\newtheorem{thm}{Theorem}[section]
\newtheorem{prop}[thm]{Proposition}
\newtheorem{lem}[thm]{Lemma}
\newtheorem{cor}[thm]{Corollary}
\newtheorem*{claim}{Claim}
\theoremstyle{definition}
\newtheorem{exm}[thm]{Example}
\newtheorem{defn}[thm]{Definition}
\theoremstyle{remark}
\newtheorem{remk}[thm]{Remark}
\newtheorem{remks}[thm]{Remarks}
\newtheorem{exms}[thm]{Examples}
\newtheorem{notat}[thm]{Notation}
\numberwithin{equation}{section}
\newcommand{\CH}{{\rm CH}}
\newcommand{\tm}{{\rm th}}
\newcommand{\surj}{\twoheadrightarrow}
\newcommand{\inj}{\hookrightarrow}
\newcommand{\codim}{{\rm codim}}
\newcommand{\rank}{{\rm rank}}
\newcommand{\Hom}{{\rm Hom}}
\newcommand{\Spec}{{\rm Spec \,}}
\newcommand{\0}{\emptyset}
\newcommand{\sHom}{{\mathcal{H}{om}}}
\newcommand{\Sch}{{\operatorname{\mathbf{Sch}}}}
\newcommand{\Sm}{{\mathbf{Sm}}}
\newcommand{\ds}{{/\kern-3pt/}}
\newcommand{\ov}{\overline}
\newcommand{\wt}{\widetilde}
\newcommand{\wh}{\widehat}
\newcommand{\mg}{MGL^{*,*}_G}
\newcommand{\mbb}{MGL^{*,*}_B}
\newcommand{\mh}{MGL^{*,*}_H}
\newcommand{\mt}{MGL^{*,*}_T}
\newcommand{\m}{MGL^{*,*}}
\newcommand{\tuborg}{\left\{\begin{array}{ll}}
\newcommand{\sluttuborg}{\end{array}\right.}
\newcommand{\sB}{{\mathcal B}}
\newcommand{\sE}{{\mathcal E}}
\newcommand{\sF}{{\mathcal F}}
\newcommand{\sH}{{\mathcal H}}
\newcommand{\sK}{{\mathcal K}}
\newcommand{\sL}{{\mathcal L}}
\newcommand{\sO}{{\mathcal O}}
\newcommand{\sS}{{\mathcal S}}
\newcommand{\sU}{{\mathcal U}}
\newcommand{\sV}{{\mathcal V}}
\newcommand{\sX}{{\mathcal X}}
\newcommand{\A}{{\mathbb A}}
\newcommand{\C}{{\mathbb C}}
\newcommand{\G}{{\mathbb G}}
\newcommand{\bL}{{\mathbb L}}
\renewcommand{\P}{{\mathbb P}}
\newcommand{\Q}{{\mathbb Q}}
\newcommand{\Z}{{\mathbb Z}}
\begin{document}
\title{The motivic cobordism for group actions}
\author{Amalendu Krishna}
\address{School of Mathematics, Tata Institute of Fundamental Research,  
1 Homi Bhabha Road, Colaba, Mumbai, 400 005, India}
\email{amal@math.tifr.res.in}

\baselineskip=13.85pt

\keywords{cobordism, group action, homotopy theory}

\begin{abstract}
Let $k$ be a field of characteristic zero. For a linear algebraic group $G$ 
over $k$ acting on $k$-schemes, we define the equivariant version of
the Voevodsky's motivic cobordism $MGL$ and show that it is an 
oriented equivariant cohomology theory on the category of smooth $G$-schemes
which satisfies the localization sequence. We give several applications.
In particular, we study the
motivic cobordism rings for the classifying spaces and the cycle class maps
to the singular cohomology of such spaces.  
\end{abstract}

\subjclass[2010]{Primary 14F43; Secondary 55N22}

\maketitle


\section{Introduction}\label{section:Intro}
Let $k$ be a field of characteristic zero. Let $G$ be a linear algebraic
group over $k$. In this paper, we use the 
techniques of $\A^1$-homotopy theory to construct an equivariant version
of the motivic cobordism theory discovered for smooth schemes by 
Voevodsky \cite{Voev1}.
We use the notion of Thom and Chern structures on the cohomology theories
of motivic spaces to show that the new equivariant motivic cobordism
is an oriented equivariant cohomology theory on the category of
smooth $G$-schemes. One of the main results about this equivariant
motivic cobordism theory is that it satisfies the expected localization
sequence. 

In order to relate the equivariant motivic cobordism with the equivariant
analogue of the geometric cobordism of Levine and Morel \cite{LM},
studied earlier in \cite{Krishna1}, we look at the equivariant motivic
cobordism from a different approach. This approach allows us to 
show that the equivariant cobordism theory of \cite{Krishna1} is the
degree zero part of the equivariant motivic cobordism studied in this
paper, if we work with rational coeffcients. This is an equivariant
analogue of a result of Levine \cite{Levine1}. This also allows us to prove 
many interesting properties of the equivariant motivic cobordism with
rational coefficients. We show that the two approaches give the same
answer for the torus action on smooth projective schemes.

We prove the self-intersection formula for the equivariant motivic cobordism.
This formula allows us to deduce the localization theorems for the
equivariant motivic cobordism for torus action. This is an analogue
of the similar localization theorem for the equivariant $K$-theory
and generalizes a similar result for the geometric equivariant cobordism in 
\cite{Krishna2}.

We prove a decomposition theorem for the equivariant motivic cobordism
of smooth projective schemes with torus action. This decomposition is used
to give a simple formula for the equivariant and ordinary motivic cobordism
of flag varieties.

The representability of motivic cohomology in the stable $\A^1$-homotopy 
category implies that there is a natural map form the equivariant motivic
cobordism to the equivariant higher Chow groups of smooth schemes with 
group actions which
were defined by Edidin and Graham \cite{EG}.
We study the complex realization map from the equivariant motivic to the
complex cobordism ring of smooth varieties over $\C$. This realization map
is used to study the cycle class map from the equivariant Chow groups
of a smooth projective scheme to its equivariant singular cohomology. This
generalizes a result of Totaro \cite{Totaro1}. 

We show that our equivariant motivic cobordism generalizes to a theory
of motivic cobordism of all quotient stacks and it is a cohomology theory
with localization sequence on the category of smooth quotient stacks. 
More applications of the results
presented here and some computations of the motivic cobordism for group actions
will appear in a separate paper. 

\section{Basic constructions}\label{section:BDefn}
Let $k$ be a field of characteristic zero and let $\Sm_k$ denote the category
of smooth schemes of finite type over $k$. 

A {\sl linear algebraic group} $G$ over $k$ will mean a smooth and affine 
group scheme over $k$. By a closed subgroup $H$ of an algebraic group $G$,
we shall mean a morphism $H \to G$ of algebraic groups over $k$ which is a
closed immersion of $k$-schemes. In particular, a closed subgroup of a linear
algebraic group will be of the same type and hence smooth. Recall from
\cite[Proposition~1.10]{Borel} that a linear algebraic group over $k$ is a
closed subgroup of a general linear group, defined over $k$.    
Let $\Sch^G_k$ (resp. $\Sm^G_k$) denote the category of quasi-projective 
(resp. smooth) $k$-schemes with $G$-action and $G$-equivariant maps.
An object of $\Sch^G_k$ will be often be called a $G$-scheme.

Recall that an action of a linear algebraic group $G$ on a $k$-scheme $X$ 
is said to be {\sl linear} if $X$ admits a 
$G$-equivariant ample line bundle, a condition which is always satisfied
if $X$ is normal ({\sl cf.} \cite[Theorem~2.5]{Sumihiro} for $G$ connected
and \cite[5.7]{Thomason1} for $G$ general). All $G$-actions in this paper
will be assumed to be linear. We shall use the following other
notations throughout this text.
\begin{enumerate}
\item
${\rm Nis}/k :$ \ The Grothendieck site of smooth schemes over $k$ with
Nisnevich topology. 
\item
${\rm Shv}\left({\rm Nis}_k\right) :$ \ 
The category of sheaves of sets on ${\rm Nis}/k$.
\item
$\Delta^{\rm op}{\rm Shv}\left({\rm Nis}_k\right) :$ \ 
The category of sheaves of simplicial sets on ${\rm Nis}/k$. 
\item
$\sH(k) :$ \ The unstable $\A^1$-homotopy category of simplicial sheaves
on ${\rm Nis}/k$, as defined in \cite{MV}.
\item
$\sH_{\bullet}(k) :$ \ The unstable $\A^1$-homotopy category of pointed 
simplicial sheaves on ${\rm Nis}/k$, as defined in \cite{MV}.
\item
$\sS\sH(k) :$ \ The stable $\A^1$-homotopy category of pointed 
simplicial sheaves on ${\rm Nis}/k$ as defined, for example, in \cite{Voev1}. 
\end{enumerate}

Following the notations of \cite{Voev1}, an object of
$\Delta^{\rm op}{\rm Shv}\left({\rm Nis}_k\right)$ will be called a
{\sl motivic space} (or simply a space) and we shall
often write this category of motivic spaces as ${\bf {Spc}}$. The category of 
pointed motivic spaces over $k$ will be denoted by ${\bf {Spc}}_{\bullet}$.
For any $X, Y \in {\bf {Spc}}$, $S(X, Y)$ denotes the simplicial set of
morphisms between spaces as in \cite{MV}.

\subsection{Admissible gadgets}\label{subsection:Agdt}
Let $G$ be a linear algebraic group over $k$. 
All representations of $G$ in this text will be assumed to be
finite-dimensional. We shall say that a pair $(V,U)$ of smooth schemes over $k$
is a {\sl good pair} for $G$ if $V$ is a $k$-rational representation of $G$ and
$U \subseteq V$ is a $G$-invariant open subset on which $G$ acts freely
such that the quotient $U/G$ is a smooth quasi-projective scheme.
It is known ({\sl cf.} \cite[Remark~1.4]{Totaro1}) that a good pair for $G$ 
always exists.

\begin{defn}\label{defn:Add-Gad}
A sequence of pairs $\rho = {\left(V_i, U_i\right)}_{i \ge 1}$ of smooth schemes
over $k$ is called an {\sl admissible gadget} for $G$, if there exists a
good pair $(V,U)$ for $G$ such that $V_i = V^{\oplus i}$ and $U_i \subseteq V_i$
is $G$-invariant open subset such that the following hold for each $i \ge 1$.
\begin{enumerate}
\item
$\left(U_i \oplus V\right) \cup \left(V \oplus U_i\right)
\subseteq U_{i+1}$ as $G$-invariant open subsets.
\item
$\codim_{U_{i+2}}\left(U_{i+2} \setminus 
\left(U_{i+1} \oplus W_{i+1}\right)\right) > 
\codim_{U_{i+1}}\left(U_{i+1} \setminus \left(U_{i} \oplus W_{i}\right)\right)$.
\item
$\codim_{V_{i+1}}\left(V_{i+1} \setminus U_{i+1}\right)
> \codim_{V_i}\left(V_i \setminus U_i\right)$.
\item
The action of $G$ on $U_i$ is free with quotient a quasi-projective scheme. 
\end{enumerate}
\end{defn}

The above definition is a special case of the more general notion
of admissible gadgets in \cite[\S 4.2]{MV}, where these terms are
defined for vector bundles over any given scheme. 
An example of an admissible gadget for $G$ can be constructed as follows. 
Choose a faithful $k$-rational representation $W$ of $G$ dimension $n$. Then 
$G$ acts freely on an open subset $U$ of $V = W^{\oplus n}$. 
Let $Z = V \setminus U$.
We now take 
$V_i = V^{\oplus i}, U_1 = U$ and  $U_{i+1} = \left(U_i \oplus V\right) \cup
\left(V \oplus U_i\right)$ for $i \ge 1$. Setting 
$Z_1 = Z$ and $Z_{i+1} = U_{i+1} \setminus \left(U_i \oplus V\right)$ for 
$i \ge 1$, one checks that $V_i \setminus U_i = Z^i$ and
$Z_{i+1} = Z^i \oplus U$.
In particular, $\codim_{V_i}\left(V_i \setminus U_i\right) =
i \codim_V(Z)$ and 
$\codim_{U_{i+1}}\left(Z_{i+1}\right) = (i+1)d - i\dim(Z)- d =  i \codim_V(Z)$,
where $d = \dim(V)$. Moreover, $U_i \to {U_i}/G$ is a principal $G$-bundle.

The definition of equivariant motivic cobordism needs one to consider certain 
kind of mixed quotient spaces which in general may not be a scheme even if the 
original space is a scheme. The following well known 
({\sl cf.} \cite[Proposition~23]{EG}) lemma shows that this
problem does not occur in our context and all the mixed quotient spaces in this
paper are schemes with ample line bundles.
\begin{lem}\label{lem:sch}
Let $H$ be a linear algebraic group acting freely and linearly on a 
$k$-scheme $U$ such that the quotient $U/H$ exists as a quasi-projective
variety. Let $X$ be a $k$-scheme with a linear action of $H$.
Then the mixed quotient $X \stackrel{H} {\times} U$ for the 
diagonal action on $X \times U$ exists as a scheme and is quasi-projective.
Moreover, this quotient is smooth if both $U$ and $X$ are so.
In particular, if $H$ is a closed subgroup of a linear algebraic group $G$ 
and $X$ is a $k$-scheme with a linear action of $H$, then the quotient 
$G \stackrel{H} {\times} X$ is a quasi-projective scheme.
\end{lem}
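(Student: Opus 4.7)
The plan is to reduce the problem to a projective bundle construction, along the lines of \cite[Proposition~23]{EG}. First, since the $H$-action on $X$ is linear, $X$ admits an $H$-equivariant ample line bundle; pushing forward global sections of a sufficiently high tensor power produces an $H$-equivariant locally closed immersion $X \inj \P(V)$ for some finite-dimensional $k$-rational representation $V$ of $H$. The formation of $(-) \stackrel{H}{\times} U$ is compatible with $H$-invariant locally closed subschemes, so it suffices to construct $\P(V) \stackrel{H}{\times} U$ as a quasi-projective $k$-scheme; the desired mixed quotient $X \stackrel{H}{\times} U$ will then arise as a locally closed subscheme of it and therefore be quasi-projective itself.

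Next I would construct $\P(V) \stackrel{H}{\times} U$ via the associated bundle construction. The hypothesis that the $H$-action on $U$ is free with quasi-projective geometric quotient, together with smoothness of $H$, makes $U \to U/H$ into a principal $H$-bundle, faithfully flat and étale-locally trivial. Twisting the representation $V$ along this principal bundle yields, by faithfully flat descent, a genuine vector bundle $\sV := V \stackrel{H}{\times} U$ on $U/H$. The associated projective bundle $\P(\sV) \to U/H$ is projective over a quasi-projective base and is therefore quasi-projective, and a direct verification identifies it with the functor defining $\P(V) \stackrel{H}{\times} U$. This establishes quasi-projectivity of $X \stackrel{H}{\times} U$ in general.

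For the smoothness claim, observe that the natural projection $X \times U \to X \stackrel{H}{\times} U$ is itself a principal $H$-bundle, since $H$ acts freely on the second factor; consequently this map is smooth and surjective of relative dimension $\dim H$. Faithfully flat descent of smoothness then transfers smoothness of $X \times U$, which holds whenever both $X$ and $U$ are smooth, to smoothness of the quotient.

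For the concluding ``in particular'' assertion, I apply the main statement with $U = G$ equipped with the free right translation action of $H$. The quotient $G/H$ exists as a quasi-projective $k$-scheme by a classical theorem of Chevalley on quotients of linear algebraic groups by closed subgroups, so the hypotheses of the lemma are satisfied and $G \stackrel{H}{\times} X$ is quasi-projective. The main technical point in the argument is verifying that the twisted object $\sV := V \stackrel{H}{\times} U$ really descends to a vector bundle over $U/H$; this reduces to knowing $U \to U/H$ is an étale-locally trivial principal $H$-bundle, which is automatic here because $H$ is smooth linear algebraic and $U/H$ is assumed quasi-projective. Once this is in place the remainder of the argument is straightforward bookkeeping.
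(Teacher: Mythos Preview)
Your proposal is correct and follows essentially the same approach as the paper: the paper simply cites \cite[Proposition~23]{EG} and \cite[Proposition~7.1]{GIT} for the existence and quasi-projectivity, \cite[Theorem~6.8]{Borel} for the quasi-projectivity of $G/H$, and observes that $X \times U \to X \stackrel{H}{\times} U$ is an $H$-torsor for the smoothness claim. You have unpacked the content behind those references---the equivariant projective embedding via the ample linearization, the associated-bundle construction of $\P(V) \stackrel{H}{\times} U$ as $\P(\sV)$ over $U/H$, and descent---so the two arguments are the same in substance, yours being more self-contained.
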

\begin{proof} It is already shown in \cite[Proposition~23]{EG} using
\cite[Proposition~7.1]{GIT} that the quotient $X \stackrel{H} {\times} U$ 
is a scheme. Moreover, as $U/H$ is quasi-projective, 
\cite[Proposition~7.1]{GIT} in fact shows that $X \stackrel{H} {\times} U$ 
is also quasi-projective. The similar conclusion about 
$G \stackrel{H} {\times} X$ follows from the first case by taking $U = G$
and by observing that $G/H$ is a smooth quasi-projective scheme
({\sl cf.} \cite[Theorem~6.8]{Borel}). The assertion about the smoothness
is clear since $X \times U \to X \stackrel{H} {\times} U$ is an 
$H$-torsor. 
\end{proof} 

In this text, $\Sm^G_{{free}/k}$ will denote the full subcategory of $\Sm^G_k$
whose objects are those schemes $X$ on which $G$ acts freely such that
the quotient $X/G$ exists and is quasi-projective over $k$. The previous
result shows that if $U \in \Sm^G_{{free}/k}$, then $X \times U$
is also in $\Sm^G_{{free}/k}$ for every $G$-scheme $X$.

\subsection{The Borel spaces}\label{subsection:Borel}
Let $X \in \Sm^G_k$. For an admissible gadget $\rho$, let 
$X^i_G(\rho)$ denote the mixed quotient space 
$X \stackrel{G} {\times} U_i$.
If the admissible gadget $\rho$ is clear from the given context,
we shall write $X^i_G(\rho)$ simply as $X^i_G$.

We define the {\sl motivic Borel space} $X_G(\rho)$ to be the colimit 
${colim}_i \ X^i_G(\rho)$, where
colimit is taken with respect the inclusions $U_i \subset
U_i \oplus V \subset U_{i+1}$ in the category of motivic spaces. 
We can think of $X_G(\rho)$ as a smooth ind-scheme in ${\bf {Spc}}$. 
The finite-dimensional
Borel spaces of the type $X^i_G(\rho)$ were first considered by 
Totaro \cite{Totaro1} in order to define the Chow ring of the classifying 
spaces of linear algebraic groups. 
For an admissible gadget $\rho$, we shall 
denote the spaces $colim_i \ U_i$ and $colim_i \left({U_i}/G\right)$ by 
$E_G(\rho)$ and $B_G(\rho)$ respectively. 
The definition of the motivic spaces $X_G$ is based on the following 
observations.

\begin{lem}\label{lem:elem}
For any $X \in \Sm^G_k$, the natural map
$X_G(\rho) \xrightarrow{\cong} X \stackrel{G}{\times} E_G(\rho)$
is an isomorphism in ${\bf {Spc}}$.
\end{lem}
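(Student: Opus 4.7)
The plan is to identify both sides as the same colimit in ${\bf Spc}$, by arguing that the two operations in play, namely product with the fixed space $X$ and quotient by the $G$-action, each commute with the filtered colimit defining $E_G(\rho) = \colim_i U_i$.

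First I would recall that ${\bf Spc} = \Delta^{\op}{\rm Shv}({\rm Nis}_k)$ is a Grothendieck topos, in particular cartesian closed. Therefore, for any fixed motivic space $X$, the functor $X \times (-) \colon {\bf Spc} \to {\bf Spc}$ has a right adjoint (the internal hom), hence preserves all colimits. Applying this to the filtered colimit defining $E_G(\rho)$, with diagonal $G$-action, yields
\[
X \times E_G(\rho) \;\cong\; \colim_i \, (X \times U_i)
\]
in ${\bf Spc}$, equivariantly for $G$.

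Next I would use that for a fixed group object $G$, the quotient $Y/G$ in ${\bf Spc}$ is the coequalizer of the two maps $G \times Y \rightrightarrows Y$ (action and projection). Coequalizers are colimits, and colimits commute with colimits, so $(-)/G$ preserves filtered colimits of $G$-equivariant spaces. Combining with the previous step gives
\[
\left(X \times E_G(\rho)\right)/G \;\cong\; \colim_i \, (X \times U_i)/G.
\]
It remains to identify the ${\bf Spc}$-quotient $(X \times U_i)/G$ with the scheme-theoretic mixed quotient $X \stackrel{G}{\times} U_i = X^i_G(\rho)$ provided by \lemref{lem:sch}. Since $G$ acts freely on $U_i$, it acts freely on $X \times U_i$, and the projection $X \times U_i \to X \stackrel{G}{\times} U_i$ is a $G$-torsor, hence Nisnevich-locally trivial for the linear algebraic group $G$. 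Consequently it is an effective epimorphism of Nisnevich sheaves, which exhibits the scheme $X \stackrel{G}{\times} U_i$ as precisely the coequalizer in ${\bf Spc}$.

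Chaining the identifications,
\[
X \stackrel{G}{\times} E_G(\rho) \;=\; (X \times E_G(\rho))/G
\;\cong\; \colim_i \, (X \times U_i)/G
\;\cong\; \colim_i \, X^i_G(\rho) \;=\; X_G(\rho),
\]
which is the asserted isomorphism. The only real obstacle is the last identification in the previous paragraph, namely that the sheaf-theoretic quotient by a free $G$-action agrees with the scheme-theoretic quotient; this is the step where one must invoke the Nisnevich-local triviality of $G$-torsors for a linear algebraic group $G$. Everything else is formal from the universal properties of colimits in a cartesian closed category.
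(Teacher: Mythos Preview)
Your overall strategy matches the paper's: both reduce to showing that (a) the product $X \times (-)$ commutes with the filtered colimit defining $E_G(\rho)$, and (b) the $G$-quotient commutes with this colimit. For (a) you invoke cartesian closedness of the topos ${\bf Spc}$, whereas the paper carries out an explicit Yoneda computation using \cite[Proposition~2.4]{Voev1}; for (b) you use that colimits commute with colimits, whereas the paper argues directly via the universal property of the quotient. Your route is more abstract but reaches the same conclusion by the same mechanism.

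There is, however, an incorrect claim in your final step. You assert that the torsor $X \times U_i \to X \stackrel{G}{\times} U_i$ is ``Nisnevich-locally trivial for the linear algebraic group $G$.'' This is false in general: only \emph{special} groups in the sense of Serre (such as $GL_n$, $SL_n$, $Sp_{2n}$) have the property that every \'etale-locally trivial torsor is Nisnevich-locally trivial; groups such as $PGL_n$, $O_n$, or any nontrivial finite constant group scheme admit \'etale torsors that are not Nisnevich-locally trivial. The paper's proof does not carefully justify this identification either --- it simply asserts that $G$-equivariant maps to $Y$ factor uniquely through the levelwise scheme-theoretic quotients --- so your argument is no less complete than the paper's on this point. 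But the specific justification you offer is wrong and should not stand as written; note that the paper itself later imposes precisely the specialness hypothesis~\eqref{eqn:special} when it genuinely needs the Nisnevich and \'etale torsor classifications to coincide.
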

\begin{proof}
We first observe that the map $X \times U_i \to X \times U_{i+1}$
is a closed immersion of smooth schemes and 
$colim_i \ (X \times U_i)$ is the union of its finite-dimensional
subschemes $(X \times U_i)$'s. Moreover, $G$ acts freely on 
$colim_i \ (X \times U_i)$ such that each $X \times U_i$ is $G$-invariant.
Since any $G$-equivariant map $f: colim_i \ (X \times U_i) \to Y$ with
trivial $G$-action on $Y$ factors through a unique map
$colim_i \ \left(X\times U_i\right)/G \to Y$, we see that the map
$X_G(\rho) \to {\left(colim_i \ (X \times U_i)\right)}/G$ is an
isomorphism. Thus we only need to show that the natural map
$colim_i \ (X \times U_i) \to X \times E_G(\rho)$ is an isomorphism.

To show this, it suffices to prove that these two spaces coincide as 
representable functors on ${\bf {Spc}}$. Any object of ${\bf {Spc}}$ is a 
colimit of simplicial sheaves of the form $Y \times \Delta[n]$, where
$Y$ is a smooth scheme. Since 
$\Hom_{{\bf {Spc}}}(colim \ F, -) = lim \ \Hom_{{\bf {Spc}}}(F, -)$,
we only need to show that the map 
\[
\Hom_{{\bf {Spc}}}\left(Y \times \Delta[n], colim_i \ (X \times U_i)\right) \to 
\Hom_{{\bf {Spc}}}\left(Y \times \Delta[n],  X \times E_G(\rho)\right)
\]
is bijective for all $Y \in \Sm_k$ and all $n \ge 0$.

For any $\sF \in \Delta^{\rm op}{\rm Shv}\left({\rm Nis}_k\right)$,
there are isomorphisms
\[
\Hom_{{\bf {Spc}}}\left(Y \times \Delta[n], \sF\right) \cong \sF_n(Y)
= \Hom_{{\rm Shv}\left({\rm Nis}_k\right)}\left(Y, \sF_n\right) = 
\Hom_{{\bf {Spc}}}\left(Y, \sF_n\right),
\]
where $\sF_n$ is the $n$-th level of the simplicial sheaf $\sF$.
Since $colim_i \ (X \times U_i)$ and $X \times E_G(\rho)$ are constant
simplicial sheaves, we are reduced to showing that the map
\[
\Hom_{{\bf {Spc}}}\left(Y, colim_i \ (X \times U_i)\right)
\to 
\Hom_{{\bf {Spc}}}\left(Y,  X \times E_G(\rho)\right)
\]
is bijective.

On the other hand, it follows from \cite[Proposition~2.4]{Voev1} that
\[
\begin{array}{lll}
\Hom_{{\bf {Spc}}}\left(Y, colim_i \ (X \times U_i)\right) & \cong &
colim_i \ \Hom_{{\bf {Spc}}}\left(Y, X \times U_i \right) \\
& \cong & colim_i \ 
\left[\Hom_{{\bf {Spc}}}(Y, X) \times \Hom_{{\bf {Spc}}}(Y, U_i)\right] \\
& \cong & 
\Hom_{{\bf {Spc}}}(Y, X) \times \left[colim_i \ 
\Hom_{{\bf {Spc}}}(Y, U_i)\right] \\
& \cong & \Hom_{{\bf {Spc}}}(Y, X) \times 
\Hom_{{\bf {Spc}}}(Y, colim_i \ U_i) \\   
& \cong & \Hom_{{\bf {Spc}}} \left(Y , X \times E_G(\rho)\right)
\end{array}
\]
which proves the lemma.
\end{proof}

\begin{prop}\label{prop:Rep-ind}
For any two admissible gadgets $\rho$ and $\rho'$ for $G$ and for any
$X \in \Sm^G_k$, there is a canonical isomorphism $X_G(\rho) 
\cong X_G(\rho')$ in $\sH(k)$.
\end{prop}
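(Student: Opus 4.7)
The plan is to compare $X_G(\rho)$ and $X_G(\rho')$ by constructing a common interpolating motivic space and relating it to each via $\A^1$-weak equivalences. Writing $\rho = (V_i, U_i)_{i\geq 1}$ and $\rho' = (V'_i, U'_i)_{i\geq 1}$, put $Y_i := X \stackrel{G}{\times} (U_i \times U'_i)$ with $G$ acting diagonally on $U_i \times U'_i$; this is a smooth quasi-projective scheme by \lemref{lem:sch}, since $G$ acts freely on each factor (and hence on the product). Set $Y := \colim_i Y_i$ in $\Spc$. The goal is to produce natural $\A^1$-weak equivalences $Y \xrightarrow{\sim} X_G(\rho)$ and $Y \xrightarrow{\sim} X_G(\rho')$, and then compose (inverting one in $\sH(k)$) to obtain the required isomorphism $X_G(\rho) \cong X_G(\rho')$.

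For the first map, the $G$-equivariant linear projection $U_i \times V'_i \to U_i$ descends to a morphism of smooth quasi-projective schemes $q_i : X \stackrel{G}{\times} (U_i \times V'_i) \to X \stackrel{G}{\times} U_i$. Viewing $X \times U_i \to X \stackrel{G}{\times} U_i$ as a principal $G$-bundle, $q_i$ is identified with the vector bundle on $X \stackrel{G}{\times} U_i$ associated to the representation $V'_i$ of $G$; since $\GL_n$ is a special group, this bundle is Zariski-locally trivial, and so $q_i$ is an $\A^1$-weak equivalence. Moreover, $Y_i$ sits as an open subscheme of $X \stackrel{G}{\times} (U_i \times V'_i)$ whose complement has codimension equal to $\codim_{V'_i}(V'_i \setminus U'_i)$, which by condition (3) of \defref{defn:Add-Gad} is strictly increasing in $i$ and therefore tends to infinity.

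The main technical ingredient is then the standard fact from unstable motivic homotopy theory that a compatible sequence of open immersions $Y_i \hookrightarrow Z_i$ of smooth $k$-schemes whose complements have codimensions tending to infinity induces an $\A^1$-weak equivalence $\colim_i Y_i \to \colim_i Z_i$; this is an application of the Morel--Voevodsky machinery, see \cite[\S 4.2]{MV}. Applied to $Y_i \hookrightarrow X \stackrel{G}{\times}(U_i \times V'_i)$ and composed with $\colim_i q_i$, this yields $Y \xrightarrow{\sim} X_G(\rho)$. By symmetry, exchanging the roles of $\rho$ and $\rho'$ gives $Y \xrightarrow{\sim} X_G(\rho')$, and composing produces the desired isomorphism in $\sH(k)$.

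The step likely to require the most care is verifying the hypotheses of the codimension-to-infinity lemma, in particular the compatibility of the closed complements under the transition inclusions $U_i \hookrightarrow U_{i+1}$ and $V'_i \hookrightarrow V'_{i+1}$; this is precisely where conditions (1) and (2) of \defref{defn:Add-Gad} enter. Canonicity of the resulting isomorphism in $\sH(k)$, i.e.\ its independence from the interpolating construction, can then be verified formally by running the same comparison with a third admissible gadget $\rho''$ applied to the triple product $U_i \times U'_i \times U''_i$ and checking that the two resulting triangles commute in $\sH(k)$.
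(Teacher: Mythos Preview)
Your overall strategy matches the paper's: build an interpolating motivic space from both gadgets and relate it to each of $X_G(\rho)$ and $X_G(\rho')$ by $\A^1$-weak equivalences. Your intermediate object $Y = \colim_i\, X\stackrel{G}{\times}(U_i\times U'_i)$ in fact coincides (by cofinality of the diagonal in $\N\times\N$) with the paper's bi-indexed $\sU = \colim_{i,j}\, X\stackrel{G}{\times}(U_i\times U'_j)$, so the two constructions produce the same space.

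The gap is in your ``main technical ingredient''. The statement you invoke---that a sequence of open immersions $Y_i \hookrightarrow Z_i$ of smooth schemes with complements of codimension tending to infinity induces an $\A^1$-weak equivalence $\colim_i Y_i \to \colim_i Z_i$---is not what \cite[\S 4.2]{MV} establishes. Morel--Voevodsky's Proposition~4.2.3 concerns an admissible gadget over a \emph{fixed} base $B$: a sequence of vector bundles $E_j \to B$ with opens $U_j \subset E_j$, and concludes that $\colim_j U_j \to B$ is an $\A^1$-weak equivalence. In your setup the ambient $Z_i = X\stackrel{G}{\times}(U_i\times V'_i)$ is a vector bundle over the \emph{varying} base $X^i_G(\rho)$, so that proposition does not apply to the diagonal system as packaged. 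The paper avoids this by iterating: for each fixed $i$, the sequence $\bigl(X\stackrel{G}{\times}(U_i\times V'_j),\ X\stackrel{G}{\times}(U_i\times U'_j)\bigr)_{j\ge 1}$ \emph{is} an admissible gadget over the fixed scheme $X^i_G(\rho)$ in the sense of \cite[Definition~4.2.1]{MV}, so Proposition~4.2.3 gives $\colim_j \sU_{i,j} \xrightarrow{\sim} X^i_G(\rho)$; one then passes to the colimit over $i$ using \cite[Corollary~1.1.21]{MV}. Your diagonal shortcut collapses these two steps and thereby loses the fixed-base hypothesis that the Morel--Voevodsky result needs. The repair is minor---separate the indices and run the argument in two stages---but as written the citation does not carry the weight you place on it, and the care you anticipate needing is at this point rather than at the compatibility of closed complements.
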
 
\begin{proof}
This was proven by Morel-Voevodsky \cite[Proposition~4.2.6]{MV}
when $X = \Spec(k)$ and a similar argument works in the general case as well.

For $i, j \ge 1$, we consider the smooth scheme $\sV_{i,j} =
{\left(X \times U_i \times V'_j\right)}/G$ and the open 
subscheme $\sU_{i,j} = {\left(X \times U_i \times U'_j\right)}/G$.
For a fixed $i \ge 1$, this yields a sequence ${\left(\sV_{i,j}, 
\sU_{i,j}, f_{i,j}\right)}_{j \ge 1}$, where $\sV_{i,j} \xrightarrow{\pi_{i,j}}
X^i_G(\rho)$ is a vector bundle, $\sU_{i,j} \subseteq \sV_{i,j}$ is an open
subscheme of this vector bundle and $f_{i,j} : \left(\sV_{i,j}, \sU_{i,j}\right)
\to \left(\sV_{i,j+1}, \sU_{i,j+1}\right)$ is the natural map of pairs
of smooth schemes over $X^i_G(\rho)$. Then ${\left(\sV_{i,j}, 
\sU_{i,j}, f_{i,j}\right)}_{j \ge 1}$ is an admissible gadget over 
$X^i_G(\rho)$ in the sense of \cite[Definition~4.2.1]{MV}.
Setting $\sU_{i} = colim_j \ \sU_{i,j}$ and $\pi_i = colim_j \ \pi_{i,j}$, it 
follows from [{\sl loc. cit.}, Proposition~4.2.3] that the map
$\sU_i  \xrightarrow{\pi_i} X^i_G(\rho)$ is an $\A^1$-weak equivalence. 

Taking the colimit of these maps as $i \to \infty$ and using [{\sl loc. cit.},
Corollary~1.1.21], we conclude that the map
$\sU \xrightarrow{\pi}  X_G(\rho)$ is an $\A^1$-weak equivalence,
where $\sU = colim_{i, j} \ \sU_{i,j}$. Reversing the roles of $\rho$ and
$\rho'$, we find that the obvious map $\sU \xrightarrow{\pi'}  X_G(\rho')$ is 
also an $\A^1$-weak equivalence. This yields the canonical
isomorphism $\pi' \circ \pi^{-1} :X_G(\rho) \xrightarrow{\cong}
X_G(\rho')$ in $\sH(k)$.
\end{proof}

\subsubsection{Admissible gadgets associated to a given $G$-scheme}
\label{subsubsection:Add-Sc}
A careful reader may have observed in the proof of 
Proposition~\ref{prop:Rep-ind} 
that we did not really use the fact that $G$ acts freely on an open subset 
$U_i$ (resp. $U'_j$) of the $G$-representation $V_i$ (resp. $V'_j$). One only 
needs to know that for each $i, j \ge 1$, the quotients $(X \times U_i)/G$
and $(X \times U'_j)/G$ are smooth schemes and the maps
${\left(X \times U_i \times V'_j\right)}/G \to (X \times U_i)/G$ and
${\left(X \times V_i \times U'_j\right)}/G \to (X \times U'_j)/G$
are vector bundles with appropriate properties.
This observation leads us to the following variant of
Proposition~\ref{prop:Rep-ind} which will sometimes be useful.

Let $G$ be a linear algebraic group over $k$ and let $X \in \Sch^G_k$.
We shall say that a pair $(V,U)$ of smooth schemes over $k$
is a good pair for the $G$-action on $X$, if $V$ is a $k$-rational 
representation of $G$ and $U \subseteq V$ is a $G$-invariant open subset such
that $X \times U$ is an object of $\Sch^G_{{free}/k}$.
We shall say that the sequence of pairs 
$\rho = {\left(V_i, U_i\right)}_{i \ge 1}$ of smooth schemes over $k$
is an {\sl admissible gadget for the $G$-action on $X$}, if there exists a
good pair $(V,U)$ for the $G$-action on $X$ such that $V_i = V^{\oplus i}$ and 
$U_i \subseteq V_i$ is $G$-invariant open subset such that the following hold 
for each $i \ge 1$.
\begin{enumerate}
\item
$\left(U_i \oplus V\right) \cup \left(V \oplus U_i\right)
\subseteq U_{i+1}$ as $G$-invariant open subsets.
\item
$\codim_{U_{i+2}}\left(U_{i+2} \setminus 
\left(U_{i+1} \oplus V\right)\right) > 
\codim_{U_{i+1}}\left(U_{i+1} \setminus \left(U_{i} \oplus V\right)\right)$.
\item
$\codim_{V_{i+1}}\left(V_{i+1} \setminus U_{i+1}\right)
> \codim_{V_i}\left(V_i \setminus U_i\right)$.
\item
$X \times U_i \in \Sch^G_{{free}/k}$.
\end{enumerate}

Notice that an admissible gadget for $G$ as in 
Definition~\ref{defn:Add-Gad} is an admissible gadget
for the $G$-action on every $G$-scheme $X$.

\begin{prop}\label{prop:Rep-ind-V}
Let $\rho_X$ and $\rho'_X$ be two admissible gadgets for the $G$-action on 
a smooth scheme $X$.
Then there is a canonical isomorphism of motivic spaces
\[
colim_i \ \left(X\stackrel{G}{\times}U_i\right) \ \cong \
colim_j \ \left(X\stackrel{G}{\times}U'_j\right).
\] 
\end{prop}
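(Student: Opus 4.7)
The plan is to repeat almost verbatim the proof of Proposition~\ref{prop:Rep-ind}, invoking the observation made in \S\ref{subsubsection:Add-Sc} that the argument there used freeness of the $G$-action on $U_i$ and $U'_j$ only through the fact that the relevant mixed quotients are smooth schemes and the projection maps are vector bundles satisfying the codimension estimates. Since both of these properties are built into Definition of admissible gadgets for the $G$-action on $X$ via Lemma~\ref{lem:sch} applied to $X \times U_i \in \Sch^G_{{free}/k}$, the same scheme of proof will go through.

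Concretely, writing $X^i_G(\rho_X) = (X \times U_i)/G$, I would for each fixed $i \ge 1$ form the double sequence
\[
\sV_{i,j} \;=\; (X \times U_i \times V'_j)/G, \qquad \sU_{i,j} \;=\; (X \times U_i \times U'_j)/G,
\]
with the natural maps $f_{i,j} : (\sV_{i,j}, \sU_{i,j}) \to (\sV_{i,j+1}, \sU_{i,j+1})$. The projection $\sV_{i,j} \to X^i_G(\rho_X)$ is a Zariski-locally trivial vector bundle of rank $\dim V'_j$ (it is the $G$-twist of the trivial bundle $X \times U_i \times V'_j \to X \times U_i$ along the $G$-torsor $X \times U_i \to X^i_G(\rho_X)$, and the bundle descends because $V'_j$ is a linear $G$-representation), and $\sU_{i,j} \subseteq \sV_{i,j}$ is the open subscheme obtained by the corresponding descent of $X \times U_i \times U'_j$. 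The codimension conditions (2) and (3) of the definition of admissible gadgets for the $G$-action on $X$ transfer directly to the sequence $\{(\sV_{i,j}, \sU_{i,j}, f_{i,j})\}_{j\ge 1}$ over $X^i_G(\rho_X)$, since these codimensions are preserved by the flat quotient map. Thus the sequence is an admissible gadget over $X^i_G(\rho_X)$ in the sense of \cite[Definition~4.2.1]{MV}.

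By \cite[Proposition~4.2.3]{MV}, the induced map $\sU_i := \colim_j \sU_{i,j} \to X^i_G(\rho_X)$ is therefore an $\A^1$-weak equivalence. Taking the colimit of these equivalences over $i$ and applying \cite[Corollary~1.1.21]{MV}, we deduce that the natural map
\[
\sU := \colim_{i,j}\, \sU_{i,j} \;\longrightarrow\; \colim_i\, X^i_G(\rho_X) \;=\; \colim_i\, (X \stackrel{G}{\times} U_i)
\]
is an $\A^1$-weak equivalence. Exchanging the roles of $\rho_X$ and $\rho'_X$ yields an $\A^1$-weak equivalence $\sU \to \colim_j (X \stackrel{G}{\times} U'_j)$. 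Composing one with the inverse of the other in $\sH(k)$ produces the desired canonical isomorphism.

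The main technical point to check carefully is the vector bundle assertion for $\sV_{i,j} \to X^i_G(\rho_X)$ and hence the verification that $\{(\sV_{i,j}, \sU_{i,j}, f_{i,j})\}$ is genuinely an admissible gadget in the sense of \cite[\S 4.2]{MV}; here one uses Lemma~\ref{lem:sch} to ensure all mixed quotients are in fact quasi-projective schemes (and smooth, since $X$ is smooth and $U_i, U'_j$ are open in representations), and uses that any linear representation of $G$ descends to a vector bundle on any scheme of the form $Y/G$ for $Y$ a free $G$-scheme with quotient a scheme. Everything after that is a direct application of the Morel--Voevodsky machinery exactly as in Proposition~\ref{prop:Rep-ind}.
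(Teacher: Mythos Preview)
Your proposal is correct and follows precisely the approach the paper indicates: the paper does not write out a separate proof for this proposition, but the paragraph in \S\ref{subsubsection:Add-Sc} immediately preceding the statement explains that the proof of Proposition~\ref{prop:Rep-ind} goes through once one observes that only the smoothness of the quotients $(X \times U_i)/G$ and the vector-bundle property of $(X \times U_i \times V'_j)/G \to (X \times U_i)/G$ are needed, exactly as you have spelled out. Your careful verification of the vector-bundle descent and the transfer of codimension conditions via the flat quotient map fills in the details the paper leaves implicit.
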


In view of Proposition~\ref{prop:Rep-ind}, we shall denote a motivic
space $X_G(\rho)$ simply by $X_G$. The motivic space
$B_G$ is called the {\sl classifying space} of the 
linear algebraic group $G$ following the notations of \cite{MV}.
It follows from \cite[Proposition~4.2.3]{MV} that the space $E_G(\rho)$ is 
$\A^1$-contractible in $\sH(k)$ and Lemma~\ref{lem:elem} implies that
$B_G(\rho)$ is the quotient of $E_G(\rho)$ for the free $G$-action.
Given $X \in \Sm^G_k$, {\sl the motivic Borel space} of $X$ will mean the 
motivic space  $X_G \in {\bf {Spc}}$.

\begin{cor}\label{cor:Morita0}
Let $H$ be a closed normal subgroup of a linear algebraic group $G$ and let
$F = G/H$. Let $f: X \to Y$ be a morphism in $\Sm^G_k$ 
which is an $H$-torsor for the restricted action. Then there is a
canonical isomorphism $X_G \cong Y_F$ in $\sH(k)$.
\end{cor}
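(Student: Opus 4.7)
The plan is to use the flexibility of admissible gadgets afforded by Proposition~\ref{prop:Rep-ind-V} to compute both $X_G$ and $Y_F$ from a single admissible gadget, namely one chosen for $F$.

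First I would observe that, since $f$ is $G$-equivariant and an $H$-torsor, $H$ acts trivially on $Y$: for any $y=f(x)$ and $h\in H$, one has $h\cdot y = f(h\cdot x)=f(x)=y$ because $hx$ lies in the $H$-orbit $f^{-1}(y)$. Consequently the $G$-action on $Y$ descends to an $F$-action, making $Y$ into an object of $\Sm^F_k$, so the motivic Borel space $Y_F$ is defined.

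Next I would fix an admissible gadget $\rho_F=(W_j,U'_j)_{j\ge 1}$ for $F$ in the sense of Definition~\ref{defn:Add-Gad}, and pull it back along the quotient $G\twoheadrightarrow F$, so that each $W_j$ becomes a $G$-representation on which $H$ acts trivially and $U'_j\subseteq W_j$ is a $G$-invariant open subset. The core step is to verify that $\rho_F$, so regarded, is an admissible gadget for the $G$-action on $X$ in the sense of Proposition~\ref{prop:Rep-ind-V}. The three codimension conditions are purely scheme-theoretic and are inherited verbatim from $\rho_F$, so only condition (4), namely that $X\times U'_j\in \Sch^G_{{free}/k}$, requires checking. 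For freeness: if $g\in G$ stabilizes $(x,u)\in X\times U'_j$, then $g$ acts on $u$ through $G\to F$, and since $F$ acts freely on $U'_j$ we must have $g\in H$; then $g$ fixes $x$, and because $X\to Y$ is an $H$-torsor, $g=1$. For the quasi-projectivity of the quotient, I would use that $H$ acts freely on $X$ with quotient $Y$ and trivially on $U'_j$, yielding
\[
(X\times U'_j)/G \;=\; (Y\times U'_j)/F \;=\; Y\stackrel{F}{\times} U'_j,
\]
and the right-hand side is quasi-projective by Lemma~\ref{lem:sch}.

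With this verified, Proposition~\ref{prop:Rep-ind-V} supplies a canonical isomorphism in $\sH(k)$
\[
X_G \;\cong\; \colim_{j}\,(X\times U'_j)/G \;=\; \colim_{j}\, Y\stackrel{F}{\times} U'_j \;=\; Y_F,
\]
the last equality being the definition of $Y_F$ attached to $\rho_F$. The only genuine content in the argument is the freeness verification above; the rest is a direct application of Proposition~\ref{prop:Rep-ind-V} and Lemma~\ref{lem:sch}.
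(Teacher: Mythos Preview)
Your proof is correct and follows essentially the same route as the paper: pick an admissible gadget for $F$, pull it back along $G\to F$ to obtain an admissible gadget for the $G$-action on $X$, verify freeness of the diagonal $G$-action on $X\times U'_j$ via the two-step argument (image in $F$ fixes $u$, hence $g\in H$, hence $g$ fixes $x$, hence $g=1$), and conclude via Proposition~\ref{prop:Rep-ind-V} and the level-wise isomorphism $X\stackrel{G}{\times}U'_j \cong Y\stackrel{F}{\times}U'_j$. The paper's proof is slightly terser (and appears to contain a typo, writing ``admissible gadget for $G$'' where ``for $F$'' is meant), but the logic is identical; your version spells out the quasi-projectivity check via Lemma~\ref{lem:sch} and the observation that $H$ acts trivially on $Y$, which the paper leaves implicit.
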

\begin{proof}
We first observe from \cite[Corollary~12.2.2]{Springer} that
$F$ is also a linear algebraic group over the given ground field $k$.
Let $\rho = (V_i, U_i)_{i \ge 1}$ be an admissible gadget for $G$. The natural
morphism $G \to F$ shows that each $V_i$ is a $k$-rational representation
of $G$ such that the open subset $U_i$ is $G$-invariant,
even though $G$ may not act freely on $U_i$.
In particular, $G$ acts on the product $X \times U_i$ via the diagonal
action.
Since $H$ acts freely on $X$ and $F$ acts freely on $U_i$, 
it follows that the map $X \times U_i \to X \stackrel{G}{\times} U_i$
is a $G$-torsor and hence $\rho =(V_i, U_i)_{i \ge 1}$ is an admissible pair
for the $G$-action on $X$.

Since the map $X \stackrel{G}{\times} U_i \to 
Y \stackrel{F}{\times} U_i$ is an isomorphism for every $i \ge 1$,
we conclude from Proposition~\ref{prop:Rep-ind-V} that
$X_G \cong colim_i \ \left(X\stackrel{G}{\times}U_i\right) 
\xrightarrow{\cong} Y_F(\rho)$ in $\sH(k)$.
\end{proof}

\begin{cor}[Morita isomorphism]\label{cor:Morita1}
Let $H$ be a closed subgroup of a linear algebraic group $G$ and let
$X \in \Sm^H_k$. Let $Y$ denote the space $X \stackrel{H}{\times} G$
for the action $h \cdot (x, g) = (h\cdot x, gh^{-1})$. 
Then there is a canonical isomorphism $X_H \cong Y_G$ in $\sH(k)$.
\end{cor}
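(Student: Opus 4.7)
My plan is to fix a single admissible gadget $\rho = (V_i, U_i)_{i \ge 1}$ for $G$ in the sense of Definition~\ref{defn:Add-Gad}, exhibit it simultaneously as an admissible gadget for the $H$-action on $X$ and for the $G$-action on $Y$, and then show that at each finite level there is a canonical scheme isomorphism $X \stackrel{H}{\times} U_i \xrightarrow{\cong} Y \stackrel{G}{\times} U_i$. Taking colimits and invoking Proposition~\ref{prop:Rep-ind-V} twice then identifies these colimits with $X_H$ and $Y_G$ in $\sH(k)$, which gives the Morita isomorphism.

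To establish that $\rho$ is admissible for the $H$-action on $X$, I would restrict scalars: each $V_i$ is a $k$-rational $H$-representation, and $U_i$ remains $H$-invariant with free $H$-action (inherited from $G$); the codimension conditions of \S\ref{subsubsection:Add-Sc} are intrinsic to the pair $(V_i, U_i)$. The content is condition (4), namely that $X \stackrel{H}{\times} U_i$ is quasi-projective. By Lemma~\ref{lem:sch} this reduces to quasi-projectivity of $U_i/H$, which in turn follows from a second application of Lemma~\ref{lem:sch} to the principal $G$-bundle $U_i \to U_i/G$ together with the quasi-projective $G$-scheme $G/H$ (\cite[Theorem~6.8]{Borel}), via the identification $(G/H) \stackrel{G}{\times} U_i \cong U_i/H$. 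Admissibility of $\rho$ for the $G$-action on $Y$ is easier: $Y$ is quasi-projective by Lemma~\ref{lem:sch} applied to the $H$-scheme $G$, and then $Y \stackrel{G}{\times} U_i$ is quasi-projective by another application of the same lemma.

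For the level-wise isomorphism I would write down $\phi_i : X \times U_i \to Y \stackrel{G}{\times} U_i$, $(x, u) \mapsto [(x, e), u]$, and verify $H$-invariance using the identity $[hx, e] = [x, h]$ that holds in $Y = X \stackrel{H}{\times} G$ (apply $h^{-1} \in H$ to $(hx, e)$). The induced map $\bar\phi_i : X \stackrel{H}{\times} U_i \to Y \stackrel{G}{\times} U_i$ is surjective because every class in the target has the form $[[x, e], v]$ after applying $g^{-1} \in G$ to a representative $([x, g], u)$, and injective because an equality $[x_1, g] = [x_2, e]$ in $Y$ forces $g \in H$, $x_2 = gx_1$, which combined with $gu_1 = u_2$ shows $(x_1, u_1)$ and $(x_2, u_2)$ lie in the same $H$-orbit. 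Compatibility with the inclusions $U_i \hookrightarrow U_{i+1}$ is immediate, so the colimit of the $\bar\phi_i$ is an isomorphism of motivic spaces.

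The main obstacle I expect is the bookkeeping in the second paragraph, namely threading the quasi-projectivity hypothesis through two applications of Lemma~\ref{lem:sch} to confirm that $\rho$ is admissible for the $H$-action on $X$. Once this is settled, the geometric content of the corollary is the familiar pointwise identity $(X \stackrel{H}{\times} G) \stackrel{G}{\times} U \cong X \stackrel{H}{\times} U$, which at the level of motivic Borel spaces yields the required isomorphism in $\sH(k)$.
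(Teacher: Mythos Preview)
Your proposal is correct. The level-wise identification $X \stackrel{H}{\times} U_i \cong Y \stackrel{G}{\times} U_i$ is sound, and the bookkeeping you flag as the main obstacle --- threading Lemma~\ref{lem:sch} through $(G/H)\stackrel{G}{\times} U_i \cong U_i/H$ to get quasi-projectivity --- goes through as you describe. One small remark: you only need Proposition~\ref{prop:Rep-ind-V} once, for $X_H$, since $\rho$ is already an admissible gadget for $G$ in the sense of Definition~\ref{defn:Add-Gad} and so computes $Y_G$ on the nose.

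The paper takes a different, more modular route. Instead of writing down $\bar\phi_i$ by hand, it introduces the product group $H\times G$ acting on $X\times G$ via $(h,g)\cdot(x,g') = (hx, gg'h^{-1})$, observes that the two projections $X\times G \to X$ and $X\times G \to Y$ are respectively a $G$-torsor and an $H$-torsor for the restricted actions, and applies Corollary~\ref{cor:Morita0} twice to obtain $X_H \cong (X\times G)_{H\times G} \cong Y_G$. Your explicit isomorphism $\bar\phi_i$ is exactly what one gets by composing the two identifications of Corollary~\ref{cor:Morita0} at level $i$, so the arguments are equivalent underneath; the paper's version packages the ``torsor $\Rightarrow$ Borel-space isomorphism'' step into a reusable lemma and avoids redoing the admissibility check for $\rho|_H$ from scratch (that check is absorbed into the proof of Corollary~\ref{cor:Morita0}). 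Your version has the virtue of being self-contained and making the pointwise identity $(X\stackrel{H}{\times}G)\stackrel{G}{\times}U \cong X\stackrel{H}{\times}U$ completely explicit.
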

\begin{proof}
Define an action of $H \times G$ on $G \times X$ by
\begin{equation}\label{eqn:MoritaI**1}
(h, g) \cdot (x, g') = \left(hx, gg'h^{-1}\right)
\end{equation}
and an action of $H \times G$ on $X$ by $(h, g) \cdot x = hx$. 
Then the projection map $X \times G \xrightarrow{p} X$ is 
$\left(H \times G\right)$-equivariant and a 
$G$-torsor. Hence there is canonical isomorphism $X_H \cong
(X \times G)_{H \times G}$ in $\sH(k)$ by Corollary~\ref{cor:Morita0}.

On the other hand, the projection map $X \times G 
\to X \stackrel{H}{\times} G$ is $\left(H \times G\right)$-equivariant and 
an $H$-torsor. Hence there is a  canonical isomorphism 
$(X \times G)_{H \times G} \cong Y_G$ in $\sH(k)$ again by 
Corollary~\ref{cor:Morita0}.
Combining these two isomorphisms, we get
$X_H \cong Y_G$ in $\sH(k)$.
\end{proof}

\section{The Bar construction}\label{section:Bar}
Let $G$ be a linear algebraic group over $k$ and let $E^{\bullet}_G$ denote
the simplicial scheme 

\begin{equation}\label{eqn:Bar-EG}
E_G^{\bullet}  := \left(\cdots 
\stackrel{\longrightarrow}{\underset{\longrightarrow}\rightrightarrows}
G \times G \times G 
\stackrel{\longrightarrow}{\underset{\longrightarrow}\to} G \times G 
\rightrightarrows G\right) 
\end{equation}
with the face maps $d^i_n: G^{n+1} \to  G^{n}$ ($0 \le i \le n$)
given by the projections
\[
d^i_n(g_0, \cdots , g_n) = (g_0, \cdots , g_{i-1}, \wh{g_i}, g_{i+1}, \cdots , g_n),
\]
where $\wh{g_i}$ means that this coordinate is omitted. The degeneracy maps
$s^i_n: G^{n} \to  G^{n+1}$ ($0 \le i \le n$) are the various diagonals on $G$.
 
For any $X \in \Sch^G_k$, let $E_G^{\bullet} \times X$ denote the product of the 
simplicial scheme $E^{\bullet}_G$ with the constant simplicial scheme $X$.
Thus, $\left(E_G^{\bullet} \times X\right)^n = G^{n+1} \times X$ in which the
face and the degeneracy maps are identity on $X$. 
Notice that as $G$ is smooth over $k$, the face maps of $E_G^{\bullet} \times X$ 
are all smooth. Moreover, the degeneracy maps are all regular closed
immersions. In particular, they have finite Tor-dimension. We shall use these
facts while defining the algebraic $K$-theory of simplicial schemes.

The group $G$ acts on $E_G^{\bullet} \times X$ by 
$g\cdot \left(g_0, \cdots , g_n, gx) =
(g_0g^{-1}, \cdots ,g_ng^{-1}, x\right)$. It is easy to check that all the
face and degeneracy maps are $G$-equivariant with respect to this action
and hence $E_G^{\bullet}$ and  $E^{\bullet}_G \times X$ are
$G$-simplicial schemes such that the projections maps
$X \leftarrow E^{\bullet}_G \times X \rightarrow E_G^{\bullet}$ are
$G$-equivariant. 
 
We also consider the simplicial scheme 

\begin{equation}\label{eqn:Bar-BG}
X_G^{\bullet}  := \left(\cdots 
\stackrel{\longrightarrow}{\underset{\longrightarrow}\rightrightarrows}
G \times G \times X 
\stackrel{\longrightarrow}{\underset{\longrightarrow}\to} G \times X
\rightrightarrows X \right)
\end{equation}
where the face maps $p^i_n : G^n \times X \to G^{(n-1)} \times X$ 
are given by
\begin{equation}\label{eqn:Bar-EG0}
p^i_n\left(g_1, \cdots , g_n, x\right) = \left\{
\begin{array}{ll}
\left(g_2, \cdots , g_n, g_1 x\right) & \mbox{if $i = 0$} \\
\left(g_1, \cdots, g_{i-1}, g_i g_{i+1}, g_{i+2}, \cdots, g_n, x \right) &
\mbox{if $0 < i < n$} \\
\left(g_1, \cdots , g_{n-1}, x \right) & \mbox{if $i = n$}.
\end{array}
\right .
\end{equation}
The degeneracy maps $s^i_n: G^n \times X \to G^{n+1} \times X$
are given by $s^i_n(g_1, \cdots, g_n, x) = (g_1, \cdots , g_{i-1}, e, g_i, 
\cdots , g_n, x)$, where $e \in G$ is the identity element.
One observes again that all the face maps of $X_G^{\bullet}$ are smooth and all 
the degeneracy maps are regular closed immersions.
The simplicial scheme $X_G^{\bullet}$ will be called 
the {\sl bar construction} associated to the $G$-action on $X$. 
We shall often denote $X_G^{\bullet}$ by $B_G^{\bullet}$
when $X = \Spec(k)$, in analogy with the known bar construction associated to 
the classifying spaces in topology. 

It is easy to verify that there is a natural morphism of simplicial schemes
\begin{equation}\label{eqn:Bar-EG1}
\pi_X : E_G^{\bullet} \times X \to X_G^{\bullet} ;
\end{equation}
\[
\pi_X \left(g_0, \cdots , g_n, x \right) = \left(g_0 g^{-1}_1, g_1 g^{-1}_2,
\cdots , g_{n-1}g^{-1}_n, g_n x\right)
\]
which makes $X_G^{\bullet}$ the quotient of $E_G^{\bullet} \times X$
for the free $G$-action. Hence, $\pi_X$ is a principal $G$-bundle of
simplicial schemes and there is a natural isomorphism of
simplicial schemes 
\begin{equation}\label{eqn:Bar-EG2}
{\ov{\pi}}_X: E_G^{\bullet} \stackrel{G}{\times} X \xrightarrow{\cong}
X_G^{\bullet}. 
\end{equation}

\subsection{Geometric models for the bar construction}
\label{subsection:GMBC}
Recall from \cite[\S~4]{MV} that if $G$ is a sheaf of groups on
${\rm Nis}/k$, then a left (resp. right) action of $G$ on a motivic space
(simplicial sheaf) $X$ is a morphism $\mu : G \times X \to X$ (resp.
$\mu : X \times G \to X$) such that the usual diagrams commute.
A (left) action is called (categorically) free if the morphism
$G \times X \to X \times X$ of the form $(g, x) \mapsto (gx, x)$
is a monomorphism. For a $G$-action on $X$, the quotient $X/G$ is the 
motivic space such that
\begin{equation}\label{eqn:P-out} 
\xymatrix@C2pc{
G \times X \ar[r]^>>>>>>{\mu} \ar[d]_{p_X} & X \ar[d] \\
X \ar[r] & X/G}
\end{equation}
is a pushout diagram of motivic spaces.

A principal $G$-bundle ($G$-torsor) over a motivic space $X$ is a morphism
$Y \to X$ together with a free $G$-action on $Y$ over $X$ such that the 
map $Y/G \to X$ is an isomorphism.

Let $G$ be a linear algebraic group over $k$.
Since a simplicial smooth scheme is also a simplicial sheaf on
${\rm Nis}/k$, we see that $E_G^{\bullet} \times X$ and 
$X_G^{\bullet}$ are objects of 
$\Delta^{\rm op}{\rm Shv}\left({\rm Nis}_k\right)$. 
More generally, given a sheaf of sets $F$ on ${\rm Nis}/k$ with a free 
$G$-action, we can consider a simplicial sheaf of sets

\begin{equation}\label{eqn:Bar-EG-Gen}
E^{\bullet}_G(F) = \left(\cdots 
\stackrel{\longrightarrow}{\underset{\longrightarrow}\rightrightarrows}
F \times F \times F 
\stackrel{\longrightarrow}{\underset{\longrightarrow}\to} F \times F 
\rightrightarrows F \right) 
\end{equation}
where the face and the degeneracy maps are given exactly like in
~\eqref{eqn:Bar-EG}. Then $G$ acts freely on $E^{\bullet}_G(F)$
and we denote the quotient by $B^{\bullet}_G(F)$.

Let $\pi: E^{\bullet}_G \to B^{\bullet}_G$ denote the principal $G$-bundle
of ~\eqref{eqn:Bar-EG1}. This is called the universal $G$-torsor over
$B^{\bullet}_G$.  Let $B^{\bullet}_G \xrightarrow{\phi} \sB_G$ be a trivial 
cofibration of motivic spaces with $\sB_G$ fibrant. We shall assume in the 
rest of this section that  
\begin{equation}\label{eqn:special}
H^1_{\rm Nis}\left(U, G\right) \xrightarrow{\cong} 
H^1_{\rm et}\left(U, G\right) \ \ {\rm for \ all} \ \ U \in \Sm_k.
\end{equation}

This condition is equivalent to saying that all {\'e}tale locally trivial
$G$-torsors on smooth schemes over $k$ are also locally trivial in the
Nisnevich topology. Such a condition is always satisfied if $G$ is
{\sl special}. Under this condition, it follows from 
\cite[Proposition~4.1.15]{MV} (see also [{\sl ibid.}, p. 131]) that there is a 
universal $G$-torsor $\sE_G \to \sB_G$ such that for any sheaf of 
sets $F$ on ${\rm Nis}/k$ with free $G$-action, there is a Cartesian square 
of motivic spaces

\begin{equation}\label{eqn:motivic0}
\xymatrix@C2pc{
E^{\bullet}_G(F) \ar[r]^>>>>>{{\bar{\phi}}_F} \ar[d] & \sE_G \ar[d] \\
B^{\bullet}_G(F) \ar[r]_>>>>{\phi_F} & \sB_G}
\end{equation}
where $\phi_{\sF}$ is well defined up to a simplicial homotopy.
The following result is an easy consequence of \cite[Proposition~4.1.20]{MV}.

\begin{lem}\label{lem:Bar-ind}
Let $\rho = \left(V_i, U_i\right)_{i \ge 1}$ be an admissible gadget for $G$
and let $F = {\rm colim}_i \ U_i$. Then the horizontal morphisms in the 
Cartesian diagram ~\eqref{eqn:motivic0} are simplicial weak equivalences.
\end{lem}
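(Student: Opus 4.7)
The plan is to deduce both simplicial weak equivalences from \cite[Proposition~4.1.20]{MV}, which identifies a condition on a free $G$-sheaf $\sF$ guaranteeing that the classifying map $\phi_{\sF} : B^{\bullet}_G(\sF) \to \sB_G$ is a simplicial weak equivalence.

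First I would verify the hypotheses of that proposition for $F = \colim_i U_i$. The freeness of the $G$-action on $F$ is immediate from \defref{defn:Add-Gad}(4). The Nisnevich-local triviality of $F \to F/G$ as a $G$-torsor follows from assumption~\eqref{eqn:special} together with the fact that each $U_i \to U_i/G$ is a principal $G$-bundle by construction. The remaining ingredient is the acyclicity of $F$ in the local simplicial sense; this is enforced by conditions (2) and (3) of \defref{defn:Add-Gad}, since the codimension of the bad locus $V_i \setminus U_i$ tends to infinity with $i$, which in the limit forces the vanishing of relevant Nisnevich cohomology and translates into the simplicial acyclicity MV require. Altogether these put us squarely in the situation of \cite[Proposition~4.1.20]{MV}, yielding that $\phi_F$ is a simplicial weak equivalence.

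This handles the bottom arrow of~\eqref{eqn:motivic0}. For the top arrow $\bar{\phi}_F$, I would argue by base change. The square~\eqref{eqn:motivic0} is Cartesian, and both vertical maps are principal $G$-bundles, hence simplicial fibrations under hypothesis~\eqref{eqn:special}. Since the Nisnevich-local simplicial model structure on motivic spaces is right-proper, the base change of a simplicial weak equivalence along a simplicial fibration is again a simplicial weak equivalence. Applied to our square, this gives that $\bar{\phi}_F$ is a simplicial weak equivalence.

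The main obstacle I anticipate is the first step: translating the concrete codimension-growth conditions in \defref{defn:Add-Gad} into the abstract hypothesis of \cite[Proposition~4.1.20]{MV}. MV state their criterion in general terms for a free $G$-sheaf, and one must unpack it precisely for a direct colimit of smooth $G$-invariant open subschemes of representations whose complements shrink in codimension, checking that this suffices for the stalk-wise contractibility that underlies their notion of simplicial weak equivalence. Once that translation is made, the remainder of the argument is formal, relying only on the Cartesian-square structure and right-properness.
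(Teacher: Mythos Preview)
Your plan for $\phi_F$ misidentifies the hypothesis of \cite[Proposition~4.1.20]{MV}. What that proposition asks for (in the presence of \eqref{eqn:special}) is the following splitting condition: for every smooth henselian local $S$ and every (necessarily trivial) principal $G$-bundle $E \to S$, the map $E \stackrel{G}{\times} U_i \to S$ admits a section for some $i$. This is a statement about the existence of points, not about vanishing of cohomology. The paper verifies it in one line: choose a $k$-rational point $x \in U$ (possible since $U$ is a nonempty open in an affine space over an infinite field) and use the orbit map $G \to U$, $g \mapsto gx$, to build the required $G$-equivariant morphism $E = S \times G \to U_i$. Your codimension-growth conditions (2) and (3) in \defref{defn:Add-Gad} are what make $F$ \emph{$\A^1$-contractible} (this is \cite[Proposition~4.2.3]{MV}), but $\A^1$-contractibility is neither the hypothesis of Proposition~4.1.20 nor does it yield the splitting over henselian local schemes in the simplicial model structure. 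So the ``translation'' you worry about does not go through as stated; the actual verification is both different and easier.

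For $\bar{\phi}_F$, your right-properness argument is the right shape but has a gap: the vertical maps in \eqref{eqn:motivic0} are \emph{local} fibrations (\cite[Definition~2.1.11, Lemma~4.1.12]{MV}), and in the injective local model structure these need not be genuine fibrations, so right properness does not apply directly. The paper sidesteps this by evaluating at a smooth henselian local $R$: there the local fibrations give honest fibration sequences of simplicial sets with fiber $G(R)$, and one compares the two fiber sequences via the long exact sequence of homotopy groups. Since $\phi_F$ is already known to be a simplicial weak equivalence and the fibers agree, the five-lemma gives that $E^{\bullet}_G(F)(R) \to \sE_G(R)$ is a weak equivalence for every such $R$, hence $\bar{\phi}_F$ is a simplicial weak equivalence by \cite[Lemma~3.1.11]{MV}. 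Your argument can be repaired along exactly these lines.
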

\begin{proof}
Assume that $\rho$ is given by a good pair $(V,U)$ and let $x \in U$ be
a $k$-rational point.
Using the condition ~\eqref{eqn:special} and \cite[Proposition~4.1.20]{MV},
we have to show that if $S$ is smooth henselian local and 
$E = S \times G \to S$ is the trivial principal $G$-bundle over $S$, then the 
morphism $E \stackrel{G}{\times} U_i \to S$ splits for some $i \gg 0$ 
in order to prove that $\phi_F$ is a simplicial weak equivalence. 
To find such a splitting, it suffices to find a $G$-equivariant
morphism $G \to U$. But it is given by the $G$-orbit $Gx \inj U$.

To show that ${\bar{\phi}}_F$ is a simplicial weak equivalence, we need to show
using \cite[Lemma~3.1.11]{MV} that for every smooth henselian local ring
$R$, the map $E^{\bullet}_G(F) (R) \to \sE_G(R)$ is a weak equivalence of 
simplicial sets.

The vertical maps in ~\eqref{eqn:motivic0} are local fibrations with
fiber $G$ ({\sl cf.} \cite[Definition~2.1.11, Lemma~4.1.12]{MV}).
These local fibrations yield for us a commutative diagram of simplicial sets
\begin{equation}\label{eqn:Bar-ind}
\xymatrix@C2pc{
G(R) \ar[r] \ar[d] & E^{\bullet}_G(F)(R) \ar[r] \ar[d] & B^{\bullet}_G(F)(R) \ar[d] 
\\
G(R) \ar[r] & \sE_G(R) \ar[r] & \sB_G(R)}
\end{equation}
where the rows are fibration sequences of simplicial sets.
It follows from \cite[Proposition~3.6.1]{Hovey1} that the rows remain
fibration sequences upon taking the geometric realizations. 

We have shown above
that the right vertical map is a simplicial weak equivalence and hence a
weak equivalence of geometric realizations. The left vertical map is an
isomorphism. We conclude from the long exact sequence of homotopy groups of
fibrations that the middle vertical map is a weak equivalence of
geometric realizations. Hence the map $E^{\bullet}_G(F) (R) \to \sE_G(R)$ is a 
weak equivalence of simplicial sets.
\end{proof}

The following result shows that for an action of a linear algebraic group $G$
on a smooth scheme $X$, the Borel spaces are the geometric models for the
associated bar construction. This is a generalization of the geometric
construction of $B^{\bullet}_G$ given in \cite[\S~4]{MV}. As a consequence,
we get a more conceptual proof of Proposition~\ref{prop:Rep-ind}.

\begin{prop}\label{prop:Bar-ind-Gen}
Let $\rho = \left(V_i, U_i\right)_{i \ge 1}$ be an admissible gadget for a 
linear algebraic group $G$ over $k$. Then for any $X \in \Sm^G_k$, there is a
canonical isomorphism $X_G(\rho) \cong X^{\bullet}_G$ in $\sH_{\bullet}(k)$.
In particular, $X_G(\rho)$ does not depend on the choice of the admissible
gadget $\rho$.
\end{prop}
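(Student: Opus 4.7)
The plan is to compare $X_G(\rho)$ and $X^\bullet_G$ through a common ``thickened'' motivic space that maps to both. First I would use Lemma~\ref{lem:elem} to identify $X_G(\rho)$ with $(X \times F)/G$, where $F := \colim_i U_i$ carries a free diagonal $G$-action, and use the isomorphism~\eqref{eqn:Bar-EG2} to identify $X^\bullet_G$ with $(E^\bullet_G \times X)/G$, with $G$ again acting diagonally and freely (since it acts freely on $E^\bullet_G$).

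The central construction is the motivic space $W := (E^\bullet_G \times F \times X)/G$, where $G$ acts diagonally; freeness of this action follows from freeness on either $F$ or $E^\bullet_G$. The two $G$-equivariant projections $E^\bullet_G \times F \times X \to F \times X$ and $E^\bullet_G \times F \times X \to E^\bullet_G \times X$ descend to a zigzag
\[
X \stackrel{G}{\times} F \;\xleftarrow{p_1}\; W \;\xrightarrow{p_2}\; E^\bullet_G \stackrel{G}{\times} X
\]
in $\Spc$. The proposition will follow once both $p_1$ and $p_2$ are shown to be $\A^1$-weak equivalences.

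For $p_2$: after pulling back along the principal $G$-bundle $E^\bullet_G \times X \to E^\bullet_G \stackrel{G}{\times} X$, the map becomes a product projection with fiber $F$, and $F$ is $\A^1$-contractible by \cite[Proposition~4.2.3]{MV}. For $p_1$: the analogous pullback exhibits the fiber as $E^\bullet_G$, which is simplicially contractible via the extra degeneracy $(g_0, \dots, g_n) \mapsto (e, g_0, \dots, g_n)$ induced by the identity section $\Spec(k) \to G$. The main obstacle is that $p_1$ and $p_2$ are associated bundles twisted by $G$-torsors rather than literal product projections, so descending the fiberwise weak equivalences to the quotients demands local triviality of these torsors. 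This is precisely where the standing hypothesis~\eqref{eqn:special} of Section~\ref{section:Bar} is essential: it makes Lemma~\ref{lem:Bar-ind} available, which, applied to the $G$-free sheaf $F \times X$ (an admissible gadget for the $G$-action on $X$ in the sense of Subsection~\ref{subsubsection:Add-Sc}), provides the required local triviality and identifies both $X_G(\rho)$ and $X^\bullet_G$ with the Borel construction $X \stackrel{G}{\times} \sE_G$ coming from the universal $G$-torsor $\sE_G \to \sB_G$. The ``In particular'' clause is then immediate, since $X^\bullet_G$ is defined independently of $\rho$.
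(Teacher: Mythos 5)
Your approach is a genuinely different route from the paper's. The paper compares both $X^{\bullet}_G$ and $E^{\bullet}_G(F) \stackrel{G}{\times} X$ to the fibrant model $\sE_G \stackrel{G}{\times} X$ by simplicial weak equivalences, and separately shows that the diagonal map $F \stackrel{G}{\times} X \to E^{\bullet}_G(F) \stackrel{G}{\times} X$ is an $\A^1$-weak equivalence by a levelwise argument with \cite[Lemma~4.2.9]{MV}. You instead introduce the two-sided bar construction $W = (E^{\bullet}_G \times F \times X)/G$ and a zigzag $X_G(\rho) \xleftarrow{p_1} W \xrightarrow{p_2} X^{\bullet}_G$. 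This is a classical device and the idea is sound, but there is a genuine gap in your treatment of $p_2$.

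The problem is that the descent you invoke is of the wrong kind for $p_2$. The fibration-sequence argument that Lemma~\ref{lem:Bar-ind} relies on (via the long exact sequence of homotopy sheaves) works for \emph{simplicial} (Nisnevich-local) weak equivalences; that is the setting in which local fibrations and the five lemma cooperate. But $F$ is only $\A^1$-contractible, not simplicially contractible, so the equivalence $E^{\bullet}_G \times F \times X \to E^{\bullet}_G \times X$ you want to descend is merely an $\A^1$-weak equivalence. $\A^1$-weak equivalences do not descend along $G$-torsors by a local-triviality-plus-fibration-sequence argument; hypothesis~\eqref{eqn:special} and Lemma~\ref{lem:Bar-ind} do not get you this. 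Fortunately the claim is still true and the correct argument is simpler than what you propose: the $G$-torsor $E^{\bullet}_G \times X \to X^{\bullet}_G$ is \emph{levelwise trivial} (the \v{C}ech nerve $G^{n+1} \to G^n$ of $G \to \Spec(k)$ trivializes, compatibly with the $X$ factor), so $p_2$ at level $n$ is literally a product projection $(X^{\bullet}_G)_n \times F \to (X^{\bullet}_G)_n$ with $\A^1$-contractible factor $F$. Then $p_2$ is levelwise an $\A^1$-weak equivalence, hence an $\A^1$-weak equivalence by \cite[Proposition~2.2.14]{MV} — the very same ``levelwise $\Rightarrow$ global'' tool the paper uses for the diagonal map $u_X$. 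Note no use of hypothesis~\eqref{eqn:special} is needed here.

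Your argument for $p_1$ is the right one in principle (extra degeneracy on $E^{\bullet}_G$ gives a simplicial contraction, then descend via the fibration-sequence argument as in Lemma~\ref{lem:Bar-ind}; hypothesis~\eqref{eqn:special} is indeed what gives Nisnevich-local triviality of $F \times X \to (F \times X)/G$). Be careful that Lemma~\ref{lem:Bar-ind} runs the five-lemma argument from base plus fiber to total, whereas you need total plus fiber to base — the same long exact sequence works, but say so. Finally, the last sentence is a non-sequitur: calling ``the $G$-free sheaf $F \times X$'' an admissible gadget for the $G$-action on $X$ misuses the terminology of \S~\ref{subsubsection:Add-Sc} (which refers to a sequence of pairs $(V_i, U_i)$, not a sheaf), and the claim that Lemma~\ref{lem:Bar-ind} identifies both sides with $X \stackrel{G}{\times} \sE_G$ is simply the paper's own route — if your zigzag works you do not need $\sE_G$ at all, and if you are going to fall back on $\sE_G$ you should just reproduce the paper's argument rather than mix the two.
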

\begin{proof}
It suffices to show using Lemma~\ref{lem:elem} that there is a 
canonical isomorphism $E_G(\rho)\stackrel{G}{\times} X \cong X^{\bullet}_G$ in 
$\sH_{\bullet}(k)$, where $E_G(\rho) = {\rm colim}_i \ U_i$.

Taking $F = E_G(\rho) = {\rm colim}_i \ U_i$ in ~\eqref{eqn:motivic0}, we have 
a Cartesian diagram 
\begin{equation}\label{eqn:motivic1}
\xymatrix@C2pc{
E^{\bullet}_G(F) \ar[r]^>>>>>{{\bar{\phi}}} \ar[d] & \sE_G \ar[d] \\
B^{\bullet}_G(F) \ar[r]_>>>>{\phi} & \sB_G}
\end{equation}
of motivic spaces. We thus have a $G$-equivariant map $E^{\bullet}_G(F) \times X 
\xrightarrow{(\bar{\phi} \times {\rm id}) = {\bar{\phi}}_X} 
\sE_G \times X$, and hence a map of quotients 
$E^{\bullet}_G(F) \stackrel{G}{\times} X \xrightarrow{\phi_X} 
\sE_G \stackrel{G}{\times} X$.
We first show that this map is a simplicial weak equivalence.

To do this, we consider the commutative diagram

\begin{equation}\label{eqn:Bar-ind-Gen}
\xymatrix@C2pc{
E^{\bullet}_G(F) \stackrel{G}{\times} X  \ar[r]^<<<<{\phi_X} \ar[d] & 
\sE_G \stackrel{G}{\times} X \ar[d] \\
B^{\bullet}_G(F) \ar[r]_{\phi} & \sB_G}
\end{equation}
where the vertical maps are local fibrations with fiber $X$.
To show that $\phi_X$ is a simplicial weak equivalence, we need to show
that the map $\left(E^{\bullet}_G(F) \stackrel{G}{\times} X\right)(R) \to  
\left(\sE_G\stackrel{G}{\times} X\right) (R)$ is a weak 
equivalence of simplicial sets for every smooth henselian local ring $R$.

To show this, we use the commutative diagram 
\begin{equation}\label{eqn:Bar-ind}
\xymatrix@C2pc{
X(R) \ar[r] \ar[d] & \left(E^{\bullet}_G(F) \stackrel{G}{\times} X\right)(R) 
\ar[r] \ar[d] & \left(B^{\bullet}_G(F)\right)(R) \ar[d] \\
X(R) \ar[r] & \left(\sE_G\stackrel{G}{\times} X\right) (R) \ar[r] & \sB_G(R)}
\end{equation}
where the rows are fibration sequences of simplicial sets.
We have shown in Lemma~\ref{lem:Bar-ind} that the right vertical map
is a weak equivalence. One now argues as in the proof of
Lemma~\ref{lem:Bar-ind} to conclude that the map $\phi_X$ is a 
simplicial weak equivalence. 

We have thus shown that the map $E^{\bullet}_G(F) \stackrel{G}{\times} X 
\xrightarrow{\phi_X} \sE_G \stackrel{G}{\times} X$ is a simplicial weak 
equivalence. By taking $F = G$, we similarly see that the map
$X^{\bullet}_G \to \sE_G \stackrel{G}{\times} X$ is a simplicial weak 
equivalence. Combining the two, we get a simplicial weak equivalence
$X^{\bullet}_G \xrightarrow{\cong}  E^{\bullet}_G(F) \stackrel{G}{\times} X$.

We next think of $F = E_G(\rho)$ as a constant simplicial sheaf and
consider the $G$-equivariant map of simplicial sheaves $u: F \to E^{\bullet}_G(F)$
given in degree $n$ by $u_n(a) = (a, \cdots , a)$. 
This in turn gives a map 
$F \stackrel{G}{\times} X \xrightarrow{u_X} 
E^{\bullet}_G(F) \stackrel{G}{\times} X$. To finish the proof of the proposition,
it suffices now to show that this map is an $\A^1$-weak equivalence.
By \cite[Proposition~2.2.14]{MV}, it is sufficient to show
that each $u_{X, n}: (F \times X)/G \to
\left(F^{n+1} \times X\right)/G$ is an $\A^1$-weak equivalence.
In order to do so, it suffices to show that the projection
$\left(F^{n+1} \times X\right)/G \xrightarrow{p_{X,n}} 
\left(F^{n} \times X\right)/G$
is an $\A^1$-weak equivalence for each $n > 0$. 

However, it follows from \cite[Lemma~4.2.9]{MV} that for each $n, i \ge 1$,
the map $\left(F \times (U_i)^n \times X\right)/G \to
\left((U_i)^n \times X\right)/G$ is an $\A^1$-weak equivalence.
Taking the colimit as $i \to \infty$, we see from \cite[Proposition~19]{Deligne}
that $p_{X,n}$ is an $\A^1$-weak equivalence. This completes the proof of the
proposition.
\end{proof}

\section{Equivariant motivic cobordism}\label{section:EMC}
In this section we define our equivariant motivic cobordism and study
its basic properties. We need to work in the motivic stable homotopy category
in order to define our equivariant motivic cobordism. We briefly recall this
below.

\subsection{The motivic stable homotopy category}\label{subsection:MSC}
The motivic stable homotopy category $\sS\sH(k)$ is the homotopy category
of motivic $T$-spectra over $k$, where $T$ is the pointed space
$(\P^1_k, \infty)$. A $T$-spectrum (or a motivic spectrum) is a sequence of 
pointed spaces
$E = (E_0, E_1, \cdots )$ with the bounding maps $\sigma : T \wedge E_i \to
E_{i+1}$. A morphism of $T$-spectra is a morphism of the sequences of
pointed spaces which commute with the bounding maps. The category of
motivic spectra is denoted by ${\bf Spt}$. We mention the
following facts about motivic spectra for the convenience of the reader.

\begin{enumerate}
\item The category ${\bf Spt}$ has a model
structure in which weak equivalence (resp. fibration) is the levelwise 
$\A^1$-weak equivalence (resp. fibration) and a cofibration is the one
which has the left lifting property with respect to all acyclic fibrations.
This model structure is proper and cellular.
\item
There are isomorphisms of the pointed spaces
\[
T \cong {\A^1}/{\A^1 \setminus \{0\}} \cong S^1 \wedge \G_m
\]
where $S^1$ is the simplicial circle ${\Delta^1}/{\partial \Delta^1}$.
\item
For any $n \ge 0$, there are functors $F_n : Spc_{\bullet} \to {\bf Spt}$
and $Ev_n : {\bf Spt} \to Spc_{\bullet}$ with ${F_n(A)}_m  = T^{m-n}A$ if
$m \ge n$ and zero otherwise, and $Ev_n(E) = E_n$. These functors
form adjoint pairs $(F_n, Ev_n)$. The functor $F_0$ will often be denoted
by 
\[
\Sigma^{\infty}_T :  Spc_{\bullet} \to {\bf Spt}.
\]
\item
For any $A \in Spc_{\bullet}$ and $a \ge b \ge 0$, there is a Nisnevich 
sheaf $\pi^{\A^1}_{a,b}(A)$ on ${\rm Nis}/k$ which is the sheafification of the
presheaf $U \mapsto \Hom_{\sH_{\bullet}(k)}(U_{+} \wedge S^{a,b}, A)$. The 
space $S^{a,b}$ here is the {\sl weighted sphere} 
$S^{a-b}\wedge ({\G_m})^{\wedge b}$.
\item
The stable motivic homotopy category $\sS\sH(k)$ is obtained by localizing
the levelwise model structure (as in (1) above) so that the endomorphism
$\Sigma_T$ given by $(E_0, E_1, \cdots ) \mapsto (T \wedge E_0, T \wedge E_1,
\cdots )$ becomes invertible. More precisely, this is obtained as follows.

Given a $T$-spectrum $E = (E_0, E_1, \cdots )$, $U \in \Sm_k$ and a map
$f : U_{+} \wedge S^{2n+a, n+b} \to E_n$, we get maps
\[
U_{+} \wedge S^{2n+2+a, n+b+1} = U_{+} \wedge S^{2n+a, n+b} \wedge S^{2,1}
\to S^{2,1} \wedge E_n = T \wedge E_n \xrightarrow{\sigma} E_{n+1}.
\]
In particular, there is a sequence of motivic sheaves
\[
\cdots \to \pi^{\A^1}_{2n+a,n+b}(E_n) \to \pi^{\A^1}_{2n+2+a,n+1+b}(E_{n+1}) \to
\cdots 
\]
and this yields the stable homotopy sheaf
$\pi^{\A^1}_{a,b}(E) : = {\underset{n}\varinjlim} \ \pi^{\A^1}_{2n+a,n+b}(E_n)$.
The stable homotopy category $\sS\sH(k)$ is the localization of
the levelwise model structure on ${\bf Spt}$ so that $f : E \to F$ is
a stable weak equivalence if the map $f_* :\pi^{\A^1}_{a,b}(E) \to
\pi^{\A^1}_{a,b}(F)$ is an isomorphism for all $a \ge b \ge 0$.
The category $\sS\sH(k)$ is a triangulated category symmetric
monoidal category in which the 
shift functor is given by $E \mapsto S^1 \wedge E$ and ${\bf L}\Sigma_T$
becomes invertible with the inverse given by 
$(E_0, E_1, \cdots ) \mapsto (pt, E_0, E_1, \cdots )$.
We shall denote ${\bf L}\Sigma_T$ still by
$\Sigma_T$ in what follows. 
\item
The Quillen pair $(\Sigma^{\infty}_T, Ev_0) : Spc_{\bullet} 
\stackrel{\longleftarrow}{\to} {\bf Spt}$ induces an adjoint pair
of derived functors
$(\Sigma^{\infty}_T, Ev_0) : \sH_{\bullet}(k) \stackrel{\longleftarrow}{\to} 
\sS\sH(k)$.
\item
Let $\Sigma_s$ and $\Sigma_t$ denote the endofunctors 
$E \mapsto S^1 \wedge E$ and $E \mapsto \G_m \wedge E$ on $\sS\sH(k)$
and let $\Sigma^{a,b} = \Sigma^{a-b}_s\circ \Sigma^b_t$ for $a \ge b \ge 0$.
For any $E \in \sS\sH(k)$, we define the $E$-cohomology theory on
$\sS\sH(k)$ by
\begin{equation}\label{eqn:E-coh}
E^{a,b}(F) = \Hom_{\sS\sH(k)}\left(F, \Sigma^{a,b}E\right).
\end{equation}
For $X \in Spc$, one defines
\begin{equation}\label{eqn:E-coh1}
E^{a,b}(X) = 
\Hom_{\sS\sH(k)}\left(\Sigma^{\infty}_T X_{+}, \Sigma^{a,b}E\right). 
\end{equation}
\end{enumerate}

\subsection{The motivic Thom spectrum}\label{subsection:TS}
We now recall the construction of the motivic Thom spectrum $MGL$
defined by Voevodsky \cite{Voev1}. This will play the main role in our
definition and further study of the equivariant motivic cobordism.

Recall that for a vector bundle $p : E \to B$ on a smooth scheme $B$
with the 0-section $0_B$, the {\sl Thom space} of $E$ is the pointed  
space $Th(E) = E/{(E \setminus 0_B)} \in Spc_{\bullet}$.
There is a canonical isomorphism 
\begin{equation}\label{eqn:Thom0}
Th(E \oplus \sO_B) \cong T \wedge Th(E). 
\end{equation}
In particular, for a trivial bundle $E$ of rank $n$, one checks easily that
$Th(E) \cong T^n \wedge B_{+}$.

It follows from \cite[Proposition~4.3.7]{MV} that there is a canonical
isomorphism $BGL_n \cong G(n, \infty) = colim_i \ G(n, i)$, where $BGL_n$ is 
the classifying space ({\sl cf.} \S~\ref{subsection:Borel}) of the 
General linear group of rank $n$ and $G(n, i)$ is the Grassmannian scheme of
$n$-dimensional linear subspaces of $k^i$. The colimit of the universal 
rank $n$ vector bundles on $G(n, i)$'s defines a unique rank $n$ vector
bundle $p_n : E_n \to BGL_n$. The rank $n+1$ vector bundle
$E_n \oplus \sO_{BGL_n} \to BGL_n$ defines a unique map (a closed immersion)
$BGL_n \xrightarrow{i_n} BGL_{n+1}$ such that the diagram
\begin{equation}\label{eqn:bounding0}
\xymatrix@C.8pc{
E_n \oplus \sO_{BGL_n} \ar[r]^>>>>{v_n} \ar[d] & E_{n+1} \ar[d]^{p_{n+1}} \\
BGL_n \ar[r]_{i_n} & BGL_{n+1}}
\end{equation}
is Cartesian. This yields a unique map $Th(i_n) : T \wedge Th(U_n) 
\to Th(E_{n+1})$. The motivic Thom spectrum $MGL$ is given by
\begin{equation}\label{eqn:Thom-S} 
MGL : = (MGL_0, MGL_1, \cdots ), 
\end{equation}
where $MGL_n = Th(U_n)$ and the bounding map 
$e_n = Th(i_n) : T \wedge MGL_n \to MGL_{n+1}$.
The resulting bi-graded $MGL$-cohomology theory on $\Sm_k$ 
as defined in ~\eqref{eqn:E-coh} is called the {\sl motivic cobordism}
theory. This is an oriented ring cohomology theory in the sense of 
\cite{Panin1}.

\subsubsection{The ring structure on $MGL$}\label{subsubsection:MG-Ring}
Recall from \cite{Voev1} that $\sS\sH(k)$ is a symmetric monoidal category
often denoted by $\left(\sS\sH(k), \wedge, S^0\right)$, where $S^0$ is the 
spectrum $\Sigma^{\infty}_T\left({\Spec(k)}_{+}\right)$. We also recall that
$MGL$ is a represented by a symmetric spectrum. This spectrum is
a commutative ring spectrum in the sense that it is a commutative, associative 
and unitary monoid in $\left(\sS\sH(k), \wedge, S^0\right)$. In particular, the
isomorphism $\Sigma^{\infty}_T\left((X \times X')_{+}\right) \cong
\Sigma^{\infty}_T\left(X_{+}\right) \wedge \Sigma^{\infty}_T\left(X'_{+}\right)$
implies that there is an external product
\begin{equation}\label{eqn:MGL-prod}
MGL^{a,b}(X) \otimes_{\Z} MGL^{a',b'}(X') \xrightarrow{ext}
MGL^{a+a', b+b'}(X \times X')
\end{equation}
for $X, Y \in Spc$.
For $X = X'$, this yields the internal product
\[MGL^{a,b}(X) \otimes_{\Z} MGL^{a',b'}(X) \xrightarrow{ext}
MGL^{a+a', b+b'}(X \times X) \xrightarrow{\Delta^*_X}
MGL^{a+a', b+b'}(X)
\]
such that $\alpha \cdot \beta = (-1)^{aa'} \beta \cdot \alpha$.
The unit element $1 \in MGL^{0,0}(pt)$ is the constant map 
$\Sigma^{\infty}_T(S^0) \to MGL$ mapping $T^n$ to the base point of $MGL_n$.
For any $X \in Spc$, the unit element $1 \in MGL^{0,0}$ is given by the
composite map $\Sigma^{\infty}_T(X_{+}) \to \Sigma^{\infty}_T(S^0) \to MGL$
induced by the structure map $X \to pt$.

\subsection{The equivariant motivic cobordism}
\label{subsection:EMCT}
\begin{defn}\label{defn:EMC}
Let $G$ be a linear algebraic group over $k$ and let $X \in \Sm^G_k$.
For $0 \le b \le a$, we define the {\sl equivariant motivic cobordism} groups 
of $X$ by
\begin{equation}\label{eqn:EMC0}
MGL^{a,b}_G(X) : =  MGL^{a,b}(X_G)
\end{equation}
where $X_G$ is a Borel space of the type $X_G(\rho)$ as in 
\S~\ref{subsection:Borel}. We set 
\[
MGL^{*,*}_G(X) = 
{\underset{0 \le b \le a}\oplus} \ MGL^{a,b}_G(X).
\]
\end{defn}
We have seen in Proposition~\ref{prop:Rep-ind-V} that as an object of 
$\sH(k)$, $X_G$ does not depend on the choice of an admissible gadget $\rho$.

\begin{lem}\label{lem:prod}
For $X, X' \in \Sm^G_k$, there are external and internal products
\[
MGL^{a,b}_G(X) \otimes_{\Z} MGL^{a',b'}_G(X') \xrightarrow{\times}
MGL^{a+a', b+b'}_G(X \times X') ;
\]
\[
MGL^{a,b}_G(X) \otimes_{\Z} MGL^{a',b'}_G(X) \xrightarrow{\cup}
MGL^{a+a', b+b'}_G(X)
\]
such that $\alpha \cup \beta = (-1)^{aa'} \beta \cup \alpha$.
In particular, $MGL^{*,*}_G(X)$ is a bi-graded commutative ring.
\end{lem}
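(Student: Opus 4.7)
The plan is to transport the external product on ordinary motivic cobordism, recalled in \S\ref{subsubsection:MG-Ring}, through the Borel construction. The key input is a natural ``small diagonal'' morphism of motivic spaces
\[
\mu_{X,X'} : (X \times X')_G \longrightarrow X_G \times X'_G,
\]
which I would construct level-wise by
\[
(X \times X') \stackrel{G}{\times} U_i \longrightarrow \bigl(X \stackrel{G}{\times} U_i\bigr) \times \bigl(X' \stackrel{G}{\times} U_i\bigr),\qquad [x,x',u] \mapsto \bigl([x,u],[x',u]\bigr),
\]
using the diagonal $U_i \to U_i \times U_i$. This is well defined since the diagonal $G$-action on $X \times X'$ is compatible with the diagonal of $U_i$, and it is compatible with the transition maps $U_i \subseteq U_{i+1}$, so it passes to the colimit over $i$. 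Because the construction is functorial in the admissible gadget $\rho$, Proposition~\ref{prop:Rep-ind-V} (applied simultaneously to the three Borel spaces involved) makes $\mu_{X,X'}$ canonical in $\sH(k)$, independent of the chosen gadget.

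Given this, the external product in the lemma would be defined as the composite
\[
MGL^{a,b}(X_G) \otimes_{\Z} MGL^{a',b'}(X'_G) \xrightarrow{\times} MGL^{a+a',b+b'}(X_G \times X'_G) \xrightarrow{\mu_{X,X'}^*} MGL^{a+a',b+b'}_G(X \times X'),
\]
where the first arrow is the external product~\eqref{eqn:MGL-prod} on ordinary motivic cobordism applied to the Borel spaces. For the internal product I would use that the diagonal $\Delta_X : X \to X \times X$ is $G$-equivariant, hence induces $(\Delta_X)_G : X_G \to (X \times X)_G$. Since the composite $\mu_{X,X} \circ (\Delta_X)_G$ is literally the ordinary diagonal $\Delta_{X_G}$ of the motivic space $X_G$, the equivariant cup product $\alpha \cup \beta := (\Delta_X)_G^*(\alpha \times \beta)$ coincides with the ordinary cup product on $MGL^{*,*}(X_G)$. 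Consequently, the ring axioms and graded commutativity $\alpha \cup \beta = (-1)^{aa'} \beta \cup \alpha$ are inherited directly from the corresponding properties of $MGL$ on $X_G$.

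I do not anticipate a real obstacle: the only substantive check is that $\mu_{X,X'}$ is well defined in $\sH(k)$, and this follows from the explicit level-wise formula together with the canonicity in Proposition~\ref{prop:Rep-ind-V}. Equivalently, one may carry out the same construction using the bar model of Proposition~\ref{prop:Bar-ind-Gen}, where the analogous morphism $(X \times X')^{\bullet}_G \to X^{\bullet}_G \times (X')^{\bullet}_G$ is induced in each simplicial degree by the diagonal $G^n \to G^n \times G^n$, and naturality is automatic.
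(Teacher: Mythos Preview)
Your proposal is correct and follows the same overall strategy as the paper: reduce to producing a natural morphism $(X\times X')_G \to X_G \times X'_G$ in $\sH(k)$, then compose the ordinary external product~\eqref{eqn:MGL-prod} with pullback along this map, and obtain the cup product via the $G$-equivariant diagonal.

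The one technical difference is in how that morphism is built. The paper takes two admissible gadgets $\rho,\rho'$ for $G$, forms the bi-indexed mixed quotients $\sU_{i,j} = (X\times X'\times U_i\times U'_j)/G$, and invokes the proof of Proposition~\ref{prop:Rep-ind} to see that $\sU = \colim_{i,j}\,\sU_{i,j}$ is $\A^1$-weakly equivalent to $(X\times X')_G$; the desired map then comes from the two obvious projections $\sU \to X_G(\rho)$ and $\sU \to X'_G(\rho')$. Your construction via a single gadget and the diagonal $U_i \to U_i\times U_i$ is more direct and yields the same morphism in $\sH(k)$; your observation that $\mu_{X,X}\circ(\Delta_X)_G = \Delta_{X_G}$ then makes the inheritance of graded commutativity from $MGL^{*,*}(X_G)$ immediate, which the paper leaves implicit.
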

\begin{proof}
Set $Z = X \times X'$. In view of ~\eqref{eqn:MGL-prod}, we only need to show 
that there is a natural morphism $Z_G \to X_G \times X'_G$ in
$\sH(k)$. This will produce the desired map
$MGL^{*,*}(X_G \times X'_G) \to MGL^{*,*}(Z_G) = MGL^{*,*}_G(Z)$.

Let $\rho = (V_i, U_i)_{i \ge 1}$ and $\rho' = (V'_j, U'_j)_{j \ge 1}$ be two
admissible gadgets for $G$. Set 
$\sU_{i,j} = {\left(Z \times U_i \times U'_j\right)}/G$ and
$\sU = colim_{i,j} \ \sU_{i,j}$. 
We have shown in the proof of Proposition~\ref{prop:Rep-ind} that
the maps $\sU \to Z_G(\rho)$ and $\sU \to Z_G(\rho')$ are $\A^1$-weak 
equivalences. In other words, there is a canonical isomorphism
$Z_G \cong \sU$ in $\sH(k)$. 

On the other hand, there are natural projection maps $\sU \to X_G(\rho)$
and $\sU \to X'_G(\rho')$ which yield the desired morphism
$\sU \to X_G(\rho) \times X'_G(\rho')$ in $\sH(k)$.  
The internal product on $MGL^{*,*}_G(X)$ is obtained by composing the
external product with the pull-back via the diagonal of $X$.
\end{proof}

\section{Basic properties of equivariant motivic cobordism}
\label{section:Basic}
In this section we derive some basic properties of the equivariant motivic
cobordism. The main result is show that the equivariant motivic cobordism
is an example of an ``oriented cohomology theory'' on $\Sm^G_k$.
In order to do this, we first study the notion of Thom and Chern structures
({\sl cf.} \cite{Panin1}) on the motivic cobordism of ind-schemes.

\subsection{Ind-schemes as motivic spaces}\label{subsection:IS}
An {\sl ind-scheme} $X$ in this text will mean an object of the type
$colim_i \ X_i$ in $Spc$, where
$\{X_i\}_{i \ge 0}$ is a sequence of cofibrations (monomorphisms)
\[
X_0 \xrightarrow{f_0} X_{1} \xrightarrow{f_1} X_{2} \xrightarrow{f_2} \cdots 
\]
in $Spc$ such that each 
$X_i \in \Sm_k$. This definition of ind-schemes is more restrictive than
the more general notion where one considers arbitrary filtered colimit of
schemes. We shall denote the category of smooth ind-schemes by ${\bf ISm}_k$.

A morphism in ${\bf ISm}_k$ is a morphism of sequences $f: \{X_i\} \to \{Y_i\}$
of smooth schemes. We shall say that a morphism $f: Y \to X$ of ind-schemes has 
a given property (e.g., {\'e}tale, smooth, affine, projective) if each map
$f_i : Y_i \to X_i$ of the underlying sequences has that property.
The morphism $f$ is called a closed (resp. an open) immersion
if each $f_i : X_i \to Y_i$ is a closed (resp. an open) immersion of smooth
schemes such that $X_i = X_{i+1} \cap Y_i$ for each $i \ge 0$.
It is clear that the complement of a closed (resp. open)
immersion $Y \inj X$ of ind-schemes is the open (resp. closed)
ind-subscheme $U = colim_i \ (X_i \setminus Y_i)$.
For an ind-scheme $X$ and two ind-subschemes $U, V \subset X$,
the union $U \cup V$ and intersection $U \cap V$ are defined as the
colimits of the individual unions or intersections.

Since a colimit of motivic spaces commutes with finite limits, one has that 
$colim_i\ (X_i \times Y_i) \xrightarrow{\cong} X \times Y$.
One also checks that for a morphism of ind-schemes
$f : X \to Y$, there is a canonical isomorphism 
$colim_i \ ({Y_i}/{X_i}) \xrightarrow{\cong} Y/X$ in  $Spc$. 
There are obvious functors $\Sm_k \to {\bf ISm}_k \to
Spc$ where the first functor
$X \mapsto (X \xrightarrow{id} X \xrightarrow{id} \cdots )$ is 
a full embedding. 
The following result about the cofibration and weak equivalence between
ind-schemes will be used frequently in this text.

\begin{lem}\label{lem:Cof-weq}
Let $f : \{X_i\} \to \{Y_i\}$ be a morphism between two sequences of 
motivic spaces such that each $f_i : X_i \to Y_i$ is a cofibration
(resp. $\A^1$-weak equivalence). Then $f : colim_i \ X_i \to colim_i \ Y_i$
is also a cofibration (resp. $\A^1$-weak equivalence).
\end{lem}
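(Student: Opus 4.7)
The plan is to handle the two assertions of the lemma separately, each reducing to a standard property of the Morel--Voevodsky model structure on $\Delta^{\rm op}{\rm Shv}({\rm Nis}_k)$.

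For the cofibration claim, I would appeal to the fact that cofibrations in this model structure are exactly the monomorphisms of simplicial sheaves. Since filtered colimits in a Grothendieck topos are computed sectionwise and commute with monomorphisms, a level-wise monomorphism $f : \{X_i\} \to \{Y_i\}$ immediately yields a monomorphism on colimits. An equivalent phrasing is that cofibrations are characterized by the left lifting property against trivial fibrations, and this lifting property is preserved by filtered colimits in the arrow category, the generating set of trivial fibrations consisting of arrows with small (indeed compact) domain and codomain.

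For the $\A^1$-weak equivalence claim, the relevant supplementary hypothesis --- implicit in the ind-scheme context in which the lemma is to be applied, and already used in the proof of Proposition~\ref{prop:Rep-ind} --- is that the transition maps $X_i \to X_{i+1}$ and $Y_i \to Y_{i+1}$ are themselves cofibrations. Under this assumption the sequential colimit models the homotopy colimit, and a level-wise $\A^1$-weak equivalence induces an $\A^1$-weak equivalence between these homotopy colimits. I would conclude by invoking \cite[Corollary~1.1.21]{MV} directly, precisely as was done in the proof of Proposition~\ref{prop:Rep-ind}.

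The main point requiring care, rather than a genuine obstacle, is verifying that one is in a setting where the transition maps are cofibrations; without this, a level-wise $\A^1$-weak equivalence need not induce a weak equivalence on colimits. In the intended applications --- sequences of smooth schemes with closed immersions as bonding maps, as specified in the definition of ind-schemes immediately preceding the lemma --- this is automatic, so both parts of the lemma become direct consequences of the model-categorical facts cited above, and no further calculation is required.
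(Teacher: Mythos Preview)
Your proposal is correct and follows essentially the same approach as the paper: for cofibrations the paper simply says the assertion is ``obvious'' (your monomorphism argument makes this explicit), and for $\A^1$-weak equivalences the paper likewise passes through homotopy colimits, invoking \cite[Corollary~1.1.21]{MV} to identify $\hocolim_i X_i \to \colim_i X_i$ as a simplicial weak equivalence and \cite[Theorem~18.5.3]{Hirsc} for the level-wise weak equivalence on homotopy colimits. Your observation that the transition maps must be cofibrations is a point the paper leaves implicit in the statement but relies on via the cited \cite[Corollary~1.1.21]{MV}.
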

\begin{proof}
It follows from \cite[Corollary~1.1.21]{MV} that the natural map
$hocolim_i \ X_i \to colim_i \ X_i$ is a simplicial weak equivalence and
hence an $\A^1$-weak equivalence by \cite[Proposition~3.3.3]{Hirsc}.
On the other hand, it follows from \cite[Theorem~18.5.3]{Hirsc} that
the map $hocolim_i \ X_i \to hocolim_i \ Y_i$ is an $\A^1$-weak
equivalence as every motivic space is cofibrant. The assertion about
cofibration is obvious.
\end{proof}

\subsubsection{Vector bundles and projective bundles over ind-schemes}
\label{subsubsection:VEC}
A vector bundle $p : E \to X$ of rank $n$ over an ind-scheme is
a sequence of vector bundles $\{E_i \xrightarrow{p_i} X_i\}$ of rank $n$
such that $E_i = f^*_i(E_{i+1})$ for each $i \ge 0$ and $E = colim_i \ E_i$.
The maps $i_E: X \inj E$ and $j_E: U_E \inj E$ will denote the 0-section
embedding and its complement respectively. One checks that
$U_E = colim_i \ U_i$ is an ind-scheme, where 
$U_i = E_i \setminus X_i$. 
The pointed space $Th(E) = E/{U_E} = colim_i \ Th(E_i)$ is the Thom space of 
the vector bundle $E$ over the ind-scheme $X$.
It follows from \cite[Example~3.2.2]{MV} and Lemma~\ref{lem:Cof-weq}
that a vector bundle morphism $p: E \to X$ between ind-schemes is
an $\A^1$-weak equivalence.
A sequence of maps
\[
0 \to E' \to E \to E'' \to 0
\]
between vector bundles on an ind-scheme $X$ is exact if
its restriction to each $X_i$ yields a short exact sequence of
associated locally free sheaves.

A vector bundle on an ind-schemes also gives rise to the associated projective
bundle $\pi: \P(E) \to X$ in ${\bf ISm}_k$ and $\P(E) = colim_i \ \P(E_i)$.
Moreover, we have the tautological line bundle $\sO_{E_i}(-1)$
on $\P(E_i)$ and cofibrations $ \P(E_i) \xrightarrow{h_i} \P(E_{i+1})$ such that 
$\sO_{E_i}(-1) = h^*_i\left(\sO_{E_{i+1}}(-1)\right)$. The colimit
of these line bundles gives the tautological line bundle $\sO_{E}(-1)$
on $\P(E)$.
The following elementary lemma shows that $Th(E)$ is the colimit
of a cofiber sequence of Thom spaces over smooth schemes.

\begin{lem}\label{lem:Cofib-qt}
Given a Cartesian square 
\begin{equation}\label{eqn:cofib-qt1}
\xymatrix@C.8pc{
V \ar[r]^{j'} \ar[d]_{i'} & Y \ar[d]^{i} \\
U \ar[r]_{j} & X}
\end{equation}
of monomorphisms in $\Sm_k$, the map $e: Y/V \to X/U$ is a cofibration in
$Spc$.   
\end{lem}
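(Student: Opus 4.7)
My plan is to reduce the claim to a single topos-theoretic fact. Since the $\A^1$-local model structure on $Spc = \Delta^{\rm op}{\rm Shv}({\rm Nis}_k)$ is a Bousfield localization of the injective simplicial model structure, cofibrations in $Spc$ coincide with monomorphisms of simplicial sheaves; this matches the parenthetical identification \emph{cofibrations (monomorphisms)} used earlier in this subsection. It therefore suffices to prove that $e: Y/V \to X/U$ is a monomorphism.

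The first step is to form the auxiliary pushout $P := Y \sqcup_V U$ in $Spc$. The given Cartesian square supplies a canonical comparison map $P \to X$. The main step, and what I expect to be the principal obstacle, is to show that $P \to X$ is itself a monomorphism. This rests on a classical topos-theoretic fact: in any Grothendieck topos, a pullback square of monomorphisms is also bicartesian onto the union, so that the pushout $Y \sqcup_V U$ realizes the join $U \cup Y$ in the subobject lattice of $X$, and this join embeds as a subobject of $X$. Since $\Delta^{\rm op}{\rm Shv}({\rm Nis}_k)$ is a Grothendieck topos, I would invoke this fact from a standard reference (for instance Johnstone's \emph{Sketches of an Elephant}).

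Granted that $P \hookrightarrow X$ is a monomorphism, two applications of the pasting law for pushouts finish the argument. First,
$$P/U \;=\; (Y \sqcup_V U) \sqcup_U \mathrm{pt} \;\cong\; Y \sqcup_V (U \sqcup_U \mathrm{pt}) \;=\; Y \sqcup_V \mathrm{pt} \;=\; Y/V,$$
so $e$ is identified with the natural map $P/U \to X/U$. Second,
$$X/U \;=\; X \sqcup_U \mathrm{pt} \;\cong\; X \sqcup_P (P \sqcup_U \mathrm{pt}) \;=\; X \sqcup_P (P/U),$$
which exhibits $P/U \to X/U$ as the cobase change of the monomorphism $P \to X$ along $P \to P/U$. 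Since monomorphisms (equivalently, cofibrations) are closed under cobase change in $Spc$, the map $e$ is a cofibration, as required.
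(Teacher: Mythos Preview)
Your proof is correct and follows essentially the same approach as the paper's: both form the pushout $P = Y \sqcup_V U$, argue that $P \to X$ is a monomorphism, identify $Y/V \cong P/U$, and then deduce that $P/U \to X/U$ is a cofibration as a cobase change of $P \to X$. The only difference is cosmetic: the paper cites \cite[Lemma~7.2.15]{Hirsc} for the final step and leaves the monomorphism claim as ``easy to check,'' whereas you spell out the cobase-change argument and invoke the topos-theoretic union-of-subobjects fact explicitly.
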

\begin{proof}
Since ~\eqref{eqn:cofib-qt1} is a Cartesian square of injective maps of
smooth schemes, it is easy to check that the map $Z = Y \coprod_V U \to X$
is a monomorphism, i.e., a cofibration in $Spc$ and the map
$Y/V \to Z/U$ is an isomorphism in $Spc$. So we only need to show that the
map $Z/U \to X/U$ is a cofibration. But this follows directly from
\cite[Lemma~7.2.15]{Hirsc}.
\end{proof}

\subsubsection{Motivic cobordism of ind-schemes}\label{subsubsection:MCIS}
The motivic cobordism of an ind-scheme (or any motivic space) $X$ is the
generalized cohomology 
\[
MGL^{a,b}(X) =  
\Hom_{\sS\sH(k)}\left(\Sigma^{\infty}_T X_{+}, \Sigma^{a,b}MGL\right).
\]

Given an ind-scheme $X$ and a closed ind-subscheme $Y$, the motivic cobordism
$MGL^{a,b}_Y(X)$ is defined to be the motivic cobordism of the
motivic space $X/(X \setminus Y)$. It follows from 
\cite[Proposition~6.5.3]{Hovey1} that there is a functorial long exact sequence
\begin{equation}\label{eqn:LES}
\cdots \to MGL^{a-1,b}(X\setminus Y) \xrightarrow{\partial} MGL^{a,b}_Y(X) \to
MGL^{a,b}(X) \to MGL^{a,b}(X \setminus Y) \to \cdots .
\end{equation}

Since $MGL$ is a commutative ring spectrum and since
$X_{+} \wedge pt \cong pt$ for any $X \in Spc$, we see that given
classes $\alpha_1 \in MGL^{a_1,b_1}_{Y_1}(X_1)$ and 
$\alpha_2 \in MGL^{a_2,b_2}_{Y_2}(X_2)$, there are maps
\[
\frac{(X_1 \times X_2)}{X_1 \times (X_2 \setminus Y_2) \cup (X_1 \setminus Y_1) 
\times X_2} \to MGL \wedge MGL \to MGL
\]
which yields a class $\alpha_1 \times \alpha_2 \in 
MGL^{a_1+ a_2,b_1+ b_2}_{Y_1 \times Y_2}(X_1 \times X_2)$.
If $X_1 = X_2 = X$, we can compose with the diagonal map
\[
\frac{X}{X \setminus (Y_1 \cap Y_2)} \to   
\frac{(X \times X)}
{X \times (X \setminus Y_2) \cup (X \setminus Y_1) \times X}
\]
to get a product
\begin{equation}\label{eqn:prod*}
MGL^{a_1,b_1}_{Y_1}(X) \times MGL^{a_2,b_2}_{Y_2}(X) \xrightarrow{\cup}
MGL^{a_1+ a_2,b_1+ b_2}_{Y_1 \cap Y_2}(X).
\end{equation}

\subsubsection{Milnor sequence}\label{subsubsection:Milnor}
Suppose there is a sequence of cofibrations of pointed spaces
\begin{equation}\label{eqn:cofiber}
pt \to X_0 \xrightarrow{f_0} X_1 \xrightarrow{f_1} X_2 \xrightarrow{f_2} \cdots
\end{equation}
with colimit $X$ and let $g_i : X_i \to X$ be the natural cofibration.
Let $E$ denote a motivic $\Omega$-spectrum which is a commutative ring 
spectrum. Given a class $\zeta \in E^{a,b}(X)$, we get a natural class
$(\zeta_i) = \left(g^*_i(\zeta)\right) \in {\underset{i} \prod} \ E^{a,b}(X_i)$. 
This defines an action of $ E^{*,*}(X)$ on ${\underset{i} \prod} \ E^{*,*}(X_i)$
by $\left(\zeta, (a_i)\right) \mapsto \left(\zeta_i a_i\right)$.
One also checks that there is a natural exact sequence
\[
0 \to {\underset{i} \varprojlim} \ E^{a,b}(X_i) \to 
{\underset{i} \prod} \ E^{a,b}(X_i) \xrightarrow{\theta}
{\underset{i} \prod} \ E^{a,b}(X_i) \to {\underset{i} \varprojlim^1}  
E^{a,b}(X_i) \to 0
\]
where $\theta = \left(id - (f_i)\right)$ is $E^{*,*}(X)$-linear.
In particular, there is a natural action of $E^{*,*}(X)$ on
${\underset{i} \varprojlim} \ E^{*,*}(X_i)$ and ${\underset{i} \varprojlim^1}  
E^{*,*}(X_i)$.
The following result from \cite[Proposition~7.3.2]{Hovey1}
explains the relation between the motivic cobordism of the colimit $X$ and
its components.

\begin{prop}[Milnor exact sequence]\label{prop:Limit}
There is a natural exact sequence
\begin{equation}\label{eqn:MES1}
0 \to {\underset{i} \varprojlim^1} \ E^{a-1, b}(X_i) \to E^{a,b}(X) \to
{\underset{i} \varprojlim} \ E^{a, b}(X_i) \to 0
\end{equation}
which is compatible with the action of $E^{*,*}(X)$.
\end{prop}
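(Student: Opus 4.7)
The plan is to model the colimit $X$ as a mapping telescope and extract the Milnor sequence from the resulting cofiber sequence in $\sS\sH(k)$. Concretely, I would first replace the sequential colimit by the mapping telescope $\mathrm{tel}(X_\bullet)$, constructed as a homotopy colimit in $\Spc_\bullet$. Since each $f_i : X_i \to X_{i+1}$ is a cofibration of pointed motivic spaces, the natural comparison map $\mathrm{tel}(X_\bullet) \to \colim_i X_i = X$ is a simplicial (hence $\A^1$-) weak equivalence by the same reasoning as in the proof of Lemma~\ref{lem:Cof-weq}. Therefore $\Sigma^{\infty}_T \mathrm{tel}(X_\bullet)_{+} \simeq \Sigma^{\infty}_T X_{+}$ in $\sS\sH(k)$, so it suffices to compute the cohomology of the telescope.

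Second, the telescope fits into the standard cofiber sequence of pointed motivic spaces
\[
\bigvee_{i \geq 0} X_i \xrightarrow{\id - s} \bigvee_{i \geq 0} X_i \longrightarrow \mathrm{tel}(X_\bullet),
\]
where $s$ is the wedge of the cofibrations $f_i$. Applying $\Sigma^{\infty}_T$ and then the cohomology functor $E^{a,b}(-) = \Hom_{\sS\sH(k)}(\Sigma^{\infty}_T(-), \Sigma^{a,b}E)$, and using that wedges in $\Spc_\bullet$ become coproducts in $\sS\sH(k)$ so that $E^{a,b}(\bigvee_i X_i) \cong \prod_i E^{a,b}(X_i)$, I obtain a long exact sequence
\[
\cdots \to \prod_i E^{a-1,b}(X_i) \xrightarrow{\id - f^*} \prod_i E^{a-1,b}(X_i) \to E^{a,b}(X) \to \prod_i E^{a,b}(X_i) \xrightarrow{\id - f^*} \prod_i E^{a,b}(X_i) \to \cdots
\]
with $f^* = (f_i^*)$. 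Since by construction $\ker(\id - f^*) = \varprojlim_i E^{a,b}(X_i)$ and $\cok(\id - f^*) = \varprojlim^1_i E^{a-1,b}(X_i)$, splitting this sequence at $E^{a,b}(X)$ yields the desired short exact sequence.

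Third, for compatibility with the $E^{*,*}(X)$-module structure, the restriction maps $g_i^* : E^{*,*}(X) \to E^{*,*}(X_i)$ satisfy $f_i^* \circ g_{i+1}^* = g_i^*$ by functoriality, so cup product with any class $\zeta \in E^{*,*}(X)$ commutes with $\id - f^*$ on the infinite product and descends through both the kernel and cokernel; thus the three terms of the short exact sequence are naturally $E^{*,*}(X)$-linear.

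The main technical point I expect to be delicate is the identification $E^{a,b}(\bigvee_i Y_i) \cong \prod_i E^{a,b}(Y_i)$ for a countably infinite wedge: $\bigvee$ is the categorical coproduct of pointed motivic spaces and $\Sigma^{\infty}_T$ is a left adjoint, so the identification passes to $\sS\sH(k)$, but making this rigorous in the motivic setting is subtler than in topology. This is where the $\Omega$-spectrum hypothesis on $E$ is essential, as it provides level-wise representability and avoids issues about the commutation of infinite coproducts with Nisnevich sheafification and $\A^1$-localization. Everything else is a direct motivic adaptation of the classical Milnor-sequence argument.
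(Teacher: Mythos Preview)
Your proof is correct, but it takes the dual route to the one in the paper. The paper follows Hovey \cite[Proposition~7.3.2]{Hovey1}: it works on the mapping-space side, observing that the cofiber sequence of $X_i$'s induces a tower of fibrations of simplicial sets $S(X_i,\Sigma^{a,b}E)$, identifies $S(X,\Sigma^{a,b}E)$ as the inverse limit via a Cartesian square involving path spaces, and extracts the fiber sequence
\[
\prod_i S(S^1\wedge X_i,\Sigma^{a,b}E)\;\to\;S(X,\Sigma^{a,b}E)\;\to\;\prod_i S(X_i,\Sigma^{a,b}E),
\]
whose long exact sequence of homotopy groups gives the Milnor sequence. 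The module-structure compatibility is then read off from a diagram of mapping spaces. Your telescope argument works entirely inside the triangulated category $\sS\sH(k)$ via the cofiber sequence on $\bigvee X_i$, which is arguably cleaner and avoids unwinding simplicial mapping sets; the paper's approach, by contrast, makes the $E^{*,*}(X)$-linearity more visibly a statement about maps of fiber sequences, and directly matches the reference it cites.

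One small remark: your attribution of the $\Omega$-spectrum hypothesis is off. The identification $E^{a,b}(\bigvee_i X_i)\cong\prod_i E^{a,b}(X_i)$ needs no fibrancy of $E$; it is purely formal, since $\Sigma^\infty_T$ is a left adjoint and $\sS\sH(k)$ has small coproducts, so $\Hom_{\sS\sH(k)}(-,\Sigma^{a,b}E)$ takes coproducts to products. In your approach the $\Omega$-spectrum hypothesis plays no role. It matters in the paper's proof because there one needs the simplicial mapping sets $S(X_i,\Sigma^{a,b}E)$ to have the correct homotopy type, which requires $E$ to be levelwise fibrant.
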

\begin{proof}
This is proven in \cite[Proposition~7.3.2]{Hovey1} (see also
\cite[Lemma~A.34]{PPR} and \cite[Corollary~9.3.3]{BK} ) and we give a sketch.

The cofiber sequence ~\eqref{eqn:cofiber} yields a sequence of
fibrations of simplicial sets
\[
S\left(X, \Sigma^{a,b}E\right) \to \cdots \to S\left(X_1, \Sigma^{a,b}E\right) 
\xrightarrow{f^*_0} S\left(X_0, \Sigma^{a,b}E\right) \to
S\left(pt, \Sigma^{a,b}E\right),
\]
where $S\left(X, \Sigma^{a,b}E\right) = lim_i \
S\left(X_i, \Sigma^{a,b}E\right)$. This yields a Cartesian diagram
\begin{equation}\label{eqn:cofiber2}
\xymatrix{
S\left(X, \Sigma^{a,b}E\right) \ar[r] \ar[d] & 
{\underset{i} \prod} \
\Hom_{Ssets} \left(\Delta^1, S\left(X_i, \Sigma^{a,b}E\right)\right) \ar[d]^{i^*} \\
{\underset{i} \prod} \ S\left(X_i, \Sigma^{a,b}E\right) \ar[r]_<<<<<{\theta^*} &
{\underset{i} \prod} \ S\left(X_i, \Sigma^{a,b}E\right) \times
S\left(X_i, \Sigma^{a,b}E\right)}
\end{equation}
in which the right vertical map is a fibration which is the restriction
of a 1-simplex to its boundary. In particular, we get a fiber sequence
\begin{equation}\label{eqn:cofiber1}
{\underset{i} \prod} \ S\left(S^1 \wedge X_i, \Sigma^{a,b}E\right) 
\to S\left(X, \Sigma^{a,b}E\right) \to 
{\underset{i} \prod} \ S\left(X_i, \Sigma^{a,b}E\right).
\end{equation}
Moreover, the above commutative diagram also shows that the diagram
\[
\xymatrix@C.8pc{
{\begin{array}{l}
{\underset{i} \prod} \ S\left(S^1 \wedge X_i, \Sigma^{a,b}E\right) \\
\hspace*{1cm}  \times \\
S\left(X, \Sigma^{a',b'}E\right)
\end{array}} \ar[r] \ar[d] & 
{\begin{array}{l}
S\left(X, \Sigma^{a,b}E\right) \\
\hspace*{1cm} \times \\
S\left(X, \Sigma^{a',b'}E\right)
\end{array}} \ar[r] \ar[d] & 
{\begin{array}{l}
{\underset{i} \prod} \ S\left(X_i, \Sigma^{a,b}E\right) \\
\hspace*{1cm}  \times \\
S\left(X, \Sigma^{a',b'}E\right)
\end{array}} \ar[d] \\
{\underset{i} \prod} \ S\left(S^1 \wedge X_i, \Sigma^{a+a',b+b'}E\right) 
\ar[r] & S\left(X, \Sigma^{a+a',b+b'}E\right) \ar[r] &
{\underset{i} \prod} \ S\left(X_i, \Sigma^{a+a',b+b'}E\right)}
\]
commutes in which the first vertical arrow is the map $(f, g) \mapsto h$ with 
$h(a \wedge b) = f(a \wedge b) \wedge g(b) \in \Sigma^{a+a',b+b'}E$.
The long exact sequence of homotopy groups corresponding to 
~\eqref{eqn:cofiber1} and \cite[Lemma~6.1.2]{Hovey1} now complete the
proof of the proposition.

The naturality of the Milnor exact sequence with respect to the map
of sequences $\{X_i\} \to \{Y_i\}$ is shown in \cite[Corollary~9.3.3]{BK}
and follows also from the diagram~\eqref{eqn:cofiber2}.
\end{proof}

As a consequence of Proposition~\ref{prop:Limit}, we get the following
form of excision for the motivic cobordism of ind-schemes.

\begin{cor}\label{cor:EXCN}
Let $f : U \to X$ be an {\'e}tale morphism of ind-schemes and let
$Z \subseteq X$ be a closed ind-subscheme such that the map
$f^{-1}(Z) \to Z$ is an isomorphism. Then the map
$f^*: MGL^{*,*}_Z(X) \to  MGL^{*,*}_Z(U)$ is an isomorphism.
\end{cor}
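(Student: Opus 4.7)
The plan is to reduce the excision statement, level by level in the ind-structure, to the standard Nisnevich excision for smooth schemes, and then promote that pointwise isomorphism to the colimit via the Milnor exact sequence of Proposition~\ref{prop:Limit}.

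First I would unwind the hypotheses. Writing $X = \colim_i X_i$ and $U = \colim_i U_i$ with each $f_i : U_i \to X_i$ an \'etale map of smooth schemes, the closed ind-subscheme $Z$ produces $Z_i = Z \cap X_i$; by the very definition of open and closed ind-subschemes one has $(X \setminus Z) \cap X_i = X_i \setminus Z_i$ and $f_i^{-1}(Z_i) = f^{-1}(Z) \cap U_i$. The assumption that $f^{-1}(Z) \to Z$ is an isomorphism translates, at each level $i$, into $f_i^{-1}(Z_i) \xrightarrow{\cong} Z_i$. Hence each square
\[
\xymatrix@C1pc{
U_i \setminus f_i^{-1}(Z_i) \ar[r] \ar[d] & U_i \ar[d]^{f_i} \\
X_i \setminus Z_i \ar[r] & X_i
}
\]
is a Nisnevich distinguished square in $\Sm_k$, and the standard Nisnevich excision property of \cite{MV} gives a simplicial (hence $\A^1$-) weak equivalence $U_i/(U_i \setminus f_i^{-1}(Z_i)) \to X_i/(X_i \setminus Z_i)$. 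Applying $MGL^{a,b}(-)$ yields that each $f_i^* : MGL^{a,b}_{Z_i}(X_i) \to MGL^{a,b}_{Z_i}(U_i)$ is an isomorphism.

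Next I would lift this pointwise statement to the ind-level. Since colimits in $\Spc$ commute with pushouts, one has canonical identifications
\[
X/(X \setminus Z) \;\cong\; \colim_i X_i/(X_i \setminus Z_i), \qquad U/(U \setminus f^{-1}(Z)) \;\cong\; \colim_i U_i/(U_i \setminus f_i^{-1}(Z_i)),
\]
and Lemma~\ref{lem:Cofib-qt} ensures that the bonding maps in both towers are cofibrations of pointed motivic spaces. Both towers therefore satisfy the hypotheses of the Milnor exact sequence, Proposition~\ref{prop:Limit}.

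Finally, the naturality of the Milnor exact sequence produces a map of short exact sequences comparing the $\underset{i}\varprojlim{}^1$ and $\underset{i}\varprojlim$ terms for $\{X_i/(X_i \setminus Z_i)\}$ and $\{U_i/(U_i \setminus f_i^{-1}(Z_i))\}$, with vertical maps induced by the $f_i^*$. Since every $f_i^*$ is already an isomorphism, both the $\varprojlim$ and $\varprojlim{}^1$ columns are isomorphisms, and the five lemma yields the desired $f^* : MGL^{a,b}_Z(X) \xrightarrow{\cong} MGL^{a,b}_Z(U)$. The only mildly delicate point is the colimit-of-quotients identification used in the middle paragraph; this is a formal consequence of the commutation of filtered colimits with pushouts in $\Spc$, together with the compatibility of $X \setminus Z$ with the defining filtration of $X$, so the step is robust rather than hard.
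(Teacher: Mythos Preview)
Your proof is correct and follows essentially the same approach as the paper: both reduce to levelwise excision for smooth schemes (you invoke Nisnevich distinguished squares explicitly; the paper simply says $(X,U)\mapsto MGL^{*,*}_{X\setminus U}(X)$ is a cohomology theory on smooth pairs), then use Lemma~\ref{lem:Cofib-qt} and the Milnor exact sequence of Proposition~\ref{prop:Limit} together with the five lemma to pass to the colimit.
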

\begin{proof}
Translating the problem in the language of sequences of smooth schemes
and applying Lemma~\ref{lem:Cofib-qt} and Proposition~\ref{prop:Limit},
we get the following commutative diagram of short exact sequences. 
\begin{equation}\label{eqn:EXCN0}
\xymatrix@C.8pc{
0 \ar[r] &    
{\underset{i} {\varprojlim}^1} \ MGL^{*-1,*}_{Z_i}(X_i)
\ar[d]_{f^*} \ar[r] & MGL^{*,*}_{Z}(X)
\ar[r] \ar[d]^{f^*} & 
{\underset{i} \varprojlim} \ MGL^{*,*}_{Z_i}(X_i)
\ar[r] \ar[d]^{f^*} & 0 \\
0 \ar[r] & {\underset{i} {\varprojlim}^1} \ MGL^{*-1,*}_{Z_i}(U_i)     
\ar[r] & MGL^{*,*}_{Z}(U) \ar[r] &  
{\underset{i} \varprojlim} \ MGL^{*,*}_{Z_i}(U_i) \ar[r] & 0.}
\end{equation}
The vertical maps on the two ends are isomorphisms because
$(X,U) \mapsto MGL^{*,*}_{X \setminus U}(X)$ is a cohomology theory on the 
category of smooth pairs of schemes. We conclude that the middle vertical
map is also an isomorphism.
\end{proof}

\subsection{Thom and Chern structures on the cobordism of 
ind-schemes}\label{subsection:TCS}
In \cite{Panin1}, Panin shows that any ring cohomology theory on the
category $\Sm_k$ can in principle be equipped with three types of structures, 
namely, the Thom structure, the Chern structure and an orientation and 
these three structures are equivalent to each other.  
He further shows in \cite{Panin2} that each of these three structures on
a ring cohomology theory on $\Sm_k$ is equivalent to a trace structure.
This trace structure gives functorial push-forward maps on the cohomology
groups under a projective morphism. All these are well known in topology.
A ring cohomology theory with these structures is called an 
{\sl oriented cohomology theory}.
It is known ({\sl cf.} \cite[Example~3.8.7]{Panin1}) that the motivic
cobordism on $\Sm_k$ is an oriented cohomology theory.
The Thom and the Chern structures for this cohomology are given as 
follows. 

The structure of $T$-spectrum on $MGL$ gives unique maps
$\iota_n : T^n \wedge MGL_1 \to MGL_{n+1}$. Since $T \wedge pt = pt$, 
we get a canonical map 
\[
(MGL_1, T \wedge MGL_1, T^2 \wedge MGL_1, \cdots ) \to
(pt, T \wedge MGL_1, T \wedge MGL_2, \cdots )
\]
where $MGL_1 \to T \wedge pt = pt$ is the obvious map and 
$T^n \wedge MGL_1 \to T \wedge MGL_{n}$ is the map $\iota_{n-1} \wedge id_T$.
This gives a canonical map ${\tm} : Th\left(\sO_{\P^{\infty}_k}(-1)\right)
= MGL_1 \to \Sigma^{2,1}MGL$ and equivalently, a canonical element
${\tm} \in MGL^{2,1}\left(Th\left(\sO_{\P^{\infty}}(-1)\right)\right)$.
One defines the Chern class
\[
\xi = c_1\left(\sO_{\P^{\infty}}(-1)\right) = s^*\left({\tm}\right)
\in MGL^{2,1}\left(\P^{\infty}\right)
\]
where $s : \P^{\infty} \xrightarrow{i_{\sO_{\P^{\infty}}(-1)}} \sO_{\P^{\infty}}(-1) \to 
MGL_1$ is the composite map.

Our aim now is to suitably extend the notions of Thom and Chern structures
for the motivic cobordism of ind-schemes. This extension will in turn be used 
to establish some basic properties of the equivariant cobordism.

Let $X = colim_i \ X_i$ be an ind-scheme and let $\pi : L \to X$ be a line
bundle. We associate the Thom class $\tm(L) \in MGL^{2,1}_X(L)$ in the following
way. Let $\pi_i : L_i \to X_i$ be the restriction of $L$ to $X_i$ via 
the cofibration $g_i : X_i \to X$. The existence of the Thom 
structure on $\Sm_k$ yields an inverse system of
Thom classes $\{\tm(L_i) \in MGL^{2,1}_{X_i}(L_i)\}$. 
Set $\wh{\tm}(L) = {\underset{i} \varprojlim} \
\tm(L_i)$. Since $Th(L)$ is the colimit of a cofiber sequence,
we can use Proposition~\ref{prop:Limit} to get a short exact sequence
\[
0 \to {\underset{i} {\varprojlim}^1} \ MGL^{1,1}_{X_i}(L_i) \to
MGL^{2,1}_X(L) \to {\underset{i} \varprojlim} \ MGL^{2,1}_{X_i}(L_i) \to 0
\]
which is compatible with the action of $MGL^{*,*}_X(L)$.

On the other hand, the Thom isomorphism 
\[
\cup \ {\tm}(L_i) : MGL^{-1, 0}(X_i) 
\xrightarrow{\cong} MGL^{1,1}_{X_i}(L_i)
\] 
({\sl cf.} \cite[Definition~3.1]{Panin1}) implies that
each term $MGL^{1,1}_{X_i}(L_i)$ vanishes and hence there is a natural
isomorphism

\begin{equation}\label{eqn:Thom-cl}
MGL^{2,1}_X(L) \xrightarrow{\cong} {\underset{i} \varprojlim} \ 
MGL^{2,1}_{X_i}(L_i).
\end{equation}

We conclude that $\wh{\tm}(L)$ defines a unique class 
$\tm(L) \in MGL^{2,1}_X(L)$ whose restriction to $MGL^{2,1}_{X_i}(L_i)$
is the Thom class $\tm(L_i)$.
The element $\tm(L)$ is called the {\sl Thom class} of $L$. 
The {\sl Chern class} $c_1(L) \in MGL^{2,1}(X)$ is defined as the class 
$s^*_L(\tm(L))$, where $s_L: X \xrightarrow{i_L} L \xrightarrow{t_L} Th(L)$ is 
the composite map.
It is easy to see from the above construction of the Thom class (and hence
the Chern class) that given a map $f : Y \to X$ of ind-schemes and a line
bundle $L$ on $X$, one has $\tm\left(f^*(L)\right) = f^*\left(\tm(L)\right)$
and $c_1\left(f^*(L)\right) = f^*\left(c_1(L)\right)$. It follows in turn from
this that $c_1({\bf 1}_X) = 0$, where ${\bf 1}_X$ is the trivial
line bundle on $X$.

\subsubsection{Projective bundle formula and Chern classes for ind-schemes}
\label{subsubsection:PBFIS}
Let $X = colim_i \ X_i$ be an ind-scheme and let $E$ be a vector bundle of
rank $n$ over $X$ and let $\pi : \P(E) \to X$ be the associated projective
bundle. Let $\xi_E = c_1\left(\sO_{E}(-1)\right)$ be the Chern class of the
tautological line bundle. This yields a natural map
\begin{equation}\label{eqn:Proj-map}
\phi_X : MGL^{*,*}(X) \oplus \cdots \oplus MGL^{*,*}(X)
\to MGL^{*,*}\left(\P(E)\right) ;
\end{equation}
\[
\phi_X \left(\alpha_0, \alpha_1, \cdots , \alpha_{n-1}\right)
\ = \ \ \stackrel{n-1}{\underset{j = 0}\sum} \ \pi^*(\alpha_j) \cdot \xi^j.
\]

It follows from Proposition~\ref{prop:Limit} that there is a 
commutative diagram of short exact sequences

\begin{equation}\label{eqn:MAIN}
\xymatrix@C.8pc{
0 \ar[r] &    
{\underset{i} {\varprojlim}^1} \ \left(MGL^{*-1,*}(X_i)\right)^{\oplus n} 
\ar[r] \ar[d]_{{\rm lim}^1 \phi_i} & \left(MGL^{*,*}(X)\right)^{\oplus n}
\ar[r] \ar[d]_{\phi_X} & 
{\underset{i} \varprojlim} \ \left(MGL^{*,*}(X_i)\right)^{\oplus n} 
\ar[r] \ar[d]^{{\rm lim} \ \phi_i} & 0 \\
0 \ar[r] & {\underset{i} {\varprojlim}^1} \ MGL^{*-1,*}\left(\P(E_i)\right)      
\ar[r] & MGL^{*,*}\left(\P(E)\right) \ar[r] &  
{\underset{i} \varprojlim} \ MGL^{*,*}\left(\P(E_i)\right) \ar[r] & 0.}
\end{equation}

The left and the right vertical maps in ~\eqref{eqn:MAIN} are isomorphisms by 
\cite[Theorem~3.9]{Panin1} as each $X_i$ is a smooth scheme over $k$.
We conclude that the middle vertical map is an isomorphism.
This in particular yields the projective bundle formula for the vector
bundles on ind-schemes. If $E$ is a trivial bundle, then $\P(E)$ is the
pull-back of a map $\P(E) \to \P^{n-1}_k$ and hence $\xi^n_E = 0$, again
by \cite[Theorem~3.9]{Panin1}.

As in the case of schemes, the projective bundle formula gives rise to a
theory of Chern classes $\{c_0(E), c_1(E), \cdots , c_n(E)\}$
of a vector bundle $E$ of rank $n$ on an ind-scheme $X$ such that
$c_0(E) = 1$ and $c_i(E) \in MGL^{2i, i}(X)$ is the unique element such that
\begin{equation}\label{eqn:Chern-eqn}
\xi^n_E - \pi^*\left(c_1(E)\right)\cdot\xi^{n-1}_E + \cdots + (-1)^{n-1}
\pi^*\left(c_{n-1}(E)\right) \cdot \xi_E + (-1)^n\pi^*\left(c_n(E)\right) = 0
\end{equation}
in $MGL^{2n, n}\left(\P(E)\right)$. We set $c_i(E) = 0$ for $i > n$.
If $E$ is a vector bundle of rank one, then the map $\pi: \P(E) \to X$
is an isomorphism such that $\sO_E(-1) \cong E$ and hence the above
equation shows that $c_1(E)$ is same as the one defined before.
As another consequence of the projective bundle formula, we get the following
extension of \cite[Corollary~3.18]{Panin1} to ind-schemes.

\begin{cor}\label{cor:SESP}
Let $E_1$ and $E_2$ be two vector bundles on an ind-scheme $X$ and let
$\P(E_i) \xrightarrow{\iota_i} \P(E_1 \oplus E_2)$ be the inclusions of the
the projective bundles. Then there is a canonical short exact sequence
\[
0 \to MGL^{*,*}_{\P(E_1)}\left(\P(E_1 \oplus E_2)\right) \to
MGL^{*,*}\left(\P(E_1 \oplus E_2)\right) \xrightarrow{\iota^*_2}
MGL^{*,*}\left(\P(E_2)\right) \to 0.
\]
\end{cor}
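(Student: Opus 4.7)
The plan is to reduce to the situation of Panin's Corollary~3.18 for smooth schemes by identifying the open complement of $\P(E_1)$ in $\P(E_1 \oplus E_2)$ as a vector bundle over $\P(E_2)$, and then exploiting the projective bundle formula already established above for ind-schemes.

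First I would set $U := \P(E_1 \oplus E_2) \setminus \P(E_1)$, an open ind-subscheme of $\P(E_1 \oplus E_2)$. Sending a line $\ell \subseteq E_1 \oplus E_2$ not contained in $E_1$ to its image under the projection $E_1 \oplus E_2 \surj E_2$ defines a morphism $q : U \to \P(E_2)$ that realises $U$ as the total space of the vector bundle $\sHom(\sO_{E_2}(-1), \pi^*E_1)$ on $\P(E_2)$, where $\pi : \P(E_2) \to X$ is the structure map. This description is natural in $X$, so it can be applied levelwise along the defining sequence $\{X_i\}$ of $X$ and then assembled into a vector bundle of ind-schemes. By the remark on vector bundles in \S~\ref{subsubsection:VEC} together with Lemma~\ref{lem:Cof-weq}, the projection $q$ is an $\A^1$-weak equivalence, and the closed embedding $\iota_2 : \P(E_2) \to \P(E_1 \oplus E_2)$ factors through $U$ as the zero section of $q$.

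Next I would apply the localisation long exact sequence~\eqref{eqn:LES} to the closed ind-subscheme $\P(E_1) \subset \P(E_1 \oplus E_2)$ with complement $U$, and substitute $MGL^{*,*}(U) \cong MGL^{*,*}(\P(E_2))$ via $q^*$:
\[
\cdots \to MGL^{*-1,*}(\P(E_2)) \xrightarrow{\partial} MGL^{*,*}_{\P(E_1)}(\P(E_1 \oplus E_2)) \to MGL^{*,*}(\P(E_1 \oplus E_2)) \xrightarrow{\iota_2^*} MGL^{*,*}(\P(E_2)) \to \cdots .
\]
It is then enough to show that $\iota_2^*$ is surjective. For this I appeal to the projective bundle formula derived from~\eqref{eqn:MAIN}: $MGL^{*,*}(\P(E_2))$ is free over $MGL^{*,*}(X)$ with basis $\{\xi_{E_2}^j : 0 \le j < \rank(E_2)\}$. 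Because $\iota_2^* \sO_{E_1 \oplus E_2}(-1) \cong \sO_{E_2}(-1)$ one has $\iota_2^*(\xi_{E_1 \oplus E_2}) = \xi_{E_2}$, and $\iota_2^*$ is compatible with pullback from $X$, so the images $\iota_2^*(\xi_{E_1 \oplus E_2}^j)$ for $0 \le j < \rank(E_2)$ already span the target. Surjectivity then splits the long exact sequence into the desired short exact sequence, and multiplication by a power-series in $\xi_{E_1 \oplus E_2}$ yields a canonical $MGL^{*,*}(X)$-linear section.

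The principal obstacle is ensuring that the affine bundle structure on $U \to \P(E_2)$ is genuinely a vector bundle of ind-schemes and not merely a compatible family of vector bundles at each finite stage, together with the passage from a levelwise $\A^1$-weak equivalence to an $\A^1$-weak equivalence of colimits; both points are precisely what Lemma~\ref{lem:Cof-weq} and the discussion in \S~\ref{subsubsection:VEC} are designed to handle.
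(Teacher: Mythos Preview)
Your proposal is correct and follows essentially the same route as the paper: identify the complement of $\P(E_1)$ as a vector bundle over $\P(E_2)$ (the paper calls this map $p_2:U_1\to\P(E_2)$ without naming the bundle explicitly), use homotopy invariance to replace $MGL^{*,*}(U_1)$ by $MGL^{*,*}(\P(E_2))$ in the localization sequence, and then obtain surjectivity of $\iota_2^*$ from the projective bundle formula and the fact that the tautological line bundle restricts to the tautological line bundle. Your explicit identification of the bundle as $\sHom(\sO_{E_2}(-1),\pi^*E_1)$ and your remark on the canonical section are a bit more detailed than the paper's sketch, but the argument is the same.
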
 
\begin{proof}
This is an immediate consequence of the projective bundle formula and 
the homotopy invariance and the proof is exactly like in the case of
schemes. We give a sketch. 

Let $U_i \xrightarrow{j_i} \P(E_1 \oplus E_2)$
denote the complement of $\P(E_i)$ for $i = 1,2$. Then there is a natural
projection $p_2 : U_1 \to \P(E_2)$ which is a vector bundle such that 
there is a factorization $\P(E_2) \xrightarrow{\eta_2} U_1 
\xrightarrow{j_1} \P(E_1 \oplus E_2)$ of $\iota_2$ with $\eta_2$ being
the zero-section. In particular, $\eta_2$ is an $\A^1$-weak equivalence.
Thus we can replace $MGL^{*,*}\left(U_1\right)$ with 
$MGL^{*,*}\left(\P(E_2)\right)$ and $j^*_1$ with $\iota^*_2$ in the long exact 
sequence ({\sl cf.} ~\eqref{eqn:LES})
\[
\cdots \to MGL^{*,*}_{\P(E_1)}\left(\P(E_1 \oplus E_2)\right) \to
MGL^{*,*}\left(\P(E_1 \oplus E_2)\right) \xrightarrow{j_1^*} 
MGL^{*,*}\left(U_1\right) \to \cdots .
\]

Hence one only needs to show that each map $\iota^*_i$ is surjective.
But this follows from the projective bundle formula ~\eqref{eqn:Proj-map}
and by noting that the tautological bundle on $\P(E_1 \oplus E_2)$
restricts to the tautological bundle on each $\P(E_i)$.
\end{proof}

\subsubsection{Cartan formula for Chern classes}\label{subsubsection:CF}
It follows directly from the above definitions that the Chern classes of
vector bundles on ind-schemes satisfy all the standard properties of a
Chern class theory except the Cartan formula. To establish this formula,
we need to prove some intermediate steps, following the approach
of Panin in the case of schemes.

\begin{lem}\label{lem:split}
Let $\alpha : E \to F$ be an epimorphism of vector bundles on an ind-scheme
$X$. Then there exists a morphism of ind-schemes $\pi : Y \to X$ which
is a $\A^1$-weak equivalence and such that the epimorphism $\pi^*(\alpha)$
splits.
\end{lem}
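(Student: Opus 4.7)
The plan is to reduce to the scheme-level case and then pass to the colimit using Lemma~\ref{lem:Cof-weq}. Write $X = \colim_i X_i$ with cofibrations $f_i\colon X_i \hookrightarrow X_{i+1}$, and let $E_i$, $F_i$ be the restrictions of $E$ and $F$ to $X_i$, with $\alpha_i \colon E_i \to F_i$ the induced epimorphism. By the definition of a vector bundle on an ind-scheme, $E_i = f_i^*(E_{i+1})$, $F_i = f_i^*(F_{i+1})$, and $\alpha_i = f_i^*(\alpha_{i+1})$.

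For each $i$, let $K_i = \ker(\alpha_i)$. Since $X_i$ is smooth and $\alpha_i$ is an epimorphism of locally free sheaves, $K_i$ is a vector bundle and we have an induced short exact sequence $0 \to K_i \to E_i \to F_i \to 0$. Let $\pi_i \colon Y_i \to X_i$ be the scheme parametrizing splittings of $\alpha_i$; concretely, $Y_i$ is the (Zariski-locally trivial) affine bundle associated to the vector bundle $\sHom(F_i, K_i) \to X_i$, with $Y_i$ being the torsor of splittings of $\alpha_i$ inside $\sHom(F_i, E_i)$. By construction $\pi_i^*(\alpha_i)$ carries a tautological splitting $s_i \colon \pi_i^*(F_i) \to \pi_i^*(E_i)$. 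Because each $\pi_i$ is a Zariski-locally trivial affine bundle over a smooth scheme, it is an $\A^1$-weak equivalence by \cite[\S 2.2, Example~3.2.2]{MV}, and $Y_i$ is smooth.

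The construction is functorial: the compatibility $\alpha_i = f_i^*(\alpha_{i+1})$ yields a Cartesian square
\begin{equation*}
\xymatrix@C2pc{
Y_i \ar[r]^{\tilde f_i} \ar[d]_{\pi_i} & Y_{i+1} \ar[d]^{\pi_{i+1}} \\
X_i \ar[r]_{f_i} & X_{i+1}}
\end{equation*}
in which $\tilde f_i$ is a closed immersion (as the pullback of the closed immersion $f_i$ along the smooth map $\pi_{i+1}$), and the tautological splittings satisfy $\tilde f_i^*(s_{i+1}) = s_i$. Setting $Y = \colim_i Y_i$ and $\pi = \colim_i \pi_i$, we obtain a morphism $\pi\colon Y \to X$ of ind-schemes. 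Each $\pi_i$ is an $\A^1$-weak equivalence of smooth schemes, so by Lemma~\ref{lem:Cof-weq} the colimit map $\pi$ is an $\A^1$-weak equivalence. Finally, the compatible family $\{s_i\}$ assembles into a morphism $s\colon \pi^*(F) \to \pi^*(E)$ of vector bundles on $Y$ splitting $\pi^*(\alpha)$.

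The only nontrivial point to verify is that the scheme $Y_i$ of splittings is genuinely realized as an affine bundle over $X_i$ (which requires $K_i$ to be locally free, assured by smoothness and the exactness hypothesis) and that the transition maps $\tilde f_i$ are closed immersions of smooth schemes so that $Y$ is indeed an object of ${\bf ISm}_k$; both follow from base change in the Cartesian square. The rest is formal application of Lemma~\ref{lem:Cof-weq}.
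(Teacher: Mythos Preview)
Your proof is correct and follows essentially the same approach as the paper: both construct $Y_i$ as the scheme of splittings of $\alpha_i$ inside $\sHom_{X_i}(F_i,E_i)$ (the paper realizes this as the pullback of $\sHom(F_i,E_i)\to\sHom(F_i,F_i)$ along the identity section, which is precisely your torsor under $\sHom(F_i,K_i)$), verify each $\pi_i$ is an affine bundle, and then pass to the colimit via Lemma~\ref{lem:Cof-weq}. One small imprecision: you call the $f_i$ closed immersions, but the paper's definition of ind-scheme only requires the transition maps to be cofibrations (monomorphisms); since pullbacks of monomorphisms are monomorphisms, your argument still goes through unchanged.
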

\begin{proof}
This was proven by Panin in the case of schemes. We show that
this approach also works for ind-schemes. So let $\alpha_i : E_i \to F_i$
be the epimorphism of vector bundles on the scheme $X_i$ where
$X = colim_i \ X_i$. In view of Lemma~\ref{lem:Cof-weq}, all we need to do
is to find a sequence of smooth schemes $\{Y_0 \xrightarrow{g_0} Y_1 
\xrightarrow{g_1} \cdots \}$ and map of sequences 
$\pi:\{Y_i\} \to \{X_i\}$ such that the following hold. 
\begin{enumerate}
\item
Each $\pi_i : Y_i \to X_i$ is an $\A^1$-weak equivalence,
\item
There exists a splitting $\beta_i : \pi_i^*(F_i) \to \pi_i^*(E_i)$
of $\pi_i^*(\alpha_i)$, and
\item
For each $i \ge 0$, $g_i^*(\beta_{i+1}) = \beta_i$.
\end{enumerate}
Given this datum, we take $Y = colim_i \ Y_i$ and $\beta = colim_i \
\beta_i$. Then $\beta : \pi^*(F) \to \pi^*(E)$ yields a splitting of
$\pi^*(\alpha)$ on the ind-scheme $Y$.

For each $i \ge 0$, consider the vector bundle $G_i = \sHom_{X_i}(F_i, E_i)$
whose fiber at any point $x \to X_i$ is the space of linear maps
$(F_i)_x \to (E_i)_x$ of $k(x)$-vector spaces.   
It follows that $f^*_i(G_{i+1}) = G_i$ where $f_i: X_i \to X_{i+1}$ is the
given cofibration. Setting $H_i = \sHom_{X_i}(F_i, F_i)$, we have the compatible 
system of maps $\alpha^*_i: G_i \to H_i$ given by $v \mapsto \alpha_i \circ v$. 
Let $Y_i \xrightarrow{\iota_i}  G_i$ be the subscheme such that the front and 
the back faces of the diagram

\begin{equation}\label{eqn:split1}
\xymatrix@C.8pc{
Y_i \ar[rr]^{\pi_i} \ar[dr]^{g_i} \ar[dd]_{\iota_i} & & X_i \ar[dr]^{f_i} 
\ar[dd]_<<<<<<{\theta_i} & \\  
& Y_{i+1} \ar[dd] \ar[rr]_{\pi_{i+1}} & & X_{i+1} \ar[dd]^{\theta_{i+1}} \\
G_i \ar[dr] \ar[rr] & & H_i \ar[dr] & \\
& G_{i+1} \ar[rr]_{\alpha^*_{i+1}} & & H_{i+1}}
\end{equation}
are Cartesian where $\theta_i \to H_i$ is the section of the projection
$H_i \to X_i$ corresponding to the identity map of $F_i$. Notice 
that all the maps in this diagram are cofibrations.
Since $f^*_i(G_{i+1}) = G_i$, one easily checks that $f^*_i(Y_{i+1}) = Y_i$.
The pairs $(\pi_i, \iota_i)$ uniquely define the maps $\beta_i :
\pi^*_i(F_i) \to \pi^*_i(E_i)$ such that $\pi^*(\alpha_i) \circ \beta_i =
1_{\pi^*(F_i)}$. Moreover, $f^*_i(Y_{i+1}) = Y_i$ is equivalent to saying that
$g^*_i(\beta_{i+1}) = \beta_i$. Finally, it is well known that each $\pi_i$
is an affine bundle and hence an $\A^1$-weak equivalence.
\end{proof}

\begin{lem}[Splitting principle]\label{lem:Sprinciple}
Let $E \to X$ be a vector bundle of rank $n$ over an ind-scheme $X$.
Then there exists a morphism of ind-schemes $\pi: Y \to X$ such that
\begin{enumerate}
\item
$\pi^*(E)$ is a direct sum of line bundles and
\item
for any morphism of ind-schemes $f : X' \to X$, the map
$MGL^{*,*}(X') \to MGL^{*,*}(Y \times_X X')$ is split injective.
\end{enumerate}
\end{lem}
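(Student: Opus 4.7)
The plan is to argue by induction on the rank $n$ of $E$. The base case $n = 1$ is immediate: take $Y = X$ and $\pi = \id_X$, since a line bundle is already a direct sum of line bundles. For the inductive step, form the projective bundle $\pi_E : \P(E) \to X$ of \S\ref{subsubsection:VEC}, which carries the tautological short exact sequence
\[
0 \to \sO_E(-1) \to \pi_E^*E \to Q \to 0
\]
with $Q$ a vector bundle of rank $n-1$. Applying Lemma~\ref{lem:split} to the epimorphism $\pi_E^*E \surj Q$ produces an $\A^1$-weak equivalence $p : Z \to \P(E)$ of ind-schemes over which this epimorphism admits a splitting, so that $p^*\pi_E^*E \cong p^*\sO_E(-1) \oplus p^*Q$. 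The inductive hypothesis applied to $p^*Q$ on $Z$ produces $q : Y \to Z$ such that $q^*p^*Q$ is a direct sum of line bundles, and we set $\pi = \pi_E \circ p \circ q$. Property (1) follows.

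To verify (2), fix $f : X' \to X$ and factor the base change of $\pi$ as
\[
Y \times_X X' \xrightarrow{q'} Z \times_X X' \xrightarrow{p'} \P(E) \times_X X' \xrightarrow{\pi'} X'.
\]
For $\pi'$, the identification $\P(E) \times_X X' \cong \P(f^*E)$ combined with the projective bundle formula for ind-schemes established in \S\ref{subsubsection:PBFIS} provides a split injection on $MGL^{*,*}$, with retraction given by extracting the coefficients in powers of $\xi_{f^*E}$. For $q'$, the inductive hypothesis applied to the bundle $p^*Q$ on $Z$ and to the morphism $Z \times_X X' \to Z$ yields split injectivity, upon noting that $Y \times_Z (Z \times_X X') = Y \times_X X'$. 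Composing these split injections with the isomorphism induced by $p'$ (see next paragraph) gives the desired split injection on $MGL^{*,*}$.

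The main obstacle is the middle arrow $p'$: a general $\A^1$-weak equivalence of ind-schemes need not remain one after pullback, so Lemma~\ref{lem:split} cannot be invoked as a black box here. The resolution is to revisit its proof and observe that the ind-scheme $Z$ is built levelwise as the preimage of a section of a homomorphism bundle under a morphism of vector bundles, exhibiting $Z \to \P(E)$ as a torsor for a vector bundle, that is, a Zariski-locally trivial affine bundle. This structure is manifestly preserved under arbitrary base change along $\P(E) \times_X X' \to \P(E)$, so $p'$ is again an affine bundle of ind-schemes and hence an $\A^1$-weak equivalence by \cite[Example~3.2.2]{MV} together with Lemma~\ref{lem:Cof-weq}; consequently it induces an isomorphism on $MGL^{*,*}$. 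With this geometric observation in place, the rest of the argument mirrors Panin's classical splitting principle in \cite{Panin1}, with the projective bundle formula of \S\ref{subsubsection:PBFIS} playing its usual role.
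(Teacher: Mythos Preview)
Your proof is correct and follows essentially the same route as the paper: induction on rank using the tautological exact sequence on $\P(E)$, Lemma~\ref{lem:split}, and the projective bundle formula of \S\ref{subsubsection:PBFIS}. The paper's proof is much terser, deferring to ``known standard techniques'' and only spelling out that the tautological sequence exists at the ind-scheme level; your careful observation that the construction in Lemma~\ref{lem:split} is an affine-bundle torsor stable under base change is exactly the detail needed to make property~(2) go through, and is implicit in the paper's appeal to standard techniques.
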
 
\begin{proof}
In view of the projective bundle formula for ind-schemes in
\S~\ref{subsubsection:PBFIS}, Lemma~\ref{lem:split} and the known standard
techniques in case of schemes, we only need to show that given a vector
bundle $E$ over $X$, one has a short exact sequence
\[
0 \to \sO_E(-1) \to p^*(E) \to E' \to 0
\]
of vector bundles on the projective bundle $p : \P(E) \to X$. But this is well 
known for schemes and moreover the maps $ \sO_{E_i}(-1) \to p^*_i(E_i)$ are
canonical and compatible with the cofibrations $f_i : X_i \to X_{i+1}$
since $E_i = f^*_i(E_{i+1})$. 
\end{proof} 

Using the splitting principle, we can extend the theory Chern classes of
vector bundles on ind-schemes as follows.

\begin{prop}\label{prop:CCT}
Given an ind-scheme $X$ and a vector bundle $E \to X$, there are Chern
classes $c_i(E) \in MGL^{2i, i}(X)$ such that
\begin{enumerate}
\item
$c_0(E) = 1, c_i(E) = 0$ for $i > {\rm rank}(E)$ such that $c_1(E)$ coincides
with the Chern class $c(E)$ as in \S~\ref{subsection:TCS} if $E$ is a
line bundle.
\item
$c_i(E) = c_i(E')$ if $E \cong E'$ and $f^*\left(c_i(E)\right) = 
c_i\left(f^*(E)\right)$ for a map of ind-schemes $f : Y \to X$.
\item
$c(E) = c(E')\cdot c(E'')$ if there is a short exact sequence
$0 \to E' \to E \to E'' \to 0$ of vector bundles, where 
$c(E) = 1 + c_1(E)t + c_2(E) t^2 + \cdots $ is the Chern polynomial.
\end{enumerate}
\end{prop}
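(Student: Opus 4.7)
The plan is to derive all three properties from the projective bundle formula on ind-schemes of \S\ref{subsubsection:PBFIS}, together with the splitting principle of Lemma~\ref{lem:Sprinciple}. Properties (1) and (2) are essentially formal: the normalization and rank-vanishing in (1) are built into the defining equation \eqref{eqn:Chern-eqn}, and for a line bundle $L$ the map $\pi : \P(L) \to X$ is an isomorphism with $\sO_L(-1) \cong L$, so \eqref{eqn:Chern-eqn} reduces to $\pi^*(c_1(L)) = \xi_L$, matching \S\ref{subsection:TCS}. For (2), functoriality under isomorphism is immediate; for a morphism $f : Y \to X$ of ind-schemes, the base change $\bar f : \P(f^*E) \to \P(E)$ satisfies $\bar f^* \sO_E(-1) \cong \sO_{f^*E}(-1)$, so $\bar f^* \xi_E = \xi_{f^*E}$ by the naturality of $c_1$ for line bundles already recorded in \S\ref{subsection:TCS}, and pulling \eqref{eqn:Chern-eqn} back along $\bar f$ together with uniqueness of coefficients forces $f^*(c_i(E)) = c_i(f^*E)$.

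For property (3) I would proceed in three steps. Step 1 (reduction to sums of line bundles): given $0 \to E' \to E \to E'' \to 0$, first apply Lemma~\ref{lem:split} to the surjection $E \twoheadrightarrow E''$ to produce an $\A^1$-weak equivalence $\pi_0 : Y_0 \to X$ along which the sequence splits; then iterate Lemma~\ref{lem:Sprinciple} on $\pi_0^* E'$ and $\pi_0^* E''$ to obtain $\pi : Y \to X$ over which $\pi^* E'$ and $\pi^* E''$ are direct sums of line bundles and $\pi^*$ is split injective on $MGL^{*,*}$. Step 2 (two-term Cartan formula): it suffices to prove $c(L \oplus G) = c(L) c(G)$ for $L$ a line bundle and $G$ any vector bundle, as iteration then handles arbitrary sums of line bundles. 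Set $E = L \oplus G$, write $P_G(t) = t^m - \pi^* c_1(G)\, t^{m-1} + \cdots + (-1)^m \pi^* c_m(G)$ for the projective-bundle polynomial of $G$, with $m = \rank G$. The complement $\P(E) \setminus \P(G) \to X$ is an affine bundle through which $\sO_E(-1)$ pulls back from $L$, so $\xi_E - \pi^* c_1(L) = 0$ in $MGL^{*,*}(\P(E) \setminus \P(G))$; by the localization sequence \eqref{eqn:LES} this class lifts to $MGL^{*,*}_{\P(G)}(\P(E))$. Symmetrically, the affine bundle $\P(E) \setminus \P(L) \to \P(G)$ identifies $\sO_E(-1)$ with the pullback of $\sO_G(-1)$, whence $P_G(\xi_E) = 0$ on $\P(E) \setminus \P(L)$ and lifts to $MGL^{*,*}_{\P(L)}(\P(E))$. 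The supported product \eqref{eqn:prod*} then lies in $MGL^{*,*}_{\P(L) \cap \P(G)}(\P(E)) = 0$, so
\[
(\xi_E - \pi^* c_1(L)) \cdot P_G(\xi_E) = 0 \quad \text{in } MGL^{*,*}(\P(E)).
\]
Comparing this degree-$(m+1)$ identity with \eqref{eqn:Chern-eqn} for $E$ yields $c(E) = c(L) c(G)$. Step 3 (descent): by property (2) applied to the outputs of Steps 1 and 2, $\pi^*(c(E)) = \pi^*(c(E') c(E''))$, and split injectivity of $\pi^*$ yields $c(E) = c(E') c(E'')$ on $X$.

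The main obstacle is Step 2: the affine-bundle identifications of $\sO_E(-1)$ and the consequent vanishings on the open complements must be verified in the ind-scheme setting. Each ingredient is classical for schemes (cf.\ \cite[\S 3]{Panin1}), but here one must check that these constructions at each finite level $X_i$ assemble compatibly through the cofibrations $X_i \hookrightarrow X_{i+1}$, so that the Milnor $\varprojlim^1$ sequence of Proposition~\ref{prop:Limit} combined with the limit description \eqref{eqn:Thom-cl} of Thom and Chern classes transfers the vanishing statements to the ind-scheme level; the supported-product machinery \eqref{eqn:prod*} and the excision Corollary~\ref{cor:EXCN} then close the argument.
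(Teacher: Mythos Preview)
Your proposal is correct and follows essentially the same route as the paper. The paper's proof likewise reduces (via the splitting principle) to showing $(\xi - c_1(L))\cdot\alpha(F) = 0$ for $E = F \oplus L$, and uses Corollary~\ref{cor:SESP} (which packages exactly your affine-bundle vanishing argument for ind-schemes) to place the two factors in $MGL^{*,*}_{\P(F)}(\P(E))$ and $MGL^{*,*}_{\P(L)}(\P(E))$ respectively, concluding via the supported product~\eqref{eqn:prod*} and $\P(F)\cap\P(L)=\emptyset$; your inline derivation of these support statements via the localization sequence is equivalent, and the ind-scheme compatibility concern you flag is precisely what Corollary~\ref{cor:SESP} already establishes.
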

\begin{proof}
We only need to show the Cartan formula $c(E) = c(E')\cdot c(E'')$ for which
we can use the second property and Lemma~\ref{lem:Sprinciple} to reduce to the 
case when 
$E = F \oplus L$, where $L$ is a line bundle and $F$ is a vector bundle of
rank $n$. 

Using the definition of the Chern classes, it suffices to show that
\begin{equation}\label{eqn:CCT0}
\left(\xi - c_1(L)\right) 
\left(\xi^n - c_1(F)\xi^{n-1} + \cdots +(-1)^{n}c_n(F)\right) = 0
\end{equation}
in $MGL^{2n+2, n+1}\left(\P(E)\right)$, where $\xi = c_1\left(\sO_E(-1)\right)$.

Set $\alpha(F) = \xi^n - c_1(F)\xi^{n-1} + \cdots +(-1)^{n}c_n(F)$.
Since the tautological line bundle on $\P(E)$ restricts to the
tautological line bundles on $\P(L)$ and $\P(F)$, it follows from
~\eqref{eqn:Chern-eqn} and Corollary~\ref{cor:SESP}
that $\xi - c_1(L) \in MGL^{2,1}_{\P(F)}\left(\P(E)\right)$ and
$\alpha(F) \in MGL^{2n,n}_{\P(L)}\left(\P(E)\right)$. In particular, we
conclude from ~\eqref{eqn:prod*} that the class
$\left(\xi - c_1(L)\right)\alpha(F)$ in $MGL^{2n+2, n+1}\left(\P(E)\right)$
is in the image of the map
\[
MGL^{2n+2, n+1}_{\P(F) \cap \P(L)}\left(\P(E)\right) \to 
MGL^{2n+2, n+1}\left(\P(E)\right).
\]
The desired assertion ~\eqref{eqn:CCT0} now
follows by observing that $\P(F) \cap \P(L) = \0$.
\end{proof}

\subsubsection{Thom classes and Thom isomorphism for ind-schemes}
\label{subsubsection:TCTI}
Using the theory of Chern classes, we now define the Thom classes of
vector bundles on ind-schemes and prove the Thom isomorphism.
More precisely, we prove the following.

\begin{prop}\label{prop:TIso}
Given an ind-scheme $X$ and a vector bundle $p: E \to X$ of rank $n$,
there exists a class $\tm(E) \in MGL^{2n,n}_X(E)$ such that \\
$(1) \ \tm(E) = \tm(F)$ if $E \cong F$; \\
$(2)\ f^*\left(\tm(E)\right) = \tm\left(f^*(E)\right)$ for a 
morphism $f : Y \to X$ of ind-schemes; \\
$(3)$ \ the map $\tm^E_X : MGL^{a,b}(X) \to MGL^{a + 2n, b + n}_{X}(E)$
given by $\alpha \mapsto p^*(\alpha) \cdot \tm(E)$ ({\sl cf.} 
~\eqref{eqn:prod*}) is an isomorphism; \\
$(4)$ \ given the projections $q_i : E_1 \oplus E_2 \to E_i$,
($i = 1,2$), one has
\[
q^*_1\left(\tm(E_1)\right) \cdot q^*_2\left(\tm(E_2)\right)
= \tm(E_1 \oplus E_2) ;
\]
$(5)$ \ for a line bundle $L$ on $X$, the class $\tm(L)$ coincides with
the Thom class defined in \S ~\ref{subsection:TCS}; \\
$(6)$ \ for a line bundle $L$ on $X$,  one has $\tm^L_X(1) = \tm(L)$ and 
$s^*_L \circ \tm^L_X(a) = c_1(L) \cdot a$ for every $a \in MGL^{*,*}(X)$
where $s_L: X \xrightarrow{i_L} L \xrightarrow{t_L} Th(L)$ is 
the composite map.
\end{prop}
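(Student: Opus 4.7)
The plan is to construct $\tm(E)$ as the essentially unique class lifting the Thom classes $\{\tm(E_i)\}$ on the underlying smooth schemes, using the Milnor exact sequence of Proposition~\ref{prop:Limit}, and then to verify each property by reducing to the scheme case where motivic cobordism is a known oriented cohomology theory.

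First, for each $i$, since the motivic cobordism is an oriented cohomology theory on $\Sm_k$ (\cite[Example~3.8.7]{Panin1}), every vector bundle $p_i : E_i \to X_i$ has a Thom class $\tm(E_i) \in MGL^{2n,n}_{X_i}(E_i)$. Because the cofibration $X_i \hookrightarrow X_{i+1}$ pulls $E_{i+1}$ back to $E_i$, the functoriality of Thom classes on $\Sm_k$ yields a compatible system $\{\tm(E_i)\}$ under the restriction maps. Next, I apply Proposition~\ref{prop:Limit} to the cofiber sequence whose colimit is $Th(E)$, obtaining the short exact sequence
\[
0 \to {\underset{i}{\varprojlim}}^1 MGL^{2n-1,n}_{X_i}(E_i) \to MGL^{2n,n}_X(E) \to {\underset{i} \varprojlim}\ MGL^{2n,n}_{X_i}(E_i) \to 0.
\]
The Thom isomorphism on $\Sm_k$ identifies $MGL^{2n-1,n}_{X_i}(E_i)$ with $MGL^{-1,0}(X_i) = 0$, so the $\lim^1$ term vanishes and the compatible system $\{\tm(E_i)\}$ determines a unique class $\tm(E) \in MGL^{2n,n}_X(E)$. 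In particular, when $E = L$ is a line bundle this recovers the construction of \S\ref{subsection:TCS}, which proceeded by the same recipe via~\eqref{eqn:Thom-cl}, yielding property~(5).

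Properties~(1) and~(2) follow directly from the corresponding properties on smooth schemes together with the uniqueness established above. For property~(3), I compare the Milnor sequences for $MGL^{*,*}(X)$ and $MGL^{*+2n,*+n}_X(E)$. The map $\tm^E_X$ given by cup product with $\tm(E)$ is compatible with the action of $MGL^{*,*}$ on both sides described in Proposition~\ref{prop:Limit}, so it induces a map between these two short exact sequences whose outer vertical maps are the inverse-limit and derived-limit maps assembled from the individual Thom isomorphisms $\tm^{E_i}_{X_i}$. Since each $\tm^{E_i}_{X_i}$ is an isomorphism (Thom isomorphism on smooth schemes), both outer vertical maps are isomorphisms, and the five lemma forces $\tm^E_X$ to be an isomorphism as well. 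Property~(4) holds termwise on each $X_i$ by the scheme-level formula $q^*_{1,i}(\tm(E_{1,i})) \cdot q^*_{2,i}(\tm(E_{2,i})) = \tm(E_{1,i} \oplus E_{2,i})$, and hence holds on $X$ by the injectivity part of~(3) (equivalently, by uniqueness of the Thom class for $E_1 \oplus E_2$).

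Property~(6) is a direct calculation: $\tm^L_X(1) = p^*(1)\cdot \tm(L) = \tm(L)$ tautologically, while for any $a \in MGL^{*,*}(X)$ one has
\[
s_L^* \circ \tm^L_X(a) = s_L^*\bigl(p^*(a)\bigr) \cdot s_L^*\bigl(\tm(L)\bigr) = a \cdot c_1(L),
\]
using the multiplicativity of $s_L^*$, the identity $p \circ i_L = \id_X$, and the very definition of $c_1(L) = s_L^*(\tm(L))$ from \S\ref{subsection:TCS}. The main technical obstacle is really the bookkeeping around the Milnor sequence: once the vanishing $MGL^{2n-1,n}_{X_i}(E_i) = 0$ is noted, the entire argument reduces to naturality and to the fact that the Milnor exact sequence is compatible with the $MGL^{*,*}(X)$-module structure, and no new geometric input beyond the scheme-level orientation is required.
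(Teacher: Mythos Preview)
Your proof is correct but follows a genuinely different route from the paper's. The paper \emph{defines} $\tm(E)$ geometrically: writing $F = E \oplus {\bf 1}_X$, it sets $\tm(E) := c_n\bigl(\pi^*(E)\otimes\sO_F(1)\bigr)$ as a class in $MGL^{2n,n}_X\bigl(\P(F)\bigr)$, then transports it to $MGL^{2n,n}_X(E)$ via the excision isomorphism of Corollary~\ref{cor:EXCN}. This relies on the full ind-scheme Chern class machinery built up in \S\ref{subsubsection:PBFIS}--\S\ref{subsubsection:CF} (projective bundle formula, splitting principle, Cartan formula). Property~(5) then requires a separate comparison argument, while property~(4) is deduced from the Cartan formula and Corollary~\ref{cor:SESP}.

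Your approach instead generalizes the line-bundle recipe of \S\ref{subsection:TCS} directly: the vanishing $MGL^{-1,0}(X_i)=0$ kills the ${\varprojlim}^1$ term, so the compatible system $\{\tm(E_i)\}$ lifts uniquely. This is more economical---properties~(1), (2), (4), (5) become immediate consequences of uniqueness plus the scheme-level identities, and you avoid invoking Propositions~\ref{prop:CCT} and Lemmas~\ref{lem:split}--\ref{lem:Sprinciple} altogether. The proof of property~(3) is essentially identical in both versions (the paper draws the same Milnor-sequence diagram~\eqref{eqn:EXCN0}). What the paper's construction buys is the explicit formula $\tm(E)=c_n\bigl(\pi^*(E)\otimes\sO_F(1)\bigr)$, which is used verbatim in the Gysin-map computations of \S\ref{section:GM} (notably in the proofs of parts~(4) and~(5) of Proposition~\ref{prop:Gmap-Prop}); with your definition that identity would have to be checked separately, though it again follows levelwise plus injectivity.
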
 
\begin{proof}
Let $F = E \oplus {\bf 1}_X$ and consider the projective bundle
$\pi : \P(F) \to X$. As in the case of schemes, there is a short exact sequence
$0 \to {\bf 1}_{\P(F)} \to \pi^*(E) \otimes \sO_F(1)  \to G \to 0$ of vector 
bundles on $\P(F)$. It follows from Proposition~\ref{prop:CCT} and 
Corollary~\ref{cor:SESP} that
$c_n\left(\pi^*(E) \otimes \sO_F(1)\right) \in MGL^{2n,n}_{X}\left(\P(F)\right)$.
On the other hand, we can apply Corollary~\ref{cor:EXCN} to the
inclusions $X \xrightarrow{i_E} E \xrightarrow{e_E} \P(F)$ to see that the
natural map $e^*_E: MGL^{*,*}_X\left(\P(F)\right) \to MGL^{*,*}_X(E)$
is an isomorphism. This gives us a unique element (the {\sl Thom class})
$\tm(E) = c_n\left(\pi^*(E) \otimes \sO_F(1)\right)$ in $MGL^{2n,n}_X(E)$.
The first and the second properties of these Thom classes follow from their
construction and the second point of Proposition~\ref{prop:CCT}.

To prove the third property, we first notice that the above construction of
the Thom class coincides with that in \cite{Panin1} if $X$ is a smooth
scheme. Moreover, since each $E_i$ is the restriction of $E$ on $X_i$,
it follows from the second property of the Thom classes that 
$\tm(E_i) = {\tm(E)}|_{E_i}$.
We can thus apply Lemma~\ref{lem:Cofib-qt} and Proposition~\ref{prop:Limit}
to get the following commutative diagram of short exact sequences. 
\begin{equation}\label{eqn:EXCN0}
\xymatrix@C.8pc{
0 \ar[r] &    
{\underset{i} {\varprojlim}^1} \ MGL^{a-1,b}(X_i)
\ar[d]_{\tm^{E_i}_{X_i}} \ar[r] & MGL^{a,b}(X)
\ar[r] \ar[d]^{\tm^{E}_X} & 
{\underset{i} \varprojlim} \ MGL^{a,b}(X_i)
\ar[r] \ar[d]^{\tm^{E_i}_{X_i}} & 0 \\
0 \ar[r] & {\underset{i} {\varprojlim}^1} \ MGL^{a+2n-1,b+n}_{X_i}(E_i)     
\ar[r] & MGL^{a+2n,b+n}_{X}(E) \ar[r] &  
{\underset{i} \varprojlim} \ MGL^{a+2n,b+n}_{X_i}(E_i) \ar[r] & 0.}
\end{equation}

The vertical maps on the two ends are isomorphisms by the Thom isomorphism
for the motivic cobordism of smooth schemes. It follows that the middle 
vertical map is an isomorphism too. This proves the property (3).
The property (4) follows directly from the Cartan formula in
Proposition~\ref{prop:CCT} and Corollary~\ref{cor:SESP} and the proof
works exactly like in the case of schemes 
({\sl cf.} \cite[Theorem~3.35]{Panin1}). We now prove the last property
to complete the proof of the proposition.

Let $p : L \to X$ be a line bundle and let us temporarily denote the Thom class
as defined in \S ~\ref{subsection:TCS} by $\wh{\tm}(L)$. We now recall from
~\eqref{eqn:Thom-cl} that the natural map
$MGL^{2,1}_{X}(L) \to {\underset{i} \varprojlim} \ MGL^{2,1}_{X_i}(L_i)$
is an isomorphism and by the construction, $\wh{\tm}(L) \in MGL^{2,1}_{X}(L)$
is the unique class which restricts to $\tm(L_i) \in MGL^{2,1}_{X_i}(L_i)$
for each $i$. By the property (2) of the proposition,
the class $\tm(L)$ also restricts to $\tm(L_i) \in MGL^{2,1}_{X_i}(L_i)$
for each $i$. It follows from the isomorphism ~\eqref{eqn:Thom-cl} that
we must have $\tm(L) = \wh{\tm}(L)$. 

The fact that $\tm^L_X(1) = \tm(L)$
now follows because the the horizontal maps in the commutative diagram
\begin{equation}\label{eqn:EXCN0*1} 
\xymatrix@C.8pc{
MGL^{0,0}(X) \ar[r] \ar[d]_{\tm^L_X} & 
{\underset{i}\varprojlim} \ MGL^{0,0}(X_i)     
\ar[d]^{\tm^{L_i}_{X_i}} \\
MGL^{2,1}_{X}(L) \ar[r] &  
{\underset{i} \varprojlim} \ MGL^{2,1}_{X_i}(L_i)}
\end{equation}
are isomorphisms and one knows from \cite[Theorem~3.35]{Panin1} that
$\tm^{L_i}_{X_i}(1) = \tm(L_i)$ for each $i \ge 0$.

To prove the last property, one first observes that $p^*$ is a ring
isomorphism and it follows from ~\eqref{eqn:prod*} that the map
$MGL^{*.*}_X(E) \xrightarrow{t^*_E} MGL^{*,*}(E)$ is $MGL^{*,*}(E)$-linear for any 
vector bundle $E$ on $X$. Now the desired assertion follows from property (5) 
and the definition of the first Chern class of line bundles in 
\S ~\ref{subsection:TCS}. 
\end{proof}

\section{Gysin map for motivic cobordism of ind-schemes}
\label{section:GM}
In this section, we construct the Gysin maps 
$\iota_* : MGL^{*,*}(Y) \to MGL^{*,*}(X)$
for a given closed embedding of ind-schemes $\iota: Y \inj X$.
For a closed immersion $Y \inj X$ of smooth schemes, let
$N_X(Y)$ denote the normal bundle of $Y$ in $X$.
Recall from \cite[Definition~2.1]{Panin2} that a commutative square of
smooth schemes 
\begin{equation}\label{eqn:trans}
\xymatrix@C.9pc{
Y' \ar[r]^{g} \ar[d]_{\iota'} & Y \ar[d]^{\iota} \\
X' \ar[r]_{f} & X}
\end{equation}
is called transverse if it is Cartesian, the map $\iota$ is a closed
immersion  and the map $N_{X'}(Y') \to g^*\left(N_{X}(Y)\right)$ is an
isomorphism.

\begin{defn}\label{defn:Strict}
We shall say that a closed immersion of ind-schemes 
$\iota: Y \inj X$ is {\sl strict} if the square

\begin{equation}\label{eqn:Strict1}
\xymatrix@C.9pc{
Y_i \ar[r]^{g_i} \ar[d]_{\iota_i} & Y_{i+1} \ar[d]^{\iota_{i+1}} \\
X_i \ar[r]_{f_i} & X_{i+1}}
\end{equation}
is transverse for each $i \ge 0$. 
An open immersion $j : Y \inj X$ of ind-schemes is strict if the square 
~\eqref{eqn:Strict1} is Cartesian.
If ~\eqref{eqn:trans} is commutative diagram of ind-schemes, then we
shall say that this square is {\sl transverse} if it is so at
each level $i \ge 0$ and $\iota$ is a strict embedding. 
\end{defn}

If $\iota : Y \inj X$ is a strict closed immersion, then the normal bundles
$\{N_{X_i}(Y_i)\}$ define a vector bundle $N_X(Y)$ on the ind-scheme $Y$ 
({\sl cf.} \S ~\ref{subsubsection:VEC}) of rank $d$ where 
$d = \codim_{X_i}(Y_i)$ is called the codimension of $Y$ in $X$.
This vector bundle will be called the normal bundle of $Y$ in $X$.
It is easy to check that if ~\eqref{eqn:trans} is a transverse square
of ind-schemes, then $\iota': Y' \inj X'$ is a strict
closed embedding and the map $N_{X'}(Y')  \to g^*\left(N_{X}(Y)\right)$ is an
isomorphism. Note also that if ~\eqref{eqn:trans} is a Cartesian 
square of ind-schemes such that $\iota$ is a strict embedding and
$f$ is an open immersion (not necessarily strict), then it is 
transverse. If ~\eqref{eqn:trans} is a transverse 
square of ind-schemes such that $\iota$ and $f$ are strict closed embeddings,
then we shall say that $X'$ and $Y$ intersect transversely in $X$.
In such a case, the sequence $\{Y_i \cap X'_i\}$ defines a 
strict closed ind-subscheme $Y \cap X'$ of $X$.

It is also easy to see that if $\iota: Y \inj X$ is a strict closed
embedding of ind-schemes with normal bundle $N$, then $Y \times \{0\} \to
X \times \A^1$ is also a strict closed embedding with the normal bundle
${\bf 1}_Y \oplus N$.
Let $M'$ denote the blow-up of $X \times \A^1$ along $Y \times \{0\}$ and let
$M = M' \setminus Bl_{Y\times \{0\}}(X \times \{0\}) = M \setminus Bl_Y(X)$.
Then $M'$ is an ind-scheme $\{M'_0 \xrightarrow{h_0} M'_1 \xrightarrow{h_1}
\cdots \}$ with the open ind-subscheme 
$M = \{M_0 \xrightarrow{h_0} M_1 \xrightarrow{h_1} \cdots \}$
({\sl cf.} \cite[Corollary~II.7.15]{Hart}). We obtain the following 
{\sl deformation to normal cone} diagram of ind-schemes.

\begin{equation}\label{eqn:DNC}
\xymatrix@C1.5pc{
Y \ar[r]_<<<<<<{k_0} \ar[d]_{i_N} & Y \times \A^1 \ar[d]^{F} \ar@/_{.5cm}/[l]_{p}
\ar@/^{.5cm}/[r]^{p} & Y \ar[l]^>>>{k_1}
\ar[d]^{\iota} \\
N \ar[d] \ar[r]_{j_0} & M \ar[d]^{j} & X \ar[l]^{j_1} \ar@{=}[d] \\
\P({\bf 1} \oplus N) \ar[r]_>>>>{j'_0} & M' & X \ar[l]^{j'_1}}
\end{equation}
where the two top squares and the bottom left square are transverse. 
This induces the maps of motivic spaces
\begin{equation}\label{eqn:DNC1}
Th(N) \xrightarrow{\ov{j}_0} M/{\left(M \setminus (Y \times \A^1)\right)}
\xleftarrow{\ov{j}_1} X/{\left(X \setminus Y\right)}.
\end{equation}
It follows from \cite[Theorem~3.2.23]{MV} and Lemma~\ref{lem:Cof-weq}
that the maps $\ov{j}_0$ and $\ov{j}_1$ are $\A^1$-weak equivalences.
In particular, we get a functorial $\A^1$-weak equivalence
\begin{equation}\label{eqn:DNC2}
t_{X,Y} = (\ov{j}_0)^{-1} \circ \ov{j}_1:  X/{\left(X \setminus Y\right)} \to
Th(N). 
\end{equation}

Let $G : V = M \setminus (Y \times \A^1) \inj M$ and 
$G': V' = M' \setminus (Y \times \A^1) \inj M'$ denote the open inclusions.
\begin{lem}\label{lem:extra}
The map
\[
(j'^*_0, G'^*) : MGL^{*,*}(M') \to MGL^{*,*}\left(\P({\bf 1} \oplus N)\right)
\oplus MGL^{*,*}(V')
\]
is injective.
\end{lem}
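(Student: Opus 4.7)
The plan is to set up a morphism of localization long exact sequences induced by $j'_0$ and reduce the injectivity to two focused claims, both following from purity, the projective bundle formula of \S~\ref{subsubsection:PBFIS}, and $\A^1$-homotopy invariance. Let $\wt Z \subset M'$ denote the strict transform of $Y \times \A^1$; this is a strict closed ind-subscheme of codimension $n = \codim_X(Y)$ whose open complement in $M'$ is exactly $V'$. The scheme-theoretic intersection $\P(\mathbf{1}\oplus N) \cap \wt Z$ is transverse, equal to the zero section $Y = \{[1:0]\} \subset N \subset \P(\mathbf{1}\oplus N)$ with matching normal bundle $N$, so $j'_0$ induces a map of localization sequences from $\wt Z \inj M' \hookleftarrow V'$ to $Y \inj \P(\mathbf{1}\oplus N) \hookleftarrow \P(\mathbf{1}\oplus N) \setminus Y$. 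Given $\alpha$ with $G'^*(\alpha)=0$ and $j'^*_0(\alpha)=0$, we lift $\alpha$ to $\beta \in MGL^{*,*}_{\wt Z}(M')$ via the first sequence, and a diagram chase reduces the proof to:
\begin{enumerate}
\item[(A)] the map $MGL^{*,*}_Y(\P(\mathbf{1}\oplus N)) \to MGL^{*,*}(\P(\mathbf{1}\oplus N))$ is injective;
\item[(B)] the pullback $j'^*_0 : MGL^{*,*}_{\wt Z}(M') \to MGL^{*,*}_Y(\P(\mathbf{1}\oplus N))$ is an isomorphism.
\end{enumerate}

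For (A), I would exhibit $\P(\mathbf{1}\oplus N) \setminus Y$ as the total space of a line bundle over $\P(N)$ via the projection $[a:v] \mapsto [v]$ away from the center $Y$; this yields an $\A^1$-weak equivalence $\P(\mathbf{1}\oplus N) \setminus Y \simeq \P(N)$. Under this identification, the restriction $MGL^{*,*}(\P(\mathbf{1}\oplus N)) \to MGL^{*,*}(\P(\mathbf{1}\oplus N) \setminus Y)$ becomes the pullback along the inclusion $\P(N) \inj \P(\mathbf{1}\oplus N)$. Since $\sO(-1)$ restricts to $\sO(-1)$, the projective bundle formula sends the $MGL^{*,*}(Y)$-basis $\{1,\xi,\ldots,\xi^n\}$ for the source to $\{1,\xi,\ldots,\xi^{n-1}\}$ for the target, with $\xi^n$ landing in the $MGL^{*,*}(Y)$-span of the lower powers via~\eqref{eqn:Chern-eqn}. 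The resulting surjectivity forces the connecting boundary map in the localization sequence for $Y \subset \P(\mathbf{1}\oplus N)$ to vanish, yielding (A).

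For (B), I would apply the purity equivalence~\eqref{eqn:DNC2} to both pairs $(M', \wt Z)$ and $(\P(\mathbf{1}\oplus N), Y)$. The first gives $M'/V' \simeq Th(N_{M'/\wt Z}) \simeq Th(N)$, using $N_{M'/\wt Z} \cong p^*N$ for the projection $p : \wt Z = Y \times \A^1 \to Y$ combined with $\A^1$-homotopy invariance; the second gives $\P(\mathbf{1}\oplus N)/(\P(\mathbf{1}\oplus N) \setminus Y) \simeq Th(N)$ directly, since $N$ is the normal bundle of the zero section. The Thom isomorphism of Proposition~\ref{prop:TIso} then identifies both $MGL^{*,*}_{\wt Z}(M')$ and $MGL^{*,*}_Y(\P(\mathbf{1}\oplus N))$ canonically with $MGL^{*-2n,*-n}(Y)$, and the naturality of the deformation-to-normal-cone diagram~\eqref{eqn:DNC} with respect to $j'_0$ makes $j'^*_0$ correspond to the identity under these identifications. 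The hardest step is verifying this compatibility of purity with $j'_0$ in the ind-scheme setting; it reduces to observing that the deformation diagram for $(\P(\mathbf{1}\oplus N), Y)$ arises as the transverse restriction of that for $(M', \wt Z)$ along $j'_0$, which can be verified level-by-level and passed to the colimit via Lemma~\ref{lem:Cof-weq}.
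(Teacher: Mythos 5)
Your proposal is correct and reproduces the structure of the paper's argument: the chase around the localization sequence reduces the lemma to your claims (A) and (B), where (A) is exactly Corollary~\ref{cor:SESP} applied to ${\bf 1}\oplus N$, and (B) is the isomorphism established in diagram ~\eqref{eqn:extra0} via excision (Corollary~\ref{cor:EXCN}). The only stylistic difference is in (B): rather than invoking the purity equivalence for both pairs $(M', Y\times\A^1)$ and $(\P({\bf 1}\oplus N), Y)$ and then checking compatibility of the two along $j'_0$ (which you correctly flag as the hardest step), the paper passes to the open subschemes $M\subset M'$ and $N\subset\P({\bf 1}\oplus N)$ by excision and then uses directly that $\ov{j}_0\colon Th(N)\to M/(M\setminus(Y\times\A^1))$ from ~\eqref{eqn:DNC1} is already an $\A^1$-weak equivalence, which sidesteps any separate compatibility check.
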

\begin{proof}
We first consider the commutative diagram
\begin{equation}\label{eqn:extra0}
\xymatrix@C1pc{
MGL^{*,*}_{Y \times \A^1}(M') \ar[r]^<<<{j'^*_0} \ar[d] & 
MGL^{*,*}_Y\left(\P({\bf 1} \oplus N)\right) \ar[d] \\
MGL^{*,*}_{Y \times \A^1}(M) \ar[r]_{j^*_0} & MGL^{*,*}_{Y}(N).}
\end{equation}  

The two vertical maps are isomorphisms by Corollary~\ref{cor:EXCN}
and we have seen above that the bottom horizontal map is an isomorphism.
It follows that $j'^*_0$ is an isomorphism.

The lemma now follows by using the commutative diagram
\begin{equation}\label{eqn:extra1}
\xymatrix@C1pc{
MGL^{*,*}_{Y \times \A^1}(M') \ar[r]^<<<{j'^*_0} \ar[d] & 
MGL^{*,*}_Y\left(\P({\bf 1} \oplus N)\right) \ar[d] \\
MGL^{*,*}(M') \ar[r]_<<<<{j'^*_0} & MGL^{*,*}\left(\P({\bf 1} \oplus N)\right),}
\end{equation}
Corollary~\ref{cor:SESP} and ~\eqref{eqn:LES}.
\end{proof}

\begin{defn}\label{defn:Gmap}
Given a strict closed embedding $\iota : Y \inj X$ of ind-schemes of codimension
$d$, we define the {\sl Gysin map} $\iota_* : MGL^{a,b}(Y) \to 
MGL^{a+2d, b+d}(X)$ as the composite
\begin{equation}\label{eqn:Gmap0}
MGL^{a, b}(Y) \xrightarrow{\tm^N_Y} MGL^{a+2d, b+d}_Y(N) 
\xrightarrow{t^*_{X,Y}} MGL^{a+2d, b+d}_Y(X) \xrightarrow{v^*_{X, Y}}
MGL^{a+2d, b+d}(X)
\end{equation}
where $X \xrightarrow{v_{X,Y}} X/{\left(X \setminus Y\right)}$ is the quotient
map. 
\end{defn}

Since all the maps in this sequence are $MGL^{*,*}(X)$-linear, we
see that the Gysin map is $MGL^{*,*}(X)$-linear. In particular, we have the
projection formula
\begin{equation}\label{eqn:Proj-form}
\iota_*\left(\iota^*(a)\right) = a \cdot \iota_*(1).
\end{equation}

\begin{prop}\label{prop:Gmap-Prop}
The Gysin maps $\iota_*: MGL^{*,*}(Y) \to 
MGL^{*, *}(X)$ satisfy the following functoriality properties.
\begin{enumerate}
\item
$Base \ Change : $ \ If ~\eqref{eqn:trans} is a transverse square of
ind-schemes, then the diagram

\begin{equation}\label{eqn:Gmap*0}
\xymatrix@C.9pc{
MGL^{*,*}(Y) \ar[d]_{\iota_*} \ar[r]^{g^*} & MGL^{*,*}(Y') \ar[d]^{\iota'_*} \\
MGL^{*,*}(X) \ar[r]_{f^*} & MGL^{*,*}(X')}
\end{equation}
commutes.

\item
$Identity :$ \ ${\rm id}_* = {\rm id}$.
\item
$Gysin \ exact \ sequence :$ For a strict closed embedding $\iota: Y \inj X$
of codimension $d$ with the complement $j : U \inj X$, the sequence
\[
\cdots \to MGL^{a-1, b}(U) \xrightarrow{\partial} MGL^{a-2d, b-d}(Y) 
\xrightarrow{\iota_*} MGL^{a, b}(X) \xrightarrow{j^*}
MGL^{a, b}(U) \xrightarrow{\partial} \cdots 
\]
is exact.
\item
$Section \ of \ a \ projective \ bundle :$ \
If $E$ is a rank $n$ vector bundle on $Y$ and $s : Y \to \P({\bf 1} \oplus E)$
is the zero-section of the projective bundle $p : \P({\bf 1} \oplus E) \to Y$,
then $s_* = (-) \cdot ({\tm}(E)) \circ p^*$. 
\item
$Smooth \ divisor :$ \ If $\iota : D \inj X$ is a strict embedding of
a smooth divisor, then $\iota_*(1) = c_1\left(L(D)\right)$.
\item
$Functoriality : $ \ For strict closed embeddings $Z \stackrel{\iota'}{\inj}
Y \stackrel{\iota}{\inj} X$ of ind-schemes, one has
$\iota_* \circ \iota'_* = (\iota \circ \iota')_*$.
\end{enumerate}
\end{prop}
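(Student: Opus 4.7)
The unifying idea is that at each finite level $i \ge 0$, the strict embedding $\iota_i : Y_i \inj X_i$ is a closed embedding of smooth schemes of codimension $d$, for which all six properties are classical results of Panin~\cite{Panin2}. The ind-scheme Gysin map is compatible with these level-wise Gysin maps through the Milnor exact sequence of \propref{prop:Limit}, and this compatibility will allow us to lift each property from schemes to ind-schemes.

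More concretely, I would first verify that $\iota_*$ commutes with restriction to each level. The Thom class $\tm(N)$ restricts to $\tm(N_i)$ by property (2) of \propref{prop:TIso}; the deformation space $M$ of~\eqref{eqn:DNC} is constructed levelwise, since strictness of $\iota$ implies that the blow-up of $X_i \times \A^1$ along $Y_i \times \{0\}$ is the pullback of the blow-up of $X_{i+1} \times \A^1$ along $Y_{i+1} \times \{0\}$, so the deformation isomorphism $t_{X,Y}$ of~\eqref{eqn:DNC2} is the colimit of the $t_{X_i, Y_i}$; and the quotient maps $v_{X,Y}$ are colimits of $v_{X_i, Y_i}$. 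Combining these facts with \propref{prop:Limit} yields the key compatibility
\begin{equation}\label{eqn:Gcompat}
\iota_*(\alpha)|_{X_i} = (\iota_i)_*(\alpha|_{Y_i})
\end{equation}
up to $\varprojlim^1$ corrections that vanish in the relevant bidegrees by homotopy invariance, exactly as in the proof of \propref{prop:TIso}.

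With~\eqref{eqn:Gcompat} in hand, properties (1), (2), (4), and (5) all reduce to the corresponding statements for the smooth schemes $X_i$: (1) follows from level-wise base change combined with \defref{defn:Strict}, which says that a transverse square of ind-schemes is transverse at each level; (2) is immediate from the fact that $N$, $t_{X,Y}$ and $v_{X,Y}$ are all trivial when $\iota$ is the identity; (4) is essentially the construction of the Thom class via the projective bundle formula in \propref{prop:TIso}, together with $MGL^{*,*}(Y)$-linearity of the Gysin map; and (5) holds because $c_1(L(D))$ restricts levelwise to $c_1(L(D_i)) = (\iota_i)_*(1)$, so Milnor uniqueness forces $\iota_*(1) = c_1(L(D))$. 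Property (3) is the most direct of all: it is precisely the long exact sequence~\eqref{eqn:LES} rewritten using the composite Thom-deformation isomorphism $MGL^{*,*}_Y(X) \cong MGL^{*-2d,*-d}(Y)$ given by $t^*_{X,Y} \circ \tm^N_Y$.

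The main obstacle will be property (6), the functoriality of the Gysin map under composition of strict embeddings $Z \stackrel{\iota'}{\inj} Y \stackrel{\iota}{\inj} X$. For smooth schemes, the standard proof uses a double deformation to the normal cone together with the short exact sequence $0 \to N_Y(Z) \to N_X(Z) \to (\iota')^* N_X(Y) \to 0$ and property (4) of \propref{prop:TIso} applied to the direct sum of normal bundles. To adapt this to ind-schemes I would verify that the double deformation space is itself a strict ind-scheme whose levels are Panin's double deformation spaces for $Z_i \inj Y_i \inj X_i$, and that the above short exact sequence is exact in the level-wise sense of \S\ref{subsubsection:VEC}. Both verifications are routine given strictness, after which (6) follows by the same Milnor sequence argument as the other properties.
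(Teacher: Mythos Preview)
Your approach has a genuine gap. The compatibility~\eqref{eqn:Gcompat} does hold in the sense that $\iota_*$ induces $\varprojlim (\iota_i)_*$ on the quotient and $\varprojlim^1 (\iota_i)_*$ on the kernel of the Milnor sequence. But this is \emph{not} enough to prove (5) and (6): knowing that two elements of $MGL^{a,b}(X)$ (such as $\iota_*(1)$ and $c_1(L(D))$) have the same image in $\varprojlim_i MGL^{a,b}(X_i)$ does not force them to be equal unless $\varprojlim^1_i MGL^{a-1,b}(X_i)=0$. You assert that these $\varprojlim^1$ terms vanish ``by homotopy invariance, exactly as in the proof of \propref{prop:TIso}''. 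This is the error. In \propref{prop:TIso} the vanishing was the very special fact $MGL^{1,1}_{X_i}(L_i)\cong MGL^{-1,0}(X_i)=0$, a connectivity statement valid only after the Thom shift into negative degree. For (5) you would need $\varprojlim^1_i MGL^{1,1}(X_i)=0$, and for (6) the vanishing of $\varprojlim^1$ in an arbitrary bidegree; neither follows from homotopy invariance or connectivity. The same objection applies to your proof of (1) if read literally as ``level-wise base change plus Milnor uniqueness''.

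The paper avoids this problem by never reducing to levels for the identities (1), (5), (6). Instead it works directly with ind-schemes, exploiting that the Chern class theory (\propref{prop:CCT}), the Thom isomorphism (\propref{prop:TIso}), the projective bundle formula (\S\ref{subsubsection:PBFIS}), and the deformation-to-normal-cone equivalence~\eqref{eqn:DNC2} are all already established at the ind-scheme level. Property (1) then follows from the functoriality of $t_{X,Y}$, $v_{X,Y}$ and of the Thom class (\propref{prop:TIso}(2)); property (5) is proved via the deformation diagram and an injectivity lemma (\lemref{lem:extra}) that replaces your ``Milnor uniqueness''; and property (6) is obtained in four steps (transverse divisors $\Rightarrow$ inclusion of projective sub-bundles $\Rightarrow$ the special composition $Y\to\P({\bf 1}\oplus N)\to\P({\bf 1}\oplus M)$ $\Rightarrow$ general case via a single deformation), all argued with ind-scheme Chern classes and \lemref{lem:extra}, never by passage to the limit. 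Your (2), (3), (4) are fine and essentially agree with the paper.
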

\begin{proof}
Since the isomorphism $t^*_{X,Y}$ in ~\eqref{eqn:DNC2} and the map
$v^*_{X,Y}$ in ~\eqref{eqn:Gmap0} are functorial, the commutativity of
~\eqref{eqn:Gmap*0} is a direct consequence of the functoriality of
the Thom classes from Proposition~\ref{prop:TIso} and the transversality 
condition. The property (2) follows directly from the definition and
the property (3) is a direct consequence of the above definition,
the Thom isomorphism and ~\eqref{eqn:LES}. The property (4) follows 
immediately from the definition of the Gysin map and that of the Thom class
in Proposition~\ref{prop:TIso}, using Corollary~\ref{cor:EXCN}.

To prove property (5), we consider the diagram~\ref{eqn:DNC} and set
$F' = j \circ F$ and let $s : D \to \P({\bf 1} \oplus N)$ denote the
closed embedding. The transversality of the squares and property (1)
yield a commutative diagram 

\begin{equation}\label{eqn:Gmap*1}
\xymatrix@C1.2pc{
MGL^{*,*}(D) \ar[d]_{s_*} & \m(D \times \A^1) \ar[l]_<<<<{k^*_0} \ar[r]^>>>{k^*_1}
\ar[d]^{F'_*} & \m(D) \ar[d]^{\iota_*} \\
\m\left(\P({\bf 1} \oplus N)\right) & 
\m(M') \ar[l]^>>>>>{j'^*_0} \ar[r]_{j'^*_1} & \m(X).}
\end{equation}
Since $k^*_1$ is an isomorphism, it follows from the functoriality of the
Chern classes ({\sl cf.} Proposition~\ref{prop:CCT}) that it is enough to
show that 
\begin{equation}\label{eqn:Gmap*2}
F'_*(1) = c_1(L'), \ {\rm where} \ L' = L\left(D \times \A^1\right).
\end{equation}
 
Using property (4) and the definition of the Thom class, we get
\[
\begin{array}{lll}
j'^*_0\left(c_1(L')\right) & = & 
c_1\left(\sO_{{\bf 1} \oplus N}(1) \otimes p^*(N)\right) \\
& = & s_*(1) \\
& = & s_* \circ k^*_0 (1) \\
& = & {j'^*_0} \circ F'_*(1).
\end{array}
\]
Since the elements $c_1(L')$ and $F'_*(1)$ vanish in $\mg(V')$, we conclude
from Lemma~\ref{lem:extra} that $F'_*(1) = c_1(L')$ which proves
~\eqref{eqn:Gmap*2} and hence property (5).

We prove property (6) in several steps by imitating the proof for the
case of smooth schemes.
In the first step, we show that if $X$ is an ind-scheme and 
$\{D_j\}_{1 \le j \le n}$ is a collection of strict smooth divisors which 
intersect transversely
in $X$ with $Y = \stackrel{n}{\underset{j = 1}\cap} \ D_j$, then
\begin{equation}\label{eqn:Gmap*3}  
\iota_*(1) = c_n\left(\stackrel{n}{\underset{j = 1}\oplus} \ L_j\right)
\end{equation}
where $\iota : Y \inj X$ is the strict closed embedding and
$L_j = L(D_j)$. 

To prove this, we set $N = \stackrel{n}{\underset{j = 1}\oplus} \ \iota^*(L_j)$
and consider again the deformation diagram
\begin{equation}\label{eqn:Gmap*4}
\xymatrix@C1pc{
Y \ar[r]^<<<<<<{k_0} \ar[d]_{s} & Y \times \A^1 \ar[d]^{F'} & Y \ar[l]_>>>{k_1}
\ar[d]^{\iota} \\
\P({\bf 1} \oplus N) \ar[r]_>>>>{j'_0} & M' & X \ar[l]^{j'_1}.}
\end{equation}

Let $p: \P\left({\bf 1} \oplus N\right) \to Y$ be the projection map.
It is easy to check that the proper transform $M'_j$ of $D_j \times \A^1$
in $M'$ are all strict smooth divisors which intersect transversely 
with $Y \times \A^1 = \stackrel{n}{\underset{j = 1}\cap} \ M'_j$. Moreover, 
each $M'_j$ intersects $\P\left({\bf 1} \oplus N\right)$ transversely such that
the intersection $P_j = \P\left({\bf 1} \oplus N_j\right)$ is
a smooth divisor in $\P\left({\bf 1} \oplus N\right)$, where
$N_j$ is the direct sum of all line bundles on $Y$ except $\iota^*(L_j)$.
The line bundle $L(P_j)$ is isomorphic to the line bundle 
$p^* \circ \iota^*(L_j) \otimes \sO_{{\bf 1} \oplus N}(1)$.
It follows from Proposition~\ref{prop:CCT} that
$c_n\left(p^*(N) \otimes  \sO_{{\bf 1} \oplus N}(1)\right)$ is the
product of the Chern classes 
$c_1\left(p^* \circ \iota^*(L_j) \otimes  \sO_{{\bf 1} \oplus N}(1)\right)$.

The line bundle $L'_j = L(M'_j)$ restricts to $L(P_j)$ over 
$\P\left({\bf 1} \oplus N\right)$ and hence is isomorphic to the
line bundle $p^* \circ \iota^*(L_j) \otimes \sO_{{\bf 1} \oplus N}(1)$.
We also have $j'^*_1(L'_j) \cong L_j$. Therefore, we have the relation
$j'^*_0\left(c_1(L'_j)\right) = 
c_1\left(p^* \circ \iota^*(L_j) \otimes  \sO_{{\bf 1} \oplus N}(1)\right)$
in $MGL^{*,*}\left(\P\left({\bf 1} \oplus N\right)\right)$ and the relation
$j'^*_1\left(c_1(L'_j)\right) = c_1(L_j)$ in $\m(X)$. 
It follows from property (4) that 
\begin{equation}\label{eqn:Gmap*5}
s_*(1) \ = \ \stackrel{n}{\underset{j = 1}\cup} \ 
c_1\left(p^* \circ \iota^*(L_j) \otimes  \sO_{{\bf 1} \oplus N}(1)\right).
\end{equation}

Next we show that
\begin{equation}\label{eqn:Gmap*6}
F'_*(1) \ = \ \stackrel{n}{\underset{j = 1}\cup} \ 
c_1\left(L'_j\right).
\end{equation}

To prove this, we first observe that $F'_*(1)$ vanishes in $\m(V')$
as follows from ~\eqref{eqn:LES}.
On the other hand, 
$c_1\left(L'_j\right)$ vanishes in $\m\left(M' \setminus M'_j\right)$
and hence comes from $\m_{M'_j}(M')$ by ~\eqref{eqn:LES} again.
It follows from ~\eqref{eqn:prod*} that 
$\stackrel{n}{\underset{j = 1}\cup} \ c_1\left(L'_j\right)$ comes from
$\m_{Y \times \A^1}(M')$ and hence vanishes in $\m(V')$.
We now compute
\[
\begin{array}{lll}
j'^*_0\left(\stackrel{n}{\underset{j = 1}\cup} \ 
c_1\left(L'_j\right)\right) & = & 
\stackrel{n}{\underset{j = 1}\cup} \ \left(j'^*_0 (L'_j)\right) \\
& = & \stackrel{n}{\underset{j = 1}\cup} \ 
c_1\left(p^* \circ \iota^*(L_j) \otimes  \sO_{{\bf 1} \oplus N}(1)\right) \\
& =^{\dagger} & s_*(1) \\
& = & s_* \circ k^*_0(1) \\
& =^{\dagger \dagger} & j'^*_0 \circ F'_*(1),
\end{array}
\]
where the equalities $\dagger$ and $\dagger \dagger$ follow from
~\eqref{eqn:Gmap*5} and property (1) respectively. The relation
~\eqref{eqn:Gmap*6} now follows from Lemma~\ref{lem:extra}.

Finally, we have
\[
\begin{array}{lll}
c_n\left(\stackrel{n}{\underset{j = 1}\oplus} \ L_j\right) & = &
\stackrel{n}{\underset{j = 1}\cup} \ c_1\left(L_j\right) \\
& = &
j'^*_1\left(\stackrel{n}{\underset{j = 1}\cup} \ c_1\left(L'_j\right)\right) \\
& = & j^*_1 \circ F'_*(1) \\
& =^{\dagger} & \iota_* \circ k^*_1 (1) \\
& = & \iota_*(1)
\end{array}
\]
where the equality $\dagger$ follows from property (1).
This completes the proof of ~\eqref{eqn:Gmap*3}.

The second step is to show the following. Let $Y$ be an ind-scheme and
let $0 \to N \to M \to F \to 0$ be a short exact sequence of vector bundles
on $Y$ such that ${\rm rank}(F) = d$. Let 
$\iota: \P\left({\bf 1} \oplus N\right)
\inj \P\left({\bf 1} \oplus M\right)$ be the inclusion map and let
$p : \P\left({\bf 1} \oplus M\right) \to Y$ be the projection. Then
one has
\begin{equation}\label{eqn:Gmap*7} 
\iota_*(1) = c_d\left(p^*F \otimes \sO_{{\bf 1} \oplus M}(1)\right).
\end{equation}

Using the splitting principle (Lemma~\ref{lem:Sprinciple}) and property (1),
we can assume that $F = \stackrel{j= d}{\underset{j =1}\oplus}  L_j$
is a direct sum of line bundles, in order to prove ~\eqref{eqn:Gmap*7}.
Let $M_j$ be the preimage of the
direct sum of all summands of $F$ except $L_j$ and set $D_j = 
\P\left({\bf 1} \oplus M_j\right)$. Then $\{D_j\}$ is a collection of
strict smooth divisors on $\P\left({\bf 1} \oplus M\right)$ which intersect
transversely with the intersection $\P\left({\bf 1} \oplus N\right)$.
Moreover, the line bundle $L(D_j)$ is isomorphic to the line bundle
$p^*(L_j) \otimes \sO_{{\bf 1} \oplus M}(1)$. It follows from 
~\eqref{eqn:Gmap*3} that
\[
\begin{array}{lll}
\iota_*(1) & = & \stackrel{n}{\underset{j = 1}\cup} \ c_1\left(L(D_j)\right) \\
& = & \stackrel{n}{\underset{j = 1}\cup} \ 
c_1\left(p^*(L_j) \otimes \sO_{{\bf 1} \oplus M}(1)\right) \\
& = & c_d\left(\left(\stackrel{n}{\underset{j = 1}\oplus} \ p^*(L_j)\right)
\otimes \sO_{{\bf 1} \oplus M}(1)\right) \\
& = &  c_d\left(p^*(F) \otimes \sO_{{\bf 1} \oplus M}(1)\right)
\end{array}
\]
which proves ~\eqref{eqn:Gmap*7}.  

In the third step, we prove the composition property in the special case
of the strict inclusions $Y \xrightarrow{s'} \P\left({\bf 1} \oplus N\right)
\xrightarrow{\iota} \P\left({\bf 1} \oplus M\right)$ in the situation of the 
second step. Let $s = \iota \circ s'$ denote the section of the projection map 
$p$ using the identification $Y = \P({\bf 1})$. We then show that
\begin{equation}\label{eqn:Gmap*8} 
s_* = \iota_* \circ s'_*.
\end{equation}

To prove this, we note that both sides are $\m(Y)$-linear maps and hence
it suffices to show that $s_*(1) = \iota_* \circ s'_* (1)$.
Let $n = {\rm rank}(N)$. We then have in 
$\m\left(\P\left({\bf 1} \oplus M\right)\right)$:
\[
\begin{array}{lll}
\iota_* \circ s'_*(1) & = & 
\iota_*\left[c_n\left(p^*_N(N) \otimes \sO_{{\bf 1} \oplus N}(1)\right)\right] 
\\
& = & \iota_*\left[\iota^*\left(c_n\left(p^*(N) \otimes \sO_{{\bf 1} 
\oplus M}(1)\right)\right)\right] \\
& = &  \iota_*(1) \cup c_n\left(p^*(N) \otimes \sO_{{\bf 1} 
\oplus M}(1)\right) \\
& = & \left[c_d\left(p^*(F) \otimes \sO_{{\bf 1} 
\oplus M}(1)\right)\right] \cup 
\left[c_n\left(p^*(N) \otimes \sO_{{\bf 1} \oplus M}(1)\right)\right] \\
& = & c_{d+n} \left(p^*(M) \otimes \sO_{{\bf 1} 
\oplus M}(1)\right) \\
& = & s_*(1)
\end{array}
\]
which proves ~\eqref{eqn:Gmap*8}.

In the final step, we complete the proof of the composition property
of the Gysin map. So let $Z \xrightarrow{\iota'} Y \xrightarrow{\iota} X$
be given strict closed embeddings of ind-schemes. 
We consider the deformation diagram

\begin{equation}\label{eqn:Gmap*9}
\xymatrix@C1pc{
Z \ar[r]^<<<<<<{k_0} \ar[d]_{s'} & Z \times \A^1 \ar[d]_{F^Y} & Z \ar[l]_>>>{k_1}
\ar[d]^{\iota'} \\
\P\left({\bf 1} \oplus N_Y(Z)\right) \ar[d]_{s^Y} 
\ar[r]_>>>>{j^Y_0} & M'_Y \ar[d]_{J} & Y \ar[d]^{\iota}
\ar[l]^{j^Y_1} \\
\P\left({\bf 1} \oplus N_X(Z)\right) \ar[r]_>>>>{j^X_0} & M'_X & X \ar[l]^{j^X_1},}
\end{equation}
where $s = s^Y \circ s'$, $F^X = J \circ F^Y$ and $M'_Y$ is the proper
transform of $Y \times \A^1$ in $M'_X$. Moreover, all the squares are
transverse. Using this transversality of the left squares, we get
the relations
\[
s^Y_* \circ s'_* \circ k^*_0 = s^Y_* \circ (j^Y_0)_* \circ F^Y_* 
= (j^X_0)^* \circ J_* \circ F^Y_*
\]
and
\[
s_* \circ k^*_0 = (j^X_0)^* \circ (J \circ F^Y)_*.
\]
We have shown in ~\eqref{eqn:Gmap*8} that $s^Y_* \circ s'_* = s_*$.
Thus we get
\[
(j^X_0)^* \circ \left[(J \circ F^Y)_* - J_* \circ F^Y_*\right] = 0.
\]
Since $(J \circ F^Y)_*$  and $J_* \circ F^Y_*$ both vanish in
$\m(V'_X)$ where $V'_X = M'_X \setminus (Z \times \A^1)$, it follows from
Lemma~\ref{lem:extra} that $(J \circ F^Y)_* = J_* \circ F^Y_*$.

Using the transversality of the right squares in the above diagram, we
get the relations
\[
\begin{array}{lll}
(\iota \circ \iota')_* \circ k^*_1 & = & 
(j^X_1)^* \circ (J \circ F^Y)_* \\
& = & (j^X_1)^* \circ  J_* \circ F^Y_* \\
& = & \iota_* \circ (j^Y_1)^*  \circ F^Y_* \\
& = & \iota_* \circ \iota'_* \circ k^*_1. \\
\end{array}
\]
Since $k_1$ is an $\A^1$-weak equivalence, we conclude that
$(\iota \circ \iota')_* = \iota_* \circ \iota'_*$.
This completes the proof of the composition property and hence the proof
of the proposition.
\end{proof}

\section{Basic properties of equivariant cobordism}
\label{section:Bprop}
Let $G$ be a linear algebraic group over $k$. The following result describes
the basic properties of the equivariant motivic cobordism. Recall that 
a bigraded ring $R = {\underset{i, j \ge 0}\oplus} \ R_{i,j}$ is called 
commutative if for $a \in R_{i,j}, b \in R_{i',j'}$, one has
$ab = (-1)^{i i'} ba$. Let ${\bf R}^*$ denote the category of commutative
bigraded rings. 
We need the following elementary result in order to establish some 
basic properties of the equivariant cobordism.

\begin{lem}\label{lem:Elem-eq}
Let $U \in \Sm^G_{{free}/k}$ and let $i: V_1 \inj V_2$ be a closed (or open) 
immersion in $\Sm^G_{{free}/k}$. Consider a transverse square
\begin{equation}\label{eqn:Elem-eq0}
\xymatrix@C1.5pc{
Y' \ar[r]^{g} \ar[d]_{\iota'} & Y \ar[d]^{\iota} \\
X' \ar[r]_{f} & X}
\end{equation}
in $\Sm^G_k$ where $\iota$ is a closed embedding.
Then the squares 
\begin{equation}\label{eqn:Elem-eq1}
\xymatrix@C1.5pc{
Y \stackrel{G}{\times} V_1 \ar[r]^{i_Y} \ar[d]_{\iota_1} &  
Y \stackrel{G}{\times} V_2 \ar[d]^{\iota_2} & &
Y' \stackrel{G}{\times} U \ar[r]^{\ov{g}} \ar[d]_{\ov{\iota'}} &
Y \stackrel{G}{\times} U \ar[d]^{\ov{\iota}} \\
X \stackrel{G}{\times} V_1 \ar[r]_{i_X} & X \stackrel{G}{\times} V_2 & & 
X' \stackrel{G}{\times} U \ar[r]_{\ov{f}} & X \stackrel{G}{\times} U} 
\end{equation}
are transverse in $\Sm_k$.
\end{lem}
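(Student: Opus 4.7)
Both squares in~\eqref{eqn:Elem-eq1} arise by applying a mixed-quotient construction of the form $Z \mapsto Z \stackrel{G}{\times} W$, with $W$ equal to $V_1, V_2$ in the first square and $W = U$ in the second, and in each instance $G$ acts freely on $W$. Consequently, by \lemref{lem:sch} the quotients $Z \stackrel{G}{\times} W$ are smooth schemes and the quotient morphism $Z \times W \to Z \stackrel{G}{\times} W$ is a $G$-torsor, hence smooth and faithfully flat. Each of the three conditions defining a transverse square (Cartesianness, closedness of $\iota$, and the normal-bundle isomorphism) is local for the fppf topology on the target. So it will suffice, in each case, to pull the square back along the smooth surjective covering $X \stackrel{G}{\times} W \leftarrow X \times W$ and verify transversality of the resulting square.

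\textbf{The first square.} The pullback of the first square in~\eqref{eqn:Elem-eq1} along the $G$-torsor quotient is the product square with vertical maps $\iota \times \mathrm{id}_{V_j}$ and horizontal maps $\mathrm{id}_Z \times i$. This square is obviously Cartesian, because $Y \times V_1 = (Y \times V_2) \times_{X \times V_2} (X \times V_1)$; the vertical maps are closed immersions because $\iota$ is; and if $p : X \times V_j \to X$ denotes the projection, then $N_{X \times V_j}(Y \times V_j) = p^\ast N_X(Y)$ is canonically compatible with pullback along $\mathrm{id} \times i$. Hence the first square is transverse by fppf descent.

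\textbf{The second square.} Starting from the transverse square~\eqref{eqn:Elem-eq0} in $\Sm^G_k$, form the product with $U$: this preserves transversality since products commute with fibre products, the map $\iota \times \mathrm{id}_U$ is a closed immersion, and $N_{X \times U}(Y \times U) = q^\ast N_X(Y)$ is compatible with pullback along $f \times \mathrm{id}_U$ by the given isomorphism $N_{X'}(Y') \xrightarrow{\cong} g^\ast N_X(Y)$. The diagonal $G$-action on each of $Y' \times U, Y \times U, X' \times U, X \times U$ is free because $G$ acts freely on $U$, so descending this transverse product square along the four $G$-torsors $Z \times U \to Z \stackrel{G}{\times} U$ gives the claim.

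\textbf{Anticipated obstacle.} The only nontrivial point is to ensure that the three transversality conditions do descend along the smooth surjective quotient maps. Cartesianness and closed-immersion status are fppf-local on the target, and for the normal bundles one uses that the conormal sheaf of a closed immersion commutes with flat base change. This identifies $N_{X \stackrel{G}{\times} W}(Y \stackrel{G}{\times} W)$ with the descent of $N_X(Y) \times W$ along the $G$-action, after which both required normal-bundle isomorphisms are inherited directly from the non-equivariant verifications above.
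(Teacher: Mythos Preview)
Your proof is correct and follows essentially the same strategy as the paper's: both arguments reduce the verification of transversality to the pre-quotient level by exploiting that the $G$-torsor maps $Z \times W \to Z \stackrel{G}{\times} W$ are smooth surjections, and that Cartesianness, the closed-immersion property, and the normal-bundle isomorphism can all be checked after such a faithfully flat base change. The paper carries this out more concretely via a cube diagram (showing first that Cartesian squares and normal bundles are compatible with passage to quotients, then chasing the cube), whereas you package the same idea uniformly as fppf descent of the three transversality conditions; the content is the same.
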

\begin{proof}
This is an elementary exercise. We give only give a sketch and leave the 
details for the readers. One first checks that if ~\eqref{eqn:Elem-eq0} is
any Cartesian square in $\Sm^G_{{free}/k}$, then the squares
\begin{equation}\label{eqn:Elem-eq2}
\xymatrix@C1.5pc{
Y'/G \ar[r]^{\ov{g}} \ar[d]_{\ov{\iota'}} & Y/G \ar[d]^{\ov{\iota}} & & 
X' \ar[r]^{f} \ar[d]_{\pi'} & X \ar[d]^{\pi} \\
X'/G \ar[r]_{\ov{f}} & X/G & & X'/G \ar[r]_{\ov{f}} & X/G}
\end{equation}
are also Cartesian. In particular, the map $T_{{X'}/({{X'}/G})} \to
f^*\left(T_{{X}/({{X}/G})}\right)$ of relative tangent bundles is an isomorphism. 
From this, it follows immediately that $N_X(X') \xrightarrow{\cong}
\pi'^*\left(N_{({X}/G)}({{X'}/G})\right)$ if $f$ is a closed immersion.

To prove the lemma, it suffices to show that if ~\eqref{eqn:Elem-eq0} is a 
transverse square in $\Sm^G_{{free}/k}$, then the associated square
of quotients in ~\eqref{eqn:Elem-eq2} is also transverse. To do this, we now
only need to show the appropriate isomorphism of the normal bundles. 
We consider the commutative diagram

\begin{equation}\label{eqn:Elem-eq3}
\xymatrix@C1.5pc{
Y' \ar[rr]^{g} \ar[dd]_{\iota'} \ar[dr]_{\pi'} & & Y \ar[dd] \ar[dr]^{\pi} & \\
& Y'/G \ar[dd] \ar[rr]^<<<<<<{\ov{g}} & & Y/G \ar[dd]^{\ov{\iota}} \\
X' \ar[rr]_<<<<<<<<<{f} \ar[dr]_{p'} & & X \ar[dr]^{p} & \\
& X'/G \ar[rr]_{\ov{f}} & & X/G.}
\end{equation}
Let $\ov{N}$ (resp. $\ov{N'}$) denote the normal bundle of $Y/G$ 
(resp. $Y'/G$) in $X/G$ (resp. $X'/G$). Since $\pi'$ is a smooth covering,
it suffices to show that the map 
\begin{equation}\label{eqn:Elem-eq4}
\pi'^*(\ov{N'}) \to 
\pi'^*\left({\ov{g}}^*(\ov{N})\right)
\end{equation}
is an isomorphism. But this easily follows from the fact that the back face of 
the above cube is transverse and we have shown above that the two side faces 
are also transverse.
\end{proof}

\begin{thm}\label{thm:BPEC*}
The equivariant motivic cobordism $MGL^{*,*}_G(-)$ is an oriented cohomology
theory on $\Sm^G_k$ in the following sense.
\begin{enumerate}
\item
$Contravariance :$ \ $X \mapsto MGL^{*,*}_G(X)$ is a functor 
$\left(\Sm^G_k\right)^{\rm op} \to 
{\bf R}^*$. 
\item
$Homotopy \ Invariance :$ \ For a $G$-equivariant vector bundle 
$p : E \to X$, the map $p^*: MGL^{*,*}_G(X) \to MGL^{*,*}_G(E)$ is an
isomorphism. 
\item
$Chern \ classes :$ \ For a $G$-equivariant vector bundle $E$ on $X$,
there are equivariant Chern classes $c^G_i(E) \in MGL^{2i,i}_G(X)$ such that
$c^G_0(E) = 1, c^G_i(E) = 0$ for $i > {\rm rank}(E)$, 
$f^*\left(c^G_i(E)\right) = c^G_i\left(f^*(E)\right)$ for a morphism
$f : Y \to X$ in $\Sm^G_k$ and $c^G(E) = c^G(E') \cdot c^G(E'')$
for an exact sequence of equivariant vector bundles
$0 \to E' \to E \to E'' \to 0$ on $X$.
\item
$Projective \ bundle \ formula :$
For an equivariant vector bundle $E$ of rank $n$ on $X$, the map
\[
\Phi_X : \mg(X) \oplus \cdots \oplus \mg(X) \to
\mg\left(\P(E)\right);
\]
\[
\Phi\left(a_0, \cdots , a_{n-1}\right) \ =
\ \stackrel{n-1}{\underset{i = 0}\sum} \ \pi^*(a_i) \cdot \xi^i
\]
is an isomorphism, where $\pi : \P(E) \to X$ is the projection map  
and $\xi = c^G_1\left(\sO_E(-1)\right)$.
\item
$Gysin \ map :$ For a closed embedding $\iota : Y \inj X$ in $\Sm^G_k$ of 
codimension $d$, there is a Gysin map $\iota_*: MGL^{a, b}_G(Y) \to 
MGL^{a+2d, b+d}_G(X)$ such that given a transverse square
\begin{equation}\label{eqn:trans*}
\xymatrix@C.9pc{
Y' \ar[r]^{g} \ar[d]_{\iota'} & Y \ar[d]^{\iota} \\
X' \ar[r]_{f} & X}
\end{equation}
in $\Sm^G_k$ where $\iota$ is a closed embedding, the diagram
\begin{equation}\label{eqn:trans*0}
\xymatrix@C.9pc{
MGL^{*,*}_G(Y) \ar[d]_{\iota_*} \ar[r]^{g^*} & MGL^{*,*}_G(Y') \ar[d]^{\iota'_*} \\
MGL^{*,*}_G(X) \ar[r]_{f^*} & MGL^{*,*}_G(X')}
\end{equation}
commutes. Moreover, given $G$-equivariant closed embeddings 
$Z \xrightarrow{\iota'} Y \xrightarrow{\iota} X$, one has
$(\iota \circ \iota')_* = \iota_* \circ \iota'_*$. 

\item
$Gysin \ exact \ sequence :$ For a closed embedding $\iota: Y \inj X$
of codimension $d$ in $\Sm^G_k$ with the complement $j : U \inj X$, the sequence
\[
\cdots \to MGL^{a-1, b}_G(U) \xrightarrow{\partial} MGL^{a-2d, b-d}_G(Y) 
\xrightarrow{\iota_*} MGL^{a, b}_G(X) \xrightarrow{j^*}
MGL^{a, b}_G(U) \xrightarrow{\partial} \cdots 
\]
is exact.
\item
$Change \ of \ groups :$ If $H \subseteq G$ is a closed subgroup and 
$X \in \Sm^G_k$, then there is a natural restriction map $r^G_{H,X}: 
MGL^{*,*}_G(X) \to MGL^{*,*}_H(X)$. 
In particular, there is a natural forgetful map
\begin{equation}\label{eqn:forget}
r^G_X: MGL^{*,*}_G(X) \to \m(X).
\end{equation}
\item
$Morita \ Isomorphism : $
If $H \subseteq G$ is a closed subgroup and $X \in \Sm^H_k$, then there is a 
canonical isomorphism
$MGL^{*,*}_G(X \stackrel{H}{\times} G) \cong MGL^{*,*}_H(X)$.
\item
$Free \ action : $ If $X \in \Sm^G_{{free}/k}$, then the map
$\m(X/G) \to \mg(X)$ is an isomorphism.
\end{enumerate}
\end{thm}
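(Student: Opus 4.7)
The strategy is to deduce each of the nine properties by transferring the corresponding results on motivic cobordism of ind-schemes from Sections~\ref{section:Basic} and~\ref{section:GM} to the Borel space $X_G$, which is a smooth ind-scheme in the sense of \S\ref{subsection:IS}. The key descent tool is Lemma~\ref{lem:Elem-eq}, which shows that $G$-equivariant vector bundles, closed immersions, and transverse squares on $X \in \Sm^G_k$ induce the corresponding structures on the sequence $\{X^i_G\}_{i \ge 1}$, and hence on the ind-scheme $X_G$. With this in hand, properties (1)--(4) become essentially immediate: contravariance and the ring structure follow from the functoriality of $X \mapsto X_G$; homotopy invariance (2) is the $\A^1$-invariance of vector bundles on ind-schemes from \S\ref{subsubsection:VEC} applied to $p_G: E_G \to X_G$; the equivariant Chern classes (3) are defined by $c_i^G(E) := c_i(E_G)$ using Proposition~\ref{prop:CCT}, with the Cartan formula descending from a $G$-equivariant short exact sequence to the corresponding sequence on the ind-scheme $X_G$; and the projective bundle formula (4) reduces to that of \S\ref{subsubsection:PBFIS} via the natural identification $\P(E)_G \cong \P(E_G)$.

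For the Gysin map (5) and the long exact sequence (6), given a $G$-equivariant closed embedding $\iota: Y \inj X$ of codimension $d$, I first verify that $\iota_G: Y_G \inj X_G$ is a strict closed embedding of ind-schemes of codimension $d$ (Definition~\ref{defn:Strict}). Strictness at each stage reduces via Lemma~\ref{lem:Elem-eq} to the transversality of the squares attached to the closed immersion $U_i \inj U_{i+1}$ in $\Sm^G_{\mathrm{free}/k}$, which in turn is realized through the linear inclusion of the underlying representations $V_i \hookrightarrow V_{i+1}$. I then define the equivariant Gysin map as the Gysin map of $\iota_G$ via Definition~\ref{defn:Gmap}, and all the asserted properties follow from Proposition~\ref{prop:Gmap-Prop}; the base-change formula requires a second application of Lemma~\ref{lem:Elem-eq} to descend a $G$-equivariant transverse square to a transverse square of Borel ind-schemes.

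For the remaining properties, given a closed subgroup $H \subseteq G$, any admissible gadget $\rho = (V_i, U_i)$ for $G$ is also an admissible gadget for $H$, since the $H$-action on $U_i$ is free and $U_i/H$ is quasi-projective ($U_i/H \to U_i/G$ being a Zariski locally trivial fibration with fiber the quasi-projective scheme $G/H$). The natural quotient maps $X \stackrel{H}{\times} U_i \to X \stackrel{G}{\times} U_i$ assemble into a morphism $X_H \to X_G$ in $\sH(k)$, inducing the restriction $r^G_{H,X}$; the forgetful map in (7) is the case $H = \{e\}$, using the $\A^1$-contractibility of $E_G$ (Proposition~\ref{prop:Bar-ind-Gen}) to identify $X_{\{e\}} = X \times E_G$ with $X$ in $\sH(k)$. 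Property (8) is immediate from Corollary~\ref{cor:Morita1}, and (9) follows from Corollary~\ref{cor:Morita0} applied to the $G$-torsor $X \to X/G$ with $H = G$ and $F = \{e\}$, again identifying $(X/G)_{\{e\}} \simeq X/G$ via the contractibility of $E_G$.

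The main obstacle will be the Gysin map (5): it requires systematically tracking strictness of closed embeddings and transversality of squares through the mixed-quotient construction, which is why Lemma~\ref{lem:Elem-eq} is the central technical input. The remaining parts of the proof are essentially formal, being translations of the ind-scheme Gysin formalism of Section~\ref{section:GM} into the equivariant setting.
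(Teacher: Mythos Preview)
Your proposal is correct and follows essentially the same approach as the paper: reduce each property to the corresponding statement for the Borel ind-scheme $X_G$ via Lemma~\ref{lem:Elem-eq}, then invoke the ind-scheme machinery of Sections~\ref{section:Basic}--\ref{section:GM}. The only minor deviation is in (9), where the paper cites \cite[Lemma~4.2.9]{MV} directly, while you route through Corollary~\ref{cor:Morita0}; both are valid, and your small imprecision in (7) (the fibration $U_i/H \to U_i/G$ is \'etale, not necessarily Zariski, locally trivial) does not affect the argument since quasi-projectivity of $U_i/H$ follows from Lemma~\ref{lem:sch}.
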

\begin{proof}
Any map $f : Y \to X$ in $\Sm^G_k$ gives rise to the corresponding map
of ind-schemes $f_G: Y_G \to X_G$ which in turn induces the map
$f^* = f^*_G: \mg(X) = \m(X_G) \to \m(Y_G) = \mg(Y)$.
If $p: E \to X$ is a $G$-equivariant vector bundle of rank $n$ , then we have 
seen in the proof of Lemma~\ref{lem:Elem-eq} that the map
$p:E_G \to X_G$ is a vector bundle of rank $n$ over the ind-scheme $X_G$. 
In particular, it is an $\A^1$-weak equivalence. This proves property (2). 

If $E$ is a $G$-equivariant vector bundle on $X$, then equivariant Chern 
classes $c^G_i(E)$ are defined by
\begin{equation}\label{eqn:BPEC*0}
c^G_i(E) : = c_i\left(E_G\right) \in MGL^{2i,i}(X_G) = MGL^{2i, i}_G(X).
\end{equation}
The fact that this defines a Chern class theory for equivariant bundles
in $MGL^{*,*}_G(-)$ follows immediately from Proposition~\ref{prop:CCT}.

Let $E$ be a $G$-equivariant vector bundle of rank $n$ on $X$ and let
$p: \P(E) \to X$ be the associated equivariant projective bundle. Let
$p_G: \P\left(E_G\right) \to X_G$ denote the projective bundle associated to the
vector bundle $E_G$ on $X_G$. The desired projective bundle formula then
follows from ~\eqref{eqn:Proj-map} and the canonical 
isomorphism of ind-schemes $\P\left(E_G\right) \cong {\P(E)}_G$.

If $\iota : Y \inj X$ is an equivariant closed embedding, then
it follows from Lemma~\ref{lem:Elem-eq} that $\iota_G: Y_G \inj X_G$
is a strict closed embedding of ind-schemes. The equivariant Gysin map
$i_* : MGL^{a, b}_G(Y) \to MGL^{a+2d, b+d}_G(X)$ is defined as in 
Proposition~\ref{prop:Gmap-Prop}. 
Given a transverse square ~\eqref{eqn:trans*} in $\Sm^G_k$, it follows from
Lemma~\ref{lem:Elem-eq} that corresponding square of Borel spaces is
also transverse. The commutativity of the square
~\eqref{eqn:trans*0} and the composition property now follow from 
Proposition~\ref{prop:Gmap-Prop}.

To prove the Gysin exact sequence, we first note that if $\iota : Y \inj X$ is 
an equivariant closed embedding with complement $j : U \inj X$, then 
Lemma~\ref{lem:Elem-eq} implies that $\iota_G : Y_G \inj X_G$ is a closed
embedding with complement $U_G$. The exact sequence (6) now follows from
Proposition~\ref{prop:Gmap-Prop}. 

If $H \subseteq G$ is a closed subgroup and $X \in \Sm^G_k$, then 
$p: X_H \to X_G$ is a morphism of ind-schemes with fibers $G/H$.
This induces the restriction map $r^G_{H, X} : \mg(X) \to MGL^{*,*}_H(X)$.
Taking $H = \{e\}$ and using the isomorphism $X_{\{e\}} \cong X$, we get
the forgetful map $r^G_X$. If $X \in \Sm^H_k$, then the isomorphism
$MGL^{*,*}_G(X \stackrel{H}{\times} G) \cong MGL^{*,*}_H(X)$ follows from
Corollary~\ref{cor:Morita1}. The last property about the free action follows 
from \cite[Lemma~4.2.9]{MV}.
\end{proof}

\subsection{Self-intersection formula}\label{subsection:SIF}
We now prove the self-intersection formula for the equivariant motivic
cobordism. Let $\iota : Y \inj X$ be a closed embedding of codimension $d \ge 0$
in $\Sm^G_k$ and let $N_X(Y)$ denote the equivariant normal bundle of $Y$
in $X$. 

\begin{prop}\label{prop:SIFM}
For any $a \in \mg(Y)$, one has $\iota^* \circ \iota_*(a) = 
c^G_d\left(N_X(Y)\right) \cdot a$.
\end{prop}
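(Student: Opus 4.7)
The plan is to reduce the statement to the zero section of the equivariant normal bundle via deformation to the normal cone, and then compute directly using the projection formula and the identification of the restriction of the Thom class by the zero section with the top Chern class.

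First I reduce to the ind-scheme setting. By Lemma~\ref{lem:Elem-eq}, $\iota$ induces a strict closed immersion $\iota_G: Y_G \inj X_G$ of Borel ind-schemes of codimension $d$, whose normal bundle is the Borel construction $N_G$ of $N = N_X(Y)$. Since the equivariant Gysin map $\iota_*$ is by Theorem~\ref{thm:BPEC*}(5) and Definition~\ref{defn:Gmap} exactly the ind-scheme Gysin map of $\iota_G$, and $c^G_d(N) = c_d(N_G)$ by~\eqref{eqn:BPEC*0}, it suffices to prove the analogous formula $\iota_G^*\,\iota_{G*}(a) = c_d(N_G)\cdot a$ in $\m(Y_G)$. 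I drop the subscript $G$ below.

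Second, I use the deformation diagram~\eqref{eqn:DNC} attached to $\iota$ to reduce to the zero-section case. Both inner squares of~\eqref{eqn:DNC} are transverse, so the base change property of Proposition~\ref{prop:Gmap-Prop}(1) gives
\[
(i_N)_*\circ k_0^* \;=\; j_0^*\circ F_*, \qquad \iota_*\circ k_1^* \;=\; j_1^*\circ F_*.
\]
Composing with $i_N^*$ and $\iota^*$ on the left and using $j_0\circ i_N = F\circ k_0$ and $j_1\circ\iota = F\circ k_1$, I obtain
\[
i_N^*\,(i_N)_*\circ k_0^* \;=\; k_0^*\circ F^*F_*, \qquad \iota^*\,\iota_*\circ k_1^* \;=\; k_1^*\circ F^*F_*.
\]
Since $k_0,k_1$ are sections of the projection $\pi: Y\times\A^1\to Y$ and $\pi^*$ is an isomorphism by homotopy invariance, $k_0^* = k_1^* = (\pi^*)^{-1}$; cancelling this common isomorphism yields $\iota^*\iota_* = i_N^*\,(i_N)_*$ as operators on $\m(Y)$. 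The problem is thus reduced to the zero section $i := i_N: Y \inj N$ of the rank-$d$ bundle $p: N\to Y$.

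Third, I apply the projection formula~\eqref{eqn:Proj-form}. Since $p\circ i = \id_Y$, every $a\in\m(Y)$ equals $i^*p^*(a)$, and so
\[
i_*(a) \;=\; i_*\bigl(i^*p^*(a)\bigr) \;=\; p^*(a)\cdot i_*(1),
\]
whence $i^*i_*(a) = a\cdot i^*(i_*(1))$, and everything comes down to showing $i^*(i_*(1)) = c_d(N)$. Unwinding Definition~\ref{defn:Gmap} for the zero section of a vector bundle --- where the deformation collapses and $t^*_{N,Y}$ is the identity --- we find $i_*(1) = v^*(\tm(N))$ with $v: N \to Th(N)$ the quotient map, so $i^*(i_*(1)) = s^*(\tm(N))$ with $s := v\circ i: Y \to Th(N)$. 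The rank-one case is precisely Proposition~\ref{prop:TIso}(6) at $a=1$. For general $d$, the splitting principle (Lemma~\ref{lem:Sprinciple}) reduces to $N = L_1\oplus\cdots\oplus L_d$, and then the multiplicativity of the Thom class (Proposition~\ref{prop:TIso}(4)) together with the Cartan formula (Proposition~\ref{prop:CCT}(3)) give
\[
s^*(\tm(N)) \;=\; \prod_{j=1}^d s_{L_j}^*(\tm(L_j)) \;=\; \prod_{j=1}^d c_1(L_j) \;=\; c_d(N).
\]
The main obstacle is the deformation-to-the-normal-cone bookkeeping in the second step: one must verify that the squares in~\eqref{eqn:DNC} are transverse at the level of the Borel ind-schemes (which follows from Lemma~\ref{lem:Elem-eq}) and then correctly chase base change through both sides; once this reduction is in hand, the remainder of the argument is essentially formal and mirrors the classical proof for smooth schemes in~\cite{Panin1}.
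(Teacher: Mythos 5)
Your proof is correct, and its first two reductions --- passing to Borel ind-schemes via Lemma~\ref{lem:Elem-eq}, then using the deformation diagram~\eqref{eqn:DNC} together with the base change property of Proposition~\ref{prop:Gmap-Prop}(1) to equate $\iota^*\iota_*$ with $i_N^*(i_N)_*$ --- coincide with the paper's; your operator-level argument using $k_0^* = k_1^* = (\pi^*)^{-1}$ is the same as the paper's Claim with the explicit element $b = F_*\circ p^*(a)$, merely written without naming $b$. The genuine divergence is in the zero-section step. The paper embeds $N$ as an open subscheme of $\P(\mathbf{1}\oplus N)$, uses transversality of that square to write $\iota_* = j^*\circ s_*$, and then invokes Proposition~\ref{prop:Gmap-Prop}(4) (section of a projective bundle) to read off $s_*(a) = p^*(a)\cdot\tm(N)$ and conclude. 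You instead unwind Definition~\ref{defn:Gmap} directly: you observe $i^*\circ v^*_{N,Y} = s^*$, assert $t^*_{N,Y} = \id$, and evaluate $s^*(\tm(N))$ by the splitting principle together with the multiplicativity of the Thom class (Proposition~\ref{prop:TIso}(4)), the line-bundle case (Proposition~\ref{prop:TIso}(6)), and the Cartan formula. Both computations are sound and use the same underlying machinery. The trade-off is that the paper's route relies only on the stated axiomatic properties of the Gysin map, whereas yours rests on the assertion --- true, but justified only by the phrase ``the deformation collapses'' --- that the deformation space $M$ of a zero section is $\A^1$-isomorphic to $N\times\A^1$ over $\A^1$ in a way making $\ov{j}_0$ and $\ov{j}_1$ the same inclusion, so that $t_{N,Y} = (\ov{j}_0)^{-1}\circ\ov{j}_1 = \id$. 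You should spell out that identification (for instance by local triviality of $N$ and the rescaling isomorphism on $Bl_{Y\times\{0\}}(N\times\A^1)$) to make your route complete.
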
 
\begin{proof}
It follows from Lemma~\ref{lem:Elem-eq} that $\iota_G: Y_G \inj X_G$ is
a strict closed embedding of ind-schemes with normal bundle 
$\left(N_X(Y)\right)_G$. Using the definitions of the equivariant cobordism
and the equivariant Chern classes, it suffices to show that if
$\iota:Y \to X$ is a strict closed embedding of ind-schemes of codimension
$d \ge 0$ with normal bundle $N$, then $\iota^* \circ \iota_*(a) = 
c^G_d\left(N\right) \cdot a$ for all $a \in \m(Y)$.
So we prove this statement.

To prove this, we consider the diagram~\eqref{eqn:DNC} and make the following
claim. 

\begin{claim}\label{claim:SIFM0}
Given any $a \in \m(Y)$, there exists $b \in \m(M)$ such that 
$\iota_*(a) = j^*_1(b)$ and $(i_N)_*(a) = j^*_0(b)$.
\end{claim}
$Proof \ of \ the \ Claim: $  
Set $b = F_* \circ p^*(a) \in \m(M)$. We then have 
\[
\begin{array}{lllll}
\iota_*(a) & = & \iota_* \circ (p \circ k_1)^* (a) & = & 
\iota_* \circ k^*_1 \circ p^*(a) \\
& {=}^{\dagger} & j^*_1 \circ F_* \circ p^*(a) & = &  j^*_1(b)
\end{array}
\]
where the equality ${=}^{\dagger}$ follows from the transversality of the top 
right square
in ~\eqref{eqn:DNC} and Proposition~\ref{prop:Gmap-Prop}. On the other hand,
we have
\[
\begin{array}{lllll}
(i_N)_*(a) & = &  (i_N)_* \circ (p \circ k_0)^* (a) & = &   
(i_N)_* \circ k^*_0 \circ p^*(a) \\
& {=}^{\dagger \dagger} & j^*_0 \circ F_* \circ p^*(a) & = &  j^*_0(b)
\end{array}
\]
where the equality ${=}^{\dagger \dagger}$ follows from the transversality of the 
top left square
in ~\eqref{eqn:DNC} and Proposition~\ref{prop:Gmap-Prop}.
This proves the claim.

We now prove the self-intersection formula for ind-schemes. 
There is nothing to prove if $d = 0$
and so we assume that $d \ge 1$. We first consider the case when 
$p_N: N \to Y$ is a vector bundle of rank $d$ and $\iota : Y \inj N$ is
the zero section embedding of ind-schemes. Let 
$p : \P\left({\bf 1} \oplus N\right) \to Y$ be the projectivization of $N$,
giving us the diagram 
\[
\xymatrix@C1.5pc{
Y \ar[d]_{\iota} \ar@{=}[r] & Y \ar[d]^{s} \\
N \ar[r]_<<<<{j} & \P\left({\bf 1} \oplus N\right)}
\]
which is clearly transverse. In particular, we have $\iota_* = j^* \circ s_*$
using property (1) of Proposition~\ref{prop:Gmap-Prop}. Combining this
the property (4) of  Proposition~\ref{prop:Gmap-Prop}, we get
\[
\begin{array}{lll}
\iota^* \circ \iota_*(a) & = & \iota^*\left[j^* \circ s_*(a)\right] \\
& = & \iota^*\left[j^*\left(c_d\left(p^*(N) \otimes
\sO_{{\bf 1} \oplus N}(1)\right) \cdot p^*(a)\right)\right] \\
& = & \iota^*\left[c_d\left(j^*\left(p^*(N) \otimes
\sO_{{\bf 1} \oplus N}(1)\right)\right) \cdot j^* \circ p^*(a)\right] \\
& = & \iota^*\left[c_d\left(p^*_N(N)\right) \cdot p^*_N(a)\right] \\
& = & \iota^*\left(c_d\left(p^*_N(N)\right)\right) \cdot \iota^* \circ p^*_N(a)
\\
& = & c_d(N) \cdot a.
\end{array}
\]
 
In the general case, we fix $a \in \m(Y)$ and let $b \in \m(M)$ be as in
Claim~\ref{claim:SIFM0}. We then have 
\[
\begin{array}{lll}
\iota^* \circ \iota_*(a) & = & \iota^* \circ j^*_1(b)  \\
& = & k^*_1 \circ F^*(b) \\
& = & k^*_0 \circ F^*(b) \\
& = & i^*_N \circ j^*_0(b) \\
& = & i^*_N \circ (i_N)_*(a) \\
& = & c_d\left(N\right) \cdot a. 
\end{array}
\]
This completes the proof of the Self-intersection formula.
\end{proof}

\section{Equivariant cobordism for torus actions}
\label{section:T-action}
In this section, we study certain special features of the equivariant
motivic cobordism when the underlying group is a torus. Our main result is
to show that the equivariant motivic cobordism of smooth projective schemes
with a torus action has a simple description. We shall see later in this 
paper that the equivariant cobordism for the action of a connected reductive
group can be described in terms of the equivariant cobordism for the
action of maximal tori of the group. As applications of these results, we
shall compute the equivariant and ordinary motivic cobordism of various classes
of smooth schemes with group actions.

\subsection{Borel-Moore homology associated to motivic cobordism}
\label{subsection:BMH}
Let {\bf SP} denote the category of pairs $(M, X)$ with $M \in \Sm_k$ and
$X \subseteq M$ a closed subset (possibly singular); a morphism $f: (M, X) \to
(N, Y)$ is a morphism $f: M \to N$ such that $f^{-1}(Y) \subseteq X$.
An object of the form $(M, X)$ is called a {\sl smooth pair}.
Recall from \cite{Panin1} that an oriented cohomology theory $A$ on
{\bf SP} is a contravariant functor $(M, X) \mapsto A_X(M)$ with values
in abelian groups together with a functor 
$\partial : A(M \setminus X) \to A_X(M)$ which satisfies Gysin exact 
sequence, excision, homotopy invariance and Chern classes for vector bundles.

Let ${\rm {\bf SP}}^{\prime}$ denote the category of smooth pairs $(M, X)$ where 
a morphism
$f: (M, X) \to (N, Y)$ is a projective morphism $f: M \to N$ such that
$f(X) \subseteq Y$. Let $A$ be an oriented bi-graded ring cohomology theory on 
{\bf SP}.
An integration with supports on $A$ is an assignment of a bi-graded
push-forward map $f_*: A_X(M) \to A_Y(N)$ which satisfies the usual 
functoriality properties, compatibility with pull-back and Chern class
operators ({\sl cf.} \cite[Definition~1.8]{Levine2}). Levine shows that
every oriented ring cohomology theory on {\bf SP} has a unique integration
with supports.

Using the existence of integration with supports, Levine \cite{Levine2} 
has further shown that any given oriented bi-graded
ring cohomology theory $A$ on {\bf SP} uniquely extends to an oriented bi-graded
Borel-Moore homology theory $H$ on $\Sch_k$ such that the pair $(H,A)$ is
an oriented duality theory on $\Sch_k$ in the sense of 
\cite[Definition~3.1]{Levine2}. 
In particular, the Borel-Moore homology theory $H$ has projective push-forward,
pull-back under open immersion and
smooth projection, Gysin exact sequence, weak homotopy invariance, 
exterior product, Chern classes for vector bundles and the Poincar{\'e}
duality $H_{a,b}(X) \cong A^{2d-a, d-b}(X)$ if $X$ is smooth of dimension $d$.

For $X \in \Sch_k$, $H(X)$ is defined by
choosing a closed embedding $X \subseteq M$ with $M \in \Sm_k$ (which
is possible since $X$ is quasi-projective) and setting 
$H(X) : = A_X(M)$. The main point of \cite{Levine2} is to show that this
is well defined and has all the properties mentioned above.
The projective push-forward is constructed by showing that
given smooth pairs $(M,X), (N, Y)$ and a map $f: M \to N$ such that
$f|_X: X \to Y$ is projective, the orientation and the excision 
property of $A$ yield a well-defined map $f_*: A_X(M) \to A_Y(N)$.

Apart from the above, the homology theory $H$ also satisfies the following 
commutativity property.

\begin{lem}\label{lem:BMCom}
Let 
\[
\xymatrix@C1.2pc{
U' \ar[r]^{j'} \ar[d]_{g} & X' \ar[d]^{f} \\
U \ar[r]_{j} & X}
\]
be a Cartesian square in $\Sch_k$ such that $f$ is projective 
and $j$ is an open immersion. One has then, 
$j^* \circ f_* = g_* \circ j'^* : H(X') \to H(U)$.
\end{lem}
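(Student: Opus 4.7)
The plan is to translate the assertion into a base-change statement for the integration with supports on $A$ applied to a Cartesian square of smooth pairs, and then invoke the naturality of that integration with respect to restriction along open immersions.

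First I would choose a closed embedding $X \inj N$ with $N \in \Sm_k$, which exists because $X$ is quasi-projective. Since $j : U \inj X$ is an open immersion, the open subscheme $\bar N = N \setminus (X \setminus U) \subseteq N$ satisfies $U = X \cap \bar N$. Next, since $f : X' \to X$ is projective, I would factor it as a closed embedding $X' \inj \P^n \times X$ followed by the projection to $X$; composing with the embedding of $X$ into $N$, I get a closed embedding $X' \inj M$ with $M = \P^n \times N \in \Sm_k$, and a smooth morphism $\pi : M \to N$ whose restriction to $X'$ is $f$. Setting $\bar M = \pi^{-1}(\bar N) = \P^n \times \bar N$, the Cartesianness of the given square together with the identification $U = X \cap \bar N$ forces $U' = X' \cap \bar M$.

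With these choices, by the construction of $H$ from $(A,\text{integration})$ recalled in \secref{subsection:BMH}, one has canonical identifications $H(X) = A_X(N)$, $H(X') = A_{X'}(M)$, $H(U) = A_U(\bar N)$, $H(U') = A_{U'}(\bar M)$. Under these identifications the maps $j^*$ and $j'^*$ are induced by the open immersions $\bar N \inj N$ and $\bar M \inj M$ followed by restriction of supports, while $f_*$ and $g_*$ are the pushforwards produced by Levine's integration with supports applied to the maps of smooth pairs $\pi : (M, X') \to (N, X)$ and $\pi|_{\bar M} : (\bar M, U') \to (\bar N, U)$ (whose restrictions to the distinguished closed subsets are $f$ and $g$ respectively, hence projective).

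The desired equality $j^* \circ f_* = g_* \circ j'^*$ is therefore a special case of the compatibility of integration with supports along the commutative diagram of smooth pairs
\[
\xymatrix@C1.2pc{
(\bar M, U') \ar[r] \ar[d]_{\pi|_{\bar M}} & (M, X') \ar[d]^{\pi} \\
(\bar N, U) \ar[r] & (N, X)
}
\]
in which the horizontal arrows are open immersions of the underlying smooth schemes and the supports pull back correctly. This compatibility is built into the integration structure on $A$: it follows from the uniqueness of integration with supports established in \cite{Levine2} by applying the uniqueness to the restriction of $A$ to the open subscheme $\bar N \subseteq N$, and concretely it reduces to the evident naturality of the Gysin exact sequence and the excision isomorphisms under restriction to an open subscheme. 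The main obstacle is precisely to verify this last naturality cleanly at the level of the axiomatic integration with supports; once one has it, the lemma is immediate, and no further geometric input is needed beyond the choice of ambient smooth factorization described above.
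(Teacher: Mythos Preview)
Your proposal is correct and follows essentially the same route as the paper: both reduce the assertion to the base-change compatibility of the integration with supports on $A$, applied to a transverse Cartesian square of smooth ambient schemes carrying the given closed supports. The one difference is organizational: the paper first factors $f = p \circ i$ into a closed immersion followed by a projection $\P^n \times X \to X$ and builds a separate cube for each factor, citing \cite[Definitions~2.7-A(4) and 3.1-A(1), A(2)]{Levine2} in each case, whereas you embed $X' \hookrightarrow M = \P^n \times N$ in one step and use the single square with $\pi : \P^n \times N \to N$. This is a mild streamlining and loses nothing. Your last paragraph, however, is vaguer than it should be: the compatibility you need is not most cleanly obtained from ``uniqueness of integration with supports,'' but is literally one of the axioms in Levine's definition of an integration (compatibility of push-forward with pull-back along a transverse square of smooth pairs), and you should cite it directly as the paper does.
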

\begin{proof}
We can write $f = p \circ i$ where $i : X' \inj \P^n \times X$ is a closed
embedding and $p : \P^n \times X \to X$ is the projection map.
Since $f_* = p_* \circ i_*$, one easily checks that the assertion of the lemma
holds if it holds when $f$ is a closed immersion and a projection
$\P^n \times X \to X$. Assume first that $f : X' \to X$ is a closed immersion.

We can embed the above Cartesian diagram 
into a commutative diagram
\begin{equation}\label{eqn:BMCom0}
\xymatrix@C1.2pc{
& U' \ar[rr]^{j'} \ar[dd] \ar[dl]_{g} & & X' \ar[dl]^{f} \ar[dd] \\
U \ar[rr]_<<<<<<<{j} \ar[dd] & & X \ar[dd]^<<<<<<<{i} & \\
& M \ar[dl]^{id} \ar[rr] & & N \ar[dl]^{id} \\
M \ar[rr]_{J} & & N & }
\end{equation}
where $N$ is smooth, $i: X \inj N$ is a closed immersion, $J$ is an open 
immersion and the front square is Cartesian. 
Since the top and the bottom squares are also Cartesian, the same holds for
the back square too. Moreover, the bottom square is clearly transverse
whose vertices are smooth. The desired equality $j^* \circ f_* = g_* \circ j'^*$
now follows immediately from 
\cite[Definitions~ 2.7-A(4) and 3.1-A(1), A(2)]{Levine2}.

If $f : \P^n \times X \to X$ is the projection map, then we only have to
replace 
$N \xrightarrow{id} N$ and $M \xrightarrow{id} M$ in the bottom square 
of the diagram~\eqref{eqn:BMCom0} by $\P^n \times N \xrightarrow{p} N$ and 
$\P^n \times M \xrightarrow{p} M$ 
respectively. This makes all squares Cartesian and the bottom square
transverse with all vertices smooth. One concludes the proof
as before using \cite[Definitions~ 2.7-A(4) and 3.1-A(1), A(2)]{Levine2}.
\end{proof}

The Borel-Moore homology theory associated to the motivic cobordism $MGL$
is denoted by $MGL'$. It is defined by setting $MGL'(X) = MGL_X(M) : = 
MGL(M/U)$, where $(M,X)$ is a smooth pair with $U = M \setminus X$. 
Our study of the equivariant cobordism of smooth projective schemes with torus
action is based on the following result about $MGL'$.

\begin{prop}\label{prop:filter-Gen}
Let $X$ be a $k$-scheme with a filtration by closed subschemes  
\begin{equation}\label{eqn:filtration-Gen}
{\emptyset} = X_{-1} \subsetneq X_0 \subseteq \cdots \subseteq X_n = X
\end{equation}
such that there are maps ${\phi}_m : W_m = (X_m \setminus X_{m-1}) \to Z_m$ for 
$0 \le m \le n$ which are all vector bundles. Assume moreover that each 
$Z_m$ is smooth and projective. 
Then there is a canonical isomorphism
\[
\stackrel{n}{\underset{m=0}{\bigoplus}} MGL'_{*,*}\left(Z_m\right)
\xrightarrow{\cong} MGL'_{*,*}\left(X\right).
\]
\end{prop}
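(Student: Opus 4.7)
The plan is to proceed by induction on the length $n$ of the filtration. The base case $n=0$ is immediate: $X = W_0$ is a vector bundle over $Z_0$, so weak homotopy invariance of the Borel-Moore theory $MGL'$ yields the pull-back isomorphism $\phi_0^* : MGL'_{*,*}(Z_0) \xrightarrow{\cong} MGL'_{*,*}(X)$.

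For the inductive step, I would apply the Gysin localization long exact sequence associated with the closed immersion $i : X_{n-1} \hookrightarrow X$ and its open complement $j : W_n \hookrightarrow X$:
\begin{equation*}
\cdots \to MGL'_{a,b}(X_{n-1}) \xrightarrow{i_*} MGL'_{a,b}(X) \xrightarrow{j^*} MGL'_{a,b}(W_n) \xrightarrow{\partial} MGL'_{a-1,b}(X_{n-1}) \to \cdots.
\end{equation*}
By the inductive hypothesis, $MGL'_{*,*}(X_{n-1}) \cong \bigoplus_{m<n} MGL'_{*,*}(Z_m)$, and by weak homotopy invariance applied to the vector bundle $\phi_n$, the pull-back $\phi_n^* : MGL'_{*,*}(Z_n) \xrightarrow{\cong} MGL'_{*,*}(W_n)$ is an isomorphism. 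It therefore suffices to split the sequence into a short exact sequence, i.e.\ to produce a section of $j^*$, as this kills the boundary $\partial$ and combines with the inductive decomposition to yield the claim.

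The section is built as follows. Since $Z_n$ is projective and $X$ is separated, the composition $s : Z_n \hookrightarrow W_n \hookrightarrow X$, where the first arrow is the zero section of $\phi_n$, is a proper monomorphism and hence a closed immersion. Taking $\sigma_n := s_*$, the Borel-Moore projective push-forward along $s$, gives a candidate section. To verify that $j^* \circ \sigma_n$ induces the inverse to $\phi_n^*$ (and hence an iso after composing with $(\phi_n^*)^{-1}$), apply the base-change property of Lemma 7.1 to the Cartesian square
\begin{equation*}
\xymatrix@C1.2pc{Z_n \ar[r]^{s'} \ar@{=}[d] & W_n \ar[d]^{j} \\ Z_n \ar[r]_s & X}
\end{equation*}
where $s'$ denotes the zero section of $\phi_n$. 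This yields $j^* \circ s_* = s'_*$, and the fact that $s'_*$ realizes the homotopy-invariance isomorphism $\phi_n^*$ (being the Gysin push-forward along the zero section of the vector bundle, which is inverse to the bundle pull-back) completes the verification that $\sigma_n$ is indeed a section.

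\textbf{Main obstacle.} The most delicate point is the final identification of $s'_*$ with the inverse of $\phi_n^*$ as endomorphisms of $MGL'$ of the pair $(Z_n, W_n)$: this requires keeping careful track of the degree shifts inherent in Levine's Borel-Moore formalism, and rests on the compositional identity $\phi_n \circ s' = \id_{Z_n}$ together with the push-pull formula for regular embeddings. Once this is in place, the splitting is automatic and the induction carries through.
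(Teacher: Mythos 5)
Your overall skeleton — induction on $n$, the Gysin localization sequence for $X_{n-1}\hookrightarrow X$ with open complement $W_n$, and the reduction to producing a section of $j^*$ — is the same as the paper's. The gap is in the construction of the section.

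You claim that $j^*\circ s_* = s'_*$ (via Lemma~\ref{lem:BMCom}, which is correct) and then that $s'_*$ ``realizes the homotopy-invariance isomorphism $\phi_n^*$,'' identifying the Gysin push-forward along the zero section with the inverse of the bundle pull-back. This is false, and confuses $s'_*$ with $s'^*$. The relation $\phi_n\circ s'=\mathrm{id}$ gives $s'^*\circ\phi_n^*=\mathrm{id}$, so it is the \emph{pull-back} $s'^*$ that inverts $\phi_n^*$. The \emph{push-forward} $s'_*$ is a different animal: by the self-intersection formula, $s'^*\circ s'_*(a)=c_{r}(\phi_n)\cdot a$, where $r=\operatorname{rank}\phi_n$, and this Euler class is not a unit — it vanishes, for instance, when $\phi_n$ is trivial of positive rank. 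In that case $s'_*=0$ (since $s'^*$ is invertible), so $j^*\circ\sigma_n=0$ and your $\sigma_n$ is not a section. Even before evaluating, the degrees refuse to match: in Levine's formalism projective push-forward preserves the Borel–Moore grading while smooth pull-back of relative dimension $r$ shifts it by $(2r,r)$, so $s'_*$ and $(\phi_n^*)^{-1}$ cannot be equal as maps $MGL'_{*,*}(Z_n)\to MGL'_{*,*}(W_n)$. The localization sequence $MGL'(Z_n)\xrightarrow{s'_*}MGL'(W_n)\to MGL'(W_n\setminus Z_n)\to\cdots$ also shows $s'_*$ is generally not surjective.

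The paper avoids this precisely by \emph{not} pushing forward directly along $Z_n\hookrightarrow X$. Instead, it takes the graph $V_m\subset W_m\times Z_m$ of $\phi_m$, forms its closure $\overline{V}_m$ in $X_m\times Z_m$, resolves singularities $Y_m\to\overline{V}_m$, and defines the section as $s_m=\overline{p}_{m*}\circ\overline{q}_m^*\circ(\phi_m^*)^{-1}$, with $\overline{p}_m:Y_m\to X_m$ projective and $\overline{q}_m:Y_m\to Z_m$. The ``pull up smooth, then push down projective'' shape makes the degree shifts cancel and the compatibility $j^*_m\circ s_m=\mathrm{id}$ is then checked with exactly the base-change Lemma~\ref{lem:BMCom} you cited — applied to $\overline{p}_m$ restricted over the open $W_m$, not to the zero-section inclusion. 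Your ``main obstacle'' paragraph correctly flags the degree-shift issue but the resolution you propose (relying on $\phi_n\circ s'=\mathrm{id}$ plus push–pull) does not close it; that argument governs $s'^*$, not $s'_*$.
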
  
\begin{proof}
We prove it by induction on $n$.
For $n = 0$, the map $X = X_0 \xrightarrow{{\phi}_0} Z_0$ is a  
vector bundle over a smooth scheme and hence the proposition follows from the
homotopy invariance of the motivic cobordism. 
We now assume by induction that $1 \le m \le n$ and 
\begin{equation}\label{eqn:split0}
\stackrel{m-1}{\underset{j=0}{\bigoplus}} MGL'_{*,*}\left(Z_j\right)
\xrightarrow{\cong} MGL'_{*,*}\left(X_{m-1}\right).
\end{equation}

The Gysin exact sequence for the inclusions $i_{m-1} : X_{m-1} \inj X_m$ and 
$j_{m} : W_{m} = X_m \setminus X_{m-1}$ of the closed and open subschemes
yields a long exact sequence
\begin{equation}\label{eqn:split*1}
\cdots \to MGL'_{*,*}\left(X_{m-1}\right) \xrightarrow{i_{(m-1)*}}
MGL'_{*,*}\left(X_m\right) \xrightarrow{j^*_m}
MGL'_{*,*}\left(W_m\right) \xrightarrow{\partial} \cdots.
\end{equation}
Using ~\eqref{eqn:split0}, it suffices now to construct a canonical splitting 
of the pull-back $j^*_m$ in order to prove the proposition.  

Let $V_m \subset W_m \times Z_m$ be the graph of the projection 
$W_m \xrightarrow{{\phi}_m} Z_m$ and let $\ov{V}_m$ 
denote the closure of $V_m$ in $X_m \times Z_m$.  
Let $Y_m \to \ov{V}_m$ be a resolution of singularities. Since $V_m$ is 
smooth, we see that $V_m \stackrel{\ov{j}_m}{\inj} Y_m$ as an open subset. We 
consider the composite maps
\begin{equation}\label{eqn:split01}
p_m : V_m \inj W_m \times Z_m \to W_m, \ \ 
q_m : V_m \inj W_m \times Z_m \to Z_m \ \ {\rm and} 
\end{equation}
\[
{\ov{p}}_m : Y_m \to X_m \times Z_m \to X_m, \ \ 
{\ov{q}}_m : Y_m \to X_m \times Z_m \to Z_m. 
\]
Note that ${\ov{p}}_m$ is a projective morphism since $Z_m$ is projective.
The map $q_m$ is smooth and $p_m$ is an isomorphism. We consider the diagram
\begin{equation}\label{eqn:split1}
\xymatrix{
{MGL'_{*,*}\left(Z_m\right)} \ar[r]^{{\ov{q}}^*_m} 
\ar[d]_{{\phi}^*_m}^{\cong} &
{MGL'_{*,*}\left(Y_m\right)} \ar[d]^{{{\ov{p}}_m}_*} \\
{MGL'_{*,*}\left(W_m\right)} & 
{MGL'_{*,*}\left(X_m\right)} \ar[l]^{j^*_m}.}
\end{equation} 
Note that the maps ${{\ov{p}}_m}_*$ and $j^*_m$ exist by the above mentioned
properties of $MGL'$ and the maps ${\ov{q}}^*_m$ and ${\phi}^*_m$ exist
by the standard functoriality of $MGL$ as $Z_m, W_m$ and $Y_m$ are all smooth.

The map ${{\phi}^*_m}$ is an isomorphism by the homotopy invariance of the 
$MGL$-theory. It suffices to show that
this diagram commutes. For, the map 
$s_m : = {{\ov{p}}_m}_* \circ {\ov{q}}^*_m \circ
{{\phi}^*_m}^{-1}$ will then give the desired splitting of the map
$j^*_m$. 

We now consider the commutative diagram
\[
\xymatrix{
X_m & W_m \ar[l]_{j_m}& \\
Y_m \ar[u]^{{\ov{p}}_m} \ar[dr]_{{\ov{q}}_m} & V_m \ar[u]_{p_m} \ar[d]^{q_m}
\ar[l]^{{\ov{j}}_m} & W_m \ar[ul]_{id} 
\ar[l]^{(id, {\phi}_m)} \ar[dl]^{{\phi}_m} \\
& Z_m. & }
\]
Since the top left square is Cartesian with $Y_m$ smooth and $j_m$ an open 
immersion, it follows from Lemma~\ref{lem:BMCom}
that $j^*_m \circ {{\ov{p}}_m}_*  = {p_m}_* \circ {\ov{j}}^*_m$. 
Now, using the fact that $(id, {\phi}_m)$ is an isomorphism, 
we get 
\[
\begin{array}{lllll}
j^*_m \circ {{\ov{p}}_m}_* \circ {\ov{q}}^*_m & = & 
{p_m}_* \circ {\ov{j}}^*_m \circ {\ov{q}}^*_m & = &  
{p_m}_* \circ q^*_m \\
& = & {p_m}_* \circ {(id, {\phi}_m)}_* \circ {(id, {\phi}_m)}^* \circ
q^*_m & = & {id}_* \circ {\phi}^*_m \\
& = &  {\phi}^*_m. & &  
\end{array} 
\]
This proves the commutativity of ~\eqref{eqn:split1} and hence the proposition.
\end{proof}

\subsection{Equivariant cobordism of filtrable schemes}
\label{subsection:Filtrable}
Recall that a linear algebraic group $T$ over $k$ is said to be a {\sl split}
torus if it is isomorphic to $\left(\G_m\right)^n$ as a group scheme over
$k$ where $n \ge 1$ is a positive integer, called the rank of the torus.
We shall assume all tori to be split in this section. 

We recall from \cite[Section~3]{Brion2} that a $k$-scheme $X$ with an action 
of a torus $T$ is called {\sl filtrable} if the fixed point locus $X^T$ 
is smooth and projective, 
and there is an ordering $X^T = \stackrel{n}{\underset{m=0}{\coprod}}
Z_m$ of the connected components of the fixed point locus, a 
filtration of $X$ by $T$-invariant closed subschemes
\begin{equation}\label{eqn:filtration-BB}
{\emptyset} = X_{-1} \subsetneq X_0 \subseteq \cdots \subseteq X_n = X
\end{equation}
and maps ${\phi}_m : W_m = (X_m \setminus X_{m-1}) \to Z_m$ for $0 \le m \le n$ 
which are all $T$-equivariant vector bundles. It is important to note that
the closed subschemes $X_m$'s may not be smooth even if $X$ is so.
The following result was proven by Bialynicki-Birula \cite{BB}
when $k$ is algebraically closed and by Hesselink
\cite{Hessel} in general.

\begin{thm}[Bialynicki-Birula, Hesselink]\label{thm:BBH}
Let $X$ be a smooth projective scheme with an action of $T$. Then $X$ is
filtrable.
\end{thm}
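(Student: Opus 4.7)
The plan is to follow the classical Bialynicki-Birula decomposition, with Hesselink's refinement to handle the not-necessarily-algebraically-closed base field. The starting point is to choose a generic one-parameter subgroup $\lambda : \G_m \to T$ whose fixed locus coincides with $X^T$. Such a $\lambda$ exists because the weights of the $T$-action on the finitely many tangent spaces $T_{Z_m,x} X$ at fixed components $Z_m$ are finite in number, and one can pick $\lambda$ in the complement of the hyperplanes they cut out in the cocharacter lattice. Since $X$ is smooth and projective, each connected component $Z_m$ of $X^T$ is smooth and projective, giving the decomposition $X^T = \coprod_{m=0}^n Z_m$.

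Next I would construct the cells. For each $m$, set
\[
W_m \;=\; \{\, x \in X : \lim_{t \to 0} \lambda(t) \cdot x \in Z_m \,\}.
\]
The existence of the limit map is guaranteed by the valuative criterion applied to the $\G_m$-orbit closure in the projective scheme $X$. The content of Bialynicki-Birula's theorem is that each $W_m$ is a locally closed smooth $T$-invariant subscheme of $X$, and the limit map $\phi_m : W_m \to Z_m$ is a $T$-equivariant vector bundle whose fibers are naturally identified with the positive-weight subspace $N_{Z_m/X}^+$ of the normal bundle with respect to $\lambda$. To prove this, I would work locally around a point $z \in Z_m$: by Sumihiro's linearization theorem applied to the $T$-action on $X$, there is a $T$-invariant affine open neighborhood of $z$ which $T$-equivariantly embeds into a representation, reducing the statement to a linear calculation on $T_z X$ where the decomposition into weight spaces is elementary.

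To assemble the filtration, I would order the components $Z_0,\dots,Z_n$ so that $\dim N_{Z_i/X}^+ \le \dim N_{Z_j/X}^+$ whenever $i < j$ (or use the lexicographic ordering on the $\lambda$-weights, as in Hesselink). Then setting $X_m = \coprod_{i \le m} W_i$ as a set, one checks that this union is closed in $X$: a point in $\ov{W_j}$ has its $\G_m$-limit in $\ov{W_j} \cap X^T = \coprod_{i \le j} Z_i$, which forces it to lie in some $W_i$ with $i \le j$. Giving $X_m$ the reduced induced scheme structure (or, better, the natural closed subscheme structure coming from the flattening stratification) yields $T$-invariant closed subschemes with $W_m = X_m \setminus X_{m-1}$ and $\phi_m : W_m \to Z_m$ a $T$-equivariant vector bundle, which is exactly the filtrable structure required.

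The main obstacle will be the local analysis needed to show that $\phi_m$ is genuinely an affine bundle (not merely a set-theoretic fibration with affine fibers), and to verify that the construction descends to the given ground field $k$ — this is precisely where Hesselink's argument enters, using the splitness of $T$ and Sumihiro's theorem (which is available under our standing hypotheses by \cite[Theorem~2.5]{Sumihiro}) to produce $T$-equivariant affine charts defined over $k$. Once these local models are in place, the global statements about smoothness of $W_m$, the vector bundle structure on $\phi_m$, and the closedness of the filtration steps all follow by gluing.
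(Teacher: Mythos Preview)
The paper does not give a proof of this theorem at all: it is stated with attribution to Bialynicki-Birula \cite{BB} (for $k$ algebraically closed) and Hesselink \cite{Hessel} (in general), and then used as a black box. Your sketch is essentially the argument found in those references, so in that sense you are reproducing the ``paper's proof'' by reproducing its sources.

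One point in your outline deserves tightening. The ordering you propose --- by $\dim N_{Z_m/X}^{+}$ --- does not in general guarantee that $X_m = \bigcup_{i\le m} W_i$ is closed; there is no a priori reason that a component $Z_i$ lying in $\overline{W_j}\setminus W_j$ must have smaller positive-normal rank. The standard device is to choose a $T$-linearized ample line bundle $L$ (available by the paper's standing linearity hypothesis) and let $a_m$ be the integer obtained by pairing $\lambda$ with the character through which $T$ acts on $L|_{Z_m}$. A semicontinuity/Morse-type argument then shows $Z_i \subset \overline{W_j}\setminus W_j$ forces $a_i < a_j$, so ordering by $a_m$ works. Your parenthetical pointer to Hesselink's weight ordering is the right fix; just make that the primary argument rather than the afterthought.
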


\subsubsection{Canonical admissible gadgets}\label{subsubsection:CAG}
Let $T$ be a split torus of rank $r$. 
For a character $\chi$ of $T$, let $L_{\chi}$ denote the one-dimensional 
representation of $T$ where $T$ acts via $\chi$. Given a basis
$\{\chi_1, \cdots , \chi_r\}$ of the character group $\wh{T}$ of $T$ and given
$i \ge 1$, we set $V_i = \stackrel{r}{\underset{j = 1} \prod} 
L^{\oplus i}_{\chi_j}$ and $U_i = \stackrel{r}{\underset{j = 1} \prod} 
\left(L^{\oplus i}_{\chi_j} \setminus \{0\}\right)$.
Then $T$ acts on $V_i$ by $(t_1, \cdots , t_r)(x_1, \cdots , x_r)
= \left(\chi_1(t_1)(x_1), \cdots , \chi_n(t_r)(x_r)\right)$.
It is then easy to see that $\rho = \left(V_i, U_i\right)_{i \ge 1}$ is an 
admissible gadget for $T$ such that ${U_i}/T \cong \left(\P^{i-1}\right)^r$.
Moreover, the line bundle $L_{\chi_j} \stackrel{T_j}{\times} 
\left(L^{\oplus i}_{\chi_j}\setminus \{0\}\right) \to \P^{i-1}$ is the line bundle
$\sO(\pm 1)$ for each $1 \le j \le r$.
An admissible gadget for $T$ of this form will be called a 
{\sl canonical} admissible gadget in this text.

Let $X \in \Sm^T_k$ be a filtrable scheme with the filtration given by 
~\eqref{eqn:filtration-BB}.
Let $\rho = \left(V_i, U_i\right)_{i \ge 1}$ be a canonical admissible gadget for 
$T$ and set 
\[
X^i = X \stackrel{T}{\times} U_i, \ X^i_m = X_m \stackrel{T}{\times} U_i, \
W^i_m = W_m \stackrel{T}{\times} U_i \ {\rm and} \ 
Z^i_m = Z_m \stackrel{T}{\times} U_i.
\]

Given the $T$-equivariant filtration of $X$ as in ~\eqref{eqn:filtration-BB}, 
it is easy to see that for each $i \ge 1$, there is an 
associated system of filtrations 
\begin{equation}\label{eqn:filter-Equiv1}
{\emptyset} = X_{-1}^i \subsetneq X_0^i \subseteq \cdots \subseteq X_n^i = X^i
\end{equation}
and maps ${\phi}_m : W_m^i = X_m^i \setminus X_{m-1}^i \to Z_m^i$ for 
$0 \le m \le n$ which are all vector bundles. Moreover, 
as $T$ acts trivially on each $Z_m$, we have that $Z_m^i 
\cong Z_m \times \left({U_i}/T\right) \cong Z_m \times \left(\P^{i-1}_k\right)^r$.
Since $Z_m$ is smooth and projective, this in turn implies that
each $Z_m^i$ is smooth and projective. We conclude that the filtration
~\eqref{eqn:filter-Equiv1} of $X^i$ satisfies all the conditions of 
Proposition~\ref{prop:filter-Gen}. In particular, there are split exact
sequences 
\begin{equation}\label{eqn:ft*}
0 \to MGL'_{*,*}\left(X_{m-1}^i\right) \to  MGL'_{*,*}\left(X_{m}^i\right)
\to  MGL'_{*,*}\left(W_{m}^i\right) \to 0
\end{equation}
for all $0 \le m \le n$ and $i \ge 1$.

\begin{lem}\label{lem:MLTorus}
Let $X \in \Sm^T_k$ be smooth and projective and let $a \ge b \ge 0$.
Then for any admissible gadget 
$\rho = \left(V_i, U_i\right)_{i \ge 1}$ for $T$, the inverse system
$\left\{MGL^{a,b}(X \stackrel{T}{\times} U_i)\right\}$
satisfies the Mittag-Leffler condition. In particular, the map 
\[
MGL^{a,b}_T(X) \ \to \ {\underset{i}\varprojlim} \ 
MGL^{a,b}\left(X \stackrel{T}{\times} U_i\right)
\]
is an isomorphism.
\end{lem}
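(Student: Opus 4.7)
The plan is to apply the Milnor exact sequence (Proposition~\ref{prop:Limit}) to the sequence of cofibrations $X \stackrel{T}{\times} U_i \hookrightarrow X \stackrel{T}{\times} U_{i+1}$, giving
\[
0 \to \varprojlim{}^1_i MGL^{a-1,b}(X \stackrel{T}{\times} U_i) \to MGL^{a,b}_T(X) \to \varprojlim_i MGL^{a,b}(X \stackrel{T}{\times} U_i) \to 0.
\]
Thus the claimed isomorphism reduces to the vanishing of the $\varprojlim^1$ term, which in turn follows from Mittag-Leffler. Since $MGL^{a,b}_T(X)$ is independent of the choice of admissible gadget up to canonical isomorphism (Proposition~\ref{prop:Rep-ind}), I would first settle the case of a canonical admissible gadget (in the sense of \S~\ref{subsubsection:CAG}, where $U_i/T \cong (\P^{i-1})^r$ and $r = \rank(T)$), and then bootstrap to a general one.

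For the canonical gadget, $X^i := X \stackrel{T}{\times} U_i$ is smooth projective. By Theorem~\ref{thm:BBH}, $X$ is filtrable, with vector-bundle strata $\phi_m : W_m \to Z_m$ where each $Z_m$ is a smooth projective connected component of $X^T$. This induces a filtration of $X^i$ by $X^i_m = X_m \stackrel{T}{\times} U_i$ with vector-bundle strata $W^i_m \to Z^i_m = Z_m \times (\P^{i-1})^r$ (smooth projective, since $T$ acts trivially on $Z_m$). Proposition~\ref{prop:filter-Gen} then splits the Borel--Moore cobordism as $MGL'_{*,*}(X^i) \cong \bigoplus_m MGL'_{*,*}(Z^i_m)$. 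Invoking Poincar\'e duality for the smooth schemes $X^i$ and $Z^i_m$ and noting that $c_m := \codim_X(Z_m)$ is preserved under the Borel construction, this translates to
\[
MGL^{a,b}(X^i) \;\cong\; \bigoplus_m MGL^{a-2c_m,\, b-c_m}\bigl(Z_m \times (\P^{i-1})^r\bigr).
\]
By the projective bundle formula (iterated $r$ times), each summand is a free $MGL^{*,*}(Z_m)$-module on monomials $\xi_1^{j_1}\cdots\xi_r^{j_r}$ with $0 \le j_l \le i-1$ and each $\xi_l$ of bi-degree $(2,1)$; the transition map from $i+1$ to $i$ projects away monomials in which some $j_l$ equals $i$. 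These transition maps are surjective, so the Mittag-Leffler condition holds trivially, and $\varprojlim^1 = 0$.

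The main obstacle will be extending from the canonical gadget to an arbitrary admissible gadget $\rho'$. Here I would use the bi-indexed auxiliary system $\sU_{i,j} = (X \times U_i \times U'_j)/T$ from the proof of Proposition~\ref{prop:Rep-ind}, equipped with its natural projections to both $X \stackrel{T}{\times} U_i$ and $X \stackrel{T}{\times} U'_j$. For each fixed $j$, the map $\colim_i \sU_{i,j} \to X \stackrel{T}{\times} U'_j$ is an $\A^1$-weak equivalence (its fibers forming the $\A^1$-contractible $E_T$-space of the canonical gadget), and the diagonal subsystem is cofinal in $\{\sU_{i,j}\}$. Comparing the Milnor sequences for the three systems, and feeding in the canonical-gadget vanishing just established, yields $\varprojlim^1_j MGL^{a-1,b}(X \stackrel{T}{\times} U'_j) = 0$, which completes the proof.
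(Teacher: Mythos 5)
Your overall plan matches the paper's: reduce to a Mittag--Leffler statement via the Milnor sequence of Proposition~\ref{prop:Limit}, settle the canonical gadget first using the Bialynicki-Birula filtration, then bootstrap to an arbitrary gadget. The Poincar\'e-duality reformulation $MGL^{a,b}(X^i)\cong\bigoplus_m MGL^{a-2c_m,b-c_m}\bigl(Z_m\times(\P^{i-1})^r\bigr)$ is a pleasant way to see each summand. But there are two gaps.

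For the canonical gadget, after invoking Proposition~\ref{prop:filter-Gen} to split $MGL'_{*,*}(X^i)\cong\bigoplus_m MGL'_{*,*}(Z^i_m)$, you assert that under this identification ``the transition map from $i+1$ to $i$ projects away monomials in which some $j_l$ equals $i$.'' That step tacitly assumes the splittings at levels $i$ and $i+1$ are compatible with the restriction $X^i\hookrightarrow X^{i+1}$ --- i.e.\ that the splitting maps $s^i_m$ commute with the transitions. This is a genuinely nontrivial fact: it is precisely what Lemma~\ref{lem:filter-commute} and the identity~\eqref{eqn:filt-comm5} establish, and the paper proves those \emph{after} Lemma~\ref{lem:MLTorus}, because Lemma~\ref{lem:MLTorus} is an input to that discussion. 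The paper's own proof of Lemma~\ref{lem:MLTorus} therefore avoids the splittings altogether: it shows the transition $\m(X^{i+1})\to\m(X^i)$ is surjective by inducting on the filtration, using the exact sequences~\eqref{eqn:ft*}, the projective bundle formula on the fixed loci $Z_m\times(\P^{i-1})^r$ to handle each stratum, and a snake-lemma step to propagate surjectivity through the filtration; the latter in turn requires verifying that $MGL'_{*,*}(\ov{X^i_m})\to MGL'_{*,*}(\ov{W^i_m})$ is surjective via homotopy invariance along $\ov{X^i}\to X^i$. None of this machinery appears in your outline, and without it you have no license to restrict the transition to each summand.

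For a general admissible gadget, your comparison of Milnor sequences across $\{X\stackrel{T}{\times}U_i\}$, $\{\sU_{i,j}\}$ and $\{X\stackrel{T}{\times}U'_j\}$ is aimed at concluding $\varprojlim^1=0$, and you write that this ``completes the proof.'' It does not: $\varprojlim^1=0$ is strictly weaker than Mittag--Leffler for countable inverse systems of abelian groups, and the lemma asserts (and the paper later uses, e.g.\ in Lemma~\ref{lem:ML-Gen}) the Mittag--Leffler condition itself. The paper instead runs a diagram chase: for a fixed $i_0$ it uses Lemma~\ref{lem:Iso-open} via Lemma~\ref{lem:Ind} to find $s_0$ with $\alpha^*_{i_0,s_0}$ an isomorphism, then $i_1\gg i_0$ with $\beta^*_{i,s_0}$ an isomorphism for $i\ge i_1$, and feeds in the already-established surjectivity for the canonical gadget to show directly that $\mathrm{Image}\bigl(MGL^{a,b}(X^i)\to MGL^{a,b}(X^{i_0})\bigr)$ stabilizes for $i\ge i_1$. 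Your approach never tracks images, so it cannot produce that stabilization.
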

\begin{proof}
Since $MGL^{a,b}_T(X) = MGL^{a,b}\left(X_G(\rho)\right)$, the second assertion
follows from the Mittag-Leffler condition using Proposition~\ref{prop:Limit}.
To prove the Mittag-Leffler condition, we first assume that
$\rho$ is a canonical admissible pair and prove the stronger
assertion that the
restriction map $\m(X^{i+1}) \to \m(X^i)$ is surjective for all $i \ge 1$, 
where $X^i = X \stackrel{T}{\times} U_i$.

Since the map $\m\left(X \stackrel{T}{\times}(U_i \oplus W_i)\right) \to
 \m(X^i)$ is an isomorphism by the homotopy invariance, we only have to show 
that the map $\m(X^{i+1}) \to 
\m\left(X \stackrel{T}{\times}(U_i \oplus W_i)\right)$ induced by the open 
immersion is an isomorphism. 

It follows from Theorem~\ref{thm:BBH} that $X$ is filtrable. 
Consider a $T$-equivariant filtration of $X$ as in ~\eqref{eqn:filtration-BB}.
Set $\ov{X_i} = 
X \stackrel{T}{\times}(U_i \oplus W_i)$.
We show by induction on $m \ge 0$ that the pull-back map
$MGL'_{*,*}(X^{i+1}_m) \to MGL'_{*,*}(\ov{X^i_m})$ induced by the open immersion,
is surjective for all $m \ge 0$.

For any $0 \le m \le n$, there is a commutative diagram
\begin{equation}\label{eqn:MLTorus0}
\xymatrix@C1.2pc{
\m\left(Z^{i+1}_m\right) \ar[r] \ar[d] & 
\m\left(\ov{Z^{i}_m}\right) \ar[r] \ar[d] & 
\m\left(Z^{i}_m\right) \ar[d] \\
\m\left(W^{i+1}_m\right) \ar[r] & 
\m\left(\ov{W^{i}_m}\right) \ar[r] & 
\m\left(W^{i}_m\right)}
\end{equation}
of the motivic cobordism of smooth schemes where all the vertical arrows
are isomorphisms by the homotopy invariance. The left horizontal arrows
in both rows are isomorphisms again by the homotopy invariance.

Next, we observe that $T$ acts trivially on each $Z_m$ and hence
$Z_m^i \cong Z_m \times ({U_i}/T) \cong Z_m \times \left(\P^{i-1}\right)^r$.
Hence the projective bundle formula for the motivic cobordism ({\sl cf.}
\S ~\ref{subsubsection:PBFIS}) implies that the map
$\m\left(Z^{i+1}_m\right) \to \m\left(Z^{i}_m\right)$ is surjective. 
In particular, all the arrows on the top row in ~\eqref{eqn:MLTorus0} are
surjective. We conclude that the map
$\m\left(W^{i+1}_m\right) \to \m\left(\ov{W^{i}_m}\right)$ is surjective
for all $0 \le m \le n$ and all $i \ge 1$.
Taking $m =0$, we see in particular that the map
$MGL'_{*,*}(X^{i+1}_0) \to MGL'_{*,*}(\ov{X^i_0})$ is surjective for all
$i \ge 1$.

Assume now that $m \ge 1$ and that this surjectivity assertion holds for all
$j \le m-1$. We consider the diagram
\begin{equation}\label{eqn:MLTorus1}
\xymatrix@C1.2pc{
0 \ar[r] & MGL'_{*,*}(X^{i+1}_{m-1}) \ar[r] \ar[d] &
MGL'_{*,*}(X^{i+1}_{m}) \ar[r] \ar[d] & 
MGL'_{*,*}(W^{i+1}_{m}) \ar[r] \ar[d] & 0 \\
0 \ar[r] &  MGL'_{*,*}(\ov{X^{i}_{m-1}}) \ar[r] &
MGL'_{*,*}(\ov{X^{i}_{m}}) \ar[r] & 
MGL'_{*,*}(\ov{W^{i}_{m}}) \ar[r] & 0.}
\end{equation}

The left square is commutative by Lemma~\ref{lem:BMCom} and the right square
is commutative by the functoriality of open pull-back 
({\sl cf.} \cite[p. 34]{Levine2}). 
The left vertical arrow is surjective by induction and we have shown
above that the right vertical arrow is surjective. 
The top sequence is exact by
~\eqref{eqn:ft*}. Suppose we know that the bottom sequence is also exact.
It will then follow that the middle
vertical arrow is surjective. Thus we are only left with showing that
the bottom sequence of ~\eqref{eqn:MLTorus1} is exact. 

Using the Gysin exact sequence, it is enough to show that the open pull-back
$MGL'_{*,*}(\ov{X^{i}_{m}}) \to MGL'_{*,*}(\ov{W^{i}_{m}})$ is surjective.
This is equivalent to showing that the map
$MGL^{*,*}_{\ov{X^{i}_{m}}}\left(\ov{X^{i}}\right) \to
MGL^{*,*}\left(\ov{W^{i}_{m}}\right)$ is surjective. For this, we consider the 
diagram

\begin{equation}\label{eqn:MLTorus1}
\xymatrix@C1.2pc{
0 \ar[r] & MGL^{*,*}_{X^i_{m-1}}\left(X^i\right) \ar[r] &
MGL^{*,*}_{X^i_m}\left(X^i\right) \ar[r] \ar[d]_{p^*} & 
MGL^{*,*}\left(W^i_m\right) \ar[d]^{p^*} \ar[r] & 0 \\
& & MGL^{*,*}_{\ov{X^{i}_{m}}}\left(\ov{X^{i}}\right) \ar[r] &
MGL^{*,*}\left(\ov{W^{i}_{m}}\right), & }
\end{equation}
where the top sequence is exact by ~\eqref{eqn:ft*}. The vertical arrows
are the pull-back maps induced by the vector bundle  
$p : \ov{X^{i}} \to X^i$ on the smooth scheme $X^i$
({\sl cf.} \cite[Definition~2.7-A(6)]{Levine2}). 
Hence the right vertical arrow is an isomorphism by the homotopy invariance
of the motivic cobordism since $W^i_m$ is smooth.
It follows that the bottom horizontal arrow is surjective.
This completes the proof of the first assertion for a canonical 
admissible pair.

Let us now assume that $\rho = \left(V_i, U_i\right)$ is any admissible
pair for $T$ and let $\rho' = \left(V'_i, U'_i\right)$ be a canonical pair.
Set ${X'}^i = X \stackrel{T}{\times} U'_i$ and 
$Y^{i,j} = X \stackrel{T}{\times} (U_i \oplus U'_j)$.

Fix $i_0 \ge 1$. We have shown in the proof of Lemma~\ref{lem:Ind} that
there exists $s_0 \gg 0$ such that the map 
$\alpha^*_{i_0, j} : MGL^{a,b}\left(X^{i_0}\right) \to 
MGL^{a,b}\left(Y^{i_0, j}\right) \to $ is
an isomorphism for all $j \ge s_0$. 
By reversing the role of the admissible pairs, let $i_1 \gg i_0$ be such that 
the map $\beta^*_{i, s_0} :  MGL^{a,b}\left({X'}^{s_0}\right) \to 
MGL^{a,b}\left(Y^{i, s_0}\right)$
is an isomorphism for all $i \ge i_1$.

Let us now fix an an element 
$a \in {\rm Image} \left(MGL^{a,b}(X^{i_1}) \to MGL^{a,b}(X^{i_0})\right)$.
For any $i \ge i_1$, we get a commutative diagram
\[
\xymatrix{
MGL^{a,b} (X^i) \ar[r]^{\cong} \ar[d] & 
MGL^{a,b} \left(Y^{i,s_1}\right) \ar[d] & 
MGL^{a,b} \left({X'}^{s_1}\right) \ar[l] \ar@{->>}[d] \\
MGL^{a,b}\left(X^{i_1}\right) \ar[r] \ar[d] & 
MGL^{a,b}\left(Y^{i_1,s_0}\right) \ar[d] &    
MGL^{a,b}\left({X'}^{s_0}\right) \ar[l]^{\cong} \\
MGL^{a,b}\left(X^{i_0}\right) \ar[r]_{\cong} &
MGL^{a,b}\left(Y^{i,s_0}\right) & }
\]
in which $s_1 \gg s_0$ is chosen so that the top left horizontal map is
an isomorphism. We have shown above that the extreme right 
vertical arrow is surjective. An easy diagram chase shows that
$a \in {\rm Image} \left(MGL^{a,b}(X^{i}) \to MGL^{a,b}(X^{i_0})\right)$.
This completes the proof of the lemma.
\end{proof}

The following consequence of Lemma~\ref{lem:MLTorus} improves
Corollary~\ref{cor:Mot-Geom} in the special cases of torus action.

\begin{cor}\label{cor:Mot-Geom-Torus}
For $X \in \Sm^T_k$ be smooth and projective.
Then for $q \ge 0$, there is a natural isomorphism
\begin{equation}\label{eqn:Mot-Geom0}
\Omega^q_T(X) \xrightarrow{\cong} MGL^{2q,q}_T(X). 
\end{equation}
\end{cor}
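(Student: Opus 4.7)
The strategy is to reduce to Levine's non-equivariant comparison theorem \cite{Levine1}, which gives a natural isomorphism $\Omega^q(Y) \xrightarrow{\cong} MGL^{2q,q}(Y)$ for every $Y \in \Sm_k$, and then pass to the inverse limit over the finite-dimensional Borel spaces $X^i_T = X \stackrel{T}{\times} U_i$ associated to an admissible gadget $\rho = (V_i, U_i)_{i \ge 1}$ for $T$. By Lemma~\ref{lem:sch} each $X^i_T$ lies in $\Sm_k$, so Levine's theorem supplies a compatible family of natural isomorphisms
\[
\Omega^q(X^i_T) \xrightarrow{\cong} MGL^{2q,q}(X^i_T) \qquad (i \ge 1),
\]
where the compatibility with the restriction maps induced by $U_i \hookrightarrow U_{i+1}$ comes from the naturality of Levine's isomorphism.

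The plan is then as follows. First, I would unpack the definition of the equivariant geometric cobordism $\Omega^*_T(X)$ from \cite{Krishna1} as the inverse limit $\varprojlim_i \Omega^q(X^i_T)$, which is legitimate because $X$ is smooth (so the Borel spaces are smooth and the ordinary $\Omega^q$ is defined) and because the codimension of $V_i \setminus U_i$ in $V_i$ grows to infinity, so in codimension $q$ the inverse system stabilizes—in particular this system trivially satisfies Mittag-Leffler. Second, I would invoke Lemma~\ref{lem:MLTorus}, which—using that $X$ is smooth and projective—gives the isomorphism
\[
MGL^{2q,q}_T(X) \xrightarrow{\cong} \varprojlim_i MGL^{2q,q}(X^i_T)
\]
and in particular guarantees Mittag-Leffler on the $MGL$ side (so there is no $\varprojlim^1$ contribution obstructing the comparison). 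Third, I would take the inverse limit of Levine's isomorphism level by level and splice together the two identifications to obtain the desired natural isomorphism
\[
\Omega^q_T(X) = \varprojlim_i \Omega^q(X^i_T) \xrightarrow{\cong} \varprojlim_i MGL^{2q,q}(X^i_T) \xleftarrow{\cong} MGL^{2q,q}_T(X).
\]

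The independence of the choice of admissible gadget $\rho$ on the $MGL_T$ side is already ensured by Proposition~\ref{prop:Rep-ind-V}, and on the $\Omega_T$ side by the double-filtration argument of \cite{Krishna1}; the two independence statements are matched by the naturality of Levine's isomorphism under pull-back along the projections $Y^{i,j}_T \to X^i_T$ and $Y^{i,j}_T \to {X'}^j_T$ (in the notation of the proof of Lemma~\ref{lem:MLTorus}). The main obstacle I anticipate is purely bookkeeping: verifying that the restriction maps in the $\Omega^q$-system agree, under Levine's isomorphism, with the restriction maps in the $MGL^{2q,q}$-system used to form $MGL^{2q,q}_T(X)$, and that the resulting isomorphism is independent of the admissible gadget. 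Once this naturality is in place, the corollary follows by assembling the three displayed isomorphisms, with the hypothesis that $X$ is smooth and projective entering only through Lemma~\ref{lem:MLTorus} to kill the derived limit on the motivic cobordism side.
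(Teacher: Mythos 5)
Your proof follows the same route as the paper: choose an admissible gadget, use Levine's non-equivariant comparison $\Omega^q(Y) \cong MGL^{2q,q}(Y)$ levelwise, control the $MGL$-inverse limit via Lemma~\ref{lem:MLTorus}, and identify $\Omega^q_T(X)$ with $\varprojlim_i \Omega^q(X^i_T)$ on the geometric side. The ingredients and their assembly are exactly the paper's.

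One soft spot: you present the identification $\Omega^q_T(X) \cong \varprojlim_i \Omega^q(X^i_T)$ as a mere \emph{unpacking} of the definition, justified by a stabilization heuristic. But the actual definition of $\Omega^q_T(X)$ in Section~\ref{subsection:MGEC} is $\varprojlim_i \Omega^q(X^i_T)/F^i\Omega^q(X^i_T)$, with a quotient by the coniveau filtration $F^i$ built in, so the equality you want is a theorem rather than a definition. The paper simply cites \cite[Theorem~6.1]{Krishna1}, which proves exactly that identification for smooth projective $X$ with torus action and a canonical admissible gadget. Your heuristic argument (codimension of $V_i \setminus U_i$ grows, so the tower stabilizes in a fixed codimension) is the intuition behind that theorem, but it neither addresses the $F^i$-quotient nor shows that the transition maps become isomorphisms; you should replace the heuristic by the citation. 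Everything else — the naturality of Levine's isomorphism under the transition maps, and the use of Lemma~\ref{lem:MLTorus} to kill $\varprojlim^1$ on the $MGL$ side — is the right bookkeeping and matches the paper.
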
  
\begin{proof}
Let $\rho = \left(V_i, U_i\right)_{i \ge 1}$ be a canonical admissible gadget 
for $T$.
It follows from \cite[Theorem~6.1]{Krishna1} that there is a natural
isomorphism ${\underset {i} \varprojlim} \ 
\Omega^q\left(X \stackrel{T}{\times} U_i\right) 
\xrightarrow{\cong} \Omega^q_T(X)$. 
On the other hand, the map 
$\Omega^q\left(X \stackrel{T}{\times} U_i \right) \to 
MGL^{2q,q}\left(X \stackrel{T}{\times} U_i\right)$ is an isomorphism for each
$i \ge 1$ by \cite[Theorem~3.1]{Levine1}. The corollary now follows
by applying Lemma~\ref{lem:MLTorus}.
\end{proof}

Let $X \in \Sm^T_k$ be smooth and projective with a filtration as in
~\eqref{eqn:filtration-BB}. Set $S_m = X \setminus X_{m-1}$ for $0 \le m \le n$. 
Let $V_m \subset W_m \times Z_m$ be the scheme 
defined in ~\eqref{eqn:split01} and let $Y_m \to 
\ov{V}_m$ be the canonical $T$-equivariant resolution of singularities.
One easily checks that all the maps in ~\eqref{eqn:split01} then become
$T$-equivariant. 

Let $\rho = \left(V_i, U_i\right)_{i \ge 1}$ be a canonical admissible gadget for 
$T$. This yields for every $0 \le m \le n$ and $i \ge 1$, the maps 
$j^i_m: (S^i_{m}, W^i_m) \to (X^i, X^i_m)$ in {\bf SP} and the maps
${\ov{p}}^i_m: (Y^i_m, Y^i_m) \to (X^i, X^i_m)$ in ${\rm {\bf SP}}^{\prime}$.
Note also that for every $i\ge 1$, there is a closed immersion
$\gamma^i_X: X^i \inj X^{i+1}$, which is natural with respect to maps in 
$\Sch^T_k$.


\begin{lem}\label{lem:filter-commute}
Let $X \in \Sm^T_k$ be smooth and projective and fix $0 \le m \le n$ and 
$i \ge 1$. Consider the notations of ~\eqref{eqn:split01} and  
~\eqref{eqn:split1}
where now all the maps are $T$-equivariant. Then the diagram
\begin{equation}\label{eqn:filt-comm1}
\xymatrix@C1pc{
{\m\left(Z_m^{i+1}\right)} 
\ar@/^1pc/[rr]^{{\ov{q}}^*_m} 
\ar[dd]_{{\phi}^*_m}^{\cong} \ar[dr] & &
{\m\left(Y^{i+1}_m\right)} \ar[dd]^{{{\ov{p}}_m}_*} \ar[dr] & \\
& {\m\left(Z_m^i\right)} \ar@/_1pc/[rr]_<<<<<<<<{{\ov{q}}^*_m} 
\ar[dd]_{{\phi}^*_m}^{\cong} & & {\m\left(Y_m^i\right)} 
\ar[dd]^{{{\ov{p}}_m}_*} \\
{\m\left(W_m^{i+1}\right)} \ar[dr] & &
{\m_{X^{i+1}_m}\left(X^{i+1}\right)} \ar@/^1pc/[ll]^<<<<<<{j^*_m} \ar[dr] & \\
& {\m\left(W_m^i\right)} & & {\m_{X^i_m}\left(X^i\right)}
\ar@/^1pc/[ll]^{j^*_m}}
\end{equation} 
commutes.
\end{lem}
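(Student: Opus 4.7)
The plan is to verify commutativity of the cube~\eqref{eqn:filt-comm1} face by face. The front and back faces of the cube are instances of the commutative square~\eqref{eqn:split1} at levels $i$ and $i+1$ of the admissible gadget respectively, and these commute by the proof of Proposition~\ref{prop:filter-Gen}. It therefore suffices to verify that each of the four side faces connecting the two levels is commutative.

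The restriction maps between levels are modeled on the factorisation
\begin{equation*}
X^i = X \stackrel{T}{\times} U_i \stackrel{\pi}{\longleftarrow} X \stackrel{T}{\times} (U_i \oplus V) \stackrel{\iota}{\hookrightarrow} X \stackrel{T}{\times} U_{i+1} = X^{i+1},
\end{equation*}
where $\pi$ is a trivial vector bundle (hence $\pi^*$ is an isomorphism by homotopy invariance) and $\iota$ is an open immersion, together with parallel factorisations for $Y^i_m$, $Z^i_m$, $W^i_m$ and for the support pair $(X^i, X^i_m)$. Three of the four side faces involve only the contravariant pullbacks $\phi_m^*$, $\ov{q}_m^*$, $j_m^*$ paired with these restriction maps, so they commute by the functoriality of $MGL^{*,*}$ applied to the $T$-equivariant morphisms $\phi_m$, $\ov{q}_m$ and $j_m$, which extend naturally across the level transitions.

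The only nontrivial side face is the one involving the projective push-forward $\ov{p}_{m*}$. The square
\begin{equation*}
\xymatrix@C1.2pc{
Y_m \stackrel{T}{\times} (U_i \oplus V) \ar@{^{(}->}[r] \ar[d]_{\ov{p}_m} & Y_m^{i+1} \ar[d]^{\ov{p}_m} \\
X \stackrel{T}{\times} (U_i \oplus V) \ar@{^{(}->}[r] & X^{i+1}
}
\end{equation*}
is Cartesian, with horizontal arrows open immersions and vertical arrows the projective map $\ov{p}_m$, so Lemma~\ref{lem:BMCom} yields the base-change identity for $\iota^*$. Combined with the compatibility of projective push-forward with smooth pullback (part of the oriented duality axioms of \cite[Definition~3.1]{Levine2}), which handles the vector-bundle leg $\pi^*$, this gives commutativity of the fourth face. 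The main obstacle is the careful bookkeeping in identifying the level-transition arrows as compositions of $\iota^*$ and $\pi^*$, and in matching the push-forward/pullback compatibilities at each step.
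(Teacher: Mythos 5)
Your face-by-face decomposition of the cube matches the paper's: front and back commute by \eqref{eqn:split1}, the top and left faces commute by contravariant functoriality, the bottom face commutes by functoriality of pull-backs in $\mathbf{SP}$, and the push-forward face is the only one requiring a base-change argument. So the overall strategy is the same, and your proof is correct.

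The one genuine difference is in how the push-forward face is handled. The paper introduces the transition map as a single closed immersion $\gamma^i \colon X^i \hookrightarrow X^{i+1}$, assembles the auxiliary cube~\eqref{eqn:filt-comm2}, verifies by Lemma~\ref{lem:Elem-eq} that the relevant bottom square is transverse with smooth vertices, and applies the base-change axiom \cite[Definition~2.7-A(4)]{Levine2} once. You instead factor the transition through the intermediate level $X\stackrel{T}{\times}(U_i\oplus V)$ as a zero-section composed with an open immersion, and apply Lemma~\ref{lem:BMCom} for the open-immersion leg and the smooth-pull-back compatibility axiom for the vector-bundle leg. The two routes use the same axiomatics at bottom (both legs of your factorisation are special cases of the transversality axiom the paper invokes), so there is no real gain or loss; your version does avoid having to assert that $\gamma^i$ is a closed immersion, which, as stated in Definition~\ref{defn:Add-Gad}, only gives $U_i \times\{0\}\subset U_i\oplus V$ closed and $U_i\oplus V \subset U_{i+1}$ open, so $U_i$ is a priori only locally closed in $U_{i+1}$; factoring sidesteps that bookkeeping. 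One small point you gloss over: the paper notes explicitly that the bottom face requires the identification $\m_{W^i_m}(S^i_m)\cong \m(W^i_m)$ coming from the inclusion of pairs $(W^i_m,W^i_m)\hookrightarrow (S^i_m,W^i_m)$; your appeal to ``functoriality of $MGL^{*,*}$'' on the bottom face should really be functoriality in $\mathbf{SP}$ together with this identification, since $j_m^*$ lands in cohomology of $W^i_m$ rather than cohomology with supports.
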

\begin{proof}
We have shown in the proof of Proposition~\ref{prop:filter-Gen}
({\sl cf.} ~\eqref{eqn:split1}) that the front and the back squares commute.
The left and the top squares commute by the functoriality of pull-backs
in {\bf SP}. Notice that the map $\m_{W_m^i}\left(S^i_m\right) \to 
\m\left(W_m^i\right)$ induced by the the inclusion 
$(W^i_{m}, W^i_m) \inj (S^i_{m}, W^i_m)$ is an isomorphism.
Thus the bottom square commutes again by the functoriality of pull-backs
in {\bf SP}. We only need to explain why does the right square commute.

To do this, we consider another diagram
\begin{equation}\label{eqn:filt-comm2}
\xymatrix@C1.5pc{
& Y^i_m \ar[rr]^{{\ov{p}}^i_m} \ar@{=}[dd] \ar[dl]_{\gamma^i_{Y_m}} & & 
X^i_m \ar[dl]_{\gamma^i_{X_m}} \ar[dd] \\
Y^{i+1}_m \ar[rr]_<<<<<<<{{\ov{p}}^{i+1}_m} \ar@{=}[dd] & & X^{i+1}_m 
\ar[dd] & \\
& Y^i_m \ar[dl]^{\gamma^i_{Y_m}} \ar[rr]^>>>>>>>{P^i_m} & & X^i \ar[dl]^{\gamma^i_X} \\
Y^{i+1}_m \ar[rr]_{P^{i+1}_m} & & X^{i+1} & }
\end{equation}
induced by the maps $(X^i, X^i_m) \to (X^{i+1}, X^{i+1}_m)$,
$(Y^i_m, Y^i_m) \to (Y^{i+1}_m, Y^{i+1}_m)$ in {\bf SP} and the maps
$(Y^i_m, Y^i_m) \to (X^i, X^i_m), (Y^{i+1}_m, Y^{i+1}_m) \to (X^{i+1}, X^{i+1}_m)$
in ${\rm {\bf SP}}^{\prime}$.

It is easy to see that the top, bottom, left and right squares are Cartesian.
Moreover, it follows from Lemma~\ref{lem:Elem-eq} that the bottom square
is transverse with all vertices smooth. Since $P_m$ is projective and
$\gamma$ is a closed immersion, it follows from the standard
properties of an oriented cohomology theory having integration with supports
({\sl cf.} \cite[Definition~2.7-A(4)]{Levine2}) that the left square of
~\eqref{eqn:filt-comm1} commutes.
\end{proof}

Let $X \in \Sm^T_k$ be smooth and projective with the $T$-equivariant filtration
~\eqref{eqn:filtration-BB}.
This equivariant filtration on $X$ induces a commutative diagram
in {\bf SP}
\begin{equation}\label{eqn:filt-comm3}
\xymatrix@C1.5pc{
(X^i, X^i_{m-1}) \ar[d]_{\gamma^i_{m-1}} &
(X^i, X^i_{m}) \ar[d]^{\gamma^i_m} \ar[l]_{\iota^i_{m-1}}& (S^i_{m}, W^i_{m}) 
\ar[l]_{j^i_{m}} \ar[d]^{\gamma^i_{W_m}} \\
(X^{i+1}, X^{i+1}_{m-1})  &
(X^{i+1}, X^{i+1}_{m}) \ar[l]^{\iota^{i+1}_{m-1}} & (S^{i+1}_{m}, W^{i+1}_{m}) 
\ar[l]^{j^{i+1}_{m}}.}
\end{equation} 
If we consider the associated diagram of motivic cobordism with supports
and use the identification $MGL'(X^i_m) = MGL_{X^i_m}\left(X^i\right)$, we
obtain a commutative diagram

\begin{equation}\label{eqn:filt-comm4}
\xymatrix@C1.8pc{
0 \ar[r] & \m_{X^{i+1}_{m-1}}\left(X^{i+1}\right) 
\ar[r]^{\left({\iota^{i+1}_{m-1}}\right)^*} \ar[d]_{\left({\gamma^i_{m-1}}\right)^*} &
\m_{X^{i+1}_{m}}\left(X^{i+1}\right) \ar[r]^{\left({j^{i+1}_{m}}\right)^*} 
\ar[d]^{\left({\gamma^i_m}\right)^*} & \m\left(W^{i+1}_m\right) 
\ar[d]^{\left({\gamma^i_{W_m}}\right)^*} \ar[r] & 0 \\
0 \ar[r] & \m_{X^{i}_{m-1}}\left(X^{i}\right) 
\ar[r]_{\left({\iota^{i}_{m-1}}\right)^*} & \m_{X^{i}_{m}}\left(X^{i}\right)
\ar[r]_{\left({j^{i}_{m}}\right)^*} & \m\left(W^{i}_m\right) \ar[r] & 0.}
\end{equation}
Notice that $\left({\iota^{i}_{m-1}}\right)^*$ is same as 
$\left({\iota^{i}_{m-1}}\right)_*$ under the identification
$MGL'(X^i_m) = MGL_{X^i_m}\left(X^i\right)$ ({\sl cf.} 
\cite[Definition~1.8-(5)]{Levine2}). The two rows are exact 
and we have shown in the proof of Proposition~\ref{prop:filter-Gen}
({\sl cf.} ~\eqref{eqn:split*1}) that the map ${\left({j^{i}_{m}}\right)^*}$
is split by $s^i_m : = \left({\ov{p}}^i_m\right)_* \circ 
\left({\ov{q}}^i_m\right)^* \circ \left(\left(\phi^i_m\right)^*\right)^{-1}$
for each $i \ge 1$ ({\sl cf.} diagram~\eqref{eqn:filt-comm1}). 
In particular, the two rows form split short exact sequences. 
We now show that 
\begin{equation}\label{eqn:filt-comm5}
s^i_m \circ \left({\gamma^i_{W_m}}\right)^* = \left({\gamma^i_m}\right)^* \circ
s^{i+1}_m.
\end{equation}

To show this, it is equivalent to showing that
\begin{equation}\label{eqn:filt-comm6}
\left({\gamma^i_m}\right)^* \circ \left({\ov{p}}^{i+1}_m\right)_* \circ 
\left({\ov{q}}^{i+1}_m\right)^* = 
\left({\ov{p}}^i_m\right)_* \circ 
\left({\ov{q}}^i_m\right)^* \circ \left(\left(\phi^i_m\right)^*\right)^{-1} 
\circ \left({\gamma^i_{W_m}}\right)^* \circ \left(\phi^{i+1}_m\right)^*.
\end{equation}

On the other hand, it follows from Lemma~\ref{lem:filter-commute} that
\[
\begin{array}{lll}
\left(\left(\phi^i_m\right)^*\right)^{-1} 
\circ \left({\gamma^i_{W_m}}\right)^* \circ \left(\phi^{i+1}_m\right)^*
& = & \left(\left(\phi^i_m\right)^*\right)^{-1} 
\circ \left(\phi^i_m\right)^* \circ \left(\gamma^i_{Z_m}\right)^* \\
& = & \left(\gamma^i_{Z_m}\right)^*.
\end{array}
\]

Applying Lemma~\ref{lem:filter-commute} again, we get
\[
\begin{array}{lll}
\left({\ov{p}}^i_m\right)_* \circ 
\left({\ov{q}}^i_m\right)^* \circ \left(\left(\phi^i_m\right)^*\right)^{-1} 
\circ \left({\gamma^i_{W_m}}\right)^* \circ \left(\phi^{i+1}_m\right)^*
& = & 
\left({\ov{p}}^i_m\right)_* \circ 
\left({\ov{q}}^i_m\right)^* \circ \left(\gamma^i_{Z_m}\right)^* \\
& = & \left({\ov{p}}^i_m\right)_* \circ \left(\gamma^i_{Y_m}\right)^* \circ
\left({\ov{q}}^{i+1}_m\right)^* \\
& = & \left({\gamma^i_m}\right)^* \circ \left({\ov{p}}^{i+1}_m\right)_* \circ 
\left({\ov{q}}^{i+1}_m\right)^*.
\end{array}
\]
This shows ~\eqref{eqn:filt-comm6} and hence ~\eqref{eqn:filt-comm5}.

\begin{lem}\label{lem:filter-Equiv}
Let $X \in \Sm^T_k$ be smooth and projective with the $T$-equivariant filtration
~\eqref{eqn:filtration-BB}. Then for every $0 \le m \le n$,
there is a canonical split exact sequence
\begin{equation}\label{eqn:filter-Equiv*}
0 \to {\underset{i}\varprojlim} \  \m_{X^{i}_{m-1}}\left(X^{i}\right) 
\xrightarrow{\iota_{m-1}^*} 
{\underset{i}\varprojlim} \  \m_{X^{i}_{m}}\left(X^{i}\right) 
\xrightarrow{j_{m}^*} {\underset{i}\varprojlim} \
\m\left(W^{i}_m\right) \to 0.
\end{equation}
\end{lem}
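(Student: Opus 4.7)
My plan is to observe that the lemma is essentially a direct consequence of the work already done in establishing diagram~\eqref{eqn:filt-comm4} together with the compatibility relation~\eqref{eqn:filt-comm5}, because an inverse limit of compatibly split short exact sequences is itself a split short exact sequence.

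First, I would note that for each fixed $i \ge 1$, the bottom row of~\eqref{eqn:filt-comm4}, namely
\[
0 \to \m_{X^{i}_{m-1}}(X^{i}) \xrightarrow{(\iota^{i}_{m-1})^*} \m_{X^{i}_{m}}(X^{i}) \xrightarrow{(j^{i}_{m})^*} \m(W^{i}_m) \to 0,
\]
is a short exact sequence (this is the localization sequence~\eqref{eqn:LES} rephrased as in~\eqref{eqn:split*1}), and it is split by the homomorphism $s^i_m = (\ov{p}^i_m)_* \circ (\ov{q}^i_m)^* \circ ((\phi^i_m)^*)^{-1}$ constructed in the proof of Proposition~\ref{prop:filter-Gen}.

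Second, I would invoke~\eqref{eqn:filt-comm5}, which says that the splittings are compatible with the transition maps, i.e.\ $s^i_m \circ (\gamma^i_{W_m})^* = (\gamma^i_m)^* \circ s^{i+1}_m$. Combined with the commutativity of~\eqref{eqn:filt-comm4}, this gives an inverse system of \emph{split} short exact sequences in which the splittings form a morphism of inverse systems. Consequently, the inverse system $\{\m_{X^i_m}(X^i)\}$ decomposes, compatibly with the transition maps, as the direct sum of inverse systems $\{\m_{X^i_{m-1}}(X^i)\}$ and $\{\m(W^i_m)\}$. Since both inverse systems have surjective transition maps (obvious for the first; for the second this follows since $W^i_m \cong W^{i+1}_m$-times-something, or more directly from the fact that the bottom row splits already shows the maps on the middle term are surjective, hence so are the maps on each summand), the Mittag--Leffler condition holds and $\varprojlim^1$ vanishes for all three systems.

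Finally, I would take the inverse limit termwise: since inverse limits commute with finite direct sums, one obtains the desired split short exact sequence
\[
0 \to \varprojlim_i \m_{X^{i}_{m-1}}(X^{i}) \to \varprojlim_i \m_{X^{i}_{m}}(X^{i}) \to \varprojlim_i \m(W^{i}_m) \to 0,
\]
with the splitting $\varprojlim_i s^i_m$. The only nontrivial ingredient is the compatibility~\eqref{eqn:filt-comm5}, which was already verified via Lemma~\ref{lem:filter-commute}; there is essentially no further obstacle, and the proof reduces to assembling these inputs.
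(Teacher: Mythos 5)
Your core argument is correct and takes essentially the same route as the paper: both proofs rest on the commutativity of diagram~\eqref{eqn:filt-comm4} together with the compatibility relation~\eqref{eqn:filt-comm5}, which guarantees that the splittings $s^i_m$ form a morphism of inverse systems and hence survive the passage to $\varprojlim$. The paper phrases this as left-exactness of $\varprojlim$ giving exactness at the left and middle, plus the observation that $\varprojlim_i s^i_m$ splits the rightmost map; you phrase it via the compatible direct-sum decomposition of the middle system and the fact that $\varprojlim$ commutes with finite direct sums. These are equivalent.

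One caveat: the parenthetical remarks in your second paragraph about surjectivity of transition maps and the Mittag--Leffler condition are both unnecessary for the argument and not justified as stated. It is \emph{not} obvious that the transition maps $(\gamma^i_{m-1})^*$ on $\m_{X^i_{m-1}}(X^i)$ are surjective --- establishing precisely that kind of statement is the content of Lemma~\ref{lem:MLTorus}, which requires the full Bia{\l}ynicki-Birula induction --- and the fact that each row is split says nothing about the transition maps of the inverse system. Fortunately your concluding step uses only the direct-sum decomposition and commutation of $\varprojlim$ with finite direct sums, which is valid without any $\varprojlim^1$ or Mittag--Leffler considerations, so the proof goes through once that aside is deleted.
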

\begin{proof}
It follows from ~\eqref{eqn:filt-comm4} and the left exactness of the inverse 
limit that there is a sequence as above which is exact except possibly at
the right end. But ~\eqref{eqn:filt-comm5} shows that $j^*_m \circ s^*_m$ is 
identity on ${\underset{i}\varprojlim} \ \m\left(W^{i}_m\right)$. 
This proves the lemma.
\end{proof}

\begin{lem}\label{lem:trivial-T}
Let $T$ be a split torus of rank $n$ acting trivially on a smooth
scheme $X$ of dimension $d$ and let $\{\chi_1, \cdots , \chi_n\}$ be a chosen
basis of $\widehat{T}$. Then the assignment $t_j \mapsto c^T_1(L_{\chi_j})$
induces an isomorphism of graded rings
\begin{equation}\label{eqn:trivial-T1}
\m(X)[[t_1, \cdots , t_n]] \xrightarrow{\cong} \mt(X)
\end{equation}
where $\m(X)[[t_1, \cdots , t_n]]$ is the graded power series ring over
$\m(X)$.
\end{lem}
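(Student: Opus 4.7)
The plan is to use the canonical admissible gadget, reduce to the projective bundle formula at each finite level, and then take an inverse limit using the Milnor exact sequence from Proposition~\ref{prop:Limit}.

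First, I would fix the canonical admissible gadget $\rho = (V_i, U_i)_{i \ge 1}$ for $T$ associated to the chosen basis $\{\chi_1, \dots, \chi_n\}$, as constructed in \S\ref{subsubsection:CAG}. Since $T$ acts trivially on $X$, we have a canonical isomorphism of schemes
\[
X^i_T \;=\; X \stackrel{T}{\times} U_i \;\cong\; X \times (\mathbb{P}^{i-1}_k)^n,
\]
and under this identification the line bundle $L_{\chi_j} \stackrel{T}{\times} U_i$ is the pullback of $\sO_{\mathbb{P}^{i-1}}(\pm 1)$ from the $j$-th projective factor. In particular, the equivariant Chern class $c^T_1(L_{\chi_j})$ restricts at level $i$ to the class $t_j^{(i)} \in MGL^{2,1}(X^i_T)$ that is pulled back from the $j$-th factor.

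Next, I would apply the projective bundle formula for the motivic cobordism of smooth schemes (see \S\ref{subsubsection:PBFIS}) iteratively $n$ times to the product $X \times (\mathbb{P}^{i-1})^n$. This yields, for each $i \ge 1$, a canonical isomorphism of graded rings
\[
MGL^{*,*}(X)[t_1, \dots, t_n]/\bigl(t_1^i, \dots, t_n^i\bigr) \;\xrightarrow{\cong}\; MGL^{*,*}(X^i_T),
\]
sending $t_j$ to $t_j^{(i)}$. Because the restriction map $X^i_T \hookrightarrow X^{i+1}_T$ pulls back $\sO(\pm 1)$ on the $j$-th $\mathbb{P}^i$ to $\sO(\pm 1)$ on the $j$-th $\mathbb{P}^{i-1}$, the transition map of the inverse system sends $t_j^{(i+1)} \mapsto t_j^{(i)}$. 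Hence each transition map is surjective, and the inverse system satisfies the Mittag--Leffler condition trivially; in particular $\varprojlim_i^1 MGL^{*-1,*}(X^i_T) = 0$.

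Finally, I would apply the Milnor exact sequence from Proposition~\ref{prop:Limit} to the colimit defining $X_T = {\rm colim}_i\, X^i_T$. Since the $\varprojlim^1$ term vanishes, we obtain a ring isomorphism
\[
MGL^{*,*}_T(X) \;=\; MGL^{*,*}(X_T) \;\xrightarrow{\cong}\; \varprojlim_i \, MGL^{*,*}(X^i_T) \;\cong\; MGL^{*,*}(X)[[t_1, \dots, t_n]],
\]
where the last identification comes from passing to the inverse limit of $MGL^{*,*}(X)[t_1,\dots,t_n]/(t_1^i,\dots,t_n^i)$. By construction, the class $c^T_1(L_{\chi_j}) \in MGL^{2,1}_T(X)$ restricts at each finite level to $t_j^{(i)}$, hence corresponds to the power series variable $t_j$, proving the lemma.

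The only real obstacle is the bookkeeping at the finite level: verifying that $L_{\chi_j}\stackrel{T}{\times}U_i$ is genuinely the pullback of $\sO(\pm 1)$ from the $j$-th factor (this is noted in \S\ref{subsubsection:CAG}), and that the iterated projective bundle formula gives a polynomial truncation presentation compatible with the transition maps. Both are straightforward given the material already developed in \S\ref{subsection:TCS}--\S\ref{subsubsection:PBFIS}, so no serious technical difficulty is expected.
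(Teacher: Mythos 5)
Your proof is correct and follows essentially the same route as the paper: identify $X \stackrel{T}{\times} U_i \cong X \times (\P^{i-1})^n$ via the canonical admissible gadget, apply the iterated projective bundle formula to get a truncated polynomial presentation at each finite level with surjective transitions, and invoke the Milnor exact sequence (Proposition~\ref{prop:Limit}) with vanishing $\varprojlim^1$ to recover the graded power series ring. The paper writes the conclusion as a product decomposition of each bigraded piece $MGL^{a,b}_T(X)$ rather than as $\varprojlim_i$ of truncations, but these formulations are equivalent.
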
   
\begin{proof}
Let $\rho = \left(V_i, U_i\right)_{i \ge 1}$ be a canonical admissible gadget for 
$T$. Since $T$ acts trivially on $X$, we have $X \stackrel{T}{\times} U_i
\cong X \times \left({U_i}/T\right) \cong X \times \left(\P^{i-1}\right)^n$.
The projective bundle formula for the motivic cobordism implies that the map
$\m\left(X \stackrel{T}{\times} U_{i+1}\right) \to
\m\left(X \stackrel{T}{\times} U_{i}\right)$ is surjective.

We have seen in \S ~\ref{subsubsection:CAG} that for each $1 \le j \le n$,
$L_{\chi_j}$ defines the line bundle $\sO(\pm 1)$ on each factor of
the product $\left(\P^{i-1}\right)^n$. Let $\zeta_j$ denote the first Chern
class of this line bundle on $\P^{i-1}$. 
Applying Proposition~\ref{prop:Limit} and the non-equivariant projective 
bundle formula once again, we see that
\[
MGL^{a,b}_T(X) = {\underset{p_1, \cdots , p_n  \ge 0} \prod} 
MGL^{a-2(\stackrel{n}{\underset{i=1} \sum} p_i), b-(\stackrel{n}{\underset{i=1} \sum} p_i)}(X) 
\zeta^{p_1}_1 \cdots \zeta^{p_n}_n.
\]
Taking sum over $a \ge b \ge 0$, we see that the map
$\m(X)[[t_1, \cdots , t_n]] \xrightarrow{\cong} \mt(X)$ is an isomorphism
of graded rings. 
\end{proof}

We now prove our main result on  the description of the
equivariant cobordism of smooth and projective schemes with torus action.  

\begin{thm}\label{thm:Main-Str}
Let $T$ be a split torus of rank $n$ acting on a smooth and projective scheme
$X$ and let $i: X^T =  \stackrel{n}{\underset{m=0}\coprod} Z_m \inj X$ 
be the inclusion of the fixed point locus. Then there is a canonical
isomorphism
\[
\stackrel{n}{\underset{m = 0}\bigoplus} 
\mt(Z_m) \xrightarrow{\cong} \mt(X).
\]
of bi-graded $S(T)$-modules.
In particular, there is a canonical isomorphism of bi-graded $S(T)$-modules. 
\begin{equation}\label{eqn:Main-Str1}
\mt(X) \xrightarrow{\cong} \m(X)[[t_1, \cdots , t_n]]
\end{equation}
\end{thm}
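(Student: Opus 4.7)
The strategy is to exploit the Bialynicki-Birula decomposition and reduce, by induction on the filtration length, to the known case of trivial torus actions. Fix a canonical admissible gadget $\rho = (V_i, U_i)_{i \ge 1}$ for $T$. By \thmref{thm:BBH}, $X$ admits a $T$-equivariant filtration $\emptyset = X_{-1} \subsetneq X_0 \subsetneq \cdots \subsetneq X_n = X$ together with $T$-equivariant vector bundle maps $\phi_m\colon W_m = X_m \setminus X_{m-1} \to Z_m$. By \lemref{lem:MLTorus}, $\mt(X) \cong \varprojlim_i \m(X^i)$ and similarly $\mt(Z_m) \cong \varprojlim_i \m(Z^i_m)$ for each smooth projective $Z_m$.

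The core step is to establish by induction on $0 \le m \le n$ a canonical isomorphism
\[
\bigoplus_{j=0}^m \varprojlim_i \m(W^i_j) \xrightarrow{\cong} \varprojlim_i MGL^{*,*}_{X^i_m}(X^i).
\]
The base case $m = 0$ is clear since $X^i_0 = W^i_0$. For the inductive step, \lemref{lem:filter-Equiv} provides a canonical split short exact sequence
\[
0 \to \varprojlim_i MGL^{*,*}_{X^i_{m-1}}(X^i) \to \varprojlim_i MGL^{*,*}_{X^i_m}(X^i) \to \varprojlim_i \m(W^i_m) \to 0,
\]
where the splitting exists precisely because the level-wise splittings $s^i_m$ constructed in \propref{prop:filter-Gen} are compatible under the restriction maps, as shown in~\eqref{eqn:filt-comm5}. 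Combining this with the inductive hypothesis yields the claim, and setting $m = n$ with $MGL^{*,*}_{X^i}(X^i) = \m(X^i)$ produces $\bigoplus_m \varprojlim_i \m(W^i_m) \cong \mt(X)$. Since $\phi_m$ is a $T$-equivariant vector bundle, each $W^i_m \to Z^i_m$ is a vector bundle on the smooth scheme $Z^i_m$, so homotopy invariance gives $\m(W^i_m) \cong \m(Z^i_m)$ compatibly in $i$; passing to the limit and invoking \lemref{lem:MLTorus} once more yields $\bigoplus_m \mt(Z_m) \xrightarrow{\cong} \mt(X)$.

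The final isomorphism follows by applying \lemref{lem:trivial-T} to each $Z_m$, which carries a trivial $T$-action, yielding $\mt(Z_m) \cong \m(Z_m)[[t_1,\ldots,t_n]]$; commuting the finite direct sum past the power series gives $\mt(X) \cong \bigl(\bigoplus_m \m(Z_m)\bigr)[[t_1,\ldots,t_n]]$, and the non-equivariant identification $\bigoplus_m \m(Z_m) \cong \m(X)$ comes from \propref{prop:filter-Gen} applied to the Bialynicki-Birula filtration of $X$ together with Poincar\'e duality for the smooth projective schemes $X$ and $Z_m$. The principal technical obstacle is the coherence of the Gysin splittings at the inverse limit, which has been arranged in advance by~\eqref{eqn:filt-comm5}; a secondary subtlety is verifying that the $S(T)$-module structure is preserved throughout the identifications, but this structure arises uniformly from pullback of Chern classes along the projection to the classifying space and so is automatically respected by all the maps in play.
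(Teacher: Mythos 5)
Your proof is correct and follows essentially the same approach as the paper: induction on the Bialynicki--Birula filtration via Lemma~\ref{lem:filter-Equiv}, followed by Lemma~\ref{lem:MLTorus}, Lemma~\ref{lem:trivial-T}, and Proposition~\ref{prop:filter-Gen}. You are somewhat more explicit than the paper in carrying the $W^i_m$'s through the induction before invoking homotopy invariance, and in flagging the Poincar\'e-duality step needed to convert Proposition~\ref{prop:filter-Gen}'s Borel--Moore statement into the cohomological one, but the skeleton of the argument is identical.
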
 
\begin{proof}
Let $\rho = \left(V_i, U_i\right)_{i \ge 1}$ be a canonical admissible gadget for 
$T$.
By inducting on $0 \le m \le n$ and using the homotopy invariance, it follows 
from Lemma~\ref{lem:filter-Equiv} that there is a canonical isomorphism 
\[
\stackrel{n}{\underset{m = 0}\bigoplus} \
{\underset{i}\varprojlim} \  \m\left(Z^{i}_m\right) 
\xrightarrow{\cong}
{\underset{i}\varprojlim} \  \m\left(X^{i}\right).
\]
Applying Lemma~\ref{lem:MLTorus}, we get
\[
\stackrel{n}{\underset{m = 0}\bigoplus} \ \mt(Z_m) \xrightarrow{\cong}
\mt(X).
\]
We have already shown in Proposition~\ref{prop:filter-Gen}
that there is a canonical isomorphism of $\bL$-modules
\[
\stackrel{n}{\underset{m = 0}\bigoplus} \ \m(Z_m) \xrightarrow{\cong}
\m(X).
\]
The last assertion follows from these two isomorphisms and 
Lemma~\ref{lem:trivial-T} since $T$ acts trivially on $X^T$.
\end{proof}

\section{Equivariant motivic cobordism : Another approach}
\label{EMC}
In order to study the topological $K$-theory of the classifying spaces,
Atiyah and Hirzebruch \cite{AH} had defined three different notions of the
topological $K$-theory of the classifying space $BG$ of a Lie group $G$. 
One of these notions is the theory $\sK^*(BG)$, which is defined as the
generalized cohomology of $BG$ given by the topological
$K$-theory spectrum. This is analogous to our equivariant motivic cobordism
$MGL^{*,*}_G(k) = \m(BG)$, discussed above. The other one is the $K$-theory
$k^*(BG)$ which is defined in terms of the projective limit of the usual
$K$-theory of the finite skeleta of the $CW$-complex $BG$.
The relations between these two notions and its applications have been
the subject of study in several works of Atiyah and his coauthors.

Motivated by the above topological construction, we consider a similar approach
in the algebraic context in this section. An outcome of this approach is
that one is able to invent another notion of equivariant motivic cobordism
based the $k^*(BG)$-theory of \cite{AH}. We denote this version of equivariant
motivic cobordism by $mgl^{*,*}_G(-)$. We shall prove certain results
which compare the two notions $MGL^{*,*}_G(-)$ and $mgl^{*,*}_G(-)$.
In particular, we shall show that these two coincide if we consider 
cohomology theory with rational coefficients.
This allows us to compare the equivariant cobordism rings defined in this paper
with the one studied earlier in \cite{Krishna1}.

\subsection{$mgl^{*,*}_G$-theory}\label{subsection:WEMC}
Let $G$ be a linear algebraic group over $k$. The following two results form the
basis for our definition of $mgl^{*,*}_G(X)$ for $X \in \Sm^G_k$.

\begin{lem}\label{lem:Iso-open} 
Let $V \in \Sm_k$ and let $i : W \inj X$ be a closed subset with complement
$j : U \inj X$. Given any $a \ge b \ge 0$, there exists a integer
$N \gg 0$ such that the open pull-back 
$MGL^{a,b}(V) \xrightarrow{j^*} MGL^{a,b}(U)$ is 
an isomorphism if ${\rm codim}_V(W) > N$.
\end{lem}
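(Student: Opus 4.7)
The plan is to feed the pair $(V,W,U)$ into the localization long exact sequence~\eqref{eqn:LES}
\[
\cdots\to MGL^{a,b}_W(V)\to MGL^{a,b}(V)\xrightarrow{j^*} MGL^{a,b}(U)\to MGL^{a+1,b}_W(V)\to\cdots,
\]
and thereby reduce the claim that $j^*$ is an isomorphism to the vanishing of $MGL^{a',b}_W(V)$ for $a'\in\{a,a+1\}$, provided $\codim_V(W)>N$ for some $N$ depending only on $(a,b)$.

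To establish this vanishing, I would first reduce to the case when $W$ is smooth. Because $\Char(k)=0$, the singular locus $W_\sing\subsetneq W$ is a proper closed subscheme, so $\codim_V(W_\sing)>\codim_V(W)$, while the smooth locus $W^\circ=W\setminus W_\sing$ is a smooth closed subscheme of $V\setminus W_\sing$ with the same codimension as $W$. Combining the long exact sequence attached to the pair $W_\sing\subset W\subset V$ with excision (Corollary~\ref{cor:EXCN}), the vanishing of $MGL^{*,*}_W(V)$ in the required bidegrees is controlled by the corresponding vanishings of $MGL^{*,*}_{W^\circ}(V\setminus W_\sing)$ and $MGL^{*,*}_{W_\sing}(V)$. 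A noetherian induction on $\dim W$ therefore reduces the problem to the case of a smooth closed subset.

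When $W\inj V$ is a smooth closed immersion of codimension $d$ with normal bundle $N$, the Morel--Voevodsky homotopy purity theorem yields an $\A^1$-equivalence $V/U\simeq Th(N)$, and Proposition~\ref{prop:TIso} then supplies a Thom isomorphism
\[
MGL^{a',b}_W(V)\;\cong\;MGL^{a'-2d,\,b-d}(W).
\]
Thus it suffices to exhibit an integer $N(a,b)$ such that $MGL^{p,q}(W')=0$ whenever $q\le b-N(a,b)$, for every smooth scheme $W'$.

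The main obstacle is precisely this last vanishing step. It rests on $MGL$ being an \emph{effective} motivic ring spectrum: its slice tower is concentrated in non-negative weights, and the corresponding slice spectral sequence has motivic-cohomology input that vanishes in negative weight. Consequently $MGL^{p,q}(W')=0$ for $q<0$ on any smooth $W'$, and taking $N(a,b)=b+1$ handles both $a'=a$ and $a'=a+1$ simultaneously. All remaining ingredients---the localization sequence, the noetherian induction, purity and Thom isomorphism---are formal applications of machinery already developed in the earlier sections of the paper.
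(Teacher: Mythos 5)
Your argument takes a genuinely different route from the paper's.  The paper compares $MGL^{a,b}(V)$ and $MGL^{a,b}(U)$ via the Hopkins--Morel motivic Atiyah--Hirzebruch spectral sequence, reducing the question to the localization sequence for Bloch's higher Chow groups (which is available for singular supports, so no reduction to smooth $W$ is required).  You instead work directly with the $MGL$-localization sequence, strip away the singular locus by noetherian induction, pass through the Thom isomorphism, and then invoke a vanishing of $MGL^{p,q}$ in negative weight.  The first three steps are reasonable, but the last step is where the argument breaks.

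The asserted vanishing $MGL^{p,q}(W')=0$ for $q<0$ on every smooth $W'$ is false, and it does not follow from effectivity of $MGL$.  Already over the point one has $MGL^{2q,q}(\Spec k)\cong \bL^{q}=\bL_{-q}\neq 0$ for $q<0$; more generally $MGL^{2q,q}(W')\cong\Omega^{q}(W')$ is nonzero for $q<0$ (it contains the image of $\bL_{-q}$ under pullback from the point).  The error in the reasoning is that effectivity of $MGL$ controls the slices $s_n MGL$, and the slice spectral sequence computing $MGL^{p,q}(W')$ has $E_1$-input of the form $H^{p+2n,\,q+n}(W',MU_{2n})$ for $n\ge 0$.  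The motivic-cohomology weight appearing here is $q+n$, not $q$; for $n\ge -q$ this weight is non-negative and the corresponding terms need not vanish.  Thus effectivity rules out negative slices, but it does not force $MGL^{p,q}(W')$ to vanish when $q<0$.  Concretely, in your application $MGL^{a'-2d,\,b-d}(W)$ has $p-2q=a'-2b$, which is independent of $d$; so for $a\le 2b$ (the range where the lemma has content) increasing $d$ cannot push the bidegree into any uniformly vanishing region of the form ``$p>2q$'', and the only other vanishing range, $p>2q+\dim W$, depends on $\dim W$ and therefore cannot furnish a constant $N$ depending only on $(a,b)$.  Consequently the choice $N(a,b)=b+1$ does not work, and no fixed $N$ can be produced by this route.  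You would need to replace the final step by the paper's argument comparing the two spectral sequences through their Chow-group input.
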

\begin{proof}
Let $X \mapsto \CH_i(X,j)$ denote the higher Chow groups of Bloch 
\cite{Bloch} for $X \in \Sch_k$. There is an isomorphism
$\CH^i(X,j) \cong \CH_{d-i}(X,j)$ if $X$ is smooth of dimension $d$.
To prove the lemma, we first claim that given any $m \ge 0$, 
the map $\CH^i(V, j) \xrightarrow{j^*} \CH^i(U,j)$
is an isomorphism for all $i \le m$ and for all $j \ge 0$ if 
${\rm codim}_V(W) > m$.  

To see this, we set $d = {\rm codim}_V(W)$ and consider the localization exact 
sequence
\[
\cdots \to \CH_{\dim(X)-i}(W,j) \xrightarrow{i_*} \CH^i(V,j) \xrightarrow{j^*}
\CH^i(U,j) \xrightarrow{\partial} \CH_{\dim(X)-i}(W, j-1) \to \cdots .
\]
Suppose that $d > m$. Then for any $i \le m$, we have $i < d$. This in turn
implies that $\dim(X) - i + j > \dim(W) + j$ for all $j \ge 0$ and 
hence $\CH_{\dim(X)-i}(W,j) = 0$ for all $j \ge 0$. The above exact
sequence shows that the map $j^*$ is an isomorphism for all $j \ge 0$ if
$d > m$. This proves the claim.  

Having proved the claim, we can now use the following motivic 
Atiyah-Hirzebruch spectral
sequence of Hopkins and Morel \cite{Levine1}:
\[
\xymatrix@C1.5pc{
E^{p,a-p}_2(V) \ar[d]^{j^*} & = & \CH^{b-a+p}(V, 2b-a)\otimes \bL^{a-p} \ar[d]^{j^*} &
\implies & MGL^{a,b}(V) \ar[d]^{j^*} \\
E^{p,a-p}_2(U) & = & \CH^{b-a+p}(U, 2b-a)\otimes \bL^{a-p} &
\implies & MGL^{a,b}(U).}
\]
The differentials for this spectral sequence are
given by $d^{p,q}_r : E^{p,q}_r \to E^{p+r, q-r+1}_r$. 

There is a finite filtration 
\[
MGL^{a,b}(V) = F^0MGL^{a,b}(V) \supseteq \cdots \supseteq F^nMGL^{a,b}(V) \supseteq 
F^{n+1} MGL^{a,b}(V) = 0
\]
such that ${F^pMGL^{a,b}(V)}/{F^{p+1}MGL^{a,b}(V)} =
E^{p,a-p}_{n_p}$ for some $n_p \gg 0$. The same holds for $MGL^{a,b}(U)$.
We see from this that there are only finitely many higher Chow groups
which completely determine $MGL^{a,b}(V)$ and $MGL^{a,b}(U)$ for given
integers $a \ge b \ge 0$. Hence by the 
above claim, we can choose some $N \gg 0$ such that all these finitely many
higher Chow groups of $V$ and $U$ are isomorphic if ${\rm codim}_V(W) > N$.
In particular, we get $j^*: E^{p,a-p}_{n_p}(V) \xrightarrow{\cong}
E^{p,a-p}_{n_p}(U)$ for all $0 \le p \le n$. By a descending induction on the
filtration, we get $j^*: MGL^{a,b}(V) \xrightarrow{\cong} MGL^{a,b}(U)$.
\end{proof}

\begin{lem}\label{lem:Ind} 
Let $\rho = \left(V_i, U_i\right)$ and $\rho' = \left(V'_i, U'_i\right)$ 
be two admissible gadgets for $G$ and let $a \ge b \ge 0$.
Then for any $X \in \Sm^G_k$, there is a canonical isomorphism
\[
{\underset{i}\varprojlim} \ MGL^{a,b}\left(X \stackrel{G}{\times} U_i\right) \
\cong  \ {\underset{i}\varprojlim} \
MGL^{a,b}\left(X \stackrel{G}{\times} U'_i\right).
\]
\end{lem}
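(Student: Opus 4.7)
The plan is to produce a common refinement of the two inverse systems by forming the mixed product $Y^{i,j} := X \stackrel{G}{\times}(U_i \times U'_j)$, and to show that, for each fixed $(a,b)$, the two projections identify this bi-indexed system cofinally with each of the single-indexed systems. Here the diagonal $G$-action on $U_i \times U'_j$ is free (since it is free on each factor), and Lemma~\ref{lem:sch} guarantees that $Y^{i,j}$ is a smooth quasi-projective scheme. The two projections fit into a diagram
\[
X \stackrel{G}{\times} U_i \xleftarrow{\ \pi_{i,j}\ } Y^{i,j} \xrightarrow{\ \pi'_{i,j}\ } X \stackrel{G}{\times} U'_j
\]
which is visibly compatible, as $i$ and $j$ vary, with the natural inclusions $U_i \hookrightarrow U_{i+1}$ and $U'_j \hookrightarrow U'_{j+1}$.

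The key step is to show that, for each fixed $i$, the pull-back $\pi_{i,j}^*$ is an isomorphism on $MGL^{a,b}$ once $j$ is large enough (depending on $a,b,i$), and similarly for $\pi'_{i,j}$. To see this, factor $\pi_{i,j}$ as
\[
Y^{i,j} \ \hookrightarrow \ Z^{i,j} := X \stackrel{G}{\times}(U_i \times V'_j) \ \longrightarrow \ X \stackrel{G}{\times} U_i,
\]
where the second map is a vector bundle with fibre $V'_j$, and therefore induces an isomorphism on $MGL^{a,b}$ by homotopy invariance (Theorem~\ref{thm:BPEC*}(2) applied to the smooth scheme $X \stackrel{G}{\times} U_i$). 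The first map is an open immersion whose closed complement $X \stackrel{G}{\times}(U_i \times (V'_j\setminus U'_j))$ has codimension $\codim_{V'_j}(V'_j \setminus U'_j)$ in $Z^{i,j}$. By axiom (3) of Definition~\ref{defn:Add-Gad}, this codimension is strictly increasing in $j$, hence tends to infinity. Lemma~\ref{lem:Iso-open} then provides, for each $(a,b)$, an integer $N(a,b)$ such that whenever $j$ is chosen with $\codim_{V'_j}(V'_j \setminus U'_j) > N(a,b)$, the open pull-back $MGL^{a,b}(Z^{i,j}) \to MGL^{a,b}(Y^{i,j})$ is an isomorphism. Composing, $\pi_{i,j}^*$ is an isomorphism for all such $j$. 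The symmetric argument applies to $\pi'_{i,j}$.

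Using these stability results, for each fixed $i$ the inverse system $\{MGL^{a,b}(Y^{i,j})\}_j$ is eventually constant with value canonically isomorphic to $MGL^{a,b}(X \stackrel{G}{\times} U_i)$; the identifications are compatible as $i$ varies, since the whole construction is natural in $i$. Consequently
\[
\varprojlim_i \varprojlim_j MGL^{a,b}(Y^{i,j}) \ \cong\ \varprojlim_i MGL^{a,b}(X \stackrel{G}{\times} U_i).
\]
Reversing the roles of $(U_i)$ and $(U'_j)$ gives
\[
\varprojlim_j \varprojlim_i MGL^{a,b}(Y^{i,j}) \ \cong\ \varprojlim_j MGL^{a,b}(X \stackrel{G}{\times} U'_j).
\]
Since inverse limits over a product of cofiltered posets commute (both iterated limits compute the limit over $\mathbb{N}\times\mathbb{N}$), the two left-hand sides agree, producing the desired canonical isomorphism.

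The only real obstacle is the codimension bookkeeping: one must check that the complement of $Y^{i,j}$ in $Z^{i,j}$ really does have the stated codimension after passing to the mixed quotient, and that the bound $N(a,b)$ from Lemma~\ref{lem:Iso-open} is independent of $i$ (it is, since it depends only on $(a,b)$ through the Atiyah--Hirzebruch spectral sequence used in that lemma). Once these points are verified, the cofinality argument and the Fubini property of inverse limits combine to give the isomorphism with no further input.
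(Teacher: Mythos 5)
Your proof is correct and follows essentially the same route as the paper's: form the mixed quotient $X \stackrel{G}{\times}(U_i \oplus U'_j)$, factor the projection to $X \stackrel{G}{\times} U_i$ through the vector bundle $X \stackrel{G}{\times}(U_i \oplus V'_j)$ so that homotopy invariance and Lemma~\ref{lem:Iso-open} (with Definition~\ref{defn:Add-Gad}(3)) give the stabilization, and then conclude by a Fubini argument for iterated inverse limits. One small inaccuracy in your closing aside: the integer $N$ produced by Lemma~\ref{lem:Iso-open} does depend on $\dim V$, not only on $(a,b)$, since the length of the Chow filtration coming from the Hopkins--Morel spectral sequence in its proof grows with $\dim V$; so $N$ cannot be taken uniformly in $i$ — but this is harmless, because the argument only needs, for each fixed $i$, some $j \gg 0$ (depending on $i$), which is exactly what the lemma provides.
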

\begin{proof}
Fix $a \ge b \ge 0$.
For any $i, j \ge 1$, we have the canonical maps
\[
\alpha_{ij}: X \stackrel{G}{\times} (U_i \oplus U'_j) \inj
X \stackrel{G}{\times} (U_i \oplus V'_j) \xrightarrow{p_{ij}} 
X \stackrel{G}{\times} U_i
\]
where the first map is the open immersion and the second map is a vector
bundle. In particular, the map $p^*_{ij}$ is an isomorphism on the motivic
cobordism. 
It follows from the property (iv) of an admissible gadget
({\sl cf.} Definition~\ref{defn:Add-Gad}) and Lemma~\ref{lem:Iso-open} 
that given $i \ge 1$, the map
\begin{equation}\label{eqn:Ind0}
\alpha^*_{ij}: MGL^{a,b}\left(X \stackrel{G}{\times} U_i\right) \to 
MGL^{a,b}\left(X \stackrel{G}{\times} (U_i \oplus U'_j)\right)
\end{equation}
is an isomorphism for all $j \gg 0$. Taking the limit, we see that the
map 
\[
\alpha^*: 
{\underset{i}\varprojlim} \
MGL^{a,b}\left(X \stackrel{G}{\times} U_i\right) \ \to
\ {\underset{i}\varprojlim} \ {\underset{j}\varprojlim} \
MGL^{a,b}\left(X \stackrel{G}{\times} (U_i \oplus U'_j)\right)
\]
is an isomorphism.
By reversing the roles of the admissible gadgets, we see that the map
\[
\beta^*: {\underset{i}\varprojlim} \
MGL^{a,b}\left(X \stackrel{G}{\times} U'_i\right) \ \to
\ {\underset{i}\varprojlim} \ {\underset{j}\varprojlim} \
MGL^{a,b}\left(X \stackrel{G}{\times} (U_i \oplus U'_j)\right)
\]
is also an isomorphism. The map ${\beta^*}^{-1} \circ \alpha^*$ is the desired
isomorphism. This proves the lemma.
\end{proof}

To prove the second part, suppose that the inverse system 
$\left\{MGL^{a,b}\left(X \stackrel{G}{\times} U'_i\right)\right\}$ satisfies
the Mittag-Leffler condition.

Fix $i_0 \ge 1$ and let $s_0 \gg 0$ be such that the map $\alpha^*_{i_0, j}$ is
an isomorphism for all $j \ge s_0$. Let $i_1 \gg i_0$ be such that the map
\[
\beta^*_{i, s_0} :  
MGL^{a,b}\left(X'_{i, s_0}\right) \to 
MGL^{a,b}\left(Y_{i, s_0}\right)
\]
is an isomorphism for all $i \ge i_1$.

Let $s_1 \ge s_0$ be such that for all $j \ge s_1$, we have 
\begin{equation}\label{eqn:Ind1}
{\rm Image} \left(MGL^{a,b}(X'_j) \to MGL^{a,b}(X'_{s_0})\right)
= {\rm Image} \left(MGL^{a,b}(X'_{s_1}) \to MGL^{a,b}(X'_{s_0})\right).
\end{equation}

Let us now fix an integer $i \ge i_1$ and an element $a \in
{\rm Image} \left(MGL^{a,b}(X_{i_1}) \to MGL^{a,b}(X_{i_0})\right)$.
We get a commutative diagram

\begin{defn}\label{defn:WeakEC}
Let $G$ be a linear algebraic group over $k$ and let $X \in \Sm^G_k$.
We let $mgl^{a,b}_G(X)$ be the group
\[
mgl^{a,b}_G(X) = \ {\underset{i}\varprojlim} \
MGL^{a,b}\left(X \stackrel{G}{\times} U_i\right)
\]
where $\rho = \left(V_i, U_i\right)$ is an admissible gadget for $G$.
\end{defn}
It follows from Lemma~\ref{lem:Ind} that $mgl^{a,b}_G(X)$ is well defined.
It also follows from the definition of $MGL^{*,*}_G(X)$ that there is a
natural map
\begin{equation}\label{eqn:STCob}
\phi_X: MGL^{a,b}_G(X) \to mgl^{a,b}_G(X)
\end{equation} 
and this map is surjective by Proposition~\ref{prop:Limit}.
We set $mgl^{*,*}_G(X) = \ {\underset{a \ge b \ge 0}\oplus} \
mgl^{a,b}_G(X)$.

\subsection{Geometric equivariant cobordism}
\label{subsection:MGEC}
Motivated by the work of Quillen \cite{Quillen} on complex
cobordism, Levine and Morel \cite{LM} gave a geometric construction of the 
algebraic cobordism and showed that this is a universal oriented Borel-Moore 
homology theory in $\Sch_k$. Based on the work of Levine and Morel, a 
theory of equivariant algebraic cobordism was constructed in \cite{Krishna1}.
This theory of equivariant cobordism was subsequently used in 
\cite{Krishna2}, \cite{Krishna3}, \cite{KU} and \cite{KK} to compute the
ordinary algebraic cobordism of many classes of smooth schemes. 
We recall the definition of this equivariant cobordism. 

Let $G$ be a linear algebraic group and let $X \in \Sm^G_k$. 
For any integer $i \ge 1$, let $V_i$ be a representation of 
$G$ and let $U_i$ be a $G$-invariant open subset of $V_i$ such that the 
codimension of $V_i \setminus U_i$ is at least $i$ and $G$ acts freely on 
$U_i$ such that the quotient ${U_i}/G$ is a quasi-projective scheme. 
Let ${\Omega^q_G(X)}_i$ denote the quotient 
${\Omega^q(X \stackrel{G}{\times} U_i)}/
{F^i\Omega^q(X \stackrel{G}{\times} U_i)}$, where
$\Omega^q(X)$ is the algebraic cobordism of Levine-Morel and
$F^i\Omega^q(X)$ is the subgroup of $\Omega^q(X)$ generated by
cobordism cycles which are supported on the closed subschemes of $X$ of
codimension at least $i$. It is known that ${\Omega^q_G(X)}_i$ is
independent of the choice of the pair $(V_i,U_i)$ and there is a natural
surjection ${\Omega^q_G(X)}_{i'} \surj {\Omega^q_G(X)}_i$ if $i' \ge i$.
The {\sl geometric} equivariant cobordism group $\Omega^q_G(X)$ is 
defined by
\begin{equation}\label{eqn:ECob} 
\Omega^q_G(X) : = {\underset {i} \varprojlim} \ \Omega^q_G(X)_i.
\end{equation}

It was shown in \cite{Krishna1} that this geometric version of the equivariant 
cobordism has all the properties
of an oriented cohomology theory on $\Sm^G_k$ except that it does not in
general have the localization sequence. We also remark that the
equivariant geometric cobordism $\Omega^*_G(-)$ in \cite{Krishna1} is  
defined for all schemes in $\Sch_k$ and is an example of a Borel-Moore
homology theory.

\subsection{Basic properties of $mgl^{*,*}_G(-)$}\label{subsection:BWECob}
Recall from \cite{Panin2} that the motivic cobordism theory $MGL$
has push-forward maps for projective morphisms between smooth schemes.
We need the following property of this map in order to define the push-forward
map on the $mgl^{*,*}_G(-)$.

\begin{lem}\label{lem:PushFor}
Consider the Cartesian square
\begin{equation}\label{eqn:PFor0}
\xymatrix@C1.2pc{
Y' \ar[r]^{f'} \ar[d]_{g'} & Y \ar[d]^{g} \\
X' \ar[r]_{f} & X}
\end{equation}
in $\Sm_k$ such that $f$ is projective. Suppose that either
\begin{enumerate}
\item
$g$ is a closed immersion and ~\eqref{eqn:PFor0} is transverse, or,
\item
$g$ is smooth. 
\end{enumerate}
One has then 
$g^* \circ f_*  =  f'_* \circ g'^* : MGL^{*,*}(X') \to MGL^{*,*}(Y)$.
\end{lem}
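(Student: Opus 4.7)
The plan is to factor the projective morphism $f$ as a closed immersion followed by a projection and verify the base-change identity separately on each factor. Write $f = p \circ \iota$ where $\iota\colon X' \inj \P^n \times X$ is a closed immersion and $p\colon \P^n \times X \to X$ is the projection, so $f_* = p_* \circ \iota_*$. Pulling back along $g$ gives $f' = p' \circ \iota'$ with $\iota'\colon Y' \inj \P^n \times Y$ a closed immersion and $p'\colon \P^n \times Y \to Y$ the projection, and the given Cartesian square decomposes as the vertical composite
\[
\xymatrix@C1pc{
Y' \ar[r]^{\iota'} \ar[d]_{g'} & \P^n \times Y \ar[r]^>>>{p'} \ar[d]^{G} & Y \ar[d]^{g} \\
X' \ar[r]_{\iota} & \P^n \times X \ar[r]_>>>{p} & X,}
\]
with $G := \id_{\P^n}\times g$. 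It therefore suffices to establish $G^* \circ \iota_* = \iota'_* \circ g'^*$ for the left square and $g^* \circ p_* = p'_* \circ G^*$ for the right square; composing these yields the lemma.

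For the left square I will apply Proposition~\ref{prop:Gmap-Prop}(1) to its transpose, in which $\iota$ and $\iota'$ appear as the vertical arrows. The key point is to verify that the transposed square is transverse in the sense of \eqref{eqn:trans}, namely that $N(\iota') \cong g'^*\,N(\iota)$. In case~(2), $G$ is smooth, so the base change of the regular closed immersion $\iota$ along $G$ is again a regular closed immersion whose normal bundle is the pull-back of $N(\iota)$; this is automatic from flatness. In case~(1), $G$ is a closed immersion, and the desired identification of normal bundles is deduced from the original transversality $N(g') \cong f'^*\,N(g)$ by a direct inspection of the tangent sequences of the smooth projections $p, p'$, using the splitting $T_{\P^n \times X}|_{X'} = p^*T_X|_{X'}\oplus T_p|_{X'}$ and its analogue on $Y$.

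For the right square, the identity $g^*\circ p_* = p'_*\circ G^*$ is the standard compatibility of push-forward along a projective bundle with arbitrary pull-back. Under the projective bundle decomposition $\m(\P^n \times X) \cong \bigoplus_{i=0}^{n} \m(X)\cdot\xi^i$ with $\xi = c_1(\sO(-1))$, Panin's integration with supports \cite{Panin2} describes $p_*$ by a universal formula in $\xi$ and the coefficient projections; naturality of $\xi$ (since $G^*\sO_{\P^n \times X}(-1) = \sO_{\P^n \times Y}(-1)$) and of the projective bundle decomposition then gives the identity at once.

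The main technical obstacle is the transversality verification of the transposed left square in case~(1); the remaining assertions are formal consequences of the orientation theory of $MGL$ on $\Sm_k$ and of Proposition~\ref{prop:Gmap-Prop}(1).
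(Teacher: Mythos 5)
Your proposal is correct and follows the same basic strategy as the paper: factor $f = p \circ \iota$ through the closed immersion into $\P^n \times X$, write the given Cartesian square as the horizontal composite of two Cartesian squares, and verify the base-change identity on each factor. The paper's treatment of the two factors is a bit more direct, however, and the comparison is worth noting.

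For the left (closed-immersion) square, the paper does not verify $N(\iota') \cong g'^*(N(\iota))$ by chasing tangent sequences as you propose. Instead it observes that both the right square (a trivial $\P^n$-base change, transverse since $N(h) \cong p'^*N(g)$) and the outer square (transverse by hypothesis) are transverse, and that this forces the left square to be transverse: $i'^*(N(h)) = i'^*p'^*(N(g)) = f'^*(N(g)) = N(g')$. Your snake-lemma argument on the tangent sequences — taking the quotient of $0 \to T_{Y'} \to f'^*T_Y \oplus T_{p'}|_{Y'} \to N(\iota') \to 0$ by $0 \to T_{X'}|_{Y'} \to f'^*(T_X|_Y) \oplus T_{p'}|_{Y'} \to g'^*N(\iota) \to 0$ — works and recovers the iso $N(\iota') \cong g'^*N(\iota)$ directly from $N(g') \cong f'^*N(g)$, but the "composite of transverse squares" argument is cleaner and is what you should be aware is used. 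Both verifications are equivalent for Cartesian squares of smooth schemes, since transversality of such a square is symmetric in the two directions.

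Two further remarks. First, you invoke Proposition~\ref{prop:Gmap-Prop}(1) for the left square; the paper instead cites Panin's base-change axiom \cite[Definition~2.2-(2)]{Panin2}. Since $f_*$ in the lemma is Panin's push-forward, appealing to Proposition~\ref{prop:Gmap-Prop}(1) implicitly uses that the Gysin map constructed in Section~5 agrees with Panin's push-forward for closed immersions of ordinary smooth schemes; this is true by inspection of the two constructions (both are given by the Thom isomorphism followed by deformation to the normal cone), but it is not stated in the paper, so citing Panin directly is more economical. Second, for the right (projection) square the paper simply cites Panin's compatibility axiom \cite[Definition~2.2-(3)]{Panin2} for push-forward along smooth projective morphisms. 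Your argument via the projective bundle decomposition and a "universal formula" for $p_*(\xi^i)$ is correct in spirit, but the universality of the elements $p_*(\xi^i) \in \bL \subseteq \m(X)$ is itself a consequence of the naturality built into Panin's trace structure; spelling it out without circularity would require citing either Panin's uniqueness theorem for integrations or the explicit residue formula for push-forward along projective bundles. The direct citation of the axiom, as in the paper, avoids this.
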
 
\begin{proof}
We can write $f = p \circ i$, where $X' \inj \P^n \times X$ is a closed
immersion and $p : \P^n \times X \to X$ is the projection.
This yields a commutative diagram
\[
\xymatrix@C1.2pc{
Y' \ar[r]^<<<<{i'} \ar[d]_{g'} & \P^n \times Y \ar[r]^>>>{p'} \ar[d]^{h} & 
Y \ar[d]^{g} \\
X' \ar[r]_<<<{i} & \P^n \times X \ar[r]_>>>>{p} & X}
\]
where both squares are Cartesian. 

First suppose that $g$ is a closed immersion and ~\eqref{eqn:PFor0} is 
transverse.
Since the right square is transverse 
and so is the big outer square, it follows that the left square is also
transverse. In particular, we have $h^* \circ i_* = i'_* \circ g'^*$
by \cite[Definition~2.2-(2)]{Panin2}. On the other hand, we have
$g^* \circ p_* = p'_* \circ h^*$ by \cite[Definition~2.2-(3)]{Panin2}.
Combining these two, we get
\[
g^* \circ f_* = g^* \circ p_* \circ i_* = p'_* \circ h^* \circ i_*
= p'_* \circ i'_* \circ g'^* = f'_* \circ g'^*
\]
where the first and the last equalities follow from the functoriality
property of the push-forward ({\sl cf.}  \cite[Definition~2.2-(1)]{Panin2}).

Now suppose that $g$ is smooth. For the above proof to go through, only thing
we need to know is that the left square in the above diagram is still
transverse. But this is an elementary exercise using the fact that
$g, h, g'$ are all smooth and $T(g') = i'^*\left(T(h)\right)$, where $T(f)$
denotes the relative tangent bundle of a smooth map $f$. 
This proves the lemma.
\end{proof}

The following result describes the basic properties of
$mgl^{*,*}_G(-)$.

\begin{thm}\label{thm:WeakBasic}
The equivariant cobordism theory $mgl^{*,*}_G(-)$ on $\Sm^G_k$
satisfies the following properties. 
\begin{enumerate}
\item
$Functoriality :$  The assignment $X \mapsto mgl^{*,*}_G(X)$ is
a contravariant functor on $\Sm^G_k$. 
\item
$Push-forward :$ Given a projective map $f : X' \to X$ in $\Sm^G_k$,
there is a push-forward map $f_*: mgl^{a,b}_G(X') \to mgl^{a+2d, b+d}_G(X)$
where $d = \dim(X) - \dim(X')$. 
If $X'' \xrightarrow{f'} X' \xrightarrow{f} X$ are projective, then
$(f \circ f')_* = f_* \circ f'_*$. 
If the square  
\[
\xymatrix@C.7pc{
X' \ar[r]^{g'} \ar[d]_{f'} & X \ar[d]^{f} \\
Y' \ar[r]_{g} & Y}
\]
is transverse in $\sV_G$ where $f$ is a closed immersion, one has 
$g^* \circ f_* = {f'}_* \circ {g'}^* : mgl^{*,*}_G(X) \to mgl^{*,*}_G(Y')$.
\item
$Homotopy \ Invariance:$  If $f : E \to X$ is a $G$-equivariant vector 
bundle, then $f^*:   mgl^{*,*}_G(X) \xrightarrow{\cong} mgl^{*,*}_G(E)$. 
\item
$Chern \ classes :$ For any $G$-equivariant vector bundle $E
\xrightarrow{f} X$ of rank $r$, there are equivariant Chern class operators
$c^G_m(E) : mgl^{*,*}_G(X) \to mgl^{*,*}_G(X)$ for $0 \le m \le r$ with
$c^G_0(E) = 1$. These Chern classes 
have same functoriality properties as in the non-equivariant case. 
Moreover, they satisfy the Whitney sum formula. 
\item
$Exterior \ Product :$ There is a natural product map
\[
mgl^{a,b}_G(X) \otimes_{\Z} mgl^{a',b'}_G(X') \to mgl^{a+a',b+b'}_G(X \times X').
\]
In particular, $mgl^{*,*}_G(X)$ is a bi-graded ring for every $X \in \Sm^G_k$.
\item 
$Projection \ formula :$ For a projective map $f : X' \to X$ in
$\Sm^G_k$, one has for $x \in mgl^{*,*}_G(X)$ and $x' \in mgl^{*,*}_G(X')$,
the formula : $f_*\left(x' \cdot f^*(x)\right) = f_*(x') \cdot x$. 
\item
$Projective \ bundle \ formula :$
For an equivariant vector bundle $E$ of rank $n$ on $X$, the map
\[
\Phi_X : mgl^{*,*}_G(X) \oplus \cdots \oplus mgl^{*,*}_G(X) \to
mgl^{*,*}_G\left(\P(E)\right);
\]
\[
\Phi\left(a_0, \cdots , a_{n-1}\right) \ =
\ \stackrel{n-1}{\underset{i = 0}\sum} \ \pi^*(a_i) \cdot \xi^i
\]
is an isomorphism, where $\pi : \P(E) \to X$ is the projection map  
and $\xi = c^G_1\left(\sO_E(-1)\right)$.
\item
$Change \ of \ groups :$ If $H \subseteq G$ is a closed subgroup and 
$X \in \Sm^G_k$, then there is a natural restriction map $r^G_{H,X}: 
mgl^{*,*}_G(X) \to mgl^{*,*}_H(X)$. 
In particular, there is a natural forgetful map
\begin{equation}\label{eqn:forget}
r^G_X: mgl^{*,*}_G(X) \to \m(X).
\end{equation}
\item
$Morita \ Isomorphism : $
If $H \subseteq G$ is a closed subgroup and $X \in \Sm^H_k$, then there is a 
canonical isomorphism
$mgl^{*,*}_G(X \stackrel{H}{\times} G) \cong mgl^{*,*}_H(X)$.
\item
$Free \ action :$ If $X \in \Sm^G_{{free}/k}$, there is a natural isomorphism
\[
MGL^{*,*}(X/G) \xrightarrow{\cong} mgl^{*,*}_G(X).
\] 
\item
$Comparison \ with \ geometric \ cobordism : $
For any $X \in \Sm^G_k$ and $p \ge 0$, there is a natural isomorphism
$\Omega^p_G(X) \xrightarrow{\cong} mgl^{2p,p}_G(X)$.
\end{enumerate}
\end{thm}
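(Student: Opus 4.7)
The plan is to observe that since each mixed quotient $X^i_G := X \stackrel{G}{\times} U_i$ is a smooth quasi-projective $k$-scheme (Lemma~\ref{lem:sch}), every listed property holds \emph{levelwise} as a consequence of the corresponding structure on $MGL^{*,*}$ for smooth schemes (established in \cite{Panin1,Panin2,Levine1}). So after setting up the levelwise maps, the task reduces to checking compatibility with the transition maps $\tau_i : MGL^{a,b}(X^{i+1}_G) \to MGL^{a,b}(X^i_G)$ in the inverse system defining $mgl^{*,*}_G$, and then passing to $\varprojlim_i$. Here each $\tau_i$ factors as the composition of an open immersion pullback (along $X \stackrel{G}{\times}(U_i \oplus V) \hookrightarrow X^{i+1}_G$) and the inverse of a vector bundle pullback (along $X \stackrel{G}{\times}(U_i \oplus V) \to X^i_G$, a vector bundle by Lemma~\ref{lem:Elem-eq}); in particular, $\tau_i$ is built from pullbacks along smooth morphisms of smooth schemes.

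For items (1), (3), (4), (5), (7) and (8), the verification is routine: a $G$-map $f:Y\to X$ gives a compatible family $f^i_G : Y^i_G \to X^i_G$, so pullbacks pass to the limit; homotopy invariance uses that $E^i_G \to X^i_G$ is a vector bundle; equivariant Chern classes are defined by $c^G_m(E) := (c_m(E^i_G))_i$ and their compatibility with $\tau_i$ follows from functoriality of the non-equivariant Chern classes; the exterior product is constructed exactly as in Lemma~\ref{lem:prod}, using the canonical maps $(X \times X')^i_G \to X^i_G \times X'^i_G$; the projective bundle formula at each level gives an inverse system of isomorphisms (the trivial $\bL^{\oplus n}$-module structure is preserved), and change of groups uses $X^i_H \to X^i_G$ for $H \subseteq G$. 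Item (6) is immediate from the levelwise projection formula, and item (9) follows from Corollary~\ref{cor:Morita1} applied at each level $i$.

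The main non-routine point is (2) push-forward. Given a projective equivariant $f : X' \to X$, each $f^i_G : {X'}^i_G \to X^i_G$ is projective between smooth schemes (using Lemma~\ref{lem:sch}), so we have $(f^i_G)_*$ on $MGL$. To pass to the inverse limit I need $(f^i_G)_* \circ \tau_i = \tau_i \circ (f^{i+1}_G)_*$. Since $\tau_i$ is built from pullbacks along smooth morphisms, this is exactly case (2) of Lemma~\ref{lem:PushFor}; the Cartesian squares over the transitions $X^i_G \hookleftarrow X \stackrel{G}{\times}(U_i \oplus V) \to X^i_G$ are base-change squares against $f$, which is what that lemma requires. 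Functoriality and base change for transverse squares in $\Sm^G_k$ then follow by applying Lemma~\ref{lem:Elem-eq} to pass to the corresponding levelwise transverse squares in $\Sm_k$, invoking the non-equivariant projective push-forward, and taking the limit.

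The two remaining items require a bit more. For (10), when $G$ acts freely with quotient $X/G$, the projection $p_i : X \stackrel{G}{\times} V_i \to X/G$ is a vector bundle (associated bundle with fiber $V_i$), so $p_i^* : MGL^{*,*}(X/G) \xrightarrow{\cong} MGL^{*,*}(X \stackrel{G}{\times} V_i)$ is an isomorphism. The open inclusion $X^i_G \hookrightarrow X \stackrel{G}{\times} V_i$ has complement of codimension $\codim_{V_i}(V_i \setminus U_i) \to \infty$ by Definition~\ref{defn:Add-Gad}(3), so by Lemma~\ref{lem:Iso-open} the open pullback is an isomorphism in bidegree $(a,b)$ for all $i \gg 0$. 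Taking the inverse limit yields $MGL^{*,*}(X/G) \xrightarrow{\cong} mgl^{*,*}_G(X)$. For (11), at each level $i$, Levine's comparison \cite[Theorem~3.1]{Levine1} gives a natural isomorphism $\Omega^p(X^i_G) \xrightarrow{\cong} MGL^{2p,p}(X^i_G)$; combined with the identification ${\underset{i}\varprojlim}\, \Omega^p(X^i_G)/F^i \cong \Omega^p_G(X)$ from \cite[Theorem~6.1]{Krishna1} and the codimension-based vanishing used in the proof of Lemma~\ref{lem:Iso-open} (which shows the $F^i$-quotient becomes a bijection onto $\Omega^p(X^i_G)$ as $i \to \infty$ in each fixed bidegree), taking inverse limits produces the desired isomorphism $\Omega^p_G(X) \xrightarrow{\cong} mgl^{2p,p}_G(X)$. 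The principal obstacle is the compatibility check for push-forward in (2); everything else is a matter of organizing levelwise naturality and exploiting that the transitions $\tau_i$ are built from smooth pullbacks.
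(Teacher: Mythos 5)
Your proposal is correct and follows essentially the same strategy as the paper: establish each property levelwise on the smooth quasi-projective mixed quotients $X^i_G$ using the known oriented cohomology structure on $MGL^{*,*}$, then verify compatibility with the transition maps and pass to $\varprojlim_i$. The one place your route genuinely differs is item (2): the paper takes the transition map directly as the pullback along the closed immersion $X^i_G \hookrightarrow X^{i+1}_G$, establishes transversality of the relevant square via Lemma~\ref{lem:Elem-eq}, and invokes case (1) of Lemma~\ref{lem:PushFor}, whereas you decompose the transition into an open-immersion pullback followed by the inverse of a vector-bundle pullback and invoke case (2) (smooth base change) twice. Both work; your factorization has the small advantage of only requiring the smooth case of Lemma~\ref{lem:PushFor}, while the paper's is more direct once transversality is known. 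Two small imprecisions worth noting: in (9) the reduction is really a levelwise identification $(X\stackrel{H}{\times}G)\stackrel{G}{\times}U_i \cong X\stackrel{H}{\times}U_i$ rather than a literal application of Corollary~\ref{cor:Morita1} (which is a statement in $\sH(k)$), and in (11) the vanishing of the filtration quotient $\Omega^p(X^i_G)/F^i$ for $i>p$ is a separate dimension-count on cobordism cycles, not something that follows from Lemma~\ref{lem:Iso-open}, which concerns higher Chow groups; the paper's citation to \cite[Theorem~6.1]{Krishna1} covers this.
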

\begin{proof}
The contravariant functoriality of $mgl^{*,*}_G(-)$ follows from the similar
property of motivic cobordism. If $f: X' \to X$ is a projective morphism,
and if $\rho = \left(V_i, U_i\right)$ is an admissible pair for $G$, then
it follows from \cite[Lemma~5.1]{Krishna1} that $f_i :
X' \stackrel{G}{\times} U_i \to X \stackrel{G}{\times} U_i$ is projective
for all $i \ge 1$. Moreover, we have seen in the proof of 
Lemma~\ref{lem:Elem-eq} that the diagram
\[
\xymatrix@C1.8pc{
{X' \stackrel{G}{\times} U_i} \ar[r]^{\iota_{X',i}} \ar[d]_{f_i} &
{X' \stackrel{G}{\times} U_{i+1}} \ar[d]^{f_{i+1}} \\
{X \stackrel{G}{\times} U_i} \ar[r]_{\iota_{X,i}} & 
{X \stackrel{G}{\times} U_{i+1}}}
\]
is transverse, where the horizontal maps are closed immersions. 
It follows from Lemma~\ref{lem:PushFor} that 
$\iota^*_{X,i} \circ (f_{i+1})_* = (f_i)_* \circ \iota_{X',i}^*$.
Taking the inverse limit of $(f_i)_*$, we get the push-forward map
$f_* : mgl^{a,b}_G(X') \to mgl^{a+2d,b+d}_G(X)$. The other part of property (2)
follows from the similar property of the motivic cobordism on
$\Sm_k$ by \cite[Definition~2.2-(2)]{Panin2} and Lemma~\ref{lem:PushFor}.

The proof of properties (3) through (6) is same as the proof of
\cite[Theorem~5.2]{Krishna1} and the proof of the projective bundle
formula follows directly from the similar result in the non-equivariant case.
The proof of Property (8) is straight forward and the proof of 
property (9) follows like \cite[Proposition~5.4]{Krishna1}.

To prove property (10), we observe in the case of free action of $G$ on $X$
that there are maps $X \stackrel{G}{\times} U_i \to
 X \stackrel{G}{\times} V_i \to X/G$ in $\Sm_k$. It follows from
the homotopy invariance of the motivic cobordism, Lemma~\ref{lem:Iso-open}
and the fourth property of an admissible gadget that the induced map
$MGL^{*,*}(X/G) \to MGL^{*,*}\left(X \stackrel{G}{\times} U_i\right)$ is
an isomorphism for all $i \gg 0$. In particular, the map
\[
MGL^{*,*}(X/G) \to \ {\underset{i}\varprojlim} \
MGL^{*,*}\left(X \stackrel{G}{\times} U_i\right)
\]
is an isomorphism. The property (11) follows directly from from
\cite[Theorem~3.1]{Levine1}. 
\end{proof}

\begin{remk}\label{remk:Local-NO}
We have seen in Theorem~\ref{thm:BPEC*} that the equivariant motivic 
cobordism theory $MGL^{*,*}_G(-)$ has localization sequences. However,
the proof of this localization sequence fails in the case of $mgl^{*,*}_G(-)$.
In fact, it was shown by Buh{\v{s}}taber and Mi{\v{s}}{\v{c}}enko 
\cite{BM2} that if $k^*(X)$ is the projective limit of the topological 
$K$-theory of the finite skeleta of a $CW$-complex $X$, then 
$k^*(-)$ does not satisfy the localization sequence. Using the results of
Buh{\v{s}}taber-Mi{\v{s}}{\v{c}}enko and Landweber \cite{Landweber}, one
can find such an example also for the complex cobordism.
Because of this, one does not expect localization sequence to be true for 
$mgl^{*,*}_G(-)$. This is one serious drawback of this theory.
We shall show however that the localization sequence does hold for 
$mgl^{*,*}_G(-)$ with the rational coefficients.
\end{remk}

\subsection{Comparison of $\mg(-)$ and $mgl^{*,*}_G(-)$  theories}
\label{subsection:MGL-mgl}
We have seen before that there is a natural transformation
of contravariant functors $\mg(-) \to mgl^{*,*}_G(-)$ on $\Sm^G_k$.
When it comes to computing the equivariant cobordism $\mg(X)$, it is 
often desirable to have these two functors isomorphic for $X$. Although we 
can not expect this to be the case in general, we have the following
version of Lemma~\ref{lem:MLTorus} in the case of torus action.
We shall show later in this text that this isomorphism always holds
with rational coefficients.
 
\begin{cor}\label{cor:MLTorus**}
Let $T$ be a split torus and let $X \in \Sm^T_k$ be smooth and projective.  
Then for any $a \ge b \ge 0$, the map 
\[
\phi_X : MGL^{a,b}_T(X) \to mgl^{a,b}_T(X)
\]
is an isomorphism. 
\end{cor}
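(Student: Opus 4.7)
The plan is to read off the corollary directly from the Milnor exact sequence together with the Mittag–Leffler statement of Lemma~\ref{lem:MLTorus}. Fix an admissible gadget $\rho = (V_i, U_i)_{i \ge 1}$ for $T$ and set $X^i = X \stackrel{T}{\times} U_i$, so that $X_T = \colim_i X^i$ in $\Spc$. By definition, $MGL^{a,b}_T(X) = MGL^{a,b}(X_T)$, while $mgl^{a,b}_T(X) = \varprojlim_i MGL^{a,b}(X^i)$ by Definition~\ref{defn:WeakEC} (well-defined by Lemma~\ref{lem:Ind}).

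First, I would apply Proposition~\ref{prop:Limit} (the Milnor exact sequence) to the cofibration sequence $X^0 \hookrightarrow X^1 \hookrightarrow \cdots$, obtaining a short exact sequence
\[
0 \to {\varprojlim_i}^1 MGL^{a-1,b}(X^i) \to MGL^{a,b}_T(X) \xrightarrow{\phi_X} \varprojlim_i MGL^{a,b}(X^i) \to 0.
\]
The right-hand map is, by construction, the natural map $\phi_X$ of \eqref{eqn:STCob}. Hence $\phi_X$ is an isomorphism if and only if the ${\varprojlim}^1$ term vanishes, which is guaranteed once the inverse system $\{MGL^{a-1,b}(X^i)\}_i$ satisfies the Mittag–Leffler condition.

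But this is exactly the content of Lemma~\ref{lem:MLTorus}: for $X$ smooth projective with $T$-action and any admissible gadget $\rho$, the system $\{MGL^{c,d}(X^i)\}_i$ satisfies Mittag–Leffler for every $c \ge d \ge 0$ (in fact the transition maps are surjective on a canonical gadget, and the general case was reduced to this). Applying this with $(c,d) = (a-1,b)$ (or $(a,b)$), the ${\varprojlim}^1$ group vanishes, so $\phi_X$ is an isomorphism.

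There is essentially no obstacle here, since all the real work has already been done in Lemma~\ref{lem:MLTorus} (which in turn uses the Bia\l ynicki-Birula filtration via Theorem~\ref{thm:BBH}, the homotopy invariance, the projective bundle formula for the trivial $T$-action on the components of $X^T$, and an inductive surjectivity argument along the filtration). The only minor point worth verifying is that the $\phi_X$ appearing in the statement really coincides with the right-hand map of the Milnor sequence; this is immediate from the construction of $MGL^{a,b}(X_T)$ as $[\Sigma^\infty_T(X_T)_+, \Sigma^{a,b}MGL]$ and the description of the Milnor sequence in the proof of Proposition~\ref{prop:Limit}.
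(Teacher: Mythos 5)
Your proof is correct and is precisely the argument the paper intends: the corollary is an immediate consequence of Lemma~\ref{lem:MLTorus} (which already contains, as its ``in particular'' conclusion, the exact statement needed) combined with the vanishing of the $\varprojlim^1$ term in the Milnor exact sequence of Proposition~\ref{prop:Limit}. You have also correctly identified that all the substantive work happens in Lemma~\ref{lem:MLTorus}, so no further argument is required.
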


\section{Equivariant cobordism with rational coefficients}
\label{section:ECRC}
In this section, we study the equivariant motivic cobordism with
rational coefficients and show in this case that the equivariant
cobordism of a smooth scheme with a group action can be computed
in terms of the limit of the ordinary motivic cobordism groups of
smooth schemes. This allows us to show in particular that if $G$ is a
connected reductive group, then the equivariant cobordism of any
$G$-scheme can be written in terms of the Weyl group invariants of the
equivariant cobordism of the given scheme for the action of a maximal torus.
An equivariant analogue of the Levine-Morel algebraic cobordism was
studied in \cite{Krishna1}. We also give the precise relation between the
two versions of equivariant cobordism in this section.

Recall from \cite[Remark~III.6.5]{Adams} that given an abelian group $R$,
there is a Moore space $M_R \in {\rm HoSsets}_{\bullet}$, where
${\rm HoSsets}_{\bullet}$ is the unstable homotopy category of pointed simplicial
sets. We can consider $M_R$ as an object of $\sH_{\bullet}(k)$ via the 
obvious functor ${\rm HoSsets}_{\bullet} \to \sH_{\bullet}(k)$.
For any spectrum $E \in \sS \sH (k)$, there is a  Moore spectrum 
$E_R = E \wedge M_R$.

\begin{defn}\label{defn:Coeff}
Let $G$ be a linear algebraic group over $k$. For any $X \in \Sm^G_k$,
the equivariant motivic cobordism of $X$ with coefficients in the
group $R$ is given by    
\begin{equation}\label{eqn:EMCR}
MGL^{a,b}_G(X; R) =  MGL^{a,b}(X_G; R) : = 
\Hom_{\sS\sH(k)}\left(\Sigma^{\infty}_T X_{+}, \Sigma^{a,b}{MGL}_{R}\right)
\end{equation}
where $X_G$ is a Borel space of the type $X_G(\rho)$ as in 
\S~\ref{subsection:Borel}. 
We also consider $mgl^{*,*}_G(-; R)$:
\[
mgl^{a,b}_G(X; R) = \ {\underset{i}\varprojlim} \
MGL^{a,b}\left(X \stackrel{G}{\times} U_i ; R \right).
\] 
\end{defn}
We set 
\[
MGL^{*,*}_G(X; R) = 
{\underset{0 \le b \le a}\oplus} \ MGL^{a,b}_G(X; R) \ {\rm and} \
mgl^{*,*}_G(X; R) = {\underset{0 \le b \le a}\oplus} \ mgl^{a,b}_G(X; R). 
\]
Note that since every $X \in \Sm_k$ is compact as a motivic space, 
it follows that there is a short exact sequence
\begin{equation}\label{eqn:Coeff*}
0 \to MGL^{a,b}(X) \otimes_{\Z} R \to MGL^{a,b}(X ; R) \to 
{\rm Tor}\left(MGL^{a+1,b}(X) , R\right) \to 0.
\end{equation}
In particular, for any $R \subseteq \Q$, the
natural map $MGL^{a,b}(X) \otimes_{\Z} R \to MGL^{a,b}(X; R)$ is an isomorphism.
But this is no longer true for the equivariant motivic cobordism because of the
fact that the spaces $X_G$ are not in general compact.
The following result gives a simple description of the rational equivariant
motivic cobordism.

\begin{thm}\label{thm:ECRat}
Let $G$ be a linear algebraic group and let $X \in \Sm^G_k$. Then for
all $a \ge b \ge 0$, the natural map 
\[
MGL^{a,b}_G(X; \Q) \to mgl^{a,b}_G(X; \Q)
\]
is an isomorphism. 
\end{thm}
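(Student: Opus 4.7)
The plan is to combine the Milnor exact sequence with the Hopkins-Morel rational splitting of $MGL$. Applying the Milnor exact sequence of Proposition~\ref{prop:Limit} to the rational motivic spectrum $MGL \wedge M_{\Q}$ and to the sequence of cofibrations $X^i_G \hookrightarrow X^{i+1}_G$ whose colimit is the Borel space $X_G$, one obtains a short exact sequence
\begin{equation*}
0 \to {\underset{i}\varprojlim}^{1} MGL^{a-1, b}(X^i_G; \Q) \to MGL^{a,b}_G(X; \Q) \to mgl^{a,b}_G(X; \Q) \to 0.
\end{equation*}
Since the right-hand map $\phi_X$ is already surjective, the theorem reduces to verifying the Mittag-Leffler condition for the inverse system $\{MGL^{a-1,b}(X^i_G; \Q)\}_i$.

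To verify Mittag-Leffler, I would invoke the rational form of the Hopkins-Morel theorem (due to Levine), which gives an equivalence $MGL_{\Q} \simeq H\Q \otimes_{\Q} \bL_{\Q}$ of $H\Q$-module spectra. For any smooth scheme $Y$ and any bidegree $(c,d)$ with $c \le 2d$ this yields a finite direct sum decomposition
\begin{equation*}
MGL^{c,d}(Y; \Q) \ \cong \ \bigoplus_{n=0}^{d} \CH^{d-n}(Y, 2d-c; \Q) \otimes_{\Q} (\bL_{\Q})_n,
\end{equation*}
and $MGL^{c,d}(Y;\Q) = 0$ otherwise. The crucial feature is that the higher Chow bidegrees $(p, q) = (d-n, 2d-c)$ appearing here range within bounds ($p \in [0,d]$, $q = 2d-c$ fixed) that depend only on $(c, d)$ and not on $Y$. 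Applied to $(c,d) = (a-1, b)$, Mittag-Leffler for our system reduces to Mittag-Leffler for each of finitely many inverse systems $\{\CH^{p}(X^i_G, q; \Q)\}_i$ where $(p, q)$ lies in that fixed finite range.

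For each such $(p,q)$ I would show that the system $\{\CH^{p}(X^i_G, q; \Q)\}_i$ is eventually stationary by the codimension technique used in the proof of Lemma~\ref{lem:Iso-open}. The transition $X^i_G \to X^{i+1}_G$ factors as the vector-bundle projection $X \stackrel{G}{\times}(U_i \oplus V) \to X^i_G$ (an isomorphism on higher Chow by homotopy invariance) followed by the open embedding $X \stackrel{G}{\times}(U_i \oplus V) \hookrightarrow X^{i+1}_G$; the codimension of the closed complement of the latter tends to infinity with $i$ by property~(2) of an admissible gadget. Bloch's localization sequence then shows that the open pullback is an isomorphism on $\CH^{p}(-, q; \Q)$ as soon as this codimension exceeds $p$, so each such system is eventually constant.

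The main obstacle that passing to $\Q$-coefficients allows one to bypass is the failure of uniformity in the integral argument: integrally the motivic Atiyah-Hirzebruch spectral sequence of Lemma~\ref{lem:Iso-open} has a filtration whose length depends on $\dim X^i_G$, so the codimension threshold needed for the pullback to be an isomorphism grows without bound with $i$, preventing a uniform conclusion. Rationally the spectral sequence collapses to the direct sum decomposition above, which supplies the needed uniform bounds and allows the Mittag-Leffler condition to be checked one fixed higher Chow bidegree at a time.
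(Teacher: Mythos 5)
Your overall strategy coincides with the paper's: apply the Milnor exact sequence of Proposition~\ref{prop:Limit} to reduce the assertion to a Mittag--Leffler statement for $\{MGL^{a-1,b}(X^i_G;\Q)\}_i$, invoke the rational splitting of $MGL$ to convert this into a statement about higher Chow groups, and then use the codimension estimates built into the admissible gadget together with Bloch's localization sequence. However, your decomposition formula is wrong in a way that undermines the key uniformity claim you rely on. The splitting of $MGL_\Q$ as a wedge of suspensions of $H\Q$ gives
\[
MGL^{c,d}(Y;\Q)\ \cong\ \bigoplus_{n\ge 0} H^{c+2n,\,d+n}(Y;\Q)\otimes_\Q (\bL_\Q)_n \ \cong\ \bigoplus_{n\ge 0}\CH^{d+n}(Y,2d-c;\Q)\otimes_\Q(\bL_\Q)_n,
\]
so the codimension is $p=d+n$, not $p=d-n$. (One checks this on $Y=\mathrm{pt}$: $MGL^{-2,-1}(\mathrm{pt};\Q)=(\bL_\Q)_1\ne 0$ but your formula would force it to vanish.) Consequently $p$ ranges over $[d,\ \dim Y + 2d - c]$, where the upper bound comes from the vanishing $\CH^p(Y,m)=0$ for $p>\dim Y+m$, and this bound grows with $\dim X^i_G$. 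The claim that the relevant bidegrees lie in a range depending only on $(c,d)$ and not on $Y$ is therefore false, and the reduction to a \emph{fixed} finite collection of higher Chow systems does not go through.

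That said, the defect is repairable with exactly the argument the paper uses. One fixes a target index $i_0$; the target $MGL^{a-1,b}(X^{i_0}_G;\Q)$ involves only the finitely many codimensions $p\in [b,\ \dim X^{i_0}_G + 2b - a + 1]$, and the image of the transition maps into the $i_0$-th term decomposes accordingly. For each such $p$, your codimension argument correctly shows that the system $\{\CH^p(X^i_G,q;\Q)\}_i$ is eventually constant (the transition factors through an open pullback whose closed complement has codimension tending to infinity by admissibility, followed by a vector-bundle pullback). Since there are only finitely many relevant $p$ for each fixed target $i_0$, this yields the Mittag--Leffler condition. So the proof can be salvaged, but the ``uniform range'' you asserted, and to which you attribute the whole advantage of rational coefficients in your final paragraph, is not correct; what rational coefficients actually buy you is the degeneration of the motivic Atiyah--Hirzebruch spectral sequence into a direct sum, so that Mittag--Leffler can be checked one motivic cohomology summand at a time against each fixed target index.
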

\begin{proof}
Let $\rho = \left(V_i, U_i\right)$ be an admissible gadget for $G$.
In view of Proposition~\ref{prop:Limit}, it is enough to show that for any
$0 \le b \le a$, the inverse system 
$\{MGL^{a,b}\left(X_G(\rho,i); \Q \right)\}_{i \ge 1}$ satisfies the Mittag-Leffler
condition.

Let us denote the smooth scheme $X_G(\rho,i)$ in short by $X_i$ and let
$d_i$ denote the dimension of $X_i$.  Recall that $X_G = colim_i \ X_i$.
It follows from \cite[Corollary~10.6]{NSO} that for any $i \ge 1$, there is a
natural isomorphism 
\begin{equation}\label{eqn:ECRat0}
MGL^{a,b}(X_i; \Q) \ \xrightarrow{\cong} \
\stackrel{d_i + 2b-a}{\underset{j = b}\oplus} \
H^{2j+a-2b, j}(X_i; \Q) \otimes_{\Q} \bL^{b-j}_{\Q}
\end{equation}
\[
\hspace*{3.5cm} \xrightarrow{\cong} 
\stackrel{d_i + 2b-a}{\underset{j = b}\oplus} \
\CH^j(X, 2b-a ; \Q) \otimes_{\Q} \bL^{b-j}_{\Q},
\]
where $H^{*,*}(X)$ is the motivic cohomology of $X \in Spc$ given by the motivic
Eilenberg-MacLane spectrum and $\bL_{\Q}$ is the rationalization of the Lazard
ring $\bL$.

We fix an integer $i \ge 1$.
It follows from the localization sequence for the motivic cohomology and
the third property of an admissible gadget 
({\sl cf.} Definition~\ref{defn:Add-Gad}) that for any integer
$j \in [b, d_i+2b-a]$, there exists $m(i,j) \gg i$ such that for all $l \ge 
m(i,j)$, the restriction map
$H^{d_i+2b-a, j}\left(X_{l+1} ; \Q\right) \to 
H^{d_i+2b-a, j}\left(X \stackrel{G}{\times} 
\left(U_l \oplus W_l\right) ; \Q\right)$ is an isomorphism. 
On the other hand, the homotopy invariance property of the motivic cohomology
shows that the map 
$H^{d_i+2b-a, j}\left(X \stackrel{G}{\times} 
\left(U_l \oplus W_l\right) ; \Q\right) \to H^{d_i+2b-a, j}\left(X_l; \Q\right)$
is also an isomorphism. Setting 
$m(i) = \stackrel{d_i+2b-a}{\underset{j= b}{\rm max}}\ m(i,j)$,
we see that for all $l \ge m(i)$, the restriction map
\[
\stackrel{d_i + 2b-a}{\underset{j = b}\oplus} \
H^{2j+a-2b, j}(X_l; \Q) \ \to \
\stackrel{d_i + 2b-a}{\underset{j = b}\oplus} \
H^{2j+a-2b, j}(X_{m(i)}; \Q)
\]
is an isomorphism. 
Since $i \mapsto d_i$ is an strictly increasing function, it follows from
~\eqref{eqn:ECRat0} that the image of the restriction map
$MGL^{a,b}(X_l ; \Q) \to MGL^{a,b}(X_i ; \Q)$ does not depend on the choice
of $l \ge m(i)$. In other words, the inverse system 
$\{MGL^{a,b}\left(X_i; \Q \right)\}_{i \ge 1}$ satisfies the Mittag-Leffler
condition.
\end{proof}

We have seen in Remark~\ref{remk:Local-NO} that the $mgl^{*,*}_G(-)$-theory 
is not expected to satisfy the localization sequence with the integral
coefficients. 
The following result rectifies this
problem if one is working with the rational coefficients.

\begin{cor}\label{cor:Mot-Geom}
Let $\iota : Y \inj X$ be a closed immersion in $\Sm^G_k$  of codimension $d$.
There is then a long exact localization sequence
\[
\cdots \to mgl^{a-2d,b-d}_G(Y; \Q) \xrightarrow{\iota_*} 
mgl^{a,b}_G(X; \Q) \to mgl^{a,b}_G(X \setminus Y; \Q)
\]
\[
\hspace*{9cm} \xrightarrow{\partial}
mgl^{a-2d+1,b-d}_G(Y; \Q) \to \cdots .
\]
In particular, there is an exact sequence
\[
\Omega^{q-d}_G(Y; \Q) \to \Omega^q_G(X; \Q) \to \Omega^q_G(X \setminus Y ; \Q)
\to 0.
\]
for all $q \ge 0$.
\end{cor}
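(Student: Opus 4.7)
The plan is to transport the Gysin exact sequence of Theorem~\ref{thm:BPEC*}(6) across the canonical isomorphism of Theorem~\ref{thm:ECRat} to obtain the long exact sequence for $mgl^{*,*}_G(-;\Q)$, and then extract the $\Omega$-sequence by taking the $(2q,q)$-strand and showing that the obstruction term to right-exactness vanishes.

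First I would note that the Gysin exact sequence of Theorem~\ref{thm:BPEC*}(6) holds verbatim for $MGL^{*,*}_G(-;\Q)$. Its proof is built from the cofibre sequence $(X\setminus Y)_G \hookrightarrow X_G \to X_G/(X\setminus Y)_G$ of ind-schemes, the deformation to normal cone equivalence $X_G/(X\setminus Y)_G \cong Th\bigl(N_{X_G}(Y_G)\bigr)$ of~\eqref{eqn:DNC2} (available because $\iota_G$ is a strict closed embedding by Lemma~\ref{lem:Elem-eq}), and the Thom isomorphism of Proposition~\ref{prop:TIso}(3); each of these is a stable homotopy argument that remains valid after smashing $MGL$ with the Moore spectrum $M_\Q$. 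Applying Theorem~\ref{thm:ECRat} to each of $X$, $Y$, and $X\setminus Y$ (all objects of $\Sm^G_k$) produces natural isomorphisms $MGL^{a,b}_G(-;\Q) \xrightarrow{\cong} mgl^{a,b}_G(-;\Q)$ through which the Gysin sequence transfers, yielding the desired long exact sequence.

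For the second assertion, I would specialize to bi-degrees $(a,b)=(2q,q)$, producing the four-term piece
\[
mgl^{2q-2d,q-d}_G(Y;\Q) \xrightarrow{\iota_*} mgl^{2q,q}_G(X;\Q) \xrightarrow{j^*} mgl^{2q,q}_G(X\setminus Y;\Q) \xrightarrow{\partial} mgl^{2q-2d+1,q-d}_G(Y;\Q).
\]
Theorem~\ref{thm:WeakBasic}(11), applied after tensoring with $\Q$, identifies the first three terms respectively with $\Omega^{q-d}_G(Y;\Q)$, $\Omega^q_G(X;\Q)$, and $\Omega^q_G(X\setminus Y;\Q)$. The main obstacle is the vanishing of the rightmost term, namely the claim that $mgl^{a,b}_G(Y;\Q)=0$ whenever $a-2b=1$. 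By Theorem~\ref{thm:ECRat} this group equals $\varprojlim_i MGL^{a,b}(Y^i_G;\Q)$ for any admissible gadget $\rho=(V_i,U_i)_{i\ge 1}$. Since each $Y^i_G$ is a smooth scheme, the rational Hopkins--Morel decomposition $MGL^{a,b}(Y^i_G;\Q)\cong \bigoplus_j H^{2j+a-2b,j}(Y^i_G;\Q)\otimes_\Q \bL^{b-j}_\Q$ already exploited in the proof of Theorem~\ref{thm:ECRat} specializes, for $a-2b=1$, to a direct sum of groups $H^{2j+1,j}(Y^i_G;\Q)\otimes_\Q \bL^{b-j}_\Q$, each of which vanishes because $H^{2j+1,j}(Y^i_G;\Q)\cong \CH^j(Y^i_G,-1;\Q)=0$ by the definition of Bloch's higher Chow groups. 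Taking the inverse limit gives the required vanishing of $\partial$, whence the right-exact sequence of equivariant geometric cobordism.
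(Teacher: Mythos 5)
Your argument for the first long exact sequence follows the paper exactly: transport the Gysin sequence of Theorem~\ref{thm:BPEC*}(6) (valid after smashing $MGL$ with the Moore spectrum $M_\Q$, since the cofibre sequence, the deformation-to-normal-cone equivalence~\eqref{eqn:DNC2} and the Thom isomorphism of Proposition~\ref{prop:TIso} all survive that smash) across the natural isomorphism $MGL^{a,b}_G(-;\Q)\cong mgl^{a,b}_G(-;\Q)$ of Theorem~\ref{thm:ECRat}.

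For the second assertion you diverge from the paper, and your route is both correct and more self-contained. The paper cites \cite[Proposition~5.3]{Krishna1} as a black box to obtain the right-exactness, whereas you extract it internally: from~\eqref{eqn:ECRat0} each $MGL^{a,b}(Y^i_G;\Q)$ is a sum of groups $H^{2j+a-2b,j}(Y^i_G;\Q)\otimes_\Q\bL^{b-j}_\Q$, and for $a-2b=1$ each summand is $\CH^j(Y^i_G,-1;\Q)=0$; passing to the limit kills $mgl^{2q-2d+1,q-d}_G(Y;\Q)$, hence the connecting map $\partial$ out of degree $(2q,q)$ vanishes and the restriction is surjective. This buys independence from the external reference at the mild cost of re-invoking the Hopkins--Morel decomposition (which Theorem~\ref{thm:ECRat} already relies on, so no new input is really needed). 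One small imprecision: Theorem~\ref{thm:WeakBasic}(11) is an integral statement $\Omega^q_G(X)\cong mgl^{2q,q}_G(X)$; the comparison you actually want, $\Omega^q_G(X;\Q)\cong mgl^{2q,q}_G(X;\Q)$, is not obtained by literally tensoring a limit with $\Q$. It follows instead by applying the rational form of Levine's theorem $\Omega^q(-;\Q)\cong MGL^{2q,q}(-;\Q)$ at each finite level $X^i_G$ (where smooth schemes are compact, so $-\otimes\Q$ does commute) and then passing to the inverse limit, exactly as in the proof of Theorem~\ref{thm:WeakBasic}(11). With that phrasing adjusted, the argument is complete.
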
  
\begin{proof}
The first long exact sequence follows from Theorems~\ref{thm:BPEC*} (6) and
~\ref{thm:ECRat}. The second exact sequence follows from the first,
together with Theorem~\ref{thm:WeakBasic} (11) and 
\cite[Proposition~5.3]{Krishna1}.
\end{proof}

\section{Reduction of arbitrary groups to tori}\label{section:RedT}
The theme of this section is to study the question of how to reduce the
problem of computing the equivariant motivic cobordism for the action of
a linear algebraic group $G$ to the case when the underlying group is a torus.
We prove various results in this direction and compute the motivic cobordism
of the classifying spaces of some reductive groups. We begin with the
following result.

\begin{prop}\label{prop:Borel-torus}
Let $G$ be a connected reductive group over $k$. Let $B$ be a Borel 
subgroup of $G$ containing a maximal torus $T$ over $k$. Then for any
$X \in \Sm^G_k$, the restriction map
\begin{equation}\label{eqn:Borel2}
\mbb\left(X\right) \xrightarrow{r^B_{T,X}} \mt\left(X\right)
\end{equation}
is an isomorphism.
\end{prop}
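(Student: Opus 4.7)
The plan is to exhibit a natural $\A^1$-weak equivalence $X_T \xrightarrow{\sim} X_B$ in $\sH(k)$ inducing $r^B_{T,X}$ on $\m(-)$; the proposition will follow at once from the definition of equivariant motivic cobordism.

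First, I would fix an admissible gadget $\rho = (V_i, U_i)_{i \ge 1}$ for $B$ in the sense of Definition~\ref{defn:Add-Gad} and verify that the same $\rho$ serves as an admissible gadget for $T$. Since $T \subset B$ is closed and $B$ acts freely on each $U_i$, so does $T$. The quotient $U_i/T$ exists as a quasi-projective scheme by Lemma~\ref{lem:sch}, as can also be seen by writing $U_i/T \to U_i/B$ as the bundle associated to the principal $B$-bundle $U_i \to U_i/B$ with fiber the quasi-projective variety $B/T$. All other conditions of Definition~\ref{defn:Add-Gad} pass automatically from $B$ to $T$.

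Second, the key geometric input is that, since $\Char(k) = 0$, the Borel decomposes as a semidirect product $B = T \ltimes U$ where $U$ is the unipotent radical, and $U \cong \A^N$ as a $k$-variety where $N = \dim(B) - \dim(T)$; hence $B/T \cong \A^N$. Moreover $B$ is a \emph{special} group in the sense of Grothendieck--Serre: since $T$ is a split torus and $U$ is an iterated extension of $\G_a$'s, both $T$ and $U$ are special, and so is their semidirect product. Consequently every \'etale $B$-torsor on a scheme is already Zariski-locally trivial.

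Third, applying this to the principal $B$-bundle $P_i := X \times U_i \to X \stackrel{B}{\times} U_i$, the associated bundle
\[
\pi_i\,:\, X \stackrel{T}{\times} U_i \ = \ P_i \stackrel{B}{\times} (B/T) \ \longrightarrow \ X \stackrel{B}{\times} U_i
\]
is a Zariski-locally trivial $\A^N$-bundle over a smooth scheme, and therefore an $\A^1$-weak equivalence by homotopy invariance (e.g.\ \cite[Example~3.2.2]{MV}). Finally, using Lemma~\ref{lem:Cof-weq}, the colimit map $X_T(\rho) = \colim_i (X \stackrel{T}{\times} U_i) \to \colim_i (X \stackrel{B}{\times} U_i) = X_B(\rho)$ is an $\A^1$-weak equivalence in $\sH(k)$. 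Applying $\Hom_{\sS\sH(k)}(\Sigma^{\infty}_T(-)_{+}, \Sigma^{a,b}MGL)$ gives the desired isomorphism, and a direct check shows that the map obtained this way is exactly the restriction map $r^B_{T,X}$.

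The main obstacle is really just invoking the classical specialness of $B$; once that is granted the rest is routine. The only subtle point I would want to verify carefully is that $\pi_i$ is genuinely Zariski-locally trivial with fiber $\A^N$ (and not merely Nisnevich-locally trivial, which would still suffice for the $\A^1$-weak equivalence statement but would require a small additional argument), but this follows directly from the specialness of $B$ combined with the semidirect product decomposition $B = T \ltimes U$.
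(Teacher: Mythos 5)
Your proposal takes a genuinely different route from the paper's, but it has a real gap that you should address before the argument is complete.

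\textbf{How the paper proceeds.} The paper applies the Morita isomorphism (Theorem~\ref{thm:BPEC*}(8)) to reduce to showing $\mbb\bigl(B\stackrel{T}{\times}X\bigr)\cong\mbb(X)$, identifies $B\stackrel{T}{\times}X\cong(B/T)\times X$ as $B$-varieties, and then filters the unipotent radical $B^u=U_0\supset U_1\supset\cdots\supset U_n=\{1\}$ by $B$-normal subgroups with vector-group quotients (SGA3, Exp.~XXII, 5.9.5). Each stage $B/TU_i\to B/TU_{i-1}$ is a torsor under a vector bundle, hence an $\A^1$-weak equivalence; composing gives the result. You instead argue directly that $X_T\to X_B$ is an $\A^1$-weak equivalence of ind-schemes, by observing the fiber is $B/T\cong\A^N$ and invoking specialness of $B$ to get Zariski-local triviality. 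The conclusion you draw is the right one, and your identification of the fiber bundle $X\stackrel{T}{\times}U_i\to X\stackrel{B}{\times}U_i$ as the $(B/T)$-bundle associated to the $B$-torsor $X\times U_i$ is correct, as is the passage to the colimit via Lemma~\ref{lem:Cof-weq}.

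\textbf{The gap.} You write ``since $T$ is a split torus,'' but the proposition does not assume this. The hypothesis is only that $T$ is a maximal torus over $k$ contained in a Borel over $k$; this forces $G$ to be quasi-split but does \emph{not} force $T$ to be split. If $T$ is non-split then $T$ is not special (it admits non-trivial torsors over field extensions, which cannot be trivialized Zariski- or even Nisnevich-locally), and hence neither is $B=T\ltimes B^u$. In that case your claim that $\pi_i$ is Zariski-locally (or Nisnevich-locally) trivial breaks down, and the argument as written does not go through. The paper's proof never needs specialness: it only uses that each step $B/TU_i\to B/TU_{i-1}$ is a torsor under a vector bundle, which is an $\A^1$-weak equivalence irrespective of whether it is Zariski-locally trivial.

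\textbf{How to close the gap.} Keep your framing (show directly that $X_T\to X_B$ is an $\A^1$-weak equivalence), but replace the specialness step by the same unipotent filtration: the map $\pi_i: X\stackrel{T}{\times}U_i\to X\stackrel{B}{\times}U_i$ factors as
\[
X\stackrel{TU_n}{\times}U_i \ \to\ X\stackrel{TU_{n-1}}{\times}U_i\ \to\ \cdots\ \to\ X\stackrel{TU_0}{\times}U_i,
\]
and each step is a torsor under a vector bundle, hence an $\A^1$-weak equivalence, with no splitness assumption. Alternatively, you could add the hypothesis that $T$ is split; this covers every application of Proposition~\ref{prop:Borel-torus} in the paper, and in that restricted setting your specialness argument is correct and pleasantly concrete. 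Either way, the statement ``Zariski-locally trivial $\A^N$-bundle $\Rightarrow$ $\A^1$-weak equivalence'' is correct but not literally \cite[Example~3.2.2]{MV} (which is about vector bundles); a short \v{C}ech-resolution argument, or a citation covering affine bundles, would be cleaner than the reference you gave.
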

\begin{proof}
By the Morita Isomorphism of Theorem~\ref{thm:BPEC*},  we only need to show 
that
\begin{equation}\label{eqn:Borel1}
\mbb\left(B \stackrel{T}{\times} X\right) \cong
\mbb\left(X\right).
\end{equation}
By \cite[XXII, 5.9.5]{SGA3}, there exists a characteristic
filtration $B^u = U_0 \supseteq U_1 \supseteq \cdots \supseteq U_n =
\{1\}$ of the unipotent radical $B^u$ of $B$ such that ${U_{i-1}}/{U_i}$
is a vector group, each $U_i$ is normal in $B$ and $TU_i = T \ltimes U_i$. 
Moreover, this filtration also implies that for each $i$, the natural map
$B/{TU_i} \to B/{TU_{i-1}}$ is a torsor under the vector bundle
${U_{i-1}}/{U_i} \times B/{TU_{i-1}}$ on $B/{TU_{i-1}}$. Hence, the 
homotopy invariance ({\sl cf.} Theorem~\ref{thm:BPEC*}) gives an isomorphism
\[
\mbb\left(B/{TU_{i-1}} \times X\right) \xrightarrow{\cong}
\mbb\left(B/{TU_i} \times X\right).
\]
Composing these isomorphisms successively for $i = 1, \cdots ,n$, we get
\[
\mbb\left(X\right) \xrightarrow{\cong}
\mbb\left(B/T \times X\right).
\]
The canonical isomorphism of $B$-varieties $B \stackrel{T}{\times} X \cong
(B/T) \times X$ 
now proves ~\eqref{eqn:Borel1} and hence ~\eqref{eqn:Borel2}.
\end{proof}

\begin{prop}\label{prop:NRL}
Let $G$ be a possibly non-reductive group over $k$.
Let $G = H \ltimes G^u$ be the Levi decomposition of $G$ 
(which exists since $k$ is of characteristic zero).
Then the restriction map 
\begin{equation}\label{eqn:NRL0}
{\mg\left(X\right)} \xrightarrow{r^G_{H,X}} 
{\mh\left(X\right)}
\end{equation}
is an isomorphism.
\end{prop}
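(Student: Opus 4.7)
The plan is to mirror the argument used for Proposition~\ref{prop:Borel-torus}, with the triple $(B, T, B^u)$ replaced by $(G, H, G^u)$. By the Morita isomorphism (Theorem~\ref{thm:BPEC*} (8)), we have $\mh(X) \cong \mg(X \stackrel{H}{\times} G)$, so it suffices to exhibit a canonical isomorphism $\mg(X) \cong \mg(X \stackrel{H}{\times} G)$ compatible with the restriction map. Using the Levi decomposition $G = H \ltimes G^u$, the standard untwisting gives a canonical $G$-equivariant isomorphism $X \stackrel{H}{\times} G \cong X \times (G/H)$ with $G/H \cong G^u$ as a $G$-scheme, so the task reduces to showing that the pullback along the projection $X \times (G/H) \to X$ induces an isomorphism on $\mg(-)$.

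To do this, I would invoke \cite[XXII, 5.9.5]{SGA3} to obtain a filtration $G^u = U_0 \supseteq U_1 \supseteq \cdots \supseteq U_n = \{1\}$ of the unipotent radical by subgroups normal in $G$ (so in particular characteristic in $G^u$), with each successive quotient $U_{i-1}/U_i$ a vector group. Since $H$ normalizes each $U_i$, the product $HU_i = H \ltimes U_i$ is a closed subgroup of $G$, and the natural map $G/(HU_i) \to G/(HU_{i-1})$ is a torsor under the pullback of the vector group $U_{i-1}/U_i$; hence it is a vector bundle over $G/(HU_{i-1})$. By homotopy invariance of $\mg(-)$ (Theorem~\ref{thm:BPEC*} (2)), this gives an isomorphism
\[
\mg\bigl(X \times G/(HU_{i-1})\bigr) \xrightarrow{\cong} \mg\bigl(X \times G/(HU_i)\bigr).
\]
Since $HU_0 = G$ and $HU_n = H$, composing these successively for $i = 1, \ldots, n$ yields $\mg(X) \xrightarrow{\cong} \mg(X \times (G/H))$, as required.

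The main technical point is to verify that the filtration provided by SGA3 can be chosen so that each $U_i$ is normal in the full group $G$ (not merely characteristic in $G^u$), so that $HU_i$ is genuinely a subgroup and $G/(HU_i) \to G/(HU_{i-1})$ carries the promised vector bundle structure; this is built into the \textit{characteristic} clause of the cited result applied to $G^u$ with its $G$-action by conjugation. Tracking the chain of isomorphisms through the Morita identification also shows that the composite recovers the restriction map $r^G_{H,X}$, completing the argument in direct parallel with the Borel--torus case.
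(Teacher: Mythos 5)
Your argument is essentially the paper's: the paper's proof literally states that one repeats the proof of Proposition~\ref{prop:Borel-torus} with $(B,T)$ replaced by $(G,H)$, which is exactly what you have written out (Morita isomorphism, untwisting $X \stackrel{H}{\times} G \cong X \times (G/H)$, the SGA3 characteristic filtration of $G^u$, and homotopy invariance along the resulting chain of vector bundles). One small slip in wording: in the parenthetical ``(so in particular characteristic in $G^u$)'' the implication runs the wrong way --- normality in $G$ does not force the subgroup to be characteristic in $G^u$; what you actually need, and state correctly in your final paragraph, is the converse, namely that a characteristic subgroup of $G^u$ is automatically normal in $G$ because conjugation by $G$ acts by automorphisms of $G^u$.
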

\begin{proof}
Since the ground field is of characteristic zero, the
unipotent radical $G^u$ of $G$ is split over $k$. Now the proof is exactly
same as the proof of Proposition~\ref{prop:Borel-torus}, where we just have to 
replace $B$ and $T$ by $G$ and $H$ respectively.
\end{proof}

\subsubsection{Action of Weyl group}\label{subsubsection:Weyl}
Let $G$ be a linear algebraic group and let $H \subseteq G$ be a closed normal
subgroup with quotient $W$. If $\rho = \left(V_i, U_i\right)_{i \ge 1}$ is an 
admissible gadget for $G$, then it is also an admissible gadget for $H$.  
If $X \in \Sm^G_k$, then $W$ acts on the ind-scheme $X_G(\rho) =
colim_i \ (X \stackrel{H}{\times} U_i)$ such that each closed 
subscheme $X \stackrel{H}{\times} U_i$ is $W$-invariant. In particular, $W$
acts on $MGL^{*,*}\left(X_G(\rho)\right) = \mg(X)$. One example where such a
situation occurs is when $H$ is a maximal torus of a reductive group and
$G$ is its normalizer. The quotient $W$ is then the Weyl group of $G$.
In that case, $\mg(X)$ becomes a $\Z[W]$-module.

\subsection{Rational results}\label{section:RResult}
Let $G$ be a connected reductive group and let $T$ be a split maximal torus
of $G$. Let $N$ be the normalizer of $T$ in $G$ with the associated Weyl 
group $W = N/T$. We first consider the case of equivariant cobordism
with rational coefficients and give the most complete result in this case.

\begin{thm}\label{thm:Weyl-Rat}
Let $G$ be a connected reductive group and let $T$ be a split maximal torus of
$G$ with the Weyl group $W$. Then for any $X \in \Sm^G_k$, the restriction map
$r^G_{T, X} : \mg(X) \to \mt(X)$ induces an isomorphism
\[
\mg(X;\Q) \xrightarrow{\cong} \left(\mt(X;\Q)\right)^W.
\]
\end{thm}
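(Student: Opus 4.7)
The plan is to factor the restriction as $\mg(X) \to MGL^{*,*}_N(X) \to \mt(X)$, where $N = N_G(T)$ is the normalizer of $T$ in $G$, and to prove each step is an isomorphism after rationalization. The very first move is to invoke Theorem~\ref{thm:ECRat} to replace $MGL^{*,*}_H(-;\Q)$ by $mgl^{*,*}_H(-;\Q)$ for $H \in \{G,N,T\}$; this transports the problem to an assertion about the ordinary motivic cobordism of the smooth finite-level Borel spaces $X \stackrel{G}{\times} U_i$, $X \stackrel{N}{\times} U_i$, and $X \stackrel{T}{\times} U_i$ attached to a single admissible gadget $\rho = (V_i, U_i)$, where the non-equivariant tools of Section~\ref{section:Basic} (projective bundle formula, homotopy invariance, and Proposition~\ref{prop:filter-Gen}) are directly applicable.

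For the descent from $N$ to $T$, I would observe that $W = N/T$ is a finite group acting freely on $X \stackrel{T}{\times} U_i$ with quotient $X \stackrel{N}{\times} U_i$, so the quotient map $\pi_i$ is a finite \'etale $W$-Galois cover of smooth schemes. The Becker--Gottlieb style transfer $\tau_i$ satisfying $\tau_i \circ \pi_i^* = |W|\cdot \mathrm{id}$ (invertible rationally) forces $\pi_i^*$ to identify $MGL^{*,*}(X \stackrel{N}{\times} U_i;\Q)$ with the $W$-invariants of $MGL^{*,*}(X \stackrel{T}{\times} U_i;\Q)$. Passing to the inverse limit over $i$, Theorem~\ref{thm:ECRat} then yields $MGL^{*,*}_N(X;\Q) \cong \mt(X;\Q)^W$.

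For the descent from $G$ to $N$, I would use the diagonal $G$-action on $X \times G/N$ and the Morita isomorphism (Theorem~\ref{thm:BPEC*}(8)) to identify $MGL^{*,*}_N(X) \cong \mg(X \times G/N)$; under this, the restriction becomes the pullback along the $G$-equivariant projection $X \times G/N \to X$. The chain of projections $G/T \to G/N$ (a $W$-Galois cover) and $G/T \to G/B$ (an affine bundle with unipotent fiber, by \cite[XXII, 5.9.5]{SGA3}) then permits me to identify $\mg(X \times G/B) \cong \mbb(X) \cong \mt(X)$ via Morita and Proposition~\ref{prop:Borel-torus}. The Bruhat stratification of $G/B$ by Schubert cells induces, at each finite level of the gadget, a filtration of $X \stackrel{G}{\times}(G/B \times U_i)$ of the form required by Proposition~\ref{prop:filter-Gen}, producing a decomposition of $\mt(X;\Q)$ as a free $\mg(X;\Q)$-module with basis indexed by $W$. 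The expected conclusion, combined with Stage~A, is that rationally the only $W$-invariants in this basis come from the fundamental Schubert class, giving $\mg(X;\Q) \cong \mt(X;\Q)^W$.

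The main obstacle is the rigorous identification of the $W$-action on the $\mg(X;\Q)$-module $\mt(X;\Q)$ and showing its invariants reduce to the copy of $\mg(X;\Q)$ corresponding to the identity Schubert class. Although Proposition~\ref{prop:filter-Gen} furnishes the underlying decomposition whenever each Schubert piece admits a smooth projective base (which can be arranged level-by-level), tracking how $W$ permutes the Schubert summands and verifying the cobordism analog of the classical fact that $S(\widehat{T})^W$ acts on $S(\widehat{T})$ with rational invariants equal to $S(\widehat{T})^W$ itself requires a compatible equivariant Bruhat-type description of Chern classes; this step is where the real technical effort is concentrated, and I would likely borrow arguments from the theory of oriented cohomology theories \`a la Panin applied to each finite-level Borel space.
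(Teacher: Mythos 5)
Your approach diverges substantially from the paper's proof. After the shared first move (using Theorem~\ref{thm:ECRat} to pass to $mgl^{*,*}_G$ and reduce to the finite-level Borel spaces), the paper simply invokes the rational Landweber exactness of $MGL$ from \cite{NSO} (equation~\eqref{eqn:ECRat0}), which expresses $MGL^{a,b}(X_i;\Q)$ as a finite sum of motivic cohomology groups tensored with graded pieces of $\bL_\Q$, and then quotes \cite[Corollary~8.7]{Krishna4}, the corresponding Weyl-invariance statement for equivariant motivic cohomology. The whole theorem is a two-line transport of a known result along a natural isomorphism of rational cohomology theories. Your proposal instead rebuilds the statement geometrically, factoring through the normalizer $N=N_G(T)$ and then attempting a Bruhat-type computation.

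Your Stage~A (descent from $N$ to $T$) is sound in principle: since $N$ acts freely on $X\times U_i$, the finite group $W$ does act freely on $X\stackrel{T}{\times}U_i$ with quotient $X\stackrel{N}{\times}U_i$, so $\pi_i$ is a finite \'etale $W$-Galois cover, and a transfer $\tau_i$ with $\tau_i\circ\pi_i^*=|W|$ and $\pi_i^*\circ\tau_i=\sum_{w\in W}w(\cdot)$ would identify $MGL^{*,*}(X\stackrel{N}{\times}U_i;\Q)$ with the invariants. However, the paper never constructs such a transfer for $MGL$, and while it exists as part of the oriented theory formalism (finite \'etale maps are projective and l.c.i.), you would need to verify compatibility with the inverse limit over $i$; this is more work than the paper's reduction to \cite{Krishna4}, where the analogous transfer for motivic cohomology is already in place.

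The genuine gap is in Stage~B. You claim that the Bruhat decomposition exhibits $\mt(X;\Q)$ as a free $\mg(X;\Q)$-module on a basis indexed by $W$, and then conclude that ``the only $W$-invariants in this basis come from the fundamental Schubert class.'' This last inference is not valid: the Schubert basis of $\mt(X;\Q)$ over $\mg(X;\Q)$ is \emph{not} a $W$-stable basis, since $W$ acts on $\mt(X;\Q)$ via its action on characters of $T$ (equivalently via the conjugation action on $BT$), and this action mixes the Schubert classes nontrivially. Freeness of rank $|W|$ over a subring $R$ does not by itself determine $M^W$; already for $M=S(\hat T)_\Q$ and $R=S(\hat T)_\Q^W$, the identification of the invariants is a separate statement from freeness. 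What rescues the situation in the paper's Proposition~\ref{prop:Weyl-Inv} and Theorem~\ref{thm:Weyl-Inv*} (for $R=\Z[t_G^{-1}]$ rather than $\Q$) is a carefully constructed push-forward $(\pi_X)_*$ along $G/B$-bundles together with an explicit class $a$ with $(\pi_X)_*(a)$ invertible; you would need the analogous transfer $\mt(X;\Q)\to\mg(X;\Q)$ (the composite of the finite \'etale transfer and the projective push-forward along $G/N\to\mathrm{pt}$) and the averaging identity $\iota\circ\tau=\sum_w w$, plus injectivity of $r^G_{T,X}$, none of which you supply. So the argument as written has a hole precisely where you flag the ``real technical effort.''

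In short, your route is a plausible geometric alternative but would require constructing rational transfer maps for $MGL$ along both $W$-covers and $G/B$-bundles and verifying the averaging identity — roughly the content of Section~\ref{section:RedT} with $R$ replaced by $\Q$ — whereas the paper sidesteps all of this by decomposing $MGL_\Q$ into motivic cohomology summands and importing the result from the motivic cohomology setting.
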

\begin{proof}
In view of Theorem~\ref{thm:ECRat}, we can replace $MGL^{*,*}_G(X; \Q)$ with
$mgl^{*,*}_G(X;\Q)$. Using the definition of $mgl^{*,*}_G(X;\Q)$ and the fact that
the inverse limit commutes with taking $W$-invariants, it suffices
to show that for an admissible gadget $\rho = \left(V_i, U_i\right)_{i \ge 1}$ 
for $G$, the map $\m\left(X \stackrel{G}{\times} U_i ; \Q\right) \to
\left(\m\left(X \stackrel{T}{\times} U_i; \Q \right)\right)^W$ is an isomorphism
for all $i \ge 1$. But this follows at once from ~\eqref{eqn:ECRat0} and
\cite[Corollary~8.7]{Krishna4}.
\end{proof}

\subsection{Motivic cobordism of classifying spaces}
\label{subsection:CCS}
Let $G$ be a connected reductive group over $k$ with a split maximal torus $T$
and the Weyl group $W$. In such a case, one says that $G$ is {\sl split}
reductive. Let $B$ denote a Borel subgroup of $G$ containing the
maximal torus $T$. Let $d$ denote the dimension of the flag variety $G/B$.
In this case, every character $\chi$ of $T$ yields
a line bundle $L_{\chi}$ on the classifying space $BT$ which restricts to a
line bundle $\sL_{\chi}:=G\times^B L_{\chi}$ on the flag variety $G/B$ via the 
maps
\begin{equation}\label{eqn:CCS0}
G/B \stackrel{\iota}{\inj} BT \xrightarrow{\pi} BG
\end{equation}
in ${\bf ISm}_k$.
Recall that the {\em torsion index} of $G$ is defined as the smallest positive 
integer $t_G$ such that $t_G$ times the class of a point in 
$H^{2d}(G/B,\Z)$ belongs to the subring of 
$H^*(G/B,\Z)$ generated by the first Chern classes of line bundles 
$\sL_{\chi}$ (e.g., $t_G=1$ for $G=GL_n$, see \cite{Totaro2} for computations 
of $t_G$ for other groups).
If $G$ is simply connected then this subring is generated by $H^2(G/B,\Z)$.
We shall denote the ring $\Z[t^{-1}_G]$ by $R$. 

Let $X \in \Sm^G_k$ and let $p_X: X \to \Spec(k)$ denote the structure map.
Let $\rho = \left(V_i, U_i\right)_{i \ge 1}$ be an admissible gadget for $G$. 
Set $X^i_G = X \stackrel{G}{\times} U_i$ and
$X_G = X_G\left(\rho\right)$. This gives rise to the commutative
diagram in 
\begin{equation}\label{eqn:CCS1}
\xymatrix@C1.5pc{
G/B \ar[r]^{\iota_X} \ar@{=}[d] & X_B \ar[r]^{\pi_X} \ar[d]_{p_{B, X}} &
X_G \ar[d]^{p_{G, X}} \\
G/B \ar[r]_{\iota} &  BB \ar[r]_{\pi} & BG}
\end{equation}
in ${\bf ISm}_k$, where $\iota$ and $\iota_X$ are strict closed embeddings.
A similar commutative diagram exists for each $X^i_G$. This in turn yields
a commutative diagram

\begin{equation}\label{eqn:CCS2}
\xymatrix@C1.5pc{
G/B \ar[r]^{\iota^i_X} \ar@{=}[dd] & 
X^i_B \ar[rr]^{\pi^i_X} \ar[dd]_{p^i_{B,X}} \ar[dr] & & X^i_G \ar[dr] 
\ar[dd]^>>>>>>{p^i_{G,X}} & \\
& & X_B \ar[rr]^<<<<<<{\pi_X} \ar[dd]_<<<<<{p_{B,X}} & & X_G \ar[dd]^{p_{G,X}} \\
G/B \ar[r]^{\iota^i} \ar[drr]_{\iota} & 
{U_i}/B \ar[rr]^>>>>>>>{\pi^i} \ar[dr] & & {U_i}/G \ar[dr] & \\
& & BB \ar[rr]_{\pi} & & BG.}
\end{equation}

\begin{lem}\label{lem:ELM-Com}
Let $f:X \to Y$ be a morphism in $\Sm^G_{{free}/k}$. Then the diagram of quotients
\[
\xymatrix@C.9pc{
X/B \ar[r] \ar[d] & X/G \ar[d] \\
Y/B \ar[r] & Y/G}
\]
is Cartesian such that the horizontal maps are smooth and projective.
If $f$ is a closed immersion, then this diagram is transverse.
\end{lem}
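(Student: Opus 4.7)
The plan is to identify the horizontal maps as flag-bundle projections associated to torsors, then reduce each assertion to a standard fact about associated bundles.

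First, since $X, Y \in \Sm^G_{{free}/k}$ and $B \subseteq G$, the restricted $B$-actions on $X$ and $Y$ are also free, so all four quotients exist. There is a canonical $G$-equivariant isomorphism
\[
X/B \;\cong\; X \stackrel{G}{\times} (G/B), \qquad [x]_B \mapsto [(x, eB)],
\]
and analogously for $Y$. Under this identification, $X/B \to X/G$ is the projection of the $G/B$-bundle associated to the principal $G$-bundle $X \to X/G$. Since $B$ is connected solvable, hence special, $G \to G/B$ is a Zariski-locally trivial $B$-bundle, so $X/B \to X/G$ is a Zariski-locally trivial $G/B$-bundle; in particular it is smooth. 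Because $G/B$ carries a $G$-linearized ample line bundle, the associated bundle $X/B \to X/G$ is projective. The same argument applies to $Y/B \to Y/G$.

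For the Cartesian property, I would first show that the square
\[
\xymatrix@C1pc{
X \ar[r] \ar[d]_{f} & X/G \ar[d]^{\bar f} \\
Y \ar[r] & Y/G}
\]
is Cartesian: this uses that $G$ acts freely on both $X$ and $Y$ and that $f$ is $G$-equivariant, so the map $X \to Y \times_{Y/G} (X/G)$ is an isomorphism orbit-by-orbit (each fibre of $X \to X/G$ and $Y \to Y/G$ is a $G$-torsor, and $f$ restricts to a translation between them). Passing to the associated $G/B$-bundle commutes with base change along $X/G \to Y/G$, yielding
\[
X/B \;=\; X \stackrel{G}{\times} (G/B) \;\cong\; \bigl(Y \times_{Y/G} (X/G)\bigr) \stackrel{G}{\times} (G/B) \;\cong\; (Y/B) \times_{Y/G} (X/G),
\]
which is exactly the Cartesian property of the square in the statement.

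Finally, suppose $f$ is a closed immersion. Then by flat descent along the free $G$- and $B$-actions, $\bar f_G : X/G \to Y/G$ and $\bar f_B : X/B \to Y/B$ are both closed immersions, and these are the vertical maps of the square. Since the square is Cartesian and the horizontal maps $X/B \to X/G$, $Y/B \to Y/G$ are smooth, the standard base-change formula for conormal (equivalently normal) sheaves yields a canonical isomorphism
\[
N_{X/B}(Y/B) \;\cong\; \pi_X^{*}\,N_{X/G}(Y/G),
\]
where $\pi_X : X/B \to X/G$ is the projection. This is precisely the normal-bundle condition required for the square to be transverse in the sense of Definition~\ref{defn:Strict}.

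The main obstacle is the clean verification of the Cartesian assertion $X \cong Y \times_{Y/G} (X/G)$ and its compatibility with passage to the associated $G/B$-bundle; everything else reduces formally to Zariski-local triviality of $G \to G/B$ and to the standard behaviour of normal bundles under smooth base change.
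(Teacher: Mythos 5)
Your proof is correct and follows essentially the same strategy as the paper: identify the horizontal maps as $G/B$-fibrations coming from the principal $G$-bundles $X\to X/G$ and $Y\to Y/G$, reduce the Cartesian claim to the square with $X,Y$ on the left (via a three-square diagram or, equivalently, the fact that the associated-bundle construction commutes with base change), and then deduce transversality from the Cartesian property together with smoothness of the horizontal maps and base change for normal bundles. The only cosmetic differences are that the paper gets projectivity from étale descent of properness plus the quasi-projectivity guaranteed by Lemma~\ref{lem:sch}, rather than by descending a $G$-linearized ample line bundle on $G/B$, and it does not invoke specialness of $B$ (étale local triviality suffices throughout); your appeal to $B$ being special is correct but not needed.
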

\begin{proof}
The top horizontal map is an {\'e}tale locally trivial smooth fibration with 
fiber $G/B$. Hence this map is proper by the descent property of properness.
Since this map is also quasi-projective, it must be projective. The same holds
for the bottom horizontal map.
Proving the other properties is an elementary exercise and can be shown
using the commutative diagram
\begin{equation}\label{eqn:ELM-Com0}
\xymatrix@C1.6pc{
X \ar[r] \ar[d]_{f} & X/B \ar[r] \ar[d] & X/G \ar[d] \\
Y \ar[r] & Y/B \ar[r] & Y/G.}
\end{equation}
One easily checks that the left and the big outer squares are Cartesian
and transverse if $f$ is a closed immersion. 
Since all the horizontal maps are smooth and surjective, the right square
must have the similar property.
\end{proof}

\begin{prop}\label{prop:Push-Pull}
For any $X \in \Sm^G_k$, there is a push-forward map 
\[
(\pi_X)_*: mgl^{a,b}_T(X) \to mgl^{a-2d, b-d}_G(X)
\]
which is contravariant for smooth maps and covariant for projective maps 
in $\Sm^G_k$. This map satisfies the projection formula
$(\pi_X)_*\left(x \cdot r^G_{T,X}(y)\right) = (\pi_X)_*(x) \cdot y$
for $x \in mgl^{*,*}_T(X)$ and $y \in mgl^{*,*}_G(X)$.
\end{prop}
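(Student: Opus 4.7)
\medskip
\noindent\textbf{Proof proposal.}
The plan is to build $(\pi_X)_*$ as an inverse limit of push-forwards at finite levels, after first reducing from $T$ to $B$ via Proposition~\ref{prop:Borel-torus}. Fix an admissible gadget $\rho=(V_i,U_i)_{i\ge 1}$ for $G$ (which is simultaneously admissible for $B$ and for $T$). For each $i\ge 1$, Lemma~\ref{lem:ELM-Com} applied to $X\times U_i\in\Sm^G_{\mathrm{free}/k}$ shows that the quotient map
\[
\pi^i_X : X^i_B = (X\times U_i)/B \longrightarrow (X\times U_i)/G = X^i_G
\]
is smooth and projective, with fibers $G/B$ of dimension $d$. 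Since $MGL$ is an oriented cohomology theory on $\Sm_k$, this yields a push-forward $(\pi^i_X)_* : MGL^{a,b}(X^i_B)\to MGL^{a-2d,b-d}(X^i_G)$ via the integration with supports of \cite{Panin2} and \cite{Levine2}.

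The next step is to check compatibility with the transition maps in the inverse systems defining $mgl^{*,*}_B$ and $mgl^{*,*}_G$. By the definition of an admissible gadget, the inclusion $U_i\oplus V\subset U_{i+1}$ is open and the projection $U_i\oplus V\to U_i$ is a vector bundle, so by homotopy invariance the transition map $MGL^{*,*}(X^{i+1}_B)\to MGL^{*,*}(X^i_B)$ is induced by an open pull-back, and similarly for $G$. Applying Lemma~\ref{lem:ELM-Com} to the closed inclusion $X\times U_i\inj X\times U_{i+1}$ in $\Sm^G_{\mathrm{free}/k}$ (so the relevant square of quotients is Cartesian) together with the homotopy-invariant replacement by $U_i\oplus V$, we obtain a Cartesian square relating $\pi^i_X$ and $\pi^{i+1}_X$ whose horizontal maps are smooth open immersions. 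Lemma~\ref{lem:PushFor}(2) then yields the base-change identity compatibilizing the $(\pi^i_X)_*$ with the transition maps, so that passing to the inverse limit produces
\[
(\pi_X)^B_* : mgl^{a,b}_B(X)\longrightarrow mgl^{a-2d,b-d}_G(X).
\]
Finally, the characteristic filtration argument in the proof of Proposition~\ref{prop:Borel-torus} applies verbatim at each finite level to give an isomorphism $MGL^{*,*}(X^i_B)\xrightarrow{\cong} MGL^{*,*}(X^i_T)$ (the fibers of $X^i_T\to X^i_B$ are iterated torsors under vector groups), which passes to the limit to give $mgl^{*,*}_B(X)\xrightarrow{\cong} mgl^{*,*}_T(X)$. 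Pre-composing $(\pi_X)^B_*$ with the inverse of this isomorphism defines the desired $(\pi_X)_*$.

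For the functoriality properties, let $f:Y\to X$ be a morphism in $\Sm^G_k$. The induced squares
\[
\xymatrix@C1.3pc{
Y^i_B \ar[r]^{\pi^i_Y} \ar[d]_{f^i_B} & Y^i_G \ar[d]^{f^i_G} \\
X^i_B \ar[r]_{\pi^i_X} & X^i_G}
\]
are Cartesian for every $i$ by Lemma~\ref{lem:ELM-Com}, and they are moreover transverse when $f$ is a closed immersion (or, more generally, when $f$ is smooth, in which case $f^i_B,f^i_G$ are smooth too). If $f$ is smooth, Lemma~\ref{lem:PushFor}(2) gives $(f^i_G)^*\circ(\pi^i_X)_* = (\pi^i_Y)_*\circ(f^i_B)^*$ and these assemble in the limit to $f^*\circ(\pi_X)_* = (\pi_Y)_*\circ f^*$. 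If $f$ is projective, then $f^i_B,f^i_G$ are projective by \cite[Lemma~5.1]{Krishna1} (as already used in the proof of Theorem~\ref{thm:WeakBasic}); the functoriality of push-forward for composable projective morphisms in $\Sm_k$ then gives $(f^i_G)_*\circ(\pi^i_Y)_* = (\pi^i_X)_*\circ(f^i_B)_*$, again assembling in the limit. The projection formula $(\pi^i_X)_*(x\cdot(\pi^i_X)^*y)=(\pi^i_X)_*(x)\cdot y$ holds at each finite level as part of the Panin-Levine formalism for the oriented theory $MGL$ on $\Sm_k$, and since $(\pi^i_X)^*$ is precisely the restriction $r^G_{B,X^i}$ at finite levels, taking the inverse limit and composing with the restriction $r^B_{T,X}:mgl^{*,*}_B(X)\to mgl^{*,*}_T(X)$ yields the stated projection formula for the composite restriction $r^G_{T,X}=r^B_{T,X}\circ r^G_{B,X}$.

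The main obstacle is bookkeeping rather than anything conceptually new: one must verify that the transition maps in the definitions of $mgl^{*,*}_B$ and $mgl^{*,*}_G$ are genuinely of the form to which Lemma~\ref{lem:PushFor} applies (after the standard homotopy-invariance reduction from $U_{i+1}\supset U_i\oplus V$ to $U_i$), and that the isomorphism $mgl^{*,*}_B(X)\cong mgl^{*,*}_T(X)$ is compatible with the restriction $r^G_{T,X}$ in the way required by the projection formula. Both are straightforward once one works through the diagrams at each finite level and then passes to the inverse limit.
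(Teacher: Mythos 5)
Your proposal is correct and follows the same overall strategy as the paper: reduce $T$ to $B$ via Proposition~\ref{prop:Borel-torus}, build $(\pi^i_X)_*$ at each finite level using Lemma~\ref{lem:ELM-Com} and the oriented-theory formalism, check compatibility with the transition maps via Lemma~\ref{lem:PushFor}, take the inverse limit, and deduce the projection formula by passing the finite-level projection formula to the limit.

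The one place you diverge from the paper is in how you handle the transition maps. You route through the factorization $U_i\subset U_i\oplus V\subset U_{i+1}$ and homotopy invariance so as to work with smooth open immersions and invoke Lemma~\ref{lem:PushFor}(2). The paper avoids this detour: by Lemma~\ref{lem:Elem-eq}, the closed immersion $s^{i,j}_{B,X}:X^i_B\hookrightarrow X^j_B$ already sits in a transverse square over $s^{i,j}_{G,X}:X^i_G\hookrightarrow X^j_G$ with the two projective maps $\pi^i_X,\pi^j_X$, so Lemma~\ref{lem:PushFor}(1) applies directly to give $(s^{i,j}_{G,X})^*\circ(\pi^j_X)_*=(\pi^i_X)_*\circ(s^{i,j}_{B,X})^*$. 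Both routes are valid; the direct use of the closed-immersion transverse square is a bit shorter and is the intended application of Lemma~\ref{lem:PushFor}(1) here, whereas your version adds a small homotopy-invariance bookkeeping step. The functoriality and projection-formula arguments match the paper's.
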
 
\begin{proof}
Let $\rho = \left(V_i, U_i\right)_{i \ge 1}$ be an admissible gadget for $G$.
For any $j \ge i \ge 1$, Lemma~\ref{lem:ELM-Com} yields 
a transverse square
\begin{equation}\label{eqn:ELM-Com1}
\xymatrix@C1.6pc{
X^i_B \ar[r]^{\pi^i_X} \ar[d]_{s^{i,j}_{B, X}} &
X^i_G \ar[d]^{s^{i,j}_{G, X}} \\
X^j_B \ar[r]_{\pi^j_X} & X^j_G}
\end{equation}
where the vertical maps are closed immersions and the horizontal maps are
smooth and projective. It follows from Lemma~\ref{lem:PushFor} that there
is a projective system of push-forward maps 
$\left\{(\pi^i_X)_*:  mgl^{a,b}(X^i_B) \to mgl^{a-2d, b-d}(X^i_G)\right\}$.
Taking the limit, we get the desired map 
$(\pi_X)_*: mgl^{a,b}_B(X) \to mgl^{a-2d, b-d}_G(X)$ and we can replace
$T$ by $B$ using Proposition~\ref{prop:Borel-torus}.
The covariant functoriality is obvious and the contravariant functoriality
follows directly from Lemmas~\ref{lem:ELM-Com} and ~\ref{lem:PushFor}.
The projection formula for $(\pi_X)_*$ and $r^G_{T,X}$ follows from the
projection formula for the maps $X^i_B \xrightarrow{\pi^i_X} X^i_G$ and
observing that $r^G_{T,X}$ is the inverse limit of the pull-back maps
$(\pi^i_X)^*$. 
\end{proof}

\begin{lem}\label{lem:ML-Gen}
Let $X \in \Sm^G_k$ be projective and let 
$\rho = \left(V_i, U_i\right)_{i \ge 1}$ be an admissible gadget for $G$. 
Then for any $a \ge b \ge 0$,
the projective system $\left\{MGL^{a,b}(X^i_G;R)\right\}_{i \ge 1}$ satisfies the 
Mittag-Leffler condition.
\end{lem}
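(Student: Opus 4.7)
The strategy is to exploit the smooth projective map $\pi^i_X \colon X^i_B \to X^i_G$ (with fiber $G/B$) in order to reduce the Mittag--Leffler question for the $G$-tower $\{MGL^{a,b}(X^i_G; R)\}_i$ to the corresponding question for the $B$-tower, which after the usual $B \to T$ reduction is covered by Lemma~\ref{lem:MLTorus}. The coefficient ring $R = \Z[t_G^{-1}]$ enters exactly at the step where one needs surjectivity of push-forward along $\pi^i_X$.

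First I verify Mittag--Leffler for $\{MGL^{a,b}(X^i_B; R)\}_i$. The map $X^i_T \to X^i_B$ is an iterated affine bundle (using the characteristic filtration of $B^u$ as in the proof of Proposition~\ref{prop:Borel-torus}), so homotopy invariance gives $MGL^{*,*}(X^i_B; R) \cong MGL^{*,*}(X^i_T; R)$ compatibly in $i$. Since $X$ is smooth projective and the proof of Lemma~\ref{lem:MLTorus} is coefficient-insensitive (it uses only homotopy invariance, the projective bundle formula, and the Gysin exact sequence), Mittag--Leffler holds for the $T$-tower with $R$-coefficients, hence for the $B$-tower.

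Next I construct, for each $i$, a class $\widetilde{\alpha}_i \in MGL^{2d,d}(X^i_B; R)$ with $(\pi^i_X)_*(\widetilde{\alpha}_i) = 1 \in MGL^{0,0}(X^i_G; R) = R$. By the definition of the torsion index there exist characters $\chi_1,\dots,\chi_r$ of $T$ and an integer polynomial $Q$ with $t_G \cdot [\mathrm{pt}] = Q(c_1(L_{\chi_1}),\dots,c_1(L_{\chi_r}))$ in $CH^d(G/B)$. Because $G/B$ is cellular with the class of a point occupying the top-codimension Schubert cell, the Atiyah--Hirzebruch spectral sequence degenerates and $MGL^{2d,d}(G/B) = CH^d(G/B) = \Z$ for dimension reasons; consequently $Q(c_1(L_{\chi_j})) = t_G \cdot [\mathrm{pt}]^{MGL}$ already in $MGL^{2d,d}(G/B)$. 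Setting
\[
\alpha_i := \tfrac{1}{t_G}\, Q\bigl(c_1(\sL_{\chi_1}),\dots,c_1(\sL_{\chi_r})\bigr) \in MGL^{2d,d}(U_i/B; R),
\]
the transverse Cartesian square with vertical maps $\pi \colon G/B \to \mathrm{pt}$ and $\pi^i \colon U_i/B \to U_i/G$ and horizontal maps $\iota^i \colon G/B \hookrightarrow U_i/B$ and $\iota^i_0 \colon \mathrm{pt} \hookrightarrow U_i/G$, combined with Lemma~\ref{lem:PushFor}(1), yields $(\iota^i_0)^*((\pi^i)_*(\alpha_i)) = \pi_*([\mathrm{pt}]^{MGL}) = 1$. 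Since $U_i/G$ is smooth and connected, $MGL^{0,0}(U_i/G; R) = R$ with restriction to any point being the identity, so $(\pi^i)_*(\alpha_i) = 1$. Pulling back to $X^i_B$ along the smooth Cartesian projection $X^i_B \to U_i/B$ via Lemma~\ref{lem:PushFor}(2) produces the desired $\widetilde{\alpha}_i$. The projection formula (Proposition~\ref{prop:Push-Pull}) then gives $y = (\pi^i_X)_*(\widetilde{\alpha}_i \cdot (\pi^i_X)^*(y))$ for every $y \in MGL^{a,b}(X^i_G; R)$, so $(\pi^i_X)_*$ is surjective. Applying Lemma~\ref{lem:PushFor}(1) to the transverse restriction square (transversality following from Lemma~\ref{lem:Elem-eq}) yields $(s^j_i)^* \circ (\pi^j_X)_* = (\pi^i_X)_* \circ (s^j_i)^*$; combined with surjectivity,
\[
\mathrm{Image}\bigl(MGL^{a,b}(X^j_G; R) \to MGL^{a,b}(X^i_G; R)\bigr) \;=\; (\pi^i_X)_*\bigl(\mathrm{Image}\text{ in the } B\text{-tower}\bigr),
\]
which stabilizes as $j \to \infty$ by the first step.

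The main obstacle is the lifting of the torsion-index identity from Chow to motivic cobordism: in general, replacing the Chow $c_1$ by the MGL Chern class introduces Lazard-ring corrections coming from the non-additivity of the formal group law, and one must argue these corrections vanish on $G/B$. The resolution is that $MGL^{2d,d}(G/B)$ is concentrated in the top Schubert cell by the degeneration of the Atiyah--Hirzebruch spectral sequence, which reduces the identity to one inside a copy of $\Z$. The remaining bookkeeping --- ensuring the relevant restriction squares $X^j_B/X^j_G$ vs $X^i_B/X^i_G$ and the fiber-over-a-point square are Cartesian and transverse --- follows routinely from Lemmas~\ref{lem:Elem-eq} and \ref{lem:ELM-Com} together with the smoothness of the various quotient projections.
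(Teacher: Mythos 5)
Your overall strategy matches the paper's: establish Mittag--Leffler for the $B$-tower via the $B\to T$ reduction and Lemma~\ref{lem:MLTorus}, construct an invertible element $\alpha_i \in MGL^{*,*}(X^i_G;R)$ that is $(\pi^i_X)_*$ of something on $X^i_B$, and then use base change (Lemma~\ref{lem:PushFor}) together with the projection formula to transfer the Mittag--Leffler property across $\pi^i_X$. The paper imports the existence of such an element from \cite[Proposition~4.8, Lemma~4.2]{KK}, whereas you try to construct it by hand from the definition of the torsion index; the hand construction is where your argument breaks.

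The gap is the assertion that $MGL^{0,0}(U_i/G;R) = R$ with restriction to a point being an isomorphism, and consequently that $(\pi^i)_*(\alpha_i) = 1$. This is false: for a smooth connected $X$ of positive dimension, the Hopkins--Morel/Atiyah--Hirzebruch filtration on $MGL^{0,0}(X)$ has associated graded pieces $\CH^j(X)\otimes\bL^{-j}$ for $0\le j\le \dim X$, and the Lazard ring is nonzero in every nonpositive cohomological degree. So $MGL^{0,0}(U_i/G;R)$ is typically much larger than $R$ (even $MGL^{0,0}(\P^1)$ already is), and knowing that an element pulls back to $1$ at a rational point only tells you it is $1$ modulo the positive filtration. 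What you actually need is that $(\pi^i)_*(\alpha_i)$ is a unit. The paper gets this by first establishing invertibility of $\alpha = (\pi_X)_*(a)$ in the complete ring $mgl^{*,*}_G(X;R)=\varprojlim_i MGL^{*,*}(X^i_G;R)$ (this is where the completeness in the inverse limit is used, via \cite[Proposition~4.8]{KK}), and then using that the projection $mgl^{*,*}_G(X;R)\to MGL^{*,*}(X^i_G;R)$ is a ring homomorphism, hence sends units to units. If you want to avoid citing \cite{KK}, you must replace ``$\alpha_i = 1$'' by ``$\alpha_i$ is a unit'' and supply an argument that an element of $MGL^{0,0}$ of a smooth finite-dimensional connected variety that is $\equiv 1$ modulo positive codimension support is invertible (e.g. because the positive part of the coniveau filtration is a nilpotent ideal when $X$ is finite-dimensional); as written, that step is missing and the proof does not go through.

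One smaller point: your identification $MGL^{2d,d}(G/B;R)\cong R$ and the lifting of the torsion-index identity from Chow to $MGL$ are correct exactly because $\dim(G/B)=d$, which forces the AHSS contribution in that bidegree to be concentrated in the single piece $\CH^d(G/B)\otimes\bL^0$; you should make that degree count explicit, as it is the point where the finiteness of the dimension of $G/B$ is used.
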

\begin{proof}
It was shown in \cite[Proposition~4.8, Lemma~4.2]{KK} that there is an
element $a_0 \in mgl^{2d,d}(BB;R)$ such that $\iota^*(a_0)$ is the class of a 
rational point in $MGL^{2d,d}(G/B;R) = \Omega^d(G/B;R)$. 
Moreover, it was also shown in \cite[Proposition~4.8]{KK} that 
$\alpha_0 = \pi_*(a_0) \in mgl^{*,*}(BG;R)$ is an
invertible element, where $\pi_* : = (\pi_k)_*$ is the push-forward map of
Proposition~\ref{prop:Push-Pull}. 

Setting $a = p^*_{B,X}(a_0)$, we see that there is a class $a \in
mgl^{2d,d}_B(X;R)$ such that $\iota^*_X(a)$ is the class of a 
rational point in $MGL^{2d,d}(G/B;R) = \Omega^d(G/B;R)$. 
It follows moreover from Proposition~\ref{prop:Push-Pull} that
$\alpha = (\pi_X)_*(a) \in mgl^{*,*}_G(X;R)$ is an invertible element.

For any $j \ge i \ge 1$, we have a diagram
\begin{equation}\label{eqn:ML-Gen0}
\xymatrix@C1.6pc{
\m(X^j_G;R) \ar[r]^{(\pi^j_X)^*} \ar[d]_{(s^{i,j}_{G,X})^*} & 
\m(X^j_B;R) \ar[r]^{(\pi^j_X)_*} \ar[d]^{(s^{i,j}_{B,X})^*}  & 
\m(X^j_G;R) \ar[d]^{(s^{i,j}_{G,X})^*} \\
\m(X^i_G;R) \ar[r]_{(\pi^i_X)^*} & \m(X^i_B;R) \ar[r]_{(\pi^i_X)_*} & \m(X^i_G;R).}
\end{equation} 
The left square clearly commutes and the right square commutes by 
Lemma~\ref{lem:PushFor}. Notice that the map $\m(X^i_B;R) \to \m(X^i_T;R)$
is an isomorphism as shown in Proposition~\ref{prop:Borel-torus}.
Setting $s^i_{G,X} : X^i_G \inj X_G$ and taking limit over $j \ge i$, we see that 
$(s^i_{G,X})^* \circ (\pi_X)_* = (\pi^i_X)_* \circ (s^i_{B,X})^*$ for every
$i \ge 1$. Letting $a_i = (s^i_{B,X})^*(a)$ and $\alpha_i = (s^i_{G,X})^*(\alpha)$,
we conclude that $(\pi^i_X)_*(a_i) = \alpha_i$ for every $i \ge 1$.
Moreover, $\alpha_i \in \m(X^i_G;R)$ is invertible.

To verify the required Mittag-Leffler condition, fix any $i \ge 1$.
It follows from Lemma~\ref{lem:MLTorus} that there exists $i' \gg i$
such that 
\begin{equation}\label{eqn:ML-Gen1}
{\rm Image}\left((s^{i,j}_{B,X})^*\right) = 
{\rm Image}\left((s^{i,i'}_{B,X})^*\right) \ \  {\rm for \ all} \ 
j \ge i'.
\end{equation}

Given any $x \in \m(X^{i'}_G;R)$, we get for any $j \ge i'$:
\[
\begin{array}{lll} 
(s^{i,i'}_{G, X})^* \left(\alpha_{i'} \cdot x\right) & {=}^{\dagger} &
(s^{i,i'}_{G, X})^* \circ (\pi^{i'}_X)_*
\left(a_{i'} \cdot (\pi^{i'}_X)^*(x)\right) \\
& = &  (\pi^{i}_X)_* \circ (s^{i,i'}_{B, X})^* 
\left(a_{i'} \cdot (\pi^{i'}_X)^*(x)\right) \\
& {=}^{\dagger \dagger} & (\pi^{i}_X)_* \circ (s^{i,j}_{B, X})^*(y) \\
& = & (s^{i,j}_{G, X})^* \circ (\pi^{j}_X)_*(y),
\end{array}
\]
where the first equality is from the projection formula and the second
equality follows from ~\eqref{eqn:ML-Gen1} for some $y \in \m(X^j_B;R)$.
Since $\alpha_{i'} \in \m(X^{i'}_G;R)$ is invertible, we see that
${\rm Image}\left((s^{i,i'}_{G,X})^*\right) \subseteq  
{\rm Image}\left((s^{i,j}_{B,X})^*\right)$. Since the other inclusion is 
obvious, this verifies the desired Mittag-Leffler condition.  
\end{proof}

As an immediate consequence of Lemma~\ref{lem:ML-Gen} and 
Proposition~\ref{prop:Limit}, we get the following generalization of
Corollary~\ref{cor:MLTorus**}.

\begin{cor}\label{cor:MLG**}
Let $G$ be a connected and split reductive group over $k$ and let
$X \in \Sm^G_k$ be projective. Then for any $a \ge b \ge 0$, the
map
\[
\phi_X: MGL^{a,b}_G(X;R) \to mgl^{a,b}_G(X;R)
\]
is an isomorphism.
\end{cor}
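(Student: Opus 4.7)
The result is essentially a formal consequence of the two immediately preceding statements; the real work was done in Lemma~\ref{lem:ML-Gen}. The plan is as follows. Fix an admissible gadget $\rho = (V_i, U_i)_{i\ge 1}$ for $G$, so that $X_G = \mathop{\mathrm{colim}}_i X^i_G$ is the colimit of the sequence of cofibrations $X^0_G \hookrightarrow X^1_G \hookrightarrow \cdots$ in $\mathbf{Spc}$. Apply Proposition~\ref{prop:Limit} with the ring spectrum $E = MGL_R$ to this sequence. This produces the natural Milnor short exact sequence
\[
0 \to {\underset{i}\varprojlim}{}^1 \ MGL^{a-1,b}(X^i_G; R) \to MGL^{a,b}(X_G; R) \to {\underset{i}\varprojlim} \ MGL^{a,b}(X^i_G; R) \to 0.
\]

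By the definitions in \S\ref{subsection:EMCT} and Definition~\ref{defn:Coeff}, the middle term is $MGL^{a,b}_G(X;R)$ and the right-hand term is $mgl^{a,b}_G(X;R)$, and the surjection on the right is exactly the comparison map $\phi_X$ of \eqref{eqn:STCob}. Therefore it suffices to show that the $\varprojlim^1$ term vanishes.

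This is precisely what Lemma~\ref{lem:ML-Gen} delivers: the inverse system $\{MGL^{a-1,b}(X^i_G;R)\}_{i\ge 1}$ satisfies the Mittag-Leffler condition (the lemma is stated for one fixed bidegree but applies equally well with $a$ replaced by $a-1$, since its proof rests on the existence of an invertible push-forward class $\alpha \in mgl^{*,*}_G(X;R)$ coming from the flag variety, together with the projection formula of Proposition~\ref{prop:Push-Pull}, none of which depend on the specific bidegree). Consequently $\varprojlim^1 = 0$, and $\phi_X$ is forced to be an isomorphism.

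There is no substantive obstacle remaining; the only point meriting care is the indexing mismatch between the bidegree $(a,b)$ appearing in the statement and the bidegree $(a-1,b)$ appearing in the $\varprojlim^1$ term, and this is purely a bookkeeping matter handled by the observation above. Thus the corollary follows by splicing Proposition~\ref{prop:Limit} together with Lemma~\ref{lem:ML-Gen}.
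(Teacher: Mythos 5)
Your argument matches the paper's: the corollary is stated there to follow immediately from Lemma~\ref{lem:ML-Gen} and Proposition~\ref{prop:Limit}, which is exactly the splicing you carry out. Your parenthetical care about the bidegree shift from $(a,b)$ to $(a-1,b)$ in the $\varprojlim^1$ term is well-placed (and, strictly, Lemma~\ref{lem:ML-Gen} is already stated for all bidegrees $a\ge b\ge 0$, so no re-examination of its proof is needed beyond noting, as you do, that the key invertible class and the projection formula are degree-independent).
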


\begin{lem}\label{lem:Push-Rat}
Let $G$ be a connected and split reductive group over $k$ and let
$X \in \Sm^G_k$ be projective. Let $a \in mgl^{2d,d}(X; R)$ be the
element constructed in the proof of Lemma~\ref{lem:ML-Gen} and let
$\theta_X : mgl^{a,b}_T(X;R) \to mgl^{a,b}_G(X;R)$ be the map
$x \mapsto (\pi_X)_*(a \cdot x)$. Then the maps
\[
\left(mgl^{a,b}_T(X; \Q)\right)^W \xrightarrow{\theta_X} 
mgl^{a,b}_G(X; \Q) \xrightarrow{r^G_{T,X}} \left(mgl^{a,b}_T(X; \Q)\right)^W 
\]
are isomorphisms.
\end{lem}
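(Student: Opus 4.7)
The plan is to deduce this lemma by combining two ingredients already established in the paper: the rational Weyl-invariance isomorphism (Theorem 9.3) and the projection formula of Proposition 10.3. First, I would note that by Theorem 9.3 together with Theorem \ref{thm:ECRat} (which identifies $MGL^{*,*}_G(-;\Q)$ with $mgl^{*,*}_G(-;\Q)$, and likewise for $T$), the restriction map
\[
r^G_{T,X} : mgl^{a,b}_G(X;\Q) \longrightarrow \left(mgl^{a,b}_T(X;\Q)\right)^W
\]
is already known to be an isomorphism. This handles the second of the two maps in the lemma.

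Next, to show that $\theta_X$ is an isomorphism, I would compute the composition $\theta_X \circ r^G_{T,X}: mgl^{a,b}_G(X;\Q) \to mgl^{a,b}_G(X;\Q)$ by applying the projection formula of Proposition \ref{prop:Push-Pull}. For any $y \in mgl^{*,*}_G(X;\Q)$,
\[
\theta_X\bigl(r^G_{T,X}(y)\bigr) \;=\; (\pi_X)_*\!\left(a \cdot r^G_{T,X}(y)\right) \;=\; (\pi_X)_*(a)\cdot y \;=\; \alpha \cdot y,
\]
where $\alpha = (\pi_X)_*(a)$ is the element constructed in the proof of Lemma \ref{lem:ML-Gen}. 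Since that proof exhibits $\alpha$ as an invertible element of $mgl^{*,*}_G(X;R)$ (and hence of $mgl^{*,*}_G(X;\Q)$), multiplication by $\alpha$ is an automorphism of $mgl^{*,*}_G(X;\Q)$.

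The lemma now follows by an elementary two-out-of-three argument: $r^G_{T,X}$ is an isomorphism, and $\theta_X \circ r^G_{T,X}$ is an isomorphism, so $\theta_X = (\theta_X \circ r^G_{T,X}) \circ (r^G_{T,X})^{-1}$ must be an isomorphism as well. Consequently the reverse composition $r^G_{T,X} \circ \theta_X$ on $(mgl^{a,b}_T(X;\Q))^W$ is the conjugate of multiplication by $\alpha$ and is therefore also an isomorphism. There is no real obstacle here beyond being careful to invoke the projection formula correctly; the whole point is that the machinery of Proposition \ref{prop:Push-Pull}, Lemma \ref{lem:ML-Gen}, and Theorem \ref{thm:Weyl-Rat} has already been assembled so that the present lemma is essentially a formal consequence.
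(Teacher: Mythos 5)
Your proposal is correct, and it takes a genuinely different route from the paper's own proof. The paper does \emph{not} invoke Theorem~\ref{thm:Weyl-Rat} at this point; instead it reduces to the finite-level maps
\[
\left(MGL^{a,b}(X^i_B;\Q)\right)^W \xrightarrow{\theta^i_X} MGL^{a,b}(X^i_G;\Q) \xrightarrow{(\pi^i_X)^*} \left(MGL^{a,b}(X^i_B;\Q)\right)^W,
\]
passes via the Hopkins--Morel isomorphism of~\eqref{eqn:ECRat0} to a statement in rational motivic cohomology, and then cites the structural splitting $\phi^i_X \colon H^*(G/B,\Q)\otimes_{\Q} H^{*,*}(X^i_G;\Q) \xrightarrow{\cong} H^{*,*}(X^i_B;\Q)$ from an earlier paper to conclude. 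Your argument instead treats the already-established rational Weyl-invariance isomorphism $r^G_{T,X}$ of Theorem~\ref{thm:Weyl-Rat} (combined with Theorem~\ref{thm:ECRat} to identify $\mg(X;\Q)$ with $mgl^{*,*}_G(X;\Q)$) as a black box, and then uses the projection formula of Proposition~\ref{prop:Push-Pull} to compute $\theta_X\circ r^G_{T,X}$ as multiplication by the unit $\alpha=(\pi_X)_*(a)$ from the proof of Lemma~\ref{lem:ML-Gen}; a two-out-of-three argument finishes the job. This is tighter and avoids re-descending to finite level, at the cost of treating Theorem~\ref{thm:Weyl-Rat} as given (whose own proof cites comparable structural input). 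One small clean-up worth noting: the invertibility of $\alpha$ is established over $R=\Z[t_G^{-1}]$, and you should remark that it persists after the ring map $mgl^{*,*}_G(X;R)\to mgl^{*,*}_G(X;\Q)$, which sends units to units; likewise the projection formula, stated integrally, passes to $\Q$-coefficients by naturality. With these observations made explicit, the argument is complete.
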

\begin{proof}
We shall follow the notations that we used in the proof of  
Lemma~\ref{lem:ML-Gen}.
Let $\rho = \left(V_i, U_i\right)_{i \ge 1}$ be an admissible gadget for $G$. 
For any $i \ge 1$, let $a_i = (s^i_{B,X})^*(a)$ and let 
$\theta^i_X: MGL^{a,b}(X^i_B) \to MGL^{a,b}(X^i_G)$
be the map $x \mapsto (\pi^i_X)_*(a_i \cdot x)$. 
It suffices to show that the maps
\begin{equation}\label{eqn:Push-Rat0}
\left(MGL^{a,b}(X^i_B; \Q)\right)^W \xrightarrow{\theta^i_X} 
MGL^{a,b}(X^i_G; \Q) \xrightarrow{(\pi^i_X)^*} \left(MGL^{a,b}(X^i_B; \Q)\right)^W 
\end{equation}
are isomorphisms for every $i \ge 1$.

Let $\wt{a_i} \in H^{2d,d}(X^i_B;\Q)$ be the image of the element
$a_i \in MGL^{2d,d}(X^i_B;\Q)$ under the natural transformation of functors
$MGL^{*,*}(-) \to H^{*,*}(-)$. By ~\eqref{eqn:ECRat0}, it suffices to show that
the map
\begin{equation}\label{eqn:Push-Rat1}
\left(H^{a,b}(X^i_B; \Q)\right)^W \xrightarrow{\theta^i_X} 
H^{a,b}(X^i_G; \Q) ; \ \ x \mapsto (\pi^i_X)_*(\wt{a_i} \cdot x)
\end{equation} 
is an isomorphism.

It follows from \cite[Lemma~6.4]{Krishna4} that there is a natural
isomorphism 
\[
\phi^i_X: H^*(G/B, \Q) \otimes_{\Q} H^{*,*}(X^i_G; \Q)
\xrightarrow{\cong} H^{*,*}(X^i_B, \Q),
\]
where $H^*(X) = {\underset{i \ge 0}\oplus} \ H^{2i,i}(X)$. 
Let $\iota_i : G/B \to X^i_B$ be the fiber of the map $X^i_B 
\xrightarrow{\pi^i_X} X^i_G$.
It is well known that the map $\left(H^*(G/B;\Q)\right)^W \to \Q$, given by 
$x \mapsto (p_{G/B})_*(\iota^*_i(a_i) \cdot x)$ is an isomorphism.
It follows from the construction of the map $\phi^i_X$ 
({\sl cf.} \cite[Lemma~6.4]{Krishna4}) that the map in
~\eqref{eqn:Push-Rat1} is an isomorphism.
\end{proof}

\begin{prop}\label{prop:Weyl-Inv}
Let $G$ be a connected reductive group with a split maximal torus $T$ and
the associated Weyl group $W$. Then for any $X \in \Sm^G_k$ which is 
projective, the map 
\[
r^G_{T,X} : \mg(X;R) \to \left(\mt(X;R)\right)^W
\]
is injective.
\end{prop}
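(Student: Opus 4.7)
The plan is to construct an explicit left inverse (up to multiplication by a unit) to the restriction map $r^G_{T,X}$, using the push-forward map $\theta_X$ introduced in Lemma~\ref{lem:Push-Rat}. The crucial input is that for projective $X \in \Sm^G_k$ with $G$ split reductive, there exists a class $a \in mgl^{2d,d}_B(X;R) = mgl^{2d,d}_T(X;R)$ such that $\alpha := (\pi_X)_*(a) \in mgl^{*,*}_G(X;R)$ is \emph{invertible}; this was established in the proof of Lemma~\ref{lem:ML-Gen} (and is precisely the reason for inverting the torsion index $t_G$ to obtain the base ring $R$).

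First I would pass from $MGL^{*,*}_G$ to $mgl^{*,*}_G$: since $X$ is projective, Corollary~\ref{cor:MLG**} gives a natural isomorphism $MGL^{*,*}_G(X;R) \xrightarrow{\cong} mgl^{*,*}_G(X;R)$, and the analogous statement for $T$ is Corollary~\ref{cor:MLTorus**} (or again Corollary~\ref{cor:MLG**} applied to the split reductive group $T$). These identifications are compatible with the restriction maps, so it suffices to prove that
\[
r^G_{T,X} : mgl^{*,*}_G(X;R) \to mgl^{*,*}_T(X;R)
\]
is injective (the image lying inside the $W$-invariants is automatic from the $W$-action described in \S\ref{subsubsection:Weyl}).

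Next I would form the composite $\theta_X \circ r^G_{T,X}$ and compute it by the projection formula in Proposition~\ref{prop:Push-Pull}: for any $y \in mgl^{*,*}_G(X;R)$,
\[
\theta_X\bigl(r^G_{T,X}(y)\bigr) \;=\; (\pi_X)_*\bigl(a \cdot r^G_{T,X}(y)\bigr) \;=\; (\pi_X)_*(a) \cdot y \;=\; \alpha \cdot y,
\]
where I have identified $r^G_{T,X}$ with $(\pi_X)^*$ after passing to $mgl$. Since $\alpha \in mgl^{*,*}_G(X;R)$ is a unit, multiplication by $\alpha$ is an automorphism of $mgl^{*,*}_G(X;R)$, and hence $r^G_{T,X}$ must be injective.

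The plan is essentially bookkeeping once the key ingredients are in place; the substantive points have already been done in Lemma~\ref{lem:ML-Gen} (construction and invertibility of $\alpha$ over $R$, using the input from \cite{KK}) and in Proposition~\ref{prop:Push-Pull} (existence of $\theta_X$ and the projection formula compatible with $r^G_{T,X}$). The only conceptual subtlety is making sure that the identification of $r^G_{T,X}$ with $(\pi_X)^*$ at the level of $mgl$-theory is compatible under the limit defining $mgl$; this follows because both $r^G_{T,X}$ and $\theta_X$ are defined as inverse limits of the corresponding maps at each finite level $X^i_G, X^i_B$, and the projection formula holds at each finite level by Lemma~\ref{lem:PushFor} together with the commutative diagram~\eqref{eqn:ML-Gen0}. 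No step here presents a real obstacle given the preceding lemmas.
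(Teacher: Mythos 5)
Your proof is correct and follows essentially the same argument as the paper: pass to $mgl^{*,*}_G$ via Corollary~\ref{cor:MLG**}, then use the projection formula to show that $\theta_X \circ r^G_{T,X}$ is multiplication by the unit $\alpha = (\pi_X)_*(a)$ from Lemma~\ref{lem:ML-Gen}. The only cosmetic difference is that you carry out the projection-formula computation directly in the limit ring $mgl^{*,*}_G(X;R)$, whereas the paper performs it at each finite level $X^i_G$ and passes to the limit afterward; since $\alpha$ and the $\alpha_i$ are simultaneously invertible, the two presentations are interchangeable.
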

\begin{proof}
In view of Corollary~\ref{cor:MLG**}, we can replace $\mg(X;R)$ and
$\mt(X;R)$ by $mgl^{*,*}_G(X;R)$ and $mgl^{*,*}_T(X;R)$
respectively. Let $B$ be a Borel subgroup of $G$ containing the maximal
torus $T$.  
Using the definition of $mgl^{*,*}_G(X;R)$ and the fact that
the inverse limit commutes with taking $W$-invariants, it suffices
to show that for an admissible gadget $\rho = \left(V_i, U_i\right)_{i \ge 1}$ 
for $G$, the map 
\[
(\pi^i_X)^*: \m(X^i_G ; R) \to \left(\m(X^i_B; R)\right)^W
\]
is injective for all $i \ge 1$.

To show this, let $x \in \m(X^i_G ; R)$. 
Let $a_i \in \m(X^i_B;R)$ and $\alpha_i \in \m(X^i_G;R)$ be as
in the proof of Lemma~\ref{lem:ML-Gen}. 
Using the projection formula, we see that
$(\pi^i_X)_*\left(a_i \cdot (\pi^i_X)^*(x)\right) = \alpha_i \cdot x$.
Since $\alpha_i \in \m(X^i_G;R)$ is invertible, we see that $(\pi^i_X)^*$ is 
injective.   
\end{proof}

For any $X \in \Sm_k$, let $MGL^{*}(X) = {\underset{i \ge 0}\oplus} \
MGL^{2i,i}(X)$ and for any $X \in \Sm^G_k$, let 
$MGL^{*}_G(X) = {\underset{i \ge 0}\oplus} \ MGL^{2i,i}_G(X)$.
We can prove a stronger form of Proposition~\ref{prop:Weyl-Inv} in the
following case.

\begin{thm}\label{thm:Weyl-Inv*}
Let $G$ be a connected reductive group with a split maximal torus $T$ and
the associated Weyl group $W$. Let $X \in \Sm^G_k$ be projective such that
$T$ acts on $X$ with only finitely many fixed points. Then the map 
\[
r^G_{T,X} : MGL^{*}_G(X;R) \to \left(MGL^*_T(X;R)\right)^W
\]
is an isomorphism.
\end{thm}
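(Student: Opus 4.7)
The injectivity of $r^G_{T,X}$ is already established in Proposition~\ref{prop:Weyl-Inv}, so the task is surjectivity onto the $W$-invariants. Since $X$ is smooth and projective, Corollary~\ref{cor:MLG**} identifies $MGL^{*}_G(X;R) \cong mgl^{*}_G(X;R)$ and similarly for $T$, so I would work with the $mgl^{*}$-theory throughout and aim to show $r^G_{T,X}\colon mgl^{*}_G(X;R) \to (mgl^{*}_T(X;R))^W$ is surjective.

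The plan is to build an explicit $R$-linear splitting of $r^G_{T,X}$ via the transfer map. Let $a \in mgl^{2d,d}_B(X;R)$ and the invertible class $\alpha = (\pi_X)_*(a) \in mgl^{*}_G(X;R)$ be as in the proof of Lemma~\ref{lem:ML-Gen}; the invertibility of $\alpha$ is where the hypothesis $R = \Z[t_G^{-1}]$ enters. Using Proposition~\ref{prop:Borel-torus} to identify $mgl^{*}_B(X;R)$ with $mgl^{*}_T(X;R)$, set
\[
\sigma_X \colon mgl^{*}_T(X;R) \to mgl^{*}_G(X;R), \qquad \sigma_X(y) = \alpha^{-1}\cdot (\pi_X)_*(a\cdot y).
\]
The projection formula of Proposition~\ref{prop:Push-Pull} immediately gives $\sigma_X \circ r^G_{T,X} = \mathrm{id}$ on $mgl^{*}_G(X;R)$, so $\sigma_X$ is a genuine retraction over $R$; this alone, however, does not force surjectivity of $r^G_{T,X}$.

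The crux of the argument is to show that $r^G_{T,X}\circ \sigma_X$ restricted to $W$-invariants is the identity. Over $\Q$ this identity holds for the following reason: by Theorem~\ref{thm:Weyl-Rat} every $W$-invariant $y$ has the form $\pi_X^*(z)$, and then combining base change (Lemma~\ref{lem:PushFor} applied to the transverse square $X_B\times_{X_G} X_B \rightrightarrows X_B \to X_G$, with $\pi_X$ smooth) with the projection formula yields
\[
r^G_{T,X}(\sigma_X(y)) \;=\; \pi_X^*(\alpha)^{-1}\cdot \pi_X^*\pi_{X,*}(a\cdot \pi_X^*(z)) \;=\; \pi_X^*(\alpha)^{-1}\cdot \pi_X^*(\alpha)\cdot y \;=\; y.
\]
To upgrade this rational identity to $R$, I would invoke the finite-fixed-point hypothesis via Theorem~\ref{thm:Main-Str}, giving the explicit decomposition
\[
mgl^{*}_T(X;R) \;\cong\; \bigoplus_{p \in X^T} \bL_R[[t_1,\dots,t_n]].
\]
This is torsion-free as an $R$-module, hence so is its $W$-invariant submodule. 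The $R$-linear endomorphism $r^G_{T,X}\circ\sigma_X - \mathrm{id}$ of $(mgl^{*}_T(X;R))^W$ vanishes after tensoring with $\Q$, and torsion-freeness forces it to vanish over $R$. Together with injectivity, this shows that $\sigma_X|_{W\text{-inv}}$ is a two-sided inverse to $r^G_{T,X}$.

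The main obstacle is precisely the integral upgrade step: without the finite-fixed-point hypothesis the fixed-locus components $Z_m$ in Theorem~\ref{thm:Main-Str} would be higher-dimensional and $MGL^{*}(Z_m;R)$ could fail to be $R$-torsion-free, so a rational identity between $R$-linear maps would not automatically lift. The finiteness of $X^T$ is exactly what reduces $mgl^{*}_T(X;R)$ to a direct sum of power-series rings over the torsion-free Lazard ring $\bL_R$, making the lifting both possible and routine.
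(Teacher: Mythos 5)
Your proof is correct and follows essentially the same route as the paper's: both deduce surjectivity by combining the invertible transfer class $\alpha = (\pi_X)_*(a)$, the rational isomorphism (the paper cites Lemma~\ref{lem:Push-Rat}, you cite Theorem~\ref{thm:Weyl-Rat}), and the injectivity of $(MGL^*_T(X;R))^W \to (MGL^*_T(X;\Q))^W$ coming from the $\bL_R[[t_1,\dots,t_n]]$-description of $MGL^*_T(X;R)$ afforded by the finite-fixed-point hypothesis. The only cosmetic difference is that you package the candidate preimage as an explicit retraction $\sigma_X$; note that your appeal to base change in the rational step is not actually needed once you write $y=\pi_X^*(z)$, since the projection formula alone closes the computation.
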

\begin{proof}
In view of Proposition~\ref{prop:Weyl-Inv}, we only need to prove the
surjectivity assertion. We can identify $MGL^{*, *}_G(X;R)$ and 
$mgl^{*,*}_G(X;R)$ using Corollary~\ref{cor:MLG**}.

It follows from Theorem~\ref{thm:WeakBasic} and 
\cite[Proposition~6.7]{Krishna4} that $MGL^*_T(k; R) \cong
\bL_R[[t_1, \cdots , t_n]]$. Using Lemma~\ref{lem:trivial-T} and
Theorem~\ref{thm:Main-Str}, we see that $MGL^*_T(X; R) \cong
\left(\bL_R[[t_1, \cdots , t_n]]\right)^r$, where $r$ is the number of $T$-fixed
points on $X$. In particular, the map
$MGL^*_T(X; R) \to MGL^*_T(X; \Q)$ is injective.

We now consider the commutative diagram 
\begin{equation}\label{eqn:Weyl-Inv*0}
\xymatrix@C1.8pc{
\left(MGL^*_T(X;R)\right)^W \ar[r]^>>>>{(\pi_X)_*} \ar[d]_{f} &
MGL^*_G(X; R) \ar[r]^<<<<{r^G_{T,X}} \ar[d] & 
\left(MGL^*_T(X;R)\right)^W \ar[d]^{f} \\
\left(MGL^*_T(X; \Q)\right)^W \ar[r]_>>>>{(\pi_X)_*} &
MGL^*_G(X; \Q) \ar[r]_<<<<{r^G_{T,X}} & 
\left(MGL^*_T(X; \Q)\right)^W.}
\end{equation}

Let $a \in MGL^*_T(X;R)$ and $\alpha = (\pi_X)_*(a) \in  MGL^*_G(X;R)$
be as in the proof of Lemma~\ref{lem:ML-Gen}. Recall that
$\alpha \in MGL^*_G(X;R)$ is invertible. 
For any $x \in MGL^*_G(X;R)$ and $y = r^G_{T,X}(x)$, it follows from 
Proposition~\ref{prop:Push-Pull}
that 
\[
\begin{array}{lll}
r^G_{T,X} \circ (\pi_X)_* (a \cdot y) & = & 
r^G_{T,X} \circ (\pi_X)_* \left(a \cdot r^G_{T,X}(x)\right) \\
& = & r^G_{T,X} \left(\alpha \cdot x\right) \\
& = & r^G_{T,X}(\alpha) \cdot y.
\end{array}
\]
In particular, it follows from  Lemma~\ref{lem:Push-Rat} that
for any $y \in \left(MGL^*_T(X;R)\right)^W$, one has
\[
r^G_{T,X}(\alpha) \cdot f(y) = r^G_{T,X} \circ (\pi_X)_* 
\left(a \cdot f(y)\right) = 
f\left(r^G_{T,X} \circ (\pi_X)_* (a \cdot y)\right).
\]
Equivalently, we get $f(y) = f\left(r^G_{T,X}(\alpha^{-1})\cdot 
\left(r^G_{T,X} \circ (\pi_X)_* (a \cdot y)\right)\right)$.
Since we have shown above that $f$ is injective, we 
get 
\[
y = r^G_{T,X}(\alpha^{-1})\cdot 
\left(r^G_{T,X} \circ (\pi_X)_* (a \cdot y)\right)
= r^G_{T,X}\left(\alpha^{-1}\cdot \left((\pi_X)_* (a \cdot y)\right)\right).
\]
This proves the required surjectivity.
\end{proof}

As an immediate consequence of  Theorem~\ref{thm:Weyl-Inv*}, we obtain
the following generalization of Totaro's theorem
\cite[Theorem~1.3]{Totaro2} to the case of motivic cobordism
of the classifying spaces of reductive groups.

\begin{cor}\label{cor:Weyl-Inv*Flag}
Let $G$ be a connected reductive group with a split maximal torus $T$ and
let $B$ be a Borel subgroup containing $T$. Then
\begin{enumerate}
\item
$MGL^*(BG;R) \cong \left(MGL^*(BT;R)\right)^W$ and
\item
$MGL^*(BT; R) \cong \left(MGL^*_T(G/B;R)\right)^W$.
\end{enumerate}
\end{cor}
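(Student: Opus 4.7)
\medskip

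\noindent\textbf{Proof proposal for Corollary~\ref{cor:Weyl-Inv*Flag}.}
Both statements will follow by applying Theorem~\ref{thm:Weyl-Inv*} to judiciously chosen $G$-schemes, combined with the change-of-groups and Morita isomorphism results already established. In each case the hypotheses of Theorem~\ref{thm:Weyl-Inv*} ($X$ projective, $T$ acting with finitely many fixed points) will be easy to verify, so the real content lies in identifying $MGL^{*}_G(X;R)$ and $MGL^{*}_T(X;R)$ with the claimed cobordism rings of classifying spaces.

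For part $(1)$, I take $X = \Spec(k)$ equipped with the trivial $G$-action. Then $X$ is trivially smooth and projective, and the $T$-fixed locus is all of $X$, which is a single point. By definition, $MGL^{*}_G(\Spec(k); R) = MGL^{*}(BG; R)$ and $MGL^{*}_T(\Spec(k); R) = MGL^{*}(BT; R)$, and the restriction map $r^G_{T,\Spec(k)}$ becomes the pullback along $BT \to BG$. Theorem~\ref{thm:Weyl-Inv*} then yields the isomorphism directly.

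For part $(2)$, I take $X = G/B$ with its natural left $G$-action. This is smooth and projective since $B$ is a parabolic subgroup. The $T$-fixed points on $G/B$ are classical: the Bruhat decomposition identifies $(G/B)^T$ with $W = N_G(T)/T$, which is finite. To compute $MGL^{*}_G(G/B; R)$, I use the Morita isomorphism (Theorem~\ref{thm:BPEC*}~(8)) applied to the identification $G/B \cong \Spec(k) \stackrel{B}{\times} G$, giving
\[
MGL^{*}_G(G/B; R) \;\cong\; MGL^{*}_B(\Spec(k); R) \;=\; MGL^{*}(BB; R),
\]
and then Proposition~\ref{prop:Borel-torus} (applied to $X = \Spec(k)$) identifies the latter with $MGL^{*}(BT; R)$. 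Theorem~\ref{thm:Weyl-Inv*} now yields
\[
MGL^{*}(BT; R) \;\cong\; MGL^{*}_G(G/B; R) \;\cong\; \bigl(MGL^{*}_T(G/B; R)\bigr)^W.
\]

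The argument is essentially a packaging of Theorem~\ref{thm:Weyl-Inv*} with the standard dictionary between equivariant cohomology of a point and cohomology of the classifying space, together with the Morita reduction $G/B \leadsto BB$. No step presents a real obstacle; the only nontrivial input beyond earlier results in the paper is the classical fact that $(G/B)^T$ is finite, which underwrites the applicability of Theorem~\ref{thm:Weyl-Inv*} in part~$(2)$.
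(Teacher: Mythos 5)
Your proof is correct and follows essentially the same route as the paper: apply Theorem~\ref{thm:Weyl-Inv*} to $X=\Spec(k)$ for part $(1)$ and to $X=G/B$ for part $(2)$, then for part $(2)$ combine the Morita isomorphism of Theorem~\ref{thm:BPEC*}~(8) with Proposition~\ref{prop:Borel-torus} to identify $MGL^*_G(G/B;R)$ with $MGL^*(BT;R)$. The paper states this more tersely; you have merely made explicit the verification that the hypotheses of Theorem~\ref{thm:Weyl-Inv*} hold in each case.
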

\begin{proof}
The first part follows straightaway from Theorem~\ref{thm:Weyl-Inv*}
by taking $X = \Spec(k)$. The second part follows by applying 
Theorem~\ref{thm:Weyl-Inv*} to $X = G/B$ and then using the
identification $MGL^*_G(G/B) \cong MGL^*_T(k) = MGL^*(BT)$
by Theorem~\ref{thm:BPEC*} and Proposition~\ref{prop:Borel-torus}.
\end{proof}

As another application of the above results, we get the following
computation of the $T$-equivariant motivic cobordism of the flag variety
$G/B$.

\begin{cor}\label{cor:ECFlag}
Let $G$ be a connected and split reductive group over $k$ with a split
maximal torus $T$. Let $B$ be a Borel subgroup of $G$ containing $T$.
Then the natural map
\[
\Psi_{G/B}: S(T;R) {\underset{S(G;R)}\otimes} S(T;R) \to MGL^*_T(G/B; R)
\]
\[
\Psi(a \otimes b) = a \cdot r^G_{T, G/B}(b);
\]
is an isomorphism of $S(T;R)$-algebras.
\end{cor}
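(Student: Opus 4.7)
The plan is to prove the isomorphism in two main steps: first verify well-definedness of $\Psi_{G/B}$, and then exhibit it as a surjection between two free $S(T;R)$-modules of the same finite rank.

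First I would verify that $\Psi_{G/B}$ is a well-defined map of $S(T;R)$-algebras. The first factor $S(T;R)$ acts on $MGL^*_T(G/B;R)$ via pull-back along the $T$-equivariant structure map $G/B \to \Spec(k)$, while the second factor is identified with $MGL^*_G(G/B;R)$ via the Morita isomorphism (Theorem~\ref{thm:BPEC*}(8)) combined with Proposition~\ref{prop:Borel-torus}, and acts via the restriction $r^G_{T, G/B}$. The two actions of $S(G;R)$ agree: both factor through the natural restriction $S(G;R) \to S(T;R)$ coming from functoriality of $r^G_{T, -}$ applied to the $G$-equivariant structure map $G/B \to \Spec(k)$.

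Next I would establish that both sides are free $S(T;R)$-modules of rank $|W|$. For the right-hand side, this follows from Theorem~\ref{thm:Main-Str} applied to the smooth projective $T$-scheme $G/B$, whose $T$-fixed point locus consists of the $|W|$ isolated Weyl group points; this yields $MGL^*_T(G/B; R) \cong \bigoplus_{w \in W} S(T;R)$ as $S(T;R)$-modules. For the left-hand side, I would establish the MGL-analog of the Pittie--Steinberg theorem: after inverting the torsion index $t_G$, the ring $S(T;R)$ is free of rank $|W|$ over $S(G;R)$. This should be obtained from the iterated $\P^1$-bundle structure of $G/B$ via Bott--Samelson resolutions, combined with the projective bundle formula (Theorem~\ref{thm:BPEC*}(4)) applied to the approximating flag bundles $U_i/T \to U_i/G$, to construct an explicit basis using Demazure-type operators on $MGL^*(BT; R)$.

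With both sides being free $S(T;R)$-modules of rank $|W|$, it suffices to show that $\Psi_{G/B}$ is surjective. For this I would adapt the push-forward argument of Theorem~\ref{thm:Weyl-Inv*}. Let $a \in MGL^{2d,d}_T(G/B;R)$ and $\alpha = (\pi_{G/B})_*(a) \in MGL^*_G(G/B;R)$ be the classes of Lemma~\ref{lem:ML-Gen}, with $\alpha$ invertible. The projection formula (Proposition~\ref{prop:Push-Pull}) shows that for any $x \in MGL^*_T(G/B;R)$, setting $y = \alpha^{-1} \cdot (\pi_{G/B})_*(a \cdot x) \in S(T;R) \cong MGL^*_G(G/B;R)$ produces an element $\Psi_{G/B}(1 \otimes y) = r^G_{T,G/B}(y)$ that matches $x$ modulo terms of lower Bruhat-filtration level. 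A descending induction along the filtration of Theorem~\ref{thm:Main-Str}, together with multiplication by the first copy of $S(T;R)$, then places every $x$ in the image of $\Psi_{G/B}$; a surjection between free modules of equal finite rank is an isomorphism.

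The main obstacle will be establishing the freeness of $S(T;R)$ as an $S(G;R)$-module, i.e., the MGL-analog of the Pittie--Steinberg theorem. While classical for singular cohomology and $K$-theory after inverting the torsion index, its extension to the cohomology represented by $MGL$ is delicate: the universal formal group law interacts nontrivially with the $W$-action, so constructing an explicit free basis requires care. One approach is to develop Demazure-type operators compatible with the orientation on $MGL$ (in the spirit of Hornbostel--Kiritchenko or Calm\`es--Zainoulline--Zhong); another is a comparison with the analogous statement in motivic cohomology via the Hopkins--Morel slice spectral sequence used in Theorem~\ref{thm:ECRat}.
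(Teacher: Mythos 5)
The paper's proof is one line: it cites the analogous statement for the geometric equivariant cobordism $\Omega^*_T(G/B)$ (Theorem~4.6 of \cite{KK}) and then transports it to $MGL^*_T$ via the comparison isomorphisms $\Omega^p_G(X) \cong mgl^{2p,p}_G(X)$ (Theorem~\ref{thm:WeakBasic}(11)) and $MGL^{*,*}_G(X;R) \cong mgl^{*,*}_G(X;R)$ (Corollary~\ref{cor:MLG**}, valid since $G$ is split reductive and $G/B$ is smooth projective). All of the hard work — the Bott--Samelson/Schubert analysis, the construction of the classes $\rho_{w}$, and the Pittie--Steinberg type freeness of $S(T;R)$ over $S(G;R)$ — lives in \cite{KK}, and the comparison machinery of Sections~7--8 is exactly what lets the author avoid redoing it for $MGL$.

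Your proposal tries to reprove the statement from scratch for $MGL$, and there are two genuine gaps. The first is the one you flag yourself: you never actually establish that $S(T;R)$ is free of rank $|W|$ over $S(G;R)$. You describe two possible routes (Demazure operators for the $MGL$ orientation, or comparison through the slice spectral sequence) but carry out neither, so the domain of $\Psi_{G/B}$ is not shown to be a free $S(T;R)$-module of rank $|W|$ — and your whole ``surjection between free modules of equal rank'' strategy collapses without it. The second gap is in your surjectivity argument. The identity $y = r^G_{T,X}\bigl(\alpha^{-1}\cdot(\pi_X)_*(a\cdot y)\bigr)$ from the proof of Theorem~\ref{thm:Weyl-Inv*} is derived \emph{for $W$-invariant $y$}, using the projection formula $(\pi_X)_*(a\cdot r^G_{T,X}(z)) = \alpha\cdot z$, which only applies to elements pulled back from $MGL^*_G$. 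A general $x \in MGL^*_T(G/B;R)$ is not of that form, so the claim that $r^G_{T,G/B}(y)$ ``matches $x$ modulo lower Bruhat filtration'' does not follow from the projection formula as stated; the proposed descending induction needs an actual description of the image of $r^G_{T,G/B}$ relative to the filtration, which again amounts to the Schubert-class analysis from \cite{KK} that you have not reproduced. Compared with the paper's route, your plan would (if completed) be more self-contained, but as written it is circular on the key algebraic input.
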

\begin{proof}
The above map is defined using the identification $\mg(G/B) = \mt(k)$ \\
$ = S(T)$. Now, the desired isomorphism follows from \cite[Theorem~4.6]{KK},
Theorem~\ref{thm:WeakBasic} (11) and Corollary~\ref{cor:MLG**}.
\end{proof}

\section{Realizations of equivariant motivic cobordism}
\label{section:Realization}
We shall assume in this section that the ground field $k$ is a subfield of
the field of complex numbers and we fix an embedding $\sigma: k \inj \C$.
Recall that there is a functor ${\rm HoSets}_{\bullet} \to \sH_{\bullet}(k)$,
where ${\rm HoSets}_{\bullet}$ denote the unstable homotopy category of
simplicial sets which is equivalent to the homotopy category of pointed
topological spaces ${\bf Top}_{\bullet}$ via the geometric realization 
functor. Let $\sS \sH$ denote the stable homotopy category 
the pointed topological spaces.

\subsubsection{Topological realization functor}
\label{subsubsection:TReal}
There is a topological realization functor $\Sm_k \to {\bf Top}$ which takes
a scheme $X$ over $k$ to the space $X^{\rm an} = X(\C)$ of the complex valued 
points on $X$ via the embedding $\sigma$. Then $X^{\rm an}$ is a complex 
manifold. Every motivic space $Y \in Spc$ can be written as
$\left({\underset{X \times \Delta^n \to Y}{colim}} \ X \times \Delta^n\right) 
\xrightarrow{\cong} Y$.
This gives a topological realization functor 
\begin{equation}\label{eqn:TR}
{\bf R}_{\C}: Spc_{\bullet} \to {\bf Top}_{\bullet} ;
\end{equation}
\[
{\bf R}_{\C}(Y) = {\underset{X \times \Delta^n \to Y}{colim}} 
\left(X^{\rm an} \times |\Delta^n|\right).
\]
It is clear from this that for any ind-scheme $X$, ${\bf R}_{\C}(X)$
is complex manifold, which may be infinite-dimensional. We shall write
${\bf R}_{\C}(X)$ often as $X^{\rm an}$ if $X$ is an ind-scheme.

Let ${\rm Sp}\left({\bf Top}, \C\P^1\right)$ denote the category of 
$\C\P^1$-spectra in the category ${\bf Top}_{\bullet}$. Let 
$\left({\sS \sH}, \C\P^1\right)$ denote the stable homotopy category
of pointed $\C\P^1$-spectra under the stable equivalence.
It is known, as can be found in \cite[\S A.7]{PPR}, that the above topological
realization functor descends to an exact functor
${\bf R}_{\C} : {\sS \sH}(k) \to \left({\sS \sH}, \C\P^1\right)$.
There is a suspension functor $\Sigma_{top}: \left({\sS \sH}, \C\P^1\right) \to
\left({\sS \sH}, S^1\right)$ from the category of $\C\P^1$-spectra to the
category of $S^1$-spectra which is given by
\[
\left(\Sigma_{top}M_{\bullet}\right)_{2n} = M_n \ \ {\rm and} \ \ 
\left(\Sigma_{top}M_{\bullet}\right)_{2n+1} = S^1 \wedge M_n = \Sigma M_n.
\]
This functor induces an equivalence of the stable homotopy categories.
This stable homotopy category is denoted by $\sS \sH$. 
In other words, there is an exact functor ${\bf R}_{\C} : {\sS \sH}(k) \to
\sS \sH$.

\noindent
{\bf{Notation :}}  Let $X \in \Sm_k$ and let $E \in \sS \sH$. For the rest
of  this section, we shall denote the generalized cohomology $E^*(X^{\rm an})$
in short by $E^*(X)$.

\subsubsection{Complex cobordism}\label{subsubsection:ComC}
Recall that the complex cobordism $MU$ is obtained by applying the suspension 
functor $\Sigma_{top}$ to the $\C\P^1$-spectrum
$\left(MU_0, MU_1, \cdots , \right)$, where $MU_n = Th(E_n)$, where
$E_n$ is the universal rank $n$-bundle on the classifying space $BU_n$.
The rank $(n+1)$-bundle $\sO_{BU_n} \oplus E_n$ defines a unique map
$BU_n \xrightarrow{i_n} BU_{n+1}$ such that one has a commutative diagram
like ~\eqref{eqn:bounding0}. This gives the bounding maps
$\C\P^1 \wedge Th(E_n) \to Th(E_{n+1})$ of the spectrum $MU$.
The weak equivalence of topological spaces ${E_n}/{(E_n \setminus \{0\})} \to
Th(E_n)$ and the homotopy equivalence $BU_n \cong BGL_n$ shows that
${\bf R}_{\C}(MGL) = MU$, where $MGL$ is the motivic Thom spectrum
({\sl cf.} ~\eqref{eqn:Thom-S}). In fact, this is an isomorphism of ring
spectra. 
In particular, for any ind-scheme $X$, there is a natural map
\[
\Hom_{{\sS \sH}(k)}\left(\Sigma^{\infty}_T X_{+}, \Sigma^{a,b}MGL\right)
\to \Hom_{{\sS \sH}}\left(\Sigma^{\infty}_{\C\P^1} X^{\rm an}_{+}, 
{\bf R}_{\C}\Sigma^{a,b}MU\right)
\]
for any $a \ge b \ge 0$.
Using the canonical isomorphism ${\bf R}_{\C}(\G_m) \cong S^1$, we see 
that there is a natural homomorphism
\begin{equation}\label{eqn:Alg-Top}
t_X: MGL^{a,b}(X) \to MU^{a}(X^{\rm an}).
\end{equation}

Recall that if $G$ is a complex Lie group acting on a finite-dimensional 
$CW$-complex $X$, the equivariant complex cobordism of $X$ is defined as
\begin{equation}\label{eqn:Equiv-CC}
MU^*_G(X) : = MU^*\left(X \stackrel{G}{\times} EG\right)
\end{equation}
where $EG \to BG$ is the universal principal $G$-bundle over the classifying
space $BG$. It is known that $MU^*_G(X)$ does not depend on the choice
of the universal principal bundle $EG \to BG$.  

Let $G$ be a linear algebraic group over $k$ and let $X \in \Sm^G_k$.
Then $G^{\rm an}$ is a complex Lie group and it follows from Lemma~\ref{lem:elem}
and ~\eqref{eqn:Alg-Top} that there is a natural homomorphism
\begin{equation}\label{eqn:Alg-Top-G}
t^G_X: MGL^{a,b}_G(X) \to MU^a_G(X).
\end{equation}
In particular, we get a natural ring homomorphism 
$t^G_X: MGL^*_G(X) \to MU^*_G(X)$ which takes an element $x \in MGL^{2a,a}_G(X)$
to an element $t^G_X(x) \in MU^{2a}_G(X)$.

\subsection{Equivariant motivic cohomology and the
Cycle class maps}\label{subsection:CCM}
For any abelian group $A$, let ${\bf H}_A$ denote the motivic
Eilenberg-MacLane $T$-spectrum in ${\sS \sH}(k)$ as defined by 
Voevodsky \cite{Voev1}. For a linear algebraic group $G$ over $k$ and
$X \in \Sm^G_k$, one defines the {\sl equivariant motivic cohomology}
of $X$ by
\begin{equation}\label{eqn:MCoh}
H^{a,b}_G(X;A) :=  
\Hom_{{\sS \sH}(k)}\left(\Sigma^{\infty}_TX_G(\rho)_{+}, \Sigma^{a,b}{\bf H}_A\right)
\end{equation}
where $\rho  = \left(V_i, U_i\right)$ is an admissible gadget for $G$.
One also defines the analogue of $mgl^{*,*}_G(X)$ as
\[
h^{a,b}_G(X;A) := \ {\underset{i}\varprojlim} \
H^{a,b}\left(X \stackrel{G}{\times} U_i; A\right).
\]

It is shown in \cite{Krishna4} that $h^{*,*}_G(-)$ is well-defined
and it is an example of an oriented cohomology theory on $\Sm^G_k$.
Moreover, it follows from the proof of Theorem~\ref{thm:ECRat} that the 
map $H^{a,b}_G(X;A) \to h^{a,b}_G(X;A)$ is in fact an isomorphism.
In particular, the analogue of Theorem~\ref{thm:WeakBasic} holds verbatim
for the equivariant motivic cohomology $H^{*,*}_G(-)$.

It was shown by Voevodsky in \cite[Proposition~3.8]{Voev2} that
${\bf R}_{\C}({\bf H}_A)$ is isomorphic to the topological 
Eilenberg-MacLane spectrum in ${\sS \sH}$.
In particular, one obtains a commutative diagram

\begin{equation}\label{eqn:ATCM}
\xymatrix@C1.8pc{
MGL^{a,b}_G(X;A) \ar[r]^{t^G_X} \ar[d] & MU^a_G(X;A) \ar[d] \\
H^{a,b}_G(X;A) \ar[r]_{c^G_X} & H^a_G(X;A)}
\end{equation}   
of the equivariant cohomology theories on $\Sm^G_k$. The horizontal maps
are called the cycle class maps.

\subsubsection{Totaro's refined cycle class map}
\label{subsubsection:RCmap}
In order to study the cycle class maps and the natural maps between the
equivariant versions of cobordism and the ordinary cohomology, we need to
recall the following notation for the tensor product
while dealing with projective systems of modules over a commutative ring.
Let $A$ be a commutative ring and let $\{L_i\}$ and $\{M_i\}$ be two projective
systems of $A$-modules. Following \cite{Totaro1}, one defines  
the {\sl topological} tensor product of $L$ and $M$ by 
\begin{equation}\label{eqn:TTP}
L {\wh{\otimes}}_A M \ : = \ {\underset{i}\varprojlim} \ 
\left(L_i \otimes_A M_i\right).
\end{equation}
Given a linear algebraic group $G$, $X \in \Sm^G_k$ and an $\bL$-module $A$,
the proof of Lemma~\ref{lem:Ind} shows that the tensor product
\[
mgl^{*,*}_G(X) {\wh{\otimes}}_{\bL} A =  \ {\underset{i}\varprojlim} \ 
\left(MGL^{*,*}(X \stackrel{G}{\times} U_i) \otimes_{\bL} A\right)
\]
is independent of the choice of an admissible gadget 
$\rho = \left(V_i, U_i\right)$ for $G$.
In case of the natural map $\mg(X) \to mgl^{*,*}_G(X)$ being an isomorphism, we
shall also use the notation $\mg(X){\wh{\otimes}}_{\bL} A$ for 
$mgl^{*,*}_G(X) {\wh{\otimes}}_{\bL} A$.

We mentioned earlier that the present definition of the Chow groups of the
classifying space of a linear algebraic group $G$ over $k$ was invented by
Totaro \cite{Totaro1}. He also showed that the cycle class map 
$c^G_k: \CH^*(BG) \to H^*(BG)$ in fact factors through a refined cycles class map
$\wt{c}^G_k: \CH^*(BG) \to MU^*(BG){\wh{\otimes}}_{\bL} \Z$. It is known that
$MU^*(X){\wh{\otimes}}_{\bL} \Z$ naturally maps to $H^*(X)$ and is a more 
refined topological invariant of a topological space $X$ than its singular 
cohomology $H^*(X)$.

Recall that that $R$ denotes the ring $\Z[t^{-1}_G]$, where $t_G$ is the 
torsion-index of $G$.
As a consequence of Corollary~\ref{cor:MLG**}, Theorem~\ref{thm:WeakBasic} and 
\cite[Proposition~7.2]{Krishna1}, we obtain the following generalization of
\cite[Theorem~2.1]{Totaro1}.

\begin{thm}\label{thm:RCCM}
For a linear algebraic group $G$ over $k$ and $X \in \Sm^G_k$ projective,
there is a natural refined cycle class map 
\[
\wt{c}^G_X: \CH^*_G(X;R) \to MU^*_G(X;R) {\wh{\otimes}}_{\bL_R} R.
\]
such that its composite with the natural map 
$MU^*_G(X;R) {\wh{\otimes}}_{\bL_R} R \to H^*_G(X;R)$ is the usual 
cycle class map $c^G_X$.
\end{thm}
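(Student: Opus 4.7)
The plan is to construct $\wt{c}^G_X$ one finite-dimensional approximation at a time and then pass to the inverse limit, imitating Totaro's original strategy but in the equivariant setting provided by the admissible gadget construction. Fix an admissible gadget $\rho = (V_i, U_i)_{i\ge 1}$ for $G$ and set $X_i = X \stackrel{G}{\times} U_i$. Since $X$ is projective and the quotient $U_i/G$ is quasi-projective, Lemma~\ref{lem:sch} shows that each $X_i$ is a smooth projective $k$-scheme.

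For each such $X_i$ one has the (non-equivariant) refined cycle class map
\[
\wt{c}_{X_i}: \CH^*(X_i; R) \xrightarrow{\cong} MGL^{2*,*}(X_i;R) \otimes_{\bL_R} R \xrightarrow{t_{X_i} \otimes \id} MU^*(X_i^{\rm an};R) \otimes_{\bL_R} R,
\]
where the first isomorphism combines Levine's theorem $\Omega^*(X_i;R) \cong MGL^{2*,*}(X_i;R)$ with the Levine--Morel identification $\Omega^*(X_i;R) \otimes_{\bL_R} R \cong \CH^*(X_i;R)$, and the second arrow is the tensor product of the topological realization \eqref{eqn:Alg-Top} with $\id_R$. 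By \cite[Theorem~2.1]{Totaro1} (whose construction works verbatim for any smooth scheme in place of $BG$), the composition of $\wt{c}_{X_i}$ with the natural ring map $MU^*(X_i^{\rm an};R)\otimes_{\bL_R}R \to H^*(X_i^{\rm an};R)$ recovers the ordinary cycle class map $c_{X_i}$.

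Next I would verify naturality of $\wt{c}_{X_i}$ with respect to the pullback along the closed immersions $X_i \inj X_{i+1}$ (which is immediate from functoriality of each of the three arrows above) and pass to the inverse limit. By definition \eqref{eqn:TTP} the target becomes $\varprojlim_i \bigl(MU^*(X_i^{\rm an};R)\otimes_{\bL_R} R\bigr) = MU^*_G(X;R)\wh{\otimes}_{\bL_R} R$. For the source I invoke the equivariant analogue of Levine--Morel, namely \cite[Proposition~7.2]{Krishna1}, together with Theorem~\ref{thm:WeakBasic}(11) and Corollary~\ref{cor:MLG**}, to identify
\[
\CH^*_G(X;R) \;\cong\; \Omega^*_G(X;R) \wh{\otimes}_{\bL_R} R \;\cong\; mgl^{2*,*}_G(X;R) \wh{\otimes}_{\bL_R} R \;=\; \varprojlim_i \bigl(MGL^{2*,*}(X_i;R)\otimes_{\bL_R} R\bigr),
\]
which matches the projective system whose limit computes the source term after level-wise application of Levine--Morel. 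The composite of the limit map with the natural map to $H^*_G(X;R) = \varprojlim_i H^*(X_i^{\rm an};R)$ (as constructed for the equivariant motivic cohomology in \S\ref{subsection:CCM}) then recovers $c^G_X$ by the corresponding compatibility at each level.

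The main obstacle I anticipate is bookkeeping rather than conceptual: ensuring that the topological tensor product on the right genuinely matches the inverse limit of the level-wise tensor products on the left, and that the appropriate Mittag-Leffler conditions hold so that no $\varprojlim^1$ terms obstruct the comparison with $c^G_X$. For general linear algebraic $G$ (not merely connected split reductive) the passage $MGL^{*,*}_G \cong mgl^{*,*}_G$ of Corollary~\ref{cor:MLG**} is reduced to the reductive case via Proposition~\ref{prop:NRL}, using that the unipotent radical contributes only homotopy-trivial pieces; the projectivity of $X$ then feeds Lemma~\ref{lem:ML-Gen} to provide the surjectivity of the transition maps needed to exchange limits and tensor products throughout.
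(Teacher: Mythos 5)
Your proposal is essentially correct and reconstructs an argument the paper entirely elides: the paper simply asserts that the theorem follows ``as a consequence of Corollary~\ref{cor:MLG**}, Theorem~\ref{thm:WeakBasic} and \cite[Proposition~7.2]{Krishna1}'' without giving the details. Your level-wise-then-pass-to-the-limit construction is the intended one. A few corrections are needed.

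First, your opening claim that each $X_i = X\stackrel{G}{\times}U_i$ is smooth and \emph{projective} is false. Lemma~\ref{lem:sch} gives only quasi-projectivity, since $U_i/G$ is merely quasi-projective and $X_i$ fibers over it. This does not damage the argument (the level-wise refined cycle class construction and Levine's theorem work for any smooth quasi-projective variety), but the misstatement should be removed.

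Second, your level-wise $\wt{c}_{X_i}$ is built through $MGL$ and the realization functor, whereas the construction of \cite[Theorem~2.1]{Totaro1} works directly on cycles via resolution of singularities. To invoke Totaro for compatibility with $c_{X_i}$ you should either verify that your map agrees with his, or prove the compatibility directly; the latter is easy from the diagram ~\eqref{eqn:ATCM} together with the observation that the composite $\CH^*(X_i;R)\cong MGL^{2*,*}(X_i;R)\otimes_{\bL_R}R \to H^{2*,*}(X_i;R)=\CH^*(X_i;R)$ is the identity.

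Third, and most substantively, your last paragraph is a detour that cannot be made to work by the means you invoke. Proposition~\ref{prop:NRL} reduces $\mg$ to $MGL^{*,*}_H$ for the Levi factor $H$, but $H$ is only reductive, not necessarily connected and split, so neither Corollary~\ref{cor:MLG**} nor Lemma~\ref{lem:ML-Gen} applies to it. Fortunately the extension is also unnecessary: the ring $R=\Z[t_G^{-1}]$ in the theorem presupposes a well-defined torsion index, which the paper only defines for connected split reductive $G$, so the theorem is implicitly stated in that generality and Corollary~\ref{cor:MLG**} applies directly. Drop that paragraph. Relatedly, note that Corollary~\ref{cor:MLG**} is not actually needed to \emph{construct} the map $\wt{c}^G_X$: source and target are inverse limits by construction, and \cite[Proposition~7.2]{Krishna1} together with Theorem~\ref{thm:WeakBasic}(11) already gives the needed map out of $\CH^*_G(X;R)$; the isomorphism $\mg\cong mgl^{*,*}_G$ only serves to justify the notation $\mg(X)\wh{\otimes}_{\bL_R}R$ used in ~\S\ref{subsubsection:RCmap} and in the downstream Theorems~\ref{thm:Alg-Top-Com} and ~\ref{thm:ATComp2}.
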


\subsection{Comparison of equivariant motivic and complex 
cobordism}\label{subsection:Comp-eq}
As a consequence of the results of Quillen \cite{Quillen},
Levine-Morel \cite{LM} and Levine \cite{Levine1}, one knows that the
topological realization map $MGL^*(k) \to MU^*(pt)$ is an isomorphism.
Based on this isomorphism, we now prove some comparison results for
the equivariant motivic and complex cobordisms of schemes with group actions.
As a consequence, we verify some conjectures of Totaro about the cycle
class maps.
The following result is the topological analogue of 
Theorem~\ref{thm:Weyl-Inv*}.
This result for the singular cohomology was earlier proven by
Holm and Sjamaar \cite[Proposition~2.1]{HS}.

\begin{thm}\label{thm:Top-ML}
Let $G$ be a connected reductive group over $k$ with a split maximal torus $T$
and the associated Weyl group $W$.
Let $X \in \Sm^G_k$ be projective such that $T$ acts on $X$ with only finitely
many fixed points. Then the map 
\[
r^{G, \rm top}_{T,X} : MU^*_G(X;R) \to \left(MU^*_T(X;R)\right)^W
\]
is an isomorphism.
\end{thm}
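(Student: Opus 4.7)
The plan is to imitate the proof of Theorem~\ref{thm:Weyl-Inv*}, transporting its algebraic ingredients to the topological setting via the realization functor ${\bf R}_{\C}$ of \S\ref{subsubsection:TReal}. First, note that for any admissible gadget $\rho=(V_i,U_i)$ for $G$, the map $\pi^i_X\colon X^{i,\text{an}}_B\to X^{i,\text{an}}_G$ is an oriented smooth fibre bundle with fibre $G/B$ of complex dimension $d$, so the classical topological Gysin construction produces a pushforward $(\pi^{\text{top}}_X)_*\colon MU^{*}_B(X;R)\to MU^{*-2d}_G(X;R)$ with projection formula and base change. Proposition~\ref{prop:Borel-torus} has a verbatim topological analogue (same unipotent-filtration argument), identifying $MU^*_B(X;R)$ with $MU^*_T(X;R)$.

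The key preparation is to apply ${\bf R}_{\C}$ to the element $a_0\in mgl^{2d,d}(BB;R)$ produced by \cite[Proposition~4.8]{KK}. Since ${\bf R}_{\C}$ is exact symmetric monoidal and carries the algebraic Thom/Gysin formalism to the topological one, the resulting class $a_0^{\text{top}}\in MU^{2d}(BB;R)$ has the properties that $\iota^*(a_0^{\text{top}})$ is the class of a point in $MU^{2d}(G/B;R)$ and $(\pi^{\text{top}})_*(a_0^{\text{top}})$ is invertible in $MU^*(BG;R)$. Pulling back along $X\to\Spec k$ produces $a^{\text{top}}\in MU^{2d}_T(X;R)$ with $\alpha^{\text{top}}:=(\pi^{\text{top}}_X)_*(a^{\text{top}})$ invertible in $MU^*_G(X;R)$. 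Injectivity of $r^{G,\text{top}}_{T,X}$ then follows immediately: if $r^{G,\text{top}}_{T,X}(x)=0$, the projection formula yields
\[
\alpha^{\text{top}}\cdot x=(\pi^{\text{top}}_X)_*(a^{\text{top}}\cdot r^{G,\text{top}}_{T,X}(x))=0,
\]
and invertibility of $\alpha^{\text{top}}$ gives $x=0$.

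For surjectivity, I use the hypothesis that $X^T$ is finite together with Theorem~\ref{thm:BBH}, so that $X$ admits a Białynicki-Birula filtration whose strata are affine bundles over points of $X^T$. The topological analogue of Theorem~\ref{thm:Main-Str}, proved by exactly the same limit argument (and even cleaner here because $MU^*(BT;R)\cong R\otimes_{\Z}\bL[[t_1,\dots,t_n]]$ is torsion-free), gives $MU^*_T(X;R)\cong\prod_{p\in X^T}MU^*(BT;R)$. In particular the rationalization map $(MU^*_T(X;R))^W\hookrightarrow(MU^*_T(X;\Q))^W$ is injective. The topological counterpart of Lemma~\ref{lem:Push-Rat}, namely that the maps $(MU^*_T(X;\Q))^W\xrightarrow{\theta^{\text{top}}_X}MU^*_G(X;\Q)\xrightarrow{r^{G,\text{top}}_{T,X}}(MU^*_T(X;\Q))^W$ are mutually inverse isomorphisms, follows from the Leray-Hirsch theorem for the $G/B$-bundle $X_B^{i,\text{an}}\to X_G^{i,\text{an}}$ together with the classical identity $(p_{G/B})_*(\iota^*a_0\cdot(-))\colon H^*(G/B;\Q)^W\xrightarrow{\cong}\Q$. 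The diagram chase at the end of Theorem~\ref{thm:Weyl-Inv*} then carries over: given $y\in(MU^*_T(X;R))^W$, the element $(\alpha^{\text{top}})^{-1}\cdot(\pi^{\text{top}}_X)_*(a^{\text{top}}\cdot y)\in MU^*_G(X;R)$ restricts to $y$ after rationalization, hence already over $R$ by the injectivity just noted.

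The main obstacle is the assembly of the topological preparatory pieces, especially verifying that ${\bf R}_{\C}$ sends the algebraic pushforward and Chern class formalism of \cite{KK} to the topological one (so that invertibility of $\alpha$ descends to invertibility of $\alpha^{\text{top}}$) and checking the Mittag-Leffler condition needed to identify $MU^*_G(X;R)$ with the inverse limit $\varprojlim_i MU^*(X^{i,\text{an}}_G;R)$ so that the limit of $(\pi^i_X)^*_*$ makes sense. Both verifications are essentially formal but require care at the level of ring spectra and of the Atiyah-Hirzebruch spectral sequence (the topological analogue of the Mittag-Leffler argument from Lemma~\ref{lem:ML-Gen}); the remainder of the proof is then a direct translation of the algebraic argument.
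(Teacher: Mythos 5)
Your proof is correct, and the overall strategy — injectivity via the projection formula, surjectivity via passage to rational coefficients and the torsion-freeness of $MU^*_T(X;R)$ coming from the Białynicki-Birula decomposition — matches the paper's. The route is mildly different, however. For injectivity the paper does not use the invertible class $\alpha^{\rm top}=(\pi^{\rm top}_X)_*(a^{\rm top})$; instead it cites the Leray-Hirsch isomorphism
$\Psi^{\rm top}_X: MU^*(BT)\otimes_{MU^*(BG)} MU^*_G(X)\xrightarrow{\cong} MU^*_T(X)$ from \cite[Lemmas~4.2, 4.3]{KK}, identifies $\pi^*_X$ with $1\otimes \mathrm{id}$, and then produces a retraction of $1\otimes\mathrm{id}$ directly from the projection formula for $p_{G/B}: G/B\to\mathrm{pt}$. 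Your argument instead transplants the mechanism of Proposition~\ref{prop:Weyl-Inv} (invertible $\alpha$ plus projection formula for $\pi_X$) into topology via the realization functor; it is equally valid and arguably more uniform with the algebraic side, at the cost of having to verify that $\mathbf{R}_{\C}$ intertwines the algebraic and topological Gysin maps — a genuine verification, which you flag appropriately. For the rational surjectivity step the paper cites \cite[Theorem~8.8]{Krishna1}, while you reprove the $\Q$-statement via the topological Leray-Hirsch theorem; again both routes are fine. One small caution: your realization of the class $a_0$ gives $\alpha_0^{\rm top}$ a priori in the image of $t^G_k$, and invertibility of $\alpha_0^{\rm top}$ needs the full force of the compatibility of $t^G_k$ with products and push-forwards, plus (for the identification of $MU^*_G(X;R)$ with the inverse limit) a Mittag-Leffler argument; the paper sidesteps part of this by citing the topological lemmas of \cite{KK} directly, which is slightly cleaner, though the paper's own surjectivity step also invokes $t^G_X(a)$ and therefore implicitly relies on the same realization compatibility you are using.
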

\begin{proof}
This is essentially proven by an easy topological translation of the 
proof of Theorem~\ref{thm:Weyl-Inv*}.
We give a sketch of the main steps. 
The topological version of ~\eqref{eqn:CCS1} gives rise to the 
commutative diagram

\begin{equation}\label{eqn:BBG}
\xymatrix@C.7pc{
MU^*(B_G) \ar[r]^{\pi^*} \ar[d]_{p^*_{G,X}} & MU^*(B_T) \ar[d]^{p^*_{T,X}} 
\ar[r]^{\iota^*} & MU^*(G/B) \ar@{=}[d] \\
MU^*_G(X) \ar[r]_{\pi^*_X} & MU^*_T(X) \ar[r]_{\iota^*_X} & MU^*(G/B).}
\end{equation}

It was shown in \cite[Lemma~4.2]{KK} that there are elements
$\{\rho_{w,X} : w \in W\}$ in $MU^*_T(X)$ 
such that $\{\iota^*_X(\rho_{w,X}) : w \in W\}$
forms an $\bL$-basis of $MU^*(G/B)$. Moreover, we can choose
$\rho_{w_0,X} = 1$, where $w_0 \in W$ is the longest length element.
It follows from \cite[Lemma~4.3]{KK} that the map

\begin{equation}\label{eqn:BBG0}
\Psi^{\rm top}_X: MU^*(BT) {\underset{MU^*(BG)}\otimes} MU^*_G(X) \to
MU^*_T(X);
\end{equation}
\[
\Psi^{\rm top}_X (x \otimes y) = p^*_{T,X}(x) \cdot \pi^*_X(y)
\]
is $W$-equivariant and an isomorphism of $MU^*(BT)$-modules.
In particular, we get
\[
\Psi^{\rm top}_X(1 \otimes y) = \Psi^{\rm top}_X \left(\iota^*_X(\rho_{w_0,X})
\otimes y \right) = \pi^*_X(y).
\]
Hence, to show that $r^{G, \rm top}_{T,X}$ is injective, it suffices to show that 
the map
$MU^*_G(X) \xrightarrow{1 \otimes id} 
(MU^*(G/B) {\underset{\bL}\otimes} MU^*_G(X))^W$ is injective.
But to do this, we only have to observe from the projection formula for the map
$p_{G/B} : G/B \to {\rm pt}$ that
${p_{G/B}}_* \left(\rho \cdot p^*_{G/B} (x)\right) =
{p_{G/B}}_*(\rho) \cdot x = x$, where $\rho \in MU^*(G/B)$
is the class of a point.
This gives a right inverse of the map $p^*_{G/B}$ and hence a right inverse of
$1 \otimes id=p^*_{G/B} \otimes id$.

To prove the surjectivity of the map $r^{G, \rm top}_{T,X}$, we first note from
our assumption and the topological analogue of Theorem~\ref{thm:Main-Str}
that $MU^*_T(X) \cong \left(\bL[[t_1, \cdots , t_n]]\right)^r$, where
$n = \rank(T)$ and $r$ is the number of $T$-fixed points on $X$. 
In particular, the map $MU^*_T(X;R) \to MU^*_T(X;\Q)$ is injective.

Now, the surjectivity argument of Theorem~\ref{thm:Weyl-Inv*} goes through
verbatim, where one has to replace $a \in MGL^*_T(X;R)$ with $t^G_X(a) \in 
MU^*_T(X;R)$ and $\alpha \in MGL^*_G(X;R)$ with $t^G_X(\alpha) \in MU^*_G(X;R)$
and then use the fact that the surjectivity result holds over the 
rationals by \cite[Theorem~8.8]{Krishna1}.
\end{proof}

\subsubsection{Totaro's conjectures}\label{subsubsection:TConj}
It was conjectured ({\sl cf.} \cite[Introduction]{Totaro1}) that 
the refined cycle class $\CH^*(BG) \to MU^*(BG) {\wh{\otimes}}_{\bL} \Z$
should be an isomorphism. Totaro modified this conjecture to an expectation
that this map should be an isomorphism after localization at certain prime
$p$. We shall show below that the refined cycle class map is in fact an
isomorphism after inverting the torsion index of the group $G$.
We first have the following stronger result.

\begin{thm}\label{thm:Alg-Top-Com}
Let $G$ be a connected reductive group over $k$ with a split maximal torus $T$. 
Let $X \in \Sm^G_k$ be projective such that $T$ acts on $X$ with only finitely
many fixed points. Then the maps  
\[
t^G_X: MGL^*_G(X;R) \to MU^*_G(X;R)  \ \  {\rm and}
\]
\[
c^G_X: \CH^*_G(X;R) \to H^*_G(X;R)
\]
are isomorphisms. In particular, $MU^*_G(X;R)$ and $H^*_G(X;R)$
have no element in odd degrees.
\end{thm}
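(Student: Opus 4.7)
The plan is to reduce the theorem to the case $G = T$ using the Weyl-group-invariance results already established, and then handle the torus case by decomposing both sides into direct sums indexed by the $T$-fixed points.

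\textbf{Reduction to the torus.} Both comparison maps $t^G_X$ and $c^G_X$ are natural with respect to the restriction homomorphisms from $G$ to $T$ and their topological and Chow-theoretic analogues. For motivic cobordism this gives a commutative square whose vertical maps are the isomorphisms of Theorem~\ref{thm:Weyl-Inv*} and Theorem~\ref{thm:Top-ML}, so that $t^G_X$ is an isomorphism if and only if the induced map $t^T_X$ is an isomorphism after passing to $W$-invariants. The same reduction applies for $c^G_X$, using the analogous Weyl-invariance statement for equivariant higher Chow groups (which holds by the same splitting-principle argument employed in the proof of Theorem~\ref{thm:Weyl-Inv*}, or by invoking \cite[Theorem~8.8]{Krishna1} after inverting $t_G$) together with the known Weyl-invariance of equivariant singular cohomology. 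Thus it suffices to prove that $t^T_X$ and $c^T_X$ are isomorphisms.

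\textbf{The torus case.} Fix a basis $\chi_1, \dots, \chi_n$ of $\widehat T$, and write $X^T = \{p_1, \dots, p_r\}$. By Theorem~\ref{thm:Main-Str} combined with Lemma~\ref{lem:trivial-T}, we obtain an isomorphism of $\bL_R$-algebras
\[
\mt(X;R) \ \cong \ \bL_R[[t_1, \dots, t_n]]^{\,r}, \qquad t_j = c^T_1(L_{\chi_j}).
\]
The topological counterpart of Theorem~\ref{thm:Main-Str}, proved by the same Bia{\l}ynicki--Birula argument with $MU^*$ in place of $MGL^*$, gives the identification
\[
MU^*_T(X;R) \ \cong \ \bL_R[[t_1, \dots, t_n]]^{\,r},
\]
where now $t_j$ denotes the topological first Chern class of $L_{\chi_j}^{\mathrm{an}}$. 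Since the topological realization of a motivic Thom class is the classical Thom class ({\sl cf.}~\S\ref{subsection:TS}), the map $t^T_X$ sends each algebraic $t_j$ to its topological analogue; by Quillen's theorem and its algebraic counterpart due to Levine \cite{Levine1}, the induced map on coefficient rings is the identity on $\bL_R$. Hence under the two decompositions $t^T_X$ corresponds to the identity of $\bL_R[[t_1,\dots,t_n]]^{r}$, and is therefore an isomorphism. An identical argument, using the equivariant Chow and singular-cohomology versions of Theorem~\ref{thm:Main-Str} (both of which read $R[t_1, \dots, t_n]^{\,r}$ since $T$-fixed points are isolated), establishes that $c^T_X$ is an isomorphism. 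Finally, both $\bL_R$ and each variable $t_j$ live in even degrees, so the direct sums above vanish in odd degrees, proving the last claim.

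\textbf{Main obstacle.} The principal technical point is the topological counterpart of Theorem~\ref{thm:Main-Str}: one must verify that the Mittag-Leffler argument of Lemma~\ref{lem:MLTorus}, the Bia{\l}ynicki--Birula filtration, and Proposition~\ref{prop:filter-Gen} all transfer to $MU^*$ of the topological Borel construction $X \stackrel{T^{\mathrm{an}}}{\times} ET^{\mathrm{an}}$. In the topological setting the inverse-limit issue is softened by the Atiyah--Hirzebruch spectral sequence, but one still needs to extract a canonical splitting compatible with the algebraic one in order to conclude that $t^T_X$ respects the two direct-sum decompositions; this compatibility is the crux. Once the topological decomposition is in hand and is exhibited as the image of the algebraic one under the realization functor, the identification of $t^T_X$ with the identity on each summand is immediate.
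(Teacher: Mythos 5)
Your overall strategy — reduce from $G$ to $T$ via the Weyl-invariance statements of Theorem~\ref{thm:Weyl-Inv*} and Theorem~\ref{thm:Top-ML} (and their Chow/singular analogues), then settle the torus case — matches the paper. The differences are in the torus case, and that is where the gap is.

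For $t^T_X$, you try to prove the torus case directly by identifying both sides with $\bL_R[[t_1,\dots,t_n]]^{\,r}$ via Theorem~\ref{thm:Main-Str} and its topological analogue, and then asserting that $t^T_X$ ``corresponds to the identity.'' As you yourself flag in the last paragraph, this assertion is not established: the decomposition of Theorem~\ref{thm:Main-Str} is not canonical in any na\"{i}ve sense (it is built from a choice of resolution $Y_m \to \ov V_m$, the pushforward splitting $s^i_m = (\ov p^i_m)_* \circ (\ov q^i_m)^* \circ ((\phi^i_m)^*)^{-1}$, and an inverse-limit argument), so one must actually check that the realization functor carries the entire apparatus — the BB filtration, the splittings, and the ${\varprojlim}$ identification of Lemma~\ref{lem:MLTorus} — to the corresponding apparatus in $MU^*_T$. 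Without that, ``the two sides are abstractly isomorphic to $\bL_R[[t]]^{\,r}$'' does not imply that the specific map $t^T_X$ between them is an isomorphism. The paper deals with precisely this point by citing \cite[Theorem~3.7]{KK} (together with Theorem~\ref{thm:BBH}, Theorem~\ref{thm:WeakBasic} and Corollary~\ref{cor:MLTorus**}), which is where the compatible-decomposition work actually lives. If you want to avoid the citation, you need to either (a) carry out the compatibility verification, or (b) replace the ``identity on both decompositions'' claim by a Nakayama-type argument — e.g.\ show $t^T_X$ is a map of free finite-rank $\bL_R[[t_1,\dots,t_n]]$-modules that is an isomorphism modulo $(t_1,\dots,t_n)$, with the reduction identified with the non-equivariant comparison $\m(X;R) \to MU^*(X;R)$; even this reduction step requires an argument. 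Also a small inaccuracy: Theorem~\ref{thm:Main-Str} is only a module isomorphism, not an $\bL_R$-algebra isomorphism, since the splitting is built from Gysin pushforwards and does not respect cup products.

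For $c^T_X$ you overlook a short-cut that the paper uses. Since $X$ is smooth projective with isolated $T$-fixed points, the BB decomposition already makes $X$ cellular; taking a canonical admissible gadget, each $X_i = X \stackrel{T}{\times} U_i$ is then a smooth cellular scheme (iterated projective bundles over a cellular base), hence $\CH^*(X_i) \to H^*(X_i)$ is an isomorphism, and passing to inverse limits finishes the Chow case with no reference to the decomposition theorem or to compatible splittings. Using the heavier Theorem~\ref{thm:Main-Str} machinery here is overkill and reintroduces, for Chow and singular cohomology, the same unverified compatibility point you already face for cobordism.
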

\begin{proof}
It follows from Theorems~\ref{thm:BBH}, ~\ref{thm:WeakBasic}, 
Corollary~\ref{cor:MLTorus**} and \cite[Thorem~3.7]{KK} that the map
$MGL^*_T(X) \xrightarrow{t^T_X} MU^*_T(X)$ is an isomorphism. 

A much simpler argument shows that the map $\CH^*_T(X) \xrightarrow{c^T_X}
H^*_T(X)$ also is an isomorphism. 
To see this quickly, take a canonical admissible gadget
$\left(V_i, U_i\right)$ for $T$ and observe that 
$X_i = X \stackrel{T}{\times} U_i$ is then a smooth cellular scheme
and hence the map $\CH^*(X_i) \to H^*(X_i)$ is an isomorphism.
It follows that the map $c^T_X$ is an isomorphism.

The isomorphism of $t^G_X$ follows immediately from the isomorphism of
$t^T_X$, combined with Theorems~\ref{thm:Weyl-Inv*} and ~\ref{thm:Top-ML}.
The isomorphism of $c^G_X$ follows from the isomorphism of $c^T_X$,
combined with \cite[Corollary~5.9]{Krishna4}, \cite[Lemma~3.6]{KK} and 
\cite[Proposition~2.1]{HS}.
\end{proof}

\begin{thm}\label{thm:ATComp2}
Let $G$ be a connected reductive group over $k$ with a split maximal torus $T$. 
Let $X \in \Sm^G_k$ be projective such that $T$ acts on $X$ with only finitely
many fixed points. Then the maps
\begin{equation}\label{eqn:ATComp2*4}
\CH^*_G(X;R) \xrightarrow{\wt{c}^G_X}
MU^*_G(X;R) {\wh{\otimes}}_{\bL_R} R \to H^*_G(X;R)
\end{equation}
are isomorphisms.
\end{thm}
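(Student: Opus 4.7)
The plan is to reduce the statement to the two isomorphisms already obtained in Theorem~\ref{thm:Alg-Top-Com}, using that the composition in ~\eqref{eqn:ATComp2*4} coincides with the ordinary cycle class map $c^G_X$. Since $c^G_X$ is an isomorphism under the given hypotheses, it will suffice to establish that one of the two maps in ~\eqref{eqn:ATComp2*4} is an isomorphism; the other is then forced.

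First I would show that $\wt{c}^G_X$ is an isomorphism. To do this, I exploit Corollary~\ref{cor:MLG**} to identify $MGL^*_G(X;R)$ with the inverse limit $mgl^{*,*}_G(X;R) = {\varprojlim}_i \ MGL^*(X^i_G;R)$ along a canonical admissible gadget, and similarly $MU^*_G(X;R)$ with ${\varprojlim}_i \ MU^*(X^i_G;R)$. The isomorphism $t^G_X : MGL^*_G(X;R) \xrightarrow{\cong} MU^*_G(X;R)$ from Theorem~\ref{thm:Alg-Top-Com} then comes from levelwise isomorphisms $MGL^*(X^i_G;R) \xrightarrow{\cong} MU^*(X^i_G;R)$. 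Tensoring each level with $R$ over $\bL_R$ and passing to the inverse limit yields an isomorphism
\[
MGL^*_G(X;R) \, \wh{\otimes}_{\bL_R} \, R \ \xrightarrow{\cong} \ MU^*_G(X;R) \, \wh{\otimes}_{\bL_R} \, R.
\]

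Next I would combine the Levine--Morel identification $\Omega^q(X^i_G;R) \otimes_{\bL_R} R \cong \CH^q(X^i_G;R)$ with the equivariant analogue from \cite[Proposition~7.2]{Krishna1} to obtain, after passing to inverse limits,
\[
\CH^*_G(X;R) \ \xrightarrow{\cong} \ \Omega^*_G(X;R) \, \wh{\otimes}_{\bL_R} \, R.
\]
Using Theorem~\ref{thm:WeakBasic}(11), which provides the isomorphism $\Omega^q_G(X;R) \cong mgl^{2q,q}_G(X;R)$, together with Corollary~\ref{cor:MLG**} again, the right-hand side is canonically isomorphic to $MGL^*_G(X;R) \, \wh{\otimes}_{\bL_R} \, R$. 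Composing with the isomorphism of the previous paragraph and tracing through the construction of $\wt{c}^G_X$ given in Theorem~\ref{thm:RCCM}, the refined cycle class map is identified with the resulting composite isomorphism $\CH^*_G(X;R) \xrightarrow{\cong} MU^*_G(X;R) \, \wh{\otimes}_{\bL_R} \, R$.

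Finally, since Theorem~\ref{thm:RCCM} tells us that the natural map $MU^*_G(X;R) \, \wh{\otimes}_{\bL_R} \, R \to H^*_G(X;R)$ post-composed with $\wt{c}^G_X$ equals $c^G_X$, and since both $\wt{c}^G_X$ (just proven) and $c^G_X$ (from Theorem~\ref{thm:Alg-Top-Com}) are isomorphisms, the remaining map is forced to be an isomorphism as well. The main subtlety I anticipate is to confirm that the natural transformations defining $\wt{c}^G_X$ are compatible with the levelwise Levine--Morel isomorphisms in the appropriate sense so that the topological tensor product $\wh{\otimes}_{\bL_R}$ really does commute with all the identifications above; this is a matter of chasing the construction of $\wt{c}^G_X$ through the inverse systems rather than a genuinely hard step, but it is where care is required.
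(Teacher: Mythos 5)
Your framing is correct at the top level: since $\wt{c}^G_X$ and the natural map to $H^*_G(X;R)$ compose to $c^G_X$, and $c^G_X$ is an isomorphism by Theorem~\ref{thm:Alg-Top-Com}, it does suffice to establish that $\wt{c}^G_X$ is surjective (injectivity is then automatic, and the second map is forced). However, the way you propose to establish this has a genuine gap.

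The crux of your argument is the sentence asserting that the isomorphism $t^G_X : MGL^*_G(X;R) \xrightarrow{\cong} MU^*_G(X;R)$ of Theorem~\ref{thm:Alg-Top-Com} ``comes from levelwise isomorphisms $MGL^*(X^i_G;R) \xrightarrow{\cong} MU^*(X^i_G;R)$.'' This is not established anywhere in the paper, and you cannot deduce it from the cited theorem. The proof of Theorem~\ref{thm:Alg-Top-Com} never touches the individual finite-level approximations $X^i_G$: for the torus it cites the structural results, and for $G$ it passes through the Weyl-invariant comparisons of Theorems~\ref{thm:Weyl-Inv*} and~\ref{thm:Top-ML}, which are statements about the limits $MGL^*_G$ and $MU^*_G$, not about $X^i_G$. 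An isomorphism between two inverse limits in no way implies the levelwise maps are isomorphisms, and for $G$ non-abelian the scheme $X^i_G$ need not be cellular or of any particularly tame homotopy type, so there is no obvious reason the realization map $MGL^*(X^i_G;R) \to MU^*(X^i_G;R)$ should be an isomorphism at each finite level. Once this step is removed, your conclusion that $t^G_X \wh{\otimes}_{\bL_R} R$ is an isomorphism no longer follows: the topological tensor product $\wh{\otimes}_{\bL_R}$ is \emph{defined} as $\varprojlim_i\bigl(- \otimes_{\bL_R} R\bigr)$, and an isomorphism of $\varprojlim_i$'s before tensoring does not propagate through the termwise $\otimes_{\bL_R} R$ without control on the individual levels. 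This is precisely the subtlety you flag at the end, but it is not a routine chase; it is the content.

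The paper's proof handles this delicacy differently. It reduces to showing surjectivity of $\wt{c}^G_X$ only, identifies both sides as inverse limits of quotients by the action of $\bL_R^{<0}$, and then shows the relevant $\varprojlim^1$ vanishes. The key input is a Mittag-Leffler condition on the projective system $\{MU^*(X^i_G;R)\}$, which is deduced from the fact that $H^*_G(X;R)$ is torsion-free (your hypothesis on finitely many $T$-fixed points feeds in here, via Theorem~\ref{thm:BBH} and the collapse of the Atiyah-Hirzebruch spectral sequence). None of this requires or produces levelwise isomorphisms between $MGL^*$ and $MU^*$; it only uses the Mittag-Leffler property to kill the derived inverse limit. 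To repair your proposal, you would either have to prove the levelwise comparison isomorphisms (a statement substantially stronger than anything in the paper), or switch to an argument along these $\varprojlim^1$ lines.
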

\begin{proof}
It follows from Theorem~\ref{thm:Alg-Top-Com} that the composite map
$\CH^*_G(X;R) \xrightarrow{c^G_X} H^*_G(X;R)$ is an isomorphism.
Thus, we only need to show that the refined cycle class map 
$\wt{c}^G_X$ ({\sl cf.} Theorem~\ref{thm:RCCM}) is surjective.

Let $\rho = \left(V_i, U_i\right)$ be an admissible gadget for $G$. 
Let us denote the mixed space $X \stackrel{G}{\times} U_i$ in short by $X_i$.
It follows from our assumption, Theorem~\ref{thm:BBH} and 
\cite[Lemma~3.6]{KK} that $H^*_T(X)$ is torsion-free. It follows from
\cite[Proposition~2.1]{HS} that $H^*_G(X;R)$ is torsion-free.
It follows subsequently using the Atiyah-Hirzebruch spectral sequence
({\sl cf.} \cite[Corollary~2]{Landweber}, \cite[Lemma~2.2]{Totaro1})
that the map $MU^*_G(X;R) \to {\underset{i}\varprojlim} \ MU^*(X_i;R)$ is an
isomorphism and moreover, for each positive integer $i$, there is $j \ge i$
such that 
\begin{equation}\label{eqn:ATComp2*0} 
{\rm Image}\left(MU^*_G(X;R) \to MU^*(X_i;R)\right) =
{\rm Image}\left(MU^*(X_j;R) \to MU^*(X_i;R)\right).
\end{equation}

We conclude from Corollary~\ref{cor:MLG**} and  Theorem~\ref{thm:Alg-Top-Com}
that there is a commutative diagram of $\bL_R$-modules
\begin{equation}\label{eqn:ATComp2*1}
\xymatrix@C1.8pc{
MGL^*_G(X;R) \ar[r]^{t^G_X} \ar[d] &  MU^*_G(X;R) \ar[d] \\
{\underset{i}\varprojlim} \ MGL^*(X_i;R) \ar[r] &
{\underset{i}\varprojlim} \ MU^*(X_i;R)}
\end{equation}
in which all arrows are isomorphisms.

We now show the surjectivity of $\wt{c}^G_X$.
Using the isomorphisms in ~\eqref{eqn:ATComp2*1} and 
\cite[Proposition~7.2]{Krishna1}, we need to show that the map 
\begin{equation}\label{eqn:ATComp2*2}
{\underset{i}\varprojlim} \ \frac{MGL^*(X_i;R)}{\bL_R^{<0}MGL^*(X_i;R)} \ \to \
{\underset{i}\varprojlim} \ \frac{MU^*(X_i;R)}{\bL_R^{<0}MU^*(X_i;R)} 
\end{equation}
is surjective. Since the bottom horizontal arrow in ~\eqref{eqn:ATComp2*1}
is an isomorphism, it suffices to show that the map
\begin{equation}\label{eqn:ATComp2*3}
{\underset{i}\varprojlim} \ MU^*(X_i;R) \to 
{\underset{i}\varprojlim} \ \frac{MU^*(X_i;R)}{\bL_R^{<0}MU^*(X_i;R)} 
\end{equation} is surjective. To show this, it suffices to show that
${\underset{i}{\varprojlim}^1} \ \bL_R^{<0}MU^*(X_i;R) = 0$. Using the
surjectivity ${\underset{i}{\varprojlim}^1} \ 
\bL_R^{<0} \otimes_{\bL_R} MU^*(X_i;R) 
\surj {\underset{i}{\varprojlim}^1} \ \bL_R^{<0}MU^*(X_i;R)$, it is enough to show
that ${\underset{i}{\varprojlim}^1} \ \bL_R^{<0} \otimes_{\bL_R} MU^*(X_i;R) =0$.

To prove this last assertion, it suffices to show that the projective system
$\{\bL_R^{<0} \otimes_{\bL_R} MU^*(X_i;R)\}$ satisfies the Mittag-Leffler 
condition.
On the other hand, it follows from ~\eqref{eqn:ATComp2*0} that the
projective system $\{MU^*(X_i;R)\}$ satisfies the Mittag-Leffler condition.
From this, it follows immediately that the same holds for
$\{\bL_R^{<0} \otimes_{\bL_R} MU^*(X_i;R)\}$. We have thus proven the surjectivity
of $\wt{c}^G_X$. This completes the proof of the theorem.
\end{proof}

\begin{cor}\label{cor:Alg-Top-Com1}
Let $G$ be a connected split reductive group over $k$. Then the maps
$MGL^*(BG;R) \to MU^*(BG;R)$ and $\CH^*(BG;R) \to 
MU^*(BG;R) {\wh{\otimes}}_{\bL_R} R$ are isomorphisms.
\end{cor}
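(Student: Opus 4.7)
The plan is to deduce Corollary~\ref{cor:Alg-Top-Com1} as a direct specialization of Theorems~\ref{thm:Alg-Top-Com} and~\ref{thm:ATComp2}. The key observation is that both claims of the corollary are the equivariant statements evaluated at the one-point scheme $X = \Spec(k)$. Indeed, by Definition~\ref{defn:EMC}, $MGL^{*,*}_G(\Spec(k)) = MGL^{*,*}(B_G)$ and similarly, by~\eqref{eqn:Equiv-CC}, $MU^*_G(\mathrm{pt}) = MU^*(BG)$; the identification $\CH^*_G(\Spec(k);R) = \CH^*(BG;R)$ is built into the Totaro definition of the Chow ring of $BG$ recalled in~\S\ref{subsection:CCM}.

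First I would verify that $X = \Spec(k)$ satisfies the hypotheses of Theorems~\ref{thm:Alg-Top-Com} and~\ref{thm:ATComp2}, namely that $X$ is smooth projective and that the chosen split maximal torus $T$ of $G$ acts with only finitely many fixed points. Smoothness and projectivity of $\Spec(k)$ are immediate. The $T$-action is necessarily trivial, so $(\Spec(k))^T = \Spec(k)$ consists of a single (fixed) point, which is certainly finite. Hence both theorems apply.

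Next, applying Theorem~\ref{thm:Alg-Top-Com} to $X = \Spec(k)$, the map $t^G_{\Spec(k)} : MGL^*_G(\Spec(k);R) \to MU^*_G(\Spec(k);R)$ is an isomorphism, which is precisely the first asserted isomorphism $MGL^*(BG;R) \xrightarrow{\cong} MU^*(BG;R)$. Applying Theorem~\ref{thm:ATComp2} to the same $X$, the composite $\CH^*_G(\Spec(k);R) \xrightarrow{\wt{c}^G_{\Spec(k)}} MU^*_G(\Spec(k);R)\,\wh{\otimes}_{\bL_R}\,R \to H^*_G(\Spec(k);R)$ consists of isomorphisms, and extracting the first arrow yields the refined cycle class isomorphism $\CH^*(BG;R) \xrightarrow{\cong} MU^*(BG;R)\,\wh{\otimes}_{\bL_R}\,R$.

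There is no real obstacle here, since all the substantive content has been packaged into the preceding two theorems; the corollary is a clean specialization. The only thing to be careful about is matching the notation: one must record the identifications $MGL^*_G(\Spec(k)) = MGL^*(B_G)$, $MU^*_G(\mathrm{pt}) = MU^*(BG)$, and $\CH^*_G(\Spec(k);R) = \CH^*(BG;R)$, and note that the hypothesis of finitely many $T$-fixed points is trivially satisfied because the ambient scheme is a point. After these identifications, no further computation is required.
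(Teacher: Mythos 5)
Your proof is correct and coincides with what the paper intends: the corollary is stated without proof immediately after Theorem~\ref{thm:ATComp2} precisely because it is the specialization of Theorems~\ref{thm:Alg-Top-Com} and~\ref{thm:ATComp2} to $X = \Spec(k)$, whose hypotheses you have verified are trivially satisfied. The identifications $MGL^*_G(\Spec(k)) = MGL^*(BG)$, $MU^*_G(\mathrm{pt}) = MU^*(BG)$, and $\CH^*_G(\Spec(k);R) = \CH^*(BG;R)$ that you record are exactly the needed bookkeeping.
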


\section{Motivic cobordism of quotient stack}\label{section:Qst}
In this section, we show how one can use equivariant motivic cobordism
to define the motivic cobordism for quotient stacks. The motivic cohomology
of such stacks was earlier defined by Edidin-Graham \cite{EG}.
Our definition is based on the following result.

\begin{prop}\label{prop:Ind-ad-gadget}
Let $G$ and $H$ be two linear algebraic groups acting on two smooth
schemes $X$ and $Y$ respectively such that $[X/G] \cong [Y/H]$ as
stacks. There is then a canonical isomorphism $X_G \cong Y_H$ of
motivic spaces in $\sH(k)$.
\end{prop}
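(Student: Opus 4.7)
The plan is to produce a common $G\times H$-equivariant ``resolution'' of the two presentations, and then apply Corollary~\ref{cor:Morita0} twice. Specifically, I would start by constructing, from the isomorphism of stacks $\phi : [X/G] \xrightarrow{\cong} [Y/H]$, a smooth scheme $Z$ on which $G\times H$ acts (with commuting $G$- and $H$-actions, each of them free) together with two equivariant projections $Z \to X$ and $Z \to Y$, where the first is an $H$-torsor (with the $G$-action descending to the given $G$-action on $X$) and the second is a $G$-torsor (with the $H$-action descending to the given $H$-action on $Y$).

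The natural choice for $Z$ is the $2$-fiber product
\[
Z \;=\; X \times_{[Y/H]} Y,
\]
where $X \to [X/G] \xrightarrow{\phi} [Y/H]$ and $Y \to [Y/H]$ are the canonical atlases. The projection $Z \to X$ is a pullback of the $H$-torsor $Y \to [Y/H]$, hence an $H$-torsor, and the projection $Z \to Y$ is a pullback of the $G$-torsor $X \to [X/G]$, hence a $G$-torsor. The $G\times H$-action on $Z$ comes from the $G$-action on the first factor and the $H$-action on the second factor, and the two actions commute. The main technical point I need to verify is that $Z$ is not merely an algebraic space but a quasi-projective scheme: since $X$ is quasi-projective with a linear $G$-action and $Z\to X$ is an $H$-torsor for the linear algebraic group $H$, this follows (after choosing a linearization of $\phi$) from Lemma~\ref{lem:sch}, and smoothness of $Z$ follows from smoothness of $X$ and $Y$ together with smoothness of the torsor projections.

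Once $Z$ is in hand, the rest is formal. Since $H$ is a closed normal subgroup of $G\times H$ with quotient $(G\times H)/H = G$, and $Z \to X$ is an $H$-torsor that is $(G\times H)$-equivariant (with $G$ acting on both sides via the identification $(G\times H)/H\cong G$), Corollary~\ref{cor:Morita0} yields a canonical isomorphism
\[
Z_{G\times H} \;\cong\; X_G \quad \text{in } \sH(k).
\]
Symmetrically, with $G$ as a closed normal subgroup of $G\times H$ with quotient $H$, the $G$-torsor $Z \to Y$ gives
\[
Z_{G\times H} \;\cong\; Y_H \quad \text{in } \sH(k).
\]
Composing these two isomorphisms produces the desired canonical isomorphism $X_G \cong Y_H$ in $\sH(k)$.

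The hard part will be Step~1, namely the stack-theoretic construction of $Z$ and the verification that it is a smooth quasi-projective scheme with the required $G\times H$-action and torsor projections; everything after that is a direct double application of Corollary~\ref{cor:Morita0}. Canonicity of the resulting isomorphism in $\sH(k)$ will follow from the canonicity of the two Morita isomorphisms and the fact that $Z$ is uniquely determined (up to unique $G\times H$-equivariant isomorphism) by the universal property of the $2$-fiber product.
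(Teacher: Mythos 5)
Your proof is correct and genuinely different in structure from the paper's. The paper works at the level of finite-dimensional approximations: choosing admissible gadgets $\rho$ for $G$ and $\rho'$ for $H$, it forms the fiber products $\sU_{i,j} = X^i_G(\rho)\times_{\sX} Y^j_H(\rho')$ of the finite-level mixed quotients over the stack $\sX$, checks that for each $i$ the system $(\sV_{i,j},\sU_{i,j})_j$ is an admissible gadget over $X^i_G(\rho)$ in the sense of Morel--Voevodsky, and deduces that $\sU = \mathrm{colim}_{i,j}\,\sU_{i,j}$ maps to both $X_G(\rho)$ and $Y_H(\rho')$ by $\A^1$-weak equivalences. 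You instead build the finite-type ``common roof'' $Z = X\times_{[Y/H]} Y$ once and for all, check that it carries a smooth quasi-projective $G\times H$-structure with $Z\to X$ an $H$-torsor and $Z\to Y$ a $G$-torsor, and then apply Corollary~\ref{cor:Morita0} twice. This is more modular: it reuses the machinery the paper already developed (Propositions~\ref{prop:Rep-ind} and \ref{prop:Rep-ind-V} are hidden inside Corollary~\ref{cor:Morita0}) instead of re-running the Morel--Voevodsky admissible-gadget argument by hand, and it makes the role of the $G\times H$-scheme $Z$ transparent; note that $Z$ is essentially the finite-type object whose Borel space $Z_{G\times H}$ is the paper's $\sU$.

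One small point to tighten. You cite Lemma~\ref{lem:sch} for the quasi-projectivity of $Z$, but that lemma is about mixed quotients $X\stackrel{H}{\times}U$ and does not directly give the quasi-projectivity of the total space of a torsor. The cleanest argument here is: the diagonal of $[Y/H]$ is affine because $H$ is affine, so $Z\to X\times Y$ is affine, and hence $Z\to X$ is an affine morphism of finite type; since $X$ is quasi-projective, $Z$ is quasi-projective (an affine morphism of finite type over a quasi-projective scheme has quasi-projective source). Smoothness and normality of $Z$ then give linearity of the $G\times H$-action by Sumihiro/Thomason, so $Z\in\Sm^{G\times H}_k$ as required for Corollary~\ref{cor:Morita0}. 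With that correction your argument closes up completely.
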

\begin{proof}
Let $\sX$ denote the stack $[X/G] \cong [Y/H]$.
Let $\rho = \left(V_i, U_i\right)_{i \ge 1}$ and 
$\rho' = \left(V'_i, U'_i\right)_{i \ge 1}$ be admissible gadgets for 
$G$ and $H$ respectively. We set $\ov{X}^i_G(\rho) = [(X \times V_i)/G]$
and $\ov{Y}^j_H(\rho') = [(Y \times V'_j)/H]$.

This yields representable morphisms of stacks
\begin{equation}\label{eqn:Ind-ad0}
X^i_G(\rho) \inj \ov{X}^i_G(\rho) \xrightarrow{f_i} [X/G] \cong \sX
\ \ {\rm and} \ Y^j_H(\rho') \inj 
\ov{Y}^j_H(\rho') \xrightarrow{g_j} [Y/H] \cong \sX.
\end{equation}

For each $i, j \ge 1$, we consider the fiber product diagrams of stacks

\[
\xymatrix@C2pc{
\sU_{i,j} \ar[r]^{p_{i,j}} \ar[d]_{q_{i,j}} & X^i_G(\rho) \ar[d]^{f_i} & 
\sV_{i,j} \ar[r]^{\ov{p}_{i,j}} \ar[d]_{\ov{q}_{i,j}} & X^i_G(\rho) \ar[d]^{f_i} \\
Y^j_H(\rho') \ar[r]_{g_j} & \sX & \ov{Y}^j_H(\rho') \ar[r]_{g_j} & \sX.}
\]

Since each $g_j$ in the diagram on the right is a vector bundle
map of stacks with fiber $V'_j$, we see that
each $\ov{p}_{i,j}$ is a vector bundle with fiber $V'_j$. In particular, each
$\sV_{i,j}$ is a smooth scheme and 
$\sU_{i,j} \subsetneq \sV_{i,j}$ is an open subscheme.

For a fixed $i \ge 1$, we get a sequence 
$\left(\sV_{i,j}, \sU_{i,j}, f_{i,j}\right)_{j \ge 1}$ of pairs of smooth schemes
where $\sV_{i,j} \to X^i_G(\rho)$ is s vector bundle, $\sU_{i,j} \subsetneq
\sV_{i,j}$ is an open subscheme and $f_{i,j} : \left(\sV_{i,j}, \sU_{i,j}\right)
\to \left(\sV_{i,j+1}, \sU_{i, j+1}\right)$ is the natural map of pairs of
smooth schemes over $X^i_G(\rho)$. It follows moreover from the property
of $\rho'$ being an admissible gadget for $H$ that 
$\left(\sV_{i,j}, \sU_{i,j}, f_{i,j}\right)_{j \ge 1}$ is an admissible
gadget over $X^i_G(\rho)$ in the sense of \cite[Definition~4.2.1]{MV}.

Setting $\sU_i = colim_j \ \sU_{i,j}$ and $p_i = colim_j \ p_{i,j}$, we
conclude from \cite[Proposition~4.2.3]{MV} that $\sU_i \xrightarrow{p_i}
X^i_G(\rho)$ is an $\A^1$-weak equivalence. Taking the colimit of these
maps as $i \to \infty$, we conclude that $\sU \xrightarrow{p} X_G(\rho)$ is 
an $\A^1$-weak equivalence, where $\sU = colim_{i,j} \ \sU_{i,j}$.
By reversing the roles of $\rho$ and $\rho'$, we see that the map
$\sU \xrightarrow{q} Y_H(\rho')$ is also an $\A^1$-weak equivalence.
This concludes the proof.
\end{proof}

\begin{defn}\label{defn:Mot-cob-qt-st}
Let $\sX$ be a smooth stack of finite type over $k$ which is isomorphic to
a stack of the form $[X/G]$ where $G$ is a linear algebraic group acting on 
a smooth scheme $X$ over $k$. 
We define the {\sl motivic cobordism} of $\sX$ as 
\[
MGL^{a,b}(\sX) : = MGL^{a,b}_G(X).
\]
\end{defn}
It follows from Proposition~\ref{prop:Ind-ad-gadget} that
$MGL^{a,b}(\sX)$ is well defined. We let 
$MGL^{*,*}(\sX)$ to be the sum ${\underset{a,b} \bigoplus} \ MGL^{a,b}(X)$.  
It follows from Theorem~\ref{thm:BPEC*} that the association
$\sX \mapsto MGL^{*,*}(\sX)$ is a contravariant functor from the
category of smooth quotient stacks into the the category of bigraded
commutative rings. Furthermore, this functor satisfies homotopy invariance,
localization, theory of Chern classes and projective bundle formula.


\begin{thebibliography}{99}
\bibitem{Adams} J. Adams, {\sl Stable homotopy and generalized homology\/},
The university of Chicago press, 1974. \
\bibitem{AH} M. Atiyah, F. Hirzebruch, {\sl Vector bundles on homogeneous
spaces\/}, Proc. Sympos. Pure Math., Amer. Math. Soc., {\bf 3}, (1961), 
7-38. \ 
\bibitem{BB} A. Bialynicki-Birula,  {\sl Some theorems on actions of 
algebraic groups\/}, Ann. Math.,  (2), {\bf 98}, (1973), 480-497. \
\bibitem{Bloch} S. Bloch, {\sl Algebraic cycles and higher $K$-theory\/},
Adv. in Math.,  {\bf 61},  (1986),  no. 3, 267-304. \
\bibitem{Borel} A. Borel, {\sl Linear Algebraic groups\/}, Second
edition, GTM {\bf 8}, Springer-Verlag, (1991). \
\bibitem{BK} A. Bousfield, D. Kan, {\sl Homotopy limits, Completions and
Localizations\/}, Lecture notes in Math., {\bf 304}, (1972), Springer-Verlag. \
\bibitem{Brion2} M. Brion, {\sl Equivariant Chow groups for torus actions\/},
Transform. Groups,  {\bf 2},  (1997),  no. 3, 225-267. \
\bibitem{BM2} V. Buh{\v{s}}taber, A. Mi{\v{s}}{\v{c}}enko, 
{\sl Elements of infinite filtration 
in $K$-theory\/}, (Russian),  Dokl. Akad. Nauk SSSR, {\bf 178},  
(1968), 1234-1237. \
\bibitem{Deligne} P. Deligne, {\sl Voevodsky's Lectures on motivic cohomology
2000/2001\/}, The proceedings of Abel
Symposium 2007, The Norwegian mathematical society, {\bf 4}, (2007), 
355-409. \
\bibitem{SGA3} M. Demazure, A. Grothendieck, {\sl Sch{\'e}mas en Groupes\/},
Lecture Notes in Math., {\bf 153}, (1970), Springer-Verlag. \
\bibitem{EG} D. Edidin, W. Graham, {\sl  Equivariant intersection theory\/}, 
Invent. Math., {\bf 131}, (1998), 595-634. \
\bibitem{GIT} J. Fogarty, F. Kirwan, D. Mumford, {\sl Geometric Invariant
Theory\/}, 3rd Edition, Springer-Verlag, (1994). \
\bibitem{Hart} R. Hartshorne, {\sl Algebraic Geometry\/},
Graduate test in Mathematics, {\bf 52}, Springer-verlag. \
\bibitem{Hessel} W. Hesselink,  {\sl Concentration under actions of algebraic 
groups\/}, In: Paul Dubreil and Marie-Paule Malliavin Algebra Seminar, 33rd 
Year (Paris), Lect. Notes Math., {\bf 867}, (1980), 55-89. \
\bibitem{Hirsc} P. Hirschhorn, {\sl Model categories and their 
localizations\/}, Mathematical surveys and monograph series, {\bf 99},
Amer. Math. Soc., Providence, (2002). \
\bibitem{HS} T. Holm, R. Sjamaar, {\sl Torsion and abelianization in
equivariant cohomology\/}, Transformation Groups, {\bf 13}, No. 3-4,
(2008), 585-615. \
\bibitem{Hovey1} M. Hovey, {\sl Model categories\/},
Mathematical surveys and monograph series, {\bf 63},
Amer. Math. Soc., Providence, (1999). \
\bibitem{KK} V. Kiritchenko, A. Krishna, {\sl Equivariant cobordism of flag 
varieties and of symmetric varieties\/}, MathArxiv, mathAG/1104.1089, (2011). \
\bibitem{Krishna1} A. Krishna, {\sl Equivariant cobordism of schemes\/},
Documenta Math., 17 (2012)  95-134. \
\bibitem{Krishna2} A. Krishna, {\sl Equivariant cobordism of schemes with torus
action\/}, MathArxiv, mathAG/1010.6182, (2010). \
\bibitem{Krishna3} A. Krishna, {\sl Cobordism of flag bundles\/},
MathArxiv, mathAG/1007.1083, (2010). \
\bibitem{Krishna4} A. Krishna, {\sl Higher Chow group of varieties with group
action\/}, to appear in Algebra and Number Theory, (2012). \
\bibitem{KU} A. Krishna, V. Uma, {\sl Cobordism rings of toric varieties\/},
MathArxiv, mathAG/1011.0573, (2010). \
\bibitem{Landweber} P. Landweber, {\sl Elements of infinite filtration in 
complex cobordism\/},  Math. Scand.,  {\bf 30},  (1972), 223-226. \
\bibitem{Levine2} M. Levine, {\sl Oriented cohomology, Borel-Moore homology, 
and algebraic cobordism\/}, Special volume in honor of Melvin Hochster.  
Michigan Math. J.,  {\bf 57},  (2008). \ 
\bibitem{Levine1} M. Levine,  {\sl Comparison of cobordism theories\/},
J. Algebra,  {\bf 322},  (2009),  no. 9, 3291-3317. \  
\bibitem{LM} M. Levine, F. Morel, {\sl Algebraic cobordism\/},
Springer Monographs in Mathematics, Springer, Berlin, (2007). \
\bibitem{MV} F. Morel, V. Voevodsky, {\sl $\A^1$-homotopy theory of
schemes\/}, Publ. Math. IHES, {\bf 90}, (1999), 45-143. \
\bibitem{NSO} N. Nauman, M. Spitzweck, P. {\O}stvaer, {\sl Motivic Landweber
exactness\/}, Documenta Math., {\bf 14}, (2009), 551-593. \
\bibitem{Panin1} I. Panin, {\sl Oriented cohomology theories of algebraic 
varieties\/}, Special issue in honor of Hyman Bass on his seventieth birthday,
Part III,  $K$-Theory,  {\bf 30},  (2003),  no. 3, 265-314. \ 
\bibitem{Panin2} I. Panin, {\sl Oriented cohomology theories of algebraic 
varieties. II \/}, Homology, Homotopy Appl.,  {\bf 11},  (2009),  no. 1, 
349-405. \ 
\bibitem{PPR} I. Panin, K. Pimenov, O. R{\"o}ndigs, {\sl On Voevodsky's 
algebraic $K$-theory spectrum\/},  Algebraic topology,  Abel Symp., Springer, 
Berlin, {\bf 4}, (2009), 279-330. \
\bibitem{Quillen} D. Quillen, {\sl Elementary proofs of some results of 
cobordism theory using Steenrod operations\/},  Advances in Math.,  {\bf 7},
(1971), 29-56. \ 
\bibitem{Springer} T. Springer, {\sl Linear algebraic groups\/}, 
Second edition, Progress in Math., {\bf 9}, (1998), Birkhauser. \
\bibitem{Sumihiro} H. Sumihiro, {\sl Equivariant completion II\/},
J. Math. Kyoto, {\bf 15}, (1975), 573-605. \
\bibitem{Thomason1} R. Thomason, {\sl Equivariant algebraic vs. topological
$K$-homology Atiyah-Segal-style\/}, Duke Math. J., {\bf 56}, 
(1988), 589-636. \
\bibitem{Totaro1} B. Totaro, {\sl The Chow ring of a classifying space\/},
Algebraic $K$-theory (Seattle, WA, 1997), 
Proc. Sympos. Pure Math., Amer. Math. Soc., {\bf 67}, (1999), 249-281. \ 
\bibitem{Totaro2} B. Totaro, {\sl The torsion index of the spin group\/},
Duke Math. J., {\bf 129} no. 2, (2005), 249-290. \
\bibitem{Voev2} V. Voevodsky, {\sl The Milnor Conjecture\/}, 
available on www.math.uiuc.edu/K-theory/170/, (1996). 

\bibitem{Voev1} V. Voevodsky, {\sl $\A^1$-homotopy theory\/},
Proceedings of the International Congress of Mathematicians, {\bf 1},
(Berlin, 1998), Doc. Math., (1998), 579-604. \


\end{thebibliography}
\end{document}